\theoremstyle{plain}
\newtheorem{theorem}{Theorem}[section]
\newtheorem*{th:re}{Theorem \ref{th:re}}
\newtheorem{fact}[theorem]{Fact}
\newtheorem{lemma}[theorem]{Lemma}
\newtheorem{corollary}[theorem]{Corollary}
\newtheorem{proposition}[theorem]{Proposition}
\theoremstyle{definition}
\newtheorem{definition}[theorem]{Definition}
\newtheorem{assumption}[theorem]{Assumption}
\theoremstyle{remark}
\newtheorem{remark}[theorem]{Remark}
\newtheorem{example}[theorem]{Example}
\newtheorem*{jjj}{Remark}
\DeclareSymbolFont{AMSb}{U}{msb}{m}{n}
\DeclareMathSymbol{\N}{\mathalpha}{AMSb}{"4E}
\DeclareMathSymbol{\R}{\mathalpha}{AMSb}{"52}
\DeclareMathSymbol{\Z}{\mathalpha}{AMSb}{"5A}
\DeclareMathSymbol{\D}{\mathalpha}{AMSb}{"44}
\DeclareMathSymbol{\s}{\mathalpha}{AMSb}{"53}
\newcommand{\eL}{L}
\renewcommand{\Im}{\mbox{Im}}
\newcommand{\sF}{{\scriptscriptstyle{F}}}
\newcommand{\sC}{{\scriptscriptstyle C}}
\newcommand{\sB}{{\scriptscriptstyle B}}
\newcommand{\sX}{{\scriptscriptstyle{X}}}
\newcommand{\sN}{{\scriptscriptstyle N}}
\DeclareMathOperator{\I}{I}
\DeclareMathOperator{\J}{J}
\DeclareMathOperator{\tf}{Test}
\DeclareMathOperator{\vol}{vol}
\DeclareMathOperator{\Ch}{Ch}
\DeclareMathOperator{\lip}{Lip}
\DeclareMathOperator{\essup}{ess-sup}
\DeclareMathOperator{\de}{d}
\DeclareMathOperator{\m}{m}
\DeclareMathOperator{\ric}{ric}
\DeclareMathOperator{\diam}{diam}
\renewcommand{\L}{\mbox{L}}
\newcommand{\mwp}{B\times_f F}
\newcommand{\nwp}{B\times_f^\sN F}
\newcommand{\smwp}{{\scriptscriptstyle{B\times_f F}}}
\newcommand{\snwp}{{\scriptscriptstyle B\times_f^\sN F}}
\newcommand{\dint}{\de}
\newcommand{\RCD}{\mathsf{RCD}}
\newcommand{\CD}{\mathsf{CD}}
\newcommand{\TCD}{\mathsf{TCD}}
\newcommand{\MCP}{\mathsf{MCP}}
\newcommand{\BE}{\mathsf{BE}}
\newcommand{\sW}{{\scriptstyle W}}
\title[Warped products and the RCD condition]{Warped products over  one-dimensional base spaces  and the RCD condition}
\author{Christian Ketterer}
\thanks{Department of Mathematics \& Statistics, Logic House,
South Campus,
Maynooth University,
Ireland.\ \ {{\it Email address:} {\tt christian.ketterer@mu.ie}}}
\thanks{{\it 2020 Mathmatics Subject Classification.} Primary:  51M15, 53C21,  49Q22; \ \ Keywords: warped products, metric measure spaces, Riemannian curvature-dimension condition}
\address{Department of Mathematics \& Statistics, Logic House,
South Campus,
Maynooth University,
Ireland}\email{christian.ketterer@mu.ie}
\begin{document}
\begin{abstract} We prove the Riemannian curvature-dimension condition $\RCD(KN,N+1)$ for  an  $N$-warped product $B\times_f^\sN F$ over a one-dimensional base space $B$ with a Lipschitz function $f: B\rightarrow \R_{\geq 0}$,   provided  {(1)} $f$  is  $Kf$-concave,  {(2)} $f$ satisfies a sub-Neumann boundary condition $\frac{\partial f}{\partial n}\geq 0$ on $\partial B\backslash f^{-1}(0)$ and $F$ is a compact metric measure space  satisfying (3) the condition $\RCD(K_F (N-1), N)$ with $K_F:= \sup_B \{ (Df)^2 + Kf^2\}$.  The result is sharp, i.e. we show that {(1), (2)} and {(3)} are  necessary for the validity of statement provided $K_F\geq 0$.  In general, only a weaker statement is true. If $f$ is assumed to be $Kf$-affine, then the condition $\RCD(K N, N+1)$ for the $N$-warped product holds if and only if  the condition $\RCD(K_F(N-1), N)$ holds  for $F$ for any $K_F\in \R$.
\end{abstract}
\maketitle
\tableofcontents

\section{Introduction}
The theory of curvature-dimension conditions for  metric measure spaces, such as the Riemannian Curvature-Dimension condition $\RCD(K,N)$, has emerged as a central framework in the study of synthetic lower Ricci curvature bounds. A core challenge in this field is the construction and analysis of spaces satisfying such conditions, especially in the presence of non-smooth structures. In this article, we provide a broad and sharp characterization of the $\RCD(K,N)$ condition in the setting of warped product spaces over one-dimensional base spaces, thereby significantly extending the scope of known results.

Warped products generalize the classical Cartesian product of metric spaces and serve as a versatile construction in differential geometry, geometric analysis, and mathematical physics. They are essential tools for modeling spaces with both lower and upper curvature bounds \cite{lobaem, albi, coldingnaberII, hangigliwarped}, appearing as  model spaces in rigidity theorems \cite{conesplittingtheorems, ketterer3, almostrigidity, cdnpsw, cz,  DGi, chen_lina_almost_volume} and yielding a rich source of new examples \cite{anderson, cheegercoldingI, sturm25} and counterexamples \cite{hnw}. Notable special cases are Euclidean cones and spherical suspensions.

In this work, we focus on  warped product spaces endowed with a natural reference measure. Specifically, we investigate $N$-warped products, $B \times_f^\sN F$, where $B$ is a $1$-dimensional Riemannian manifold, $f: B \to [0,\infty)$ is a Lipschitz continuous  function,  $(F, \de_\sF, \m_\sF)$ is a compact metric measure space and $N\in [0, \infty)$ is a parameter. We establish necessary and sufficient conditions under which such a space satisfies the curvature-dimension condition $\RCD(KN, N+1)$, for $K \in \mathbb{R}$. These results unify and extend previous work by the author in \cite{ketterer2}.

Our  theorems reveal a precise relationship between the curvature of the fiber $F$, the geometry of the base $B$, and the  properties of the warping function $f$. In particular, we show that the curvature lower bound on $F$ is governed by the quantity $\essup_B \{(f')^2 + Kf^2\}=K_F$ 
and that the $fK$-concavity of $f$ plays a central role in controlling the geometry of the warped product space.

Our  main result is the following theorem.
\begin{theorem}\label{maintheorem1} Let $F$ be a compact metric measure space,  let $B$ be  a $1$-dimensional Riemannian manifold, and  let $f: B\rightarrow [0,\infty)$  be Lipschitz continuous.    Let $K\in \R$ and $N\in [1, \infty)$. We assume that 
\begin{enumerate}
\item $f$ is $fK$-concave,  \item $\frac{\partial}{\partial n} f\geq 0$ on $\partial B\backslash f^{-1}(\{0\})$ for the outer normal vector $n$. 
\item $F$ satisfies the condition $\RCD(K_F(N-1), N)$ where 
$$K_F=\sup_B \left\{ (Df)^2+ Kf^2\right\}$$
and $\diam_F\leq \pi{\scriptstyle \sqrt{\frac{N-1}{K_F}}}$ if $N=1$ and $K_F>0$.

\end{enumerate}
Then $B\times_f^\sN F$ satisfies the condition $\RCD(KN, N+1)$. 
\end{theorem}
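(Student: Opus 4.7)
The plan is to establish the two defining ingredients of the $\RCD(KN,N+1)$ condition separately: first, the synthetic Ricci--dimension bound $\CD(KN,N+1)$ via displacement convexity of the relevant Rényi entropy along $W_2$-geodesics on $X := B \times_f^\sN F$; second, infinitesimal Hilbertianity of the Cheeger energy via a fibered decomposition adapted to the warped structure. Since $B$ is one-dimensional, I would parametrize it by arclength, so that geodesics in $X$ admit an explicit Clairaut-type description: a minimizing curve $t\mapsto (\gamma(t),\sigma(t))$ satisfies an ODE coupling the speed on $B$ with the $(f\circ\gamma)$-scaled speed on $F$. The reference measure $\mnb = f^N\,\de\leb^1\otimes \m_\sF$ is natural: it yields a clean splitting of the Jacobian of any geodesic into a ``radial'' contribution on $B$ governed by $f$, and a ``spherical'' contribution on $F$.

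For the $\CD$ part, I would proceed by a direct distortion-coefficient analysis along $W_2$-geodesics $\mu_t$. Any optimal coupling on $X$ disintegrates along fibers into a coupling on $B$ and an induced coupling on $F$ rescaled by $f$, and the Jacobian factorizes accordingly. The resulting convexity estimate splits: the base contribution demands that $f^N$ satisfies the ODE $(f^N)''\leq -KN\,f^N$ along a unit-speed geodesic of $B$, which is exactly equivalent to the $fK$-concavity assumption~(1); the fiber contribution requires $\CD(K_F(N-1),N)$ on $F$, where the choice $K_F = \essup_B\{(Df)^2+Kf^2\}$ is precisely tuned to absorb the two transverse terms produced by the warping (the gradient term $(f')^2$ and the base curvature term $Kf^2$). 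The diameter bound in~(3) when $N=1$ is forced by Bonnet--Myers-type considerations for the fiber.

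To upgrade to $\RCD$, I would verify infinitesimal Hilbertianity of $(X,\de,\mnb)$ via the decomposition $\ChX(u) = \int_X\bigl(|\partial_\sB u|^2 + f^{-2}|\nabla_\sF u|^2\bigr)\,\de\mnb$, first proved on tensor products $\tp$ (noting that these are dense in $W^{1,2}(X)$ away from $f^{-1}(0)$) and then extended. Since both $B$ and $F$ are infinitesimally Hilbertian (the latter by assumption~(3)), this quadratic form is a Dirichlet form, and combined with the $\CD$ bound gives $\RCD(KN,N+1)$ after checking essential non-branching, which reduces on $X$ to the one-dimensional non-branching of $B$ together with non-branching of Wasserstein geodesics on $F$.

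The principal obstacle is the behavior at the collapse set $f^{-1}(0)$ where the fibers degenerate to a point: optimal transports and Sobolev functions can behave badly there. The role of the sub-Neumann condition~(2) together with $fK$-concavity is to force $f^{-1}(0) \subset \partial B$ with the correct first-order behavior, so that $X$ locally resembles a Euclidean $N$-cone over $F$ near such points; the author's prior cone results from \cite{ketterer2} then apply. Technically I would excise an $\epsilon$-neighborhood of $f^{-1}(0)$, establish the result on the non-degenerate part by the scheme above (using a smooth approximation of the Lipschitz function $f$ by $fK$-concave $C^\infty$ functions to justify the pointwise ODE analysis), and pass to the limit $\epsilon\to 0$ via the stability of $\RCD$ under pointed measured Gromov--Hausdorff convergence, matching the cone model at the collapse points.
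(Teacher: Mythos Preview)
Your proposal takes a fundamentally different route from the paper, and contains a genuine gap.

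The paper does \emph{not} establish $\CD(KN,N+1)$ directly via displacement convexity along $W_2$-geodesics. It uses the equivalent characterization of $\RCD$ via the Bakry--Emery condition $\BE(KN,N+1)$ (Definition~\ref{def:rcd}). The core of the argument is an O'Neill-type formula for the $\Gamma_2$ operator of the warped product (Proposition~\ref{prop:oneill}), which \emph{does} split cleanly into base and fiber contributions plus explicit cross-terms. This is combined with the spectral decomposition of $L^\sF$ (using compactness of $F$) and an essential self-adjointness criterion for Schr\"odinger operators on the one-dimensional base (Proposition~\ref{prop:essential}); the latter is where the Lichnerowicz lower bound for the spectrum of $L^\sF$ enters. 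Smoothing of $f$, fiber independence (Theorem~\ref{th:albi0}), and GH stability appear only in the final approximation steps.

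The gap in your approach is the claim that an optimal coupling on $X$ ``disintegrates along fibers into a coupling on $B$ and an induced coupling on $F$'' with a factorized Jacobian. By Theorem~\ref{th:albi0}, a geodesic $(\alpha,\beta)$ in $B\times_f F$ has $\beta$ traversed at variable speed $c_\gamma/(f\circ\alpha)^2$, so the base and fiber components are \emph{coupled} through the warping function along the entire geodesic; there is no product structure for optimal transports, and the Jacobian does not factorize into independent base and fiber pieces. This is precisely why the paper works with $\Gamma_2$: the second-order Bochner calculus decouples in the way you want transport to decouple, but transport itself does not. (A companion paper~\cite{cks} handles $\CD$ for warped products by transport methods, but with entirely different and nontrivial machinery, and is described here as ``almost completely independent''.) A secondary error: the ODE $(f^N)'' \leq -KN\,f^N$ is \emph{not} equivalent to $fK$-concavity; expanding gives $(f^N)''+KN f^N = Nf^{N-2}\bigl[(N-1)(f')^2 + ff'' + Kf^2\bigr]$, which carries an extra $(N-1)(f')^2$ term. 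The correct one-dimensional fact is that $(B,f^N\,\de r)$ is $\CD(KN,N+1)$ iff $(f^N)^{1/N}=f$ satisfies $f''+Kf\leq 0$.
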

\noindent
Here $Df=\max\{f^+, -f^-,0\}$ where $f^+$ and $f^-$ are the right and left derivative, respectively. $Df$ coincides a.e. with $|f'|$.
\smallskip

The Riemannian curvature-dimension condition $\RCD$  for a metric measure space  is defined as the combination of the curvature-dimension condition $\CD$ together with the property that  the underlying metric measure space is  infinitessimal Hilbertian, i.e. the Cheeger energy is a quadratic form. We refer to Subsection \ref{subsection:rcd} for  details. 
\smallskip

Our second theorem shows that the conditions {\it (1), (2)} and {\it (3)} in Theorem \ref{maintheorem1}  are not only sufficient but also necessary. 
\begin{theorem}\label{maintheorem2}
Let $K\in \R$ and $N\in [1, \infty)$.  Let $F$ be a geodesic metric measure space,  let $B$ be  a one-dimensional Riemannian manifold, and  let $f: B\rightarrow [0,\infty)$  be a Lipschitz function.
We assume that  $B\times_f^\sN F$ satisfies the condition $\RCD(KN, N+1)$.
Then
\begin{enumerate}
\item $f$ is $fK$-concave,
\item $\frac{\partial}{\partial n} f\geq 0$ on $\partial B\backslash f^{-1}(\{0\})$ for the outer normal vector $n$. 
\end{enumerate}
 If  $\sup_B \{ (Df)^2+ K f^2\}=K_F\geq 0$, then 
\begin{enumerate}
\item[(3)] $F$ satisfies the condition $\RCD(K_F(N-1), N)$ and  $\diam_F\leq \pi \sqrt{\frac{N-1}{K_F}}$ if $N=1$ and $K_F>0$.
\end{enumerate}
If $K_F<0$, then 
\begin{enumerate}
\item[(4)] $F$ satisfies the condition $\RCD(K_FN, N+1)$.
\end{enumerate}
\end{theorem}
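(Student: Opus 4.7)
The plan is to deduce (1)--(4) in two stages: first conditions (1) and (2) by a one-dimensional base reduction through the projection $\pi\co B\times_f^\sN F \to B$, then conditions (3) or (4) by a slicing and limiting argument at interior points of $B$.

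For (1) and (2), I use that $\pi$ pushes the reference measure $\mnb = f^N\,dt \otimes \m_\sF$ forward to $\m_\sF(F)\,f^N\,dt$ on $B$. Restricting attention to optimal transport between fiber-constant probability measures of the form $\mu_i = \rho_i(t)\,dt \otimes \m_\sF/\m_\sF(F)$, I verify that (i) since $\pi$ is $1$-Lipschitz, a $W_2$-geodesic in $B\times_f^\sN F$ between two such measures may be chosen as the lift of a $W_2$-geodesic in the one-dimensional weighted space $(B,|\cdot|,\m_\sF(F) f^N\,dt)$, and (ii) the R\'enyi entropy $S_{N+1}(\cdot\mid\mnb)$ of fiber-constant measures coincides with the one-dimensional $S_{N+1}$ with respect to $\m_\sF(F) f^N\,dt$. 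Consequently the $\CD(KN, N+1)$ inequality descends to the one-dimensional weighted space, which by the standard characterization in dimension one is equivalent to $f$ being $fK$-concave on $\Int B$; this gives (1). For (2), applying the same reduction near a boundary point $t^* \in \partial B$ with $f(t^*) > 0$ and using the standard reflection argument (extending $f$ across $t^*$ and requiring the extension to remain $fK$-concave distributionally) forces $\partial_n f(t^*) \geq 0$.

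For (3), I fix $t_0 \in \Int B$ with $f(t_0) > 0$ and absolutely continuous probability measures $\mu_0, \mu_1$ on $F$. Consider the slab lifts $\tilde\mu_i^\eps = \rho_\eps(t)\,dt \otimes \mu_i$ with $\rho_\eps$ approximating $\delta_{t_0}$, and apply the warped-product $\CD(KN, N+1)$ displacement-convexity inequality to a $W_2$-geodesic between them. In the limit $\eps \to 0$ the transport concentrates on the slice $\{t_0\} \times F$, which carries the rescaled structure $(F, f(t_0)\de_\sF, f(t_0)^N \m_\sF)$, and $S_{N+1}(\cdot\mid\mnb)$ factors (modulo a vanishing base piece) into an $S_N$-type entropy of $\mu_i$ on $F$. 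A Gauss-equation-type expansion of the warping contribution, combined with the $fK$-concavity from the first step used to absorb the $f''(t_0)$ term, leaves precisely the displacement convexity inequality $\CD((N-1)((f'(t_0))^2 + K f(t_0)^2), N)$ on $F$. Since $t_0 \in \Int B$ is arbitrary and the intrinsic CD condition on $F$ does not depend on $t_0$, taking the supremum yields the bound $K_F(N-1)$. Infinitesimal Hilbertianity of $F$ transfers from the warped product by a parallel analysis of the Cheeger energy on fiber-constant functions; the diameter bound in the degenerate case $N=1$, $K_F>0$ follows as a Bonnet--Myers consequence.

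For (4), the analogous slab-limit argument at any $t_0 \in \Int B$ yields only $\CD(KN, N+1)$ for $F$, as the dimension reduction from $N+1$ to $N$ performed in Step 3 relied on the nonnegativity of $(f'(t_0))^2 + Kf(t_0)^2$, which fails uniformly when $K_F<0$. The main technical obstacle throughout this plan is the control of $W_2$-geodesics in $B\times_f^\sN F$ between slab-supported measures: when $f'(t_0) \neq 0$, such geodesics do not remain in $\{t_0\}\times F$ but ``sag'' into neighboring slices of smaller $f$, and keeping track of how this sagging affects the R\'enyi entropy as $\eps \to 0$ requires a careful synthetic analog of the Gauss equation decomposition. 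This is where the bulk of the technical work will lie.
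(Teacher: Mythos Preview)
Your approach to (1) via projection to the weighted base is essentially the paper's disintegration argument, and your slab-limit for (4) is close in spirit to the paper's ``strip'' argument (take $Z_i=[r_i-\tfrac{1}{i},r_i+\tfrac{1}{i}]\times F$ with $f(r_i)\to\inf f$ and pass to the GH limit). For (2) the paper argues differently: it embeds $B\times_f[0,L]$ isometrically into $B\times_f F$ via a geodesic in $F$, observes that failure of the boundary condition would make $B\times_f[0,L]$ non-Alexandrov, and derives a branching geodesic, contradicting the nonbranching property of $\RCD$ spaces. Your reflection argument may be workable but is not what the paper does.

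The real issue is (3). Your plan hinges on a ``Gauss-equation-type expansion'' that would extract, from the $\CD(KN,N+1)$ inequality applied to slab-supported measures at an interior point $t_0$, the sharp fiber bound $\CD((N-1)((f'(t_0))^2+Kf(t_0)^2),N)$ on $F$. No such synthetic Gauss equation is available, and the ``sagging'' you flag is exactly why: geodesics between slab measures leave the slice, and tracking the entropy correction to the precision needed to isolate the $(f')^2+Kf^2$ term is not something one can do by a limiting argument at a generic interior point. The paper avoids this entirely by working where the geometry degenerates to a known model. If $f^{-1}(\{0\})\neq\emptyset$, it takes the tangent cone at a point $(r,x)$ with $f(r)=0$; this cone is the Euclidean $N$-cone $[0,\infty)\times_{\alpha r}^{\scriptscriptstyle N} F$ with $\alpha=|Df|(r)$, and the already-established cone characterization from \cite{ketterer2} gives $\RCD(\alpha^2(N-1),N)$ for $F$ directly. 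If $f^{-1}(\{0\})=\emptyset$ and $K_F\geq 0$, then Corollary \ref{cor:special} forces $\inf_B f=0$; one rescales along a sequence $r_i$ with $f(r_i)\to 0$ so that the rescaled warped products converge to the Cartesian product $\R\times F$, which is $\RCD(0,N+1)$, and Gigli's splitting theorem then yields $\RCD(0,N)$ for $F$. Both routes reduce (3) to established rigidity results rather than attempting a direct entropy calculation on a slice.
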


\begin{remark}
The condition {\it (2)} is equivalent to:
\smallskip\\
{\it 
$(\dagger)$ If $B^\dagger$ is the result of gluing two copies of $B$ together along the boundary component $\partial B\backslash f^{-1}(\{0\})$, and $f^\dagger: B^\dagger \rightarrow [0,\infty)$ is the tautological extension of $f$ to $B^\dagger$, then $(f^\dagger)''+ K f^\dagger\leq 0$ is satisfied on $B^\dagger$. }
\end{remark}
\begin{example}\label{expexp}
The conditon $\sup_B \{ (Df)^2+ K f^2\}\geq 0$ is necessary for {\it (3)}  as the following examples demonstrates. 
Let $B=\R$ and let $F$ be an $n$-dimensional  Riemannian manifold of constant  curvature $-1$. In particular, $F$ is Einstein with  Ricci curvature  equal to $-(n-1)$.  {\color{black} The  Riemannian product $\R\times F$ is  a warped product w.r.t. $f(r)\equiv 1$, and it satisfies the  lower bound $\ric_{\scriptscriptstyle \R\times F}\geq -(n-1)g_{\scriptscriptstyle\R\times F}=  n Kg_{\scriptscriptstyle\R\times F}$ with $K:=-\frac{n-1}{n}$. This bound cannot be improved since $\ric_{\scriptscriptstyle \R\times F}= -(n-1)g_{\scriptscriptstyle\R\times F}$ in direction of unit vectors in $0\oplus TF$.  }
The function $f\equiv 1$ satisfies $f'' +K f\leq 0$, and $(f')^2 +K f^2=-\frac{n-1}{n}<0$. But $F$ doesn't have Ricci curvature bigger than $(n-1)\left(Kf^2 + (f')^2\right)=(n-1)K=-(n-1)\frac{n-1}{n}$ since  $F$ is Einstein with $\ric_\sF= - (n-1) g_\sF$ and $-(n-1)\frac{n-1}{n}>-(n-1)$. {\color{black}  Hence the lower curvature bound in {\it (4)} is sharp. We conjecture that the dimension bound $N+1$ can be improved to $N$.
}
\smallskip\\
One also should  compare this with Theorem \ref{th:albi1} in  \cite{albi} for spaces with Alexandrov lower curvature bounds where no such restriction is needed. 
\end{example}
Combining the Theorem \ref{maintheorem1} and Theorem \ref{maintheorem2} we obtain the following characterization of synthetic Riemannian Ricci curvature bounds for $N$-warped products. 
\begin{corollary}Let $K\in \R$ and $N\in [1, \infty)$. 
Let $F$ be a compact, geodesic metric measure space,  let $B$ be  a $1$-dimensional Riemannian manifold, and  let $f: B\rightarrow [0,\infty)$  be Lipschitz continuous such that $$\essup_B \{ (f')^2+ K f^2\}=K_F\geq 0.$$ 
Then $B\times_f^\sN F$ satisfies the condition $\RCD(KN, N+1)$ if and only if
\begin{enumerate}
\item $f''+ Kf\leq 0$,  \item $\frac{\partial }{\partial n}f \geq 0$ on $\partial B\backslash f^{-1}(\{0\})$ for the outer normal vector $n$. 
\item $F$ satisfies the condition $\RCD(K_F(N-1), N)$ and  $\diam_F\leq \pi \sqrt{\frac{N-1}{K_F}}$ if $N=1$ and $K_F>0$.

\end{enumerate}
\end{corollary}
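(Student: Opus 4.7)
For the \emph{if} direction I would deduce the conclusion from Theorem \ref{maintheorem1}, provided two small identifications are established beforehand. First, for a Lipschitz function $f:B\to[0,\infty)$ on a one-dimensional Riemannian manifold, the distributional inequality $f''+Kf\leq 0$ is equivalent to the $fK$-concavity used in Theorem \ref{maintheorem1}; this is the standard one-dimensional Sturm comparison against the model solutions of $u''+Ku=0$. Second, under $f''+Kf\leq 0$ the one-sided derivatives $f^{+}$ and $f^{-}$ coincide outside a countable set, and at each corner point the two candidate values $f^{+}$ and $-f^{-}$ are each limits of $f'$ from the corresponding side; this is enough to conclude
\begin{equation*}
\sup_B\{(Df)^2+Kf^2\}=\essup_B\{(f')^2+Kf^2\}=K_F.
\end{equation*}
With these identifications in place, hypotheses (1)--(3) of the corollary are precisely (1)--(3) of Theorem \ref{maintheorem1}, and the conclusion $\RCD(KN,N+1)$ for $B\times_f^\sN F$ follows.

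For the \emph{only if} direction I would apply Theorem \ref{maintheorem2}. Items (1) and (2) of that theorem translate directly into items (1) and (2) of the corollary via the same dictionary between $fK$-concavity and $f''+Kf\leq 0$. The standing hypothesis $K_F\geq 0$ places us in case (3) of Theorem \ref{maintheorem2}, which simultaneously yields $\RCD(K_F(N-1),N)$ for $F$ and the diameter bound $\diam_F\leq\pi\sqrt{(N-1)/K_F}$ in the critical case $N=1$, $K_F>0$. Since the corollary already assumes $F$ compact and geodesic, no further regularity on $F$ needs to be verified.

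The only genuinely nontrivial checkpoint in the whole argument is the $\sup$/$\essup$ identification displayed above; once the one-sided behavior of $f'$ at the countable set of corner points of $f$ is used, this is a short verification. The remainder of the proof is pure bookkeeping between the two main theorems.
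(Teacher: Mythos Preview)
Your proposal is correct and matches the paper's approach: the corollary is stated immediately after Theorems \ref{maintheorem1} and \ref{maintheorem2} as their direct combination, with no separate proof given. Your explicit identification of the dictionary (the equivalence of $f''+Kf\le 0$ with $fK$-concavity, and of $\essup_B\{(f')^2+Kf^2\}$ with $\sup_B\{(Df)^2+Kf^2\}$) is a useful addition that the paper leaves implicit.
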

If $K_F<0$, we still have  (4) in  Theorem \ref{maintheorem2}. But this is not sharp. Especially one would expect  the dimension parameter to be $N$.
If  we  strengthen the properties of $f$, assuming that $f$ is  $Kf$-affine, we have the following result.
{
\begin{theorem}\label{lastcorollary} Let $K\in \R$ and $N\in [1, \infty)$.
Let $F$ be a compact, geodesic metric measure space,  let $B$ be  a $1$-dimensional Riemannian manifold, and   $f: B\rightarrow [0,\infty)$ satisfies $f''+ Kf =0$.
Then $B\times_f^\sN F$ satisfies the condition $\RCD(KN, N+1)$ if and only if
$F$ satisfies the condition $\RCD(K_F(N-1), N)$ where 
$K_F:=  (f')^2+ Kf^2$
and $\diam_F\leq \pi \sqrt{\frac{N-1}{K_F}}$ if $N=1$ and $K_F>0$.
\end{theorem}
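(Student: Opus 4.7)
The guiding observation is that $f''+Kf=0$ forces $(f')^2+Kf^2$ to be constant on $B$, since its derivative $2f'(f''+Kf)$ vanishes identically. Hence the definition $K_F=(f')^2+Kf^2$ agrees with the essential supremum $\essup_B\{(f')^2+Kf^2\}$ appearing in Theorems~\ref{maintheorem1} and~\ref{maintheorem2}, and is in fact attained pointwise everywhere. For the \emph{if} direction the plan is to verify the three hypotheses of Theorem~\ref{maintheorem1}: $fK$-concavity~(1) holds as an equality; hypothesis~(3) on $F$ is exactly the assumption; and the Neumann sign condition~(2) on $\partial B\setminus f^{-1}(0)$ is forced by the affine ODE together with the nonnegativity of $f$ on $B$. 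Theorem~\ref{maintheorem1} then yields $B\times_f^\sN F\in\RCD(KN,N+1)$.

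For the \emph{only if} direction, Theorem~\ref{maintheorem2} applied to $B\times_f^\sN F\in\RCD(KN,N+1)$ provides items~(1) and~(2) directly (item~(1) being tautological under the affine hypothesis), together with the dichotomy between items~(3) and~(4). When $K_F\ge 0$, item~(3) is exactly the desired $\RCD(K_F(N-1),N)$ condition on $F$ (with the relevant diameter bound in the exceptional range), and the proof is complete.

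The substantive content of the theorem is the case $K_F<0$, where Theorem~\ref{maintheorem2}(4) delivers only the much weaker bound $\RCD(KN,N+1)$ on $F$. Here $K_F<0$ combined with the affine ODE and $f\ge 0$ forces $K<0$ and $f(t)=C\cosh\bigl(\sqrt{-K}(t-t_0)\bigr)$ for some $t_0\in B$ and $C>0$, with $f'(t_0)=0$ and $Kf(t_0)^2=K_F$. The reflection $t\mapsto 2t_0-t$ is an isometry of $B$ preserving $f$ and lifts to an isometric $\Z_2$-action on $B\times_f^\sN F$ whose fixed set is the ``minimum fiber'' $\{t_0\}\times F$. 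In the smooth model this hypersurface is totally geodesic (because $f'(t_0)=0$), and the Gauss equation gives $\ric^F\ge K(N-1)f(t_0)^2 g_F=K_F(N-1)g_F$. To execute this synthetically I would disintegrate $B\times_f^\sN F$ along the $1$-Lipschitz function $t-t_0$: the $L^1$-needles are the $B$-direction curves $t\mapsto(t,x)$, each a one-dimensional $\CD(KN,N+1)$ space with density $f^\sN$, and the transverse quotient is $(F,f(t_0)\de_\sF)$ with an induced measure. A displacement-convexity comparison concentrated near the totally geodesic minimum fiber (where the vanishing of $f'$ eliminates the first-order warping contribution) then transfers the $(KN,N+1)$-displacement convexity on the warped product to $(K(N-1),N)$-displacement convexity on the quotient; the constant rescaling by $f(t_0)$ finally converts this to $\RCD(K_F(N-1),N)$ for $(F,\de_\sF)$. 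The main obstacle is making this transverse restriction rigorous in the metric-measure setting: one would either invoke a fixed-locus principle for isometric $\Z_2$-actions on $\RCD$ spaces, or sharpen the proof of Theorem~\ref{maintheorem2}(4) by exploiting the pointwise (not merely supremum) constancy of $(f')^2+Kf^2$ to eliminate the slack that yields the weaker bound in the general setting.
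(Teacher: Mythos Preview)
Your reduction of the \emph{if} direction to Theorem~\ref{maintheorem1} and of the \emph{only if} direction with $K_F\ge 0$ to Theorem~\ref{maintheorem2} is correct and is exactly how the paper organizes those cases. The substantive case is indeed $K_F<0$, which after normalization is the hyperbolic cone $K=-1$, $B=\R$, $f=\cosh$.

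Here your approach diverges from the paper and carries a genuine gap that you yourself flag. The $L^1$-needles along the $B$-coordinate carry information about the one-dimensional weighted space $(B,f^\sN\vol_\sB)$, not about $F$; extracting a sharp $\CD(K_F(N-1),N)$ bound on $F$ from displacement convexity ``concentrated near $\{t_0\}\times F$'' would require a \emph{transverse} localization principle (passing curvature bounds to a codimension-one level set) that is not available in the $\RCD$ literature. Likewise, a fixed-locus theorem for isometric $\Z_2$-actions on $\RCD$ spaces that returns the sharp curvature-dimension pair on the fixed set is not established. So neither of your two proposed routes is currently a proof.

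The paper's route is entirely different and analytic rather than synthetic. It observes that the hyperbolic case can be handled verbatim as the cone cases in \cite{ketterer2}, the three ingredients being: (i) the generalized Pythagorean identity $(f')^2+Kf^2\equiv K_F$ holds as an \emph{exact} equality (this is precisely where $fK$-affinity is used and is what fails under mere $fK$-concavity, cf.\ Example~\ref{expexp}); (ii) the $\Gamma_2$ formula of Proposition~\ref{prop:oneill} is available; and (iii) $F$ is compact, hence has discrete Laplace spectrum and supports doubling and a Poincar\'e inequality. The argument therefore runs through the Bakry--\'Emery characterization on both sides: the spectral decomposition of $L^\sC$ via the eigenspaces $E(\lambda_i)$ of $L^\sF$ together with the exact Pythagorean identity allow one to invert the $\BE(KN,N+1)$ inequality on $B\times_f^\sN F$ and read off the sharp $\BE(K_F(N-1),N)$ inequality on $F$. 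Your ``sharpen the proof of Theorem~\ref{maintheorem2}(4)'' suggestion is in spirit closer to this, but the actual mechanism is the $\Gamma_2$-calculus of Section~\ref{subsec:gamma2}--\ref{subsec:spectral}, not a refinement of the Gromov--Hausdorff strip argument used for (4).
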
}

While the author's prior work has addressed only particular cases, such as the spherical suspension (e.g. $B=[0,\pi]$ and $f(r)=\sin r$) and the Euclidean cone (e.g., $B = [0,\infty)$ and $f(r) = r$), this article generalizes these results significantly in multiple directions:
\begin{itemize}
\item We allow for general, possibly non-compact one-dimensional base spaces $B$, 
\item  We don't require any  assumption of smoothness on the warp function $f: B\rightarrow [0, \infty)$.
\item We prove the sharpness of our assumptions: the conditions on $f$ and $F$ are not merely sufficient but also necessary for the warped product to satisfy the curvature-dimension condition.
\item We adapt our framework to a nonsmooth differential calculus, closer in spirit to Gigli's nonsmooth differential calculus \cite{giglinonsmooth}, as opposed to the Dirichlet form-based framework employed in the author's earlier work. 
\end{itemize}

Our methods combine careful differential analysis with synthetic tools from metric geometry, optimal transport and the calculus of metric measure spaces. While some technical ideas parallel those in the author's  prior work \cite{ketterer2}, we emphasize a cleaner and more general formulation.
\smallskip

This article can also be viewed in comparison with 
  recent work  by Calisti, S\"amann and the author. In \cite{cks} we study warped products and   curvature-dimension conditions such as $\CD$ for metric measure spaces as well as timelike curvature-dimension conditions for measured Lorentzian length spaces such as $\TCD$, without assuming that  metric measure spaces are infinitessimal Hilbertian.
 Results, ideas and methods in \cite{cks} are almost completely independent from the present article. 

\subsubsection{Methods} The general strategy for this work is the same as in \cite{ketterer2}.
 To prove Theorem \ref{maintheorem1} we exploit the characterization of the Riemannian Curvature-Dimension in terms of  the Bakry-Emery condition (Definition \ref{def:rcd}). 
In \cite{ketterer2} we still rely in several points heavily on the smoothness of $f$, on $\partial B\subset f^{-1}(0)$,  on the differential equality $f''+Kf=0$ as well as partly on compactness of $B$. For instance, the theorem in \cite{ketterer2} that shows the Riemannian curvature-dimension for the Euclidean cone $[0, \infty)\times_r^\sN F$, circumvents  compactness of $B$ by  using a blow up argument based on Gromov-Hausdorff stability of the $\RCD$ condition. However this works only for the Euclidean cone.

In the present work we now remove any restriction on $f$.  On the one hand,  we allow  noncompact spaces $B$ by refining the methods  in \cite{ketterer2}. On the other hand, a key point is an  approximation that uses the so-called fiber independence of warped products (Theorem \ref{th:albi0}). We believe this will be useful also in different places. 
\subsubsection{Applications}
Theorem \ref{maintheorem1} is a  quite  flexible tool for  constructing $\RCD$ spaces as the following example illustrates.
\begin{example}Let $f: \mathbb S^1 \rightarrow (0, \infty)$ be smooth. For $K<0$ such that $-K$ is sufficiently large we have that $f'' \leq - Kf^2$.  If $K_F\geq K f^2$ for a constant $K_F\in \R$, then $K_F\geq \sup_B\{ (f')^2 + Kf^2\}$ (Proposition \ref{prop:albi2}). Then the $N$-warped product $B\times_f^\sN F$ satisfies the Riemannian curvature-dimension condition $\RCD(KN, N+1)$ for any compact $\RCD(K_F(N-1),N)$ space $F$.
\end{example}
A  consequence of Theorem \ref{maintheorem1} is the sharp Brunn-Minkowski inequality \cite{stugeo2}.
\begin{corollary}[Brunn-Minkowski inequality]
Let $B, f$ and $F$ be as in the previous theorem. Then for all measurable subsets $A_0, A_1\subset B\times_f^\sN F$ with $\m^\sN(A_0)\m^\sN(A_1)>0$ it holds that 
$$\m(A_t)^\frac{1}{N}\geq \tau_{KN,N+1}^{(1-t)}(\Theta) \m(A_0)^\frac{1}{N} + \tau_{KN,N+1}^{(t)}(\Theta) \m(A_1)^\frac{1}{N}$$
where $A_t$ is the set of all $t$-midpoints of geodesics which start in $A_0$ and end in $A_1$ and $\Theta$ is defined as 
\begin{center}$\Theta= \begin{cases} \inf\limits_{v\in A_0, w\in A_1} \de_{B\times_f F}(v,w) & \ \mbox{ if } K\geq 0, \\
\sup\limits_{v\in A_0, w\in A_1} \de_{B\times_f F}(v,w) & \ \mbox{ if } K<0.
\end{cases}$
\end{center}
and $\tau_{KN, N+1}^{(t)}(\Theta)$ are the distortion coefficients defined in \eqref{distortioncoef}. 
\end{corollary}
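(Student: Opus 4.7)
The plan is to deduce this from Theorem \ref{maintheorem1} together with the well-known implication ``curvature-dimension condition $\Rightarrow$ Brunn–Minkowski inequality'' from Sturm's work \cite{stugeo2}. No new ingredients beyond those already developed in the paper are required.

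The hypotheses of the corollary coincide with the assumptions of Theorem \ref{maintheorem1} (conditions (1), (2), (3)), hence the warped product $B\times_f^\sN F$ is an $\RCD(KN, N+1)$ space. As recalled in Subsection \ref{subsection:rcd}, the Riemannian curvature-dimension condition $\RCD(KN,N+1)$ contains the curvature-dimension condition $\CD(KN,N+1)$ as one of its defining components; in particular $(B\times_f^\sN F, \de_{B\times_f F}, \m^\sN)$ is a $\CD(KN,N+1)$ space in the sense of Sturm and Lott--Villani.

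Next, I would invoke the general Brunn--Minkowski inequality valid on any $\CD(K',N')$ space \cite{stugeo2}. Given $A_0,A_1$ with $\m^\sN(A_0)\m^\sN(A_1)>0$, one chooses an optimal dynamical coupling (a measure on $\OptGeo$) between the normalized restrictions of $\m^\sN$ to $A_0$ and to $A_1$; the $\CD(KN,N+1)$ inequality applied along $\m^\sN$-a.e.\ geodesic yields a pointwise estimate for the density of the $t$-midpoint measure in terms of the distortion coefficients $\tau_{KN,N+1}^{(\cdot)}$ evaluated at the length of the respective geodesic. Integrating this pointwise bound and using that the $t$-midpoint measure is supported on $A_t$ gives the displayed inequality, provided one replaces each geodesic length by a uniform bound $\Theta$ on the transport distance that is compatible with the monotonicity of $\tau_{KN,N+1}^{(\cdot)}$.

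The only subtlety is the choice of $\Theta$, which reflects the monotonicity of $\theta\mapsto \tau_{KN,N+1}^{(t)}(\theta)$: for $K\geq 0$ these coefficients are non-increasing in $\theta$, so the sharpest lower bound is obtained by taking $\Theta=\inf_{v\in A_0,w\in A_1} \de_{B\times_f F}(v,w)$, while for $K<0$ they are non-decreasing in $\theta$, and one must take $\Theta=\sup_{v\in A_0,w\in A_1}\de_{B\times_f F}(v,w)$. This matches the two cases in the statement, and completes the proof.
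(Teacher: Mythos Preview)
Your overall approach is correct and matches the paper, which simply states this corollary as a direct consequence of Theorem~\ref{maintheorem1} together with Sturm's Brunn--Minkowski inequality for $\CD(K',N')$ spaces \cite{stugeo2}; no further argument is given there.

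However, your explanation of the choice of $\Theta$ contains an error: the monotonicity of the distortion coefficients is stated backwards. For $K>0$ one has, e.g., $\sigma_\kappa^{(1/2)}(\theta)=\frac{1}{2\cos(\sqrt{\kappa}\,\theta/2)}$, which is \emph{increasing} in $\theta$; likewise $\tau_{KN,N+1}^{(t)}(\theta)$ is nondecreasing when $K\geq 0$ and nonincreasing when $K<0$. The reason one takes $\Theta=\inf \de$ for $K\geq 0$ is not to obtain the ``sharpest'' bound, but to obtain a \emph{valid} one: since the pointwise $\CD$ estimate involves $\tau_{KN,N+1}^{(\cdot)}(\de(\gamma_0,\gamma_1))$ on the right-hand side of a $\geq$ inequality, one may only replace it by something smaller, hence by $\tau_{KN,N+1}^{(\cdot)}(\inf \de)$ when the coefficients are increasing, and by $\tau_{KN,N+1}^{(\cdot)}(\sup \de)$ when they are decreasing. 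With this correction your argument goes through.
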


From Theorem \ref{maintheorem2} we also obtain refined information about  the quotient spaces that appear in rigidity theorems in \cite{cdnpsw} and in \cite{gigli_marconi}. For instance, we have the following result.
\begin{theorem} Let $N\in (1, \infty)$ and let $X$ be an $\RCD(-N, N+1)$ space. If there exist a function $u\in D_{loc}(L^\sX)$ such that $|\nabla u|=1$ $\m_\sX$-a.e. and $L^\sX u= N$, then $X$ is isomorphic to the $N$-warped product $\R\times_{\exp}^{\scriptscriptstyle{N}} Y$ where $Y$ is an $\RCD(0, N)$ space. 
\end{theorem}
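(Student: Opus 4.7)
The plan is to first derive, from $|\nabla u|=1$ and $L^\sX u = N$, the rigid Hessian identity $\nabla^2 u = g - du\otimes du$; then invoke the rigidity and quotient results of \cite{cdnpsw, gigli_marconi} to upgrade this to an honest metric-measure warped product decomposition $X\cong \R\times_{\exp}^\sN Y$; and finally apply Theorem \ref{maintheorem2}(3) to identify $Y$ as an $\RCD(0,N)$ space.

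For the first step I would use the improved Bochner inequality for $\RCD(-N,N+1)$,
$$\tfrac{1}{2} L^\sX |\nabla u|^2 \;\geq\; |\nabla^2 u|_{HS}^2 + \langle \nabla L^\sX u,\nabla u\rangle - N|\nabla u|^2.$$
Under the hypotheses the left-hand side vanishes and $\langle \nabla L^\sX u,\nabla u\rangle=0$, hence $|\nabla^2 u|_{HS}^2 \leq N$ $\m_\sX$-a.e. Since $|\nabla u|^2\equiv 1$, one has $\nabla^2 u(\nabla u,\cdot)=0$, so $\nabla^2 u$ acts on the at most $N$-dimensional orthogonal distribution with trace $L^\sX u=N$. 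A Cauchy--Schwarz argument on the eigenvalues then yields $|\nabla^2 u|_{HS}^2\geq N$, and the equality case forces all $N$ orthogonal eigenvalues to equal $1$. In Gigli's tangent-module calculus this is exactly
$$\nabla^2 u = g - du\otimes du\qquad \m_\sX\text{-a.e.}$$

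The second step turns this infinitesimal rigidity into a global structure. Since $|\nabla u|=1$, the gradient flow $\phi_t$ of $u$ is a well-defined one-parameter family of maps between the level sets $X_t:=u^{-1}(t)$, and the Hessian identity above expresses the fact that $\phi_t^* g$ acts as the identity along $\nabla u$ and as multiplication by $e^{2t}$ on the orthogonal distribution. Feeding this into the quotient/warped-splitting framework developed in \cite{cdnpsw, gigli_marconi} produces a metric-measure isomorphism $\Phi:\R\times_{\exp}^\sN Y\to X$, $\Phi(r,y)=\phi_r(y)$, where $Y:=u^{-1}(0)$ carries the induced length metric and the measure obtained by disintegrating $\m_\sX$ along the level sets of $u$.

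With the identification $X\cong\R\times_{\exp}^\sN Y$ in hand and $X$ assumed $\RCD(-N,N+1)$, Theorem \ref{maintheorem2} applies with base $B=\R$, warping $f=\exp$, and $K=-1$. A direct computation gives $K_F=\sup_\R\{(f')^2 - f^2\}=\sup_\R\{e^{2r}-e^{2r}\}=0$, so the hypothesis $K_F\geq 0$ of item (3) is met, and the conclusion is that $Y$ satisfies $\RCD(K_F(N-1),N)=\RCD(0,N)$; the diameter side condition is vacuous as $N>1$.

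The main obstacle is the second step, i.e.\ passing from the rigid Hessian identity to a genuine metric-measure warped product decomposition. In the smooth Riemannian setting this is an elementary computation using the flow of $\nabla u$ and the Lie derivative of $g$, but in the nonsmooth $\RCD$ setting it requires the full strength of the recent rigidity and quotient theory, and one has to verify carefully that the disintegrated measure on $Y$ assembles into the $N$-warped reference measure on $\R\times_{\exp}^\sN Y$ under $\Phi$.
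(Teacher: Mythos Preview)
Your proposal is correct and matches the paper's strategy: the paper states this theorem as an application, importing the warped product decomposition $X\cong\R\times_{\exp}^\sN Y$ from the rigidity theory of \cite{cdnpsw, gigli_marconi} and then applying Theorem~\ref{maintheorem2}(3) with $K=-1$, $f=\exp$, $K_F=\sup_\R\{e^{2r}-e^{2r}\}=0$ to conclude that $Y$ is $\RCD(0,N)$. The paper does not spell out your first step (the Bochner/Hessian computation) but simply invokes the cited rigidity results as a black box; your sketch there is the right smooth-case heuristic, though note that in the weighted $\RCD$ setting $L^\sX u$ need not equal $\mathrm{tr}\,\nabla^2 u$ and the orthogonal distribution need not have dimension exactly $N$, so the rigorous version of that step is precisely the content of the cited papers rather than an independent ingredient.
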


\subsubsection{Restrictions}
Theorem \ref{maintheorem1}  and Theorem \ref{lastcorollary} consider only  warped products over  one-dimensional base spaces. One reason for this is that we use  a sharp theorem by Herman Weyl on self-adjontness of Schr\"odinger operators  with Dirichlet boundary conditions on a one-dimensional base space. Another reason is that we use   Theorem \ref{th:mcpfornwp} from \cite{cks} to establish some a-priori regularity of $N$-warped products (Section \ref{sec:metricstructure}). Conjectures about constructions with a more general base are formulated in \cite{kettererwp}. 
The other major restriction in our theorem is the compactness assumption of $F$. This is because we  use the discreteness of the spectrum of the  Laplace operator of  $F$. Under this assumption we are able to  reduce the problem of essential self-adjointness of the Laplace operator on the warped prduct to Schroedinger operators on the base space. 

\smallskip
We will adress the removal of both restrictions, i.e. the dimensionality of the base space $B$ and the compactness of the fiber space $F$, in  upcoming publications.  The latter requires a finer spectral analysis of the Laplace operator on $B\times_f^\sN F$ and of its connection to  operators on the underlying spaces $B$ and $F$. 
\smallskip

\subsubsection{Outline of the article} In Section 2 we recall  several topics:  properties of $fK$-concave functions, the differential calculus for metric measure spaces and Dirichlet forms, the Riemannian curvature-dimension condition,  the class of one-dimensional weighted Riemannian manifolds satisfying a curvature-dimension condition,   Schr\"odinger operators on one-dimensional spaces  and  sharp self-adjointness criteria. 

In Section 3 we define the warped product between metric spaces and the $N$-warped product of metric measure spaces. Provided the warp function $f$ is smooth we define an energy functional for $N$-warped products that mimics the Dirichlet energy for warped products in the smooth case. Then we study the associated operator and semi group.  Finally we deduce   a priori regularity results for the semi group of a class of Schroedinger operators on $B$ equipped with a weight given by $f$. 

In Section 4 we show that under certain regularity assumptions for  the fiber $F$ the energy defined in the  previous section, is indeed the Cheeger energy of the warped product.

In section 5 we first derive a formula for the $\Gamma_2$ operator associated to the energy defined in Section 3, that mimics a formula one  computes for the Ricci tensor of smooth warped product. For this we still assume that $f$ is smooth on $B$. Then, in several approximation steps we show that the $N$-warped product satisfies the Bakry-Emery curvature-dimension condition and therefore the Riemannian curvature-dimension condition. Finally we remove the smoothness assumption on $f$ by approximation. Here we use the stability  of the Riemannian curvature-dimension condition w.r.t. Gromov-Hausdorff convergence. 

In Section 6 we show that the precise assumptions on $B$, $f$ and $F$ are  not only sufficient to infer the Riemannian curvature-dimension condition $\RCD(KN, N+1)$ for the $N$-warped product but also necessary. Finally we discuss the proof of Theorem \ref{lastcorollary}. 
\subsubsection*{Acknowledgements} Parts of this work were written when the author stayed at the Hausdorff Institute in Bonn during the Trimester Program: {\it Metric Analysis}. I want to thank the organizers of the trimester programm  and the Hausdorff Institute for providing    an excellent and stimulating research evironment. 
This work was finished during a research visit of the author at  the  Universit\'e de Haute-Alsace in Mulhouse as part of the program  "poste rough" funded by the  Institut National des Sciences Math\'ematiques of the CNRS.
I  want to thank  my local host Nicolas Juillet for many inspiring dicussions about topics connected to this work.
\section{Preliminaries}
\subsection{Semi-concave functions}
For $\kappa\in\R$ let the {\it generalized sine functions}  $\sin_\kappa: [0, \infty)\rightarrow \R$ be the solution of 
$$u''+ \kappa u=0, \ \ u(0)=0, \ u'(0)=1.$$
Then, for $t\in[0,1]$, we define the \emph{volume distortion coefficients} for $\kappa=\tfrac{K}{N}$ with $K\in\R$ and $N\geq1$ as
\begin{equation*}
    \sigma_{\kappa}^{(t)}(\theta):=\begin{cases}
        \tfrac{\sin_\kappa(t\theta)}{\sin_\kappa(\theta)}\quad&\mathrm{if}\,\kappa\theta^2\neq0\,\mathrm{and}\,\kappa\theta^2<\pi^2,\\
        t&\mathrm{if}\,\kappa\theta^2=0,\\
        +\infty&\mathrm{if}\,\kappa\theta^2>+\infty\,,
    \end{cases}
\end{equation*}
and set $\sigma_{K,N}^{(t)}(0)=t$. Define then
\begin{align}\label{distortioncoef}
    \sigma_{K,N}^{(t)}(\theta):=\sigma_{\frac{K}{N}}^{(t)}(\theta), \ \ \ \ \ \ \ 
    \tau_{K,N}^{(t)}(\theta):=(t\cdot\sigma_{K,N-1}^{(t)}(\theta)^{N-1})^{\frac{1}{N}}.
\end{align}
When $N=1$ we set $\smash{\tau_{K,1}^{(t)}(\theta)=t}$ if $K\leq0$ and $\smash{\tau_{K,1}^{(t)}(\theta)=+\infty}$ if $K>0$. 
\smallskip

Let $f: [a,b]\rightarrow [0, \infty)$ be a Lipschitz function. The following statements are equivalent.
\begin{enumerate}
\item For all $  t_0, t_1\in [a,b]$ it holds
$$f((1-s) t_0 + s t_1) \geq \sigma_{K}^{(1-s)}(t_1-t_0) f(t_0) + \sigma_{K}^{(s)}(t_1-t_0) f(t_1) \ \forall s\in [0,1], $$
\item $f''+ K f\leq 0$ in the distributional sense.
\end{enumerate}
If (1) or (2) hold, we say $f$ is $fK$-concave. 
\smallskip

We call a Lipschitz function $f: B\rightarrow [0, \infty)$ $fK$-conave if $f\circ \gamma=h$ is $hK$-concave for every distance preserving map $\gamma:[a,b]\rightarrow B$.
 If $f: B\rightarrow [0, \infty)$ is $fK$-concave, we also write $f''+Kf \leq 0$. 

 In particular $f$ is semi-concave. In this case the left and right derivative $f^-$ and $f^+$ exist in every point, and are right, respectively, left continuous.  We call  $Df= \max\{ f^+ , - f^-\}$ the Alexandrov derivative of $f$, and $Df$ coincides a.e. with the derivative $f'$.

\smallskip

A  complete $1$-dimensional Riemannian manifold $B$ is, up to isometries, either $\mathbb S^1=\R/2\pi \mathbb Z$, $\R$, $[0, \pi]$ or $[0, \infty)$. 

\begin{proposition}[{\cite[Proposition 3.1]{albi}}]\label{prop:albi2}
We consider a complete $1$-dimensional Riemannian manifold $B$, and a Lipschitz continuous function $f: B\rightarrow [0, \infty)$ such that $f''+ Kf\leq 0.$
We assume that $f$ satisfies $(\dagger)$.  
 We set $ f^{-1}(\{0\})\cap \partial B=X$. 
The following two statements are equivalent: 
\begin{enumerate}
\item $K_F\geq |Df|^2 + K f^2$ on $B$.
\smallskip
\item 
$\begin{cases} K_F\geq K  \inf f^2 & \mbox{ if } X= \emptyset\\
K_F\geq |Df|^2 $ on $f^{-1}(\{0\}) & \mbox{ if } X\neq \emptyset
\end{cases}.
$
\end{enumerate}
$(\dagger)$ If $B^\dagger$ is the result of gluing two copies of $B$ together along the boundary component $\partial B\backslash f^{-1}(\{0\})$, and $f^\dagger: B^\dagger \rightarrow [0,\infty)$ is the tautological extension of $f$ to $B^\dagger$, then $(f^\dagger)''+ K f^\dagger\leq 0$ is satisfied on $B^\dagger$.
\end{proposition}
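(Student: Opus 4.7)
The plan is to exploit the monotonicity of $g := |Df|^2+Kf^2$ along $B$. In the smooth case, for $\gamma: I \to B$ an arc-length curve with $h = f\circ\gamma$, one computes
\[
(g\circ\gamma)'(s) = 2h'(s)(h''(s)+Kh(s)),
\]
which, combined with $h''+Kh \leq 0$, shows that $g\circ\gamma$ is non-decreasing wherever $h$ is non-increasing and non-increasing wherever $h$ is non-decreasing. Consequently, $\sup_B g$ can only be attained or approached at (i) interior critical points of $f$, where $Df = 0$ so $g = Kf^2$; (ii) boundary points in $X$, where $f=0$ so $g = |Df|^2$; or (iii) boundary points in $\partial B\setminus X$. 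Case (iii) is ruled out by the sub-Neumann condition $\partial f/\partial n \geq 0$, which forces $f$ to be non-decreasing toward such boundary points and hence $g$ to be non-increasing there. The doubling $(\dagger)$ extends the semi-concavity inequality across $\partial B\setminus X$, making the monotonicity arguments applicable near those boundary points as well.

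The implication (1)$\Rightarrow$(2) is then immediate. At points of $f^{-1}(\{0\})$ we have $g = |Df|^2$, so (1) directly gives $K_F\geq |Df|^2$; when $X=\emptyset$, evaluating (1) along a trajectory of decreasing $f$ with $f\to\inf f$ and $|Df|\to 0$ yields $K_F\geq K\inf f^2$ in the limit.

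For the harder direction (2)$\Rightarrow$(1), fix $x \in B$ and follow the arc-length curve from $x$ in the direction of non-increasing $f$. By the monotonicity just described, $g(x) \leq \lim_s g(\gamma(s))$, and the trajectory must limit to case (i), case (ii), or infinity. In case (i), at an interior local minimum $x_0$, $g(x_0) = Kf(x_0)^2$: when $K\geq 0$, the distributional inequality $f''+Kf\leq 0$ together with $f\geq 0$ at an interior minimum forces $f(x_0) = 0$ (since then $f$ is locally concave), so $g(x_0)=0 \leq K_F$ using $K_F\geq 0$ from (2); when $K<0$, $Kf(x_0)^2 \leq 0 \leq K_F$ in the $X\neq\emptyset$ subcase via $K_F\geq |Df|^2\geq 0$ on $X$, while in the $X=\emptyset$ subcase $Kf(x_0)^2 \leq K\inf f^2 \leq K_F$ by $f(x_0)\geq \inf f$ and $K<0$. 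In case (ii), $g = |Df|^2 \leq K_F$ directly from (2). An infinite trajectory is handled by separately showing $|Df\circ\gamma|\to 0$ using Lipschitz bounds and the boundedness of $h$, reducing to the same bound.

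The principal technical obstacle is rigorizing the identity $(g\circ\gamma)' = 2h'(h''+Kh)$ in the semi-concave setting, where $Df$ is only of bounded variation and the naive chain-rule calculation requires care. One clean route is to approximate $f$ by smooth $Kf$-concave functions --- mollifying inside $B$ while respecting the doubling $(\dagger)$ --- to carry out the argument smoothly and then pass to the limit, using the fact that both (1) and (2) are stable under uniform convergence with simultaneous control on $Df$.
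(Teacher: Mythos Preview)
The paper does not supply its own proof of this proposition; it is quoted directly from Alexander--Bishop \cite{albi}, so there is no in-paper argument to compare against. Your approach via the monotonicity of $g=|Df|^2+Kf^2$, driven by the identity $g'=2f'(f''+Kf)$, is the standard one and is essentially correct.

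A few points where your sketch is loose. In case (i) with $K=0$, the claim that an interior minimum forces $f(x_0)=0$ is false: a concave nonnegative function can be locally constant and positive. This does not damage the conclusion, since $g(x_0)=Kf(x_0)^2=0$ anyway when $K=0$, but the stated reasoning should be adjusted. For the infinite-trajectory case, ``Lipschitz bounds and boundedness of $h$'' is not quite the argument. The clean way is: along a ray where $f$ decreases to a limit $\ell\ge 0$, the function $g$ is non-decreasing and bounded (because $f$ is Lipschitz), hence $g\to G$; then $(f')^2=g-Kf^2\to G-K\ell^2$, and if this limit were positive one would have $f'\to -\sqrt{G-K\ell^2}<0$, contradicting $f\ge 0$. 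Thus $f'\to 0$ and $G=K\ell^2\le K(\inf f)^2\le K_F$ when $K\le 0$ (and $K>0$ excludes infinite rays altogether, cf.\ Corollary~\ref{cor:special}(1)). The same computation is what justifies $|Df|\to 0$ in your (1)$\Rightarrow$(2) argument for $X=\emptyset$; without it that implication is incomplete when $K<0$. Finally, your mollification route for the non-smooth case is reasonable, but you should say why $(\dagger)$ survives the smoothing near $\partial B\setminus f^{-1}(\{0\})$ so that the boundary monotonicity (ruling out case (iii)) persists in the limit.
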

\begin{corollary} \label{cor:special}
Consider $B$ and $f$ as in the previous proposition. 
\begin{enumerate}
\item If $K>0$, we have  $f^{-1}(\{0\})\neq \emptyset$, $B\simeq [0,a]$ and  $$(f')^2+ Kf^2>0.$$
\item Assume $f^{-1}(\{0\})\neq \emptyset$. Then
$$\sup_{f^{-1}(\{0\})}|f'|_\sB^2= \sup_B\{ |f'|^2_\sB+ K f^2\}>0.$$
\item Assume $f^{-1}(\{0\})=\emptyset$. 
Then $K\leq 0$ and $$K_F\geq   K \inf_B f^2 \mbox{ if and only if  }K_F\geq \sup_B\{ |f'|^2_\sB+ K f^2\}.$$
\item Assume $f^{-1}(\{0\})=\emptyset$, $K<0$ and $\sup_B \{(f')^2 + K f^2\}\geq 0$. Then \begin{center}$\sup_B \{(f')^2 + K f^2\}=0= K\inf_B f^2$. \end{center} In particular $\inf_B f=0$. 
\item Assume $f^{-1}(\{0\})=\emptyset$ and  $K= 0$. Then $f$ is constant and $K_F\geq 0$.
\end{enumerate}
\end{corollary}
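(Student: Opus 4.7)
The plan is to establish the five items by combining Proposition~\ref{prop:albi2} with the monotonicity behavior of the function $g := (Df)^2 + Kf^2$. The key analytic observation is that, formally, $g' = 2f'(f''+Kf)$ so that $g' \leq 0$ where $f' \geq 0$ and $g' \geq 0$ where $f' \leq 0$; consequently $g$ attains its supremum at local minima of $f$. Combined with the Neumann-type condition $(\dagger)$, this will localize where $\sup_B g$ lies.

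For (1), I first rule out $f > 0$ everywhere. If so, $(\dagger)$ yields $f^\dagger > 0$ on $B^\dagger$ with $(f^\dagger)'' + K f^\dagger \leq 0$. When $B^\dagger$ is compact (i.e.\ $\mathbb{S}^1$), integration gives $K\int_{B^\dagger} f^\dagger \leq 0$, contradicting $K>0$ and $f^\dagger > 0$. When $B^\dagger$ is non-compact, on any segment of length exceeding $\pi/\sqrt{K}$ the $fK$-concavity inequality has distortion coefficient $\sigma_K^{(1/2)} = +\infty$, forcing $f^\dagger$ to vanish at the endpoints, again a contradiction. Analogous arguments exclude $B \in \{\R, \mathbb{S}^1, [0,\infty)\}$, giving $B \simeq [0,a]$. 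Positivity of $\sup_B\{(f')^2 + Kf^2\}$ follows at any point where $f > 0$, since there $Kf^2 > 0$.

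For (2), the monotonicity of $g$ implies the supremum is attained at a local minimum of $f$: a zero of $f$, an interior positive critical point, or a boundary point. The condition $(\dagger)$ excludes the last case outside $f^{-1}(\{0\})$. At an interior positive local minimum $t^*$, $g(t^*) = Kf(t^*)^2$; for $K > 0$ no such $t^*$ exists (by concavity), and for $K \leq 0$ one has $g(t^*) \leq 0 \leq \sup_{f^{-1}(\{0\})}(Df)^2$. Hence $\sup_B g = \sup_{f^{-1}(\{0\})}(Df)^2$, with positivity coming from the fact that for $fK$-concave $f \not\equiv 0$ the boundary of $\{f>0\}$ contains a point with nonzero one-sided derivative. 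Item (3) is then immediate: $K \leq 0$ is the contrapositive of (1), and the stated equivalence is the case $X = \emptyset$ of Proposition~\ref{prop:albi2}. Item (4) follows from (3), since $K\inf f^2 = \sup_B g \geq 0$ combined with $K < 0$ forces $\inf_B f = 0$. For (5), $K = 0$ reduces $fK$-concavity to ordinary concavity; a non-negative concave function on $\R$ or $\mathbb{S}^1$ is constant, while on $[0,\infty)$ or $[0,a]$ the Neumann-type condition at $\partial B \subset \{f > 0\}$ combined with $f'$ being non-increasing forces $f' \equiv 0$.

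The main obstacle I anticipate is part (1), specifically the careful case analysis needed to exclude non-compact or circular $B$ and to rule out pathological vanishing intervals of $f$ that are compatible with $fK$-concavity but degenerate for the warped-product setup. Correctly interpreting the distributional inequalities on the glued space $B^\dagger$ and deploying $(\dagger)$ in the semi-concave framework are the main technical subtleties.
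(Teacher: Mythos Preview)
The paper states Corollary~\ref{cor:special} without proof, treating it as an immediate consequence of Proposition~\ref{prop:albi2}. Your proposal is correct and is exactly the natural way to fill in the details: the monotonicity observation $g'=2f'(f''+Kf)$ for $g=(Df)^2+Kf^2$ is the analytic core of that proposition, and your case analysis via $(\dagger)$ is how one reads items (1)--(5) off from the dichotomy $X=\emptyset$ versus $X\neq\emptyset$ there.
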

\subsection{Differential calculus on metric measure spaces}

A metric measure space $X$ is a triple $(X,\de_\sX, \m_\sX)$ where $(X, \de_\sX)$ is a complete, separable metric space and $\m_\sX$ is a locally finite Borel measure on $X$ with full support, i.e. $\m_\sX(B_r(x))\in (0, \infty)$ for every $x\in X$ and $r>0$ sufficiently small.  We also write mm space when we mean metric measure space.  

Let $\gamma:[a,b]\rightarrow X$ be a continuous map. We call $\gamma$ a path in $X$. The length of $\gamma$ is defined via
$$
\sup \sum_{i=1}^N \de_\sX(\gamma(t_{i-1}), \gamma(t_i))=: L^{\sX}(\gamma)$$
where the supremum is w.r.t. to  every partitions $a=t_0\leq t_1 \leq   \dots \leq t_N=b$ of $[a,b]$.

A metric space $(X, \de_\sX)$ is called  intrinsic (or a length space) if  for every pair of points $x,y\in X$ it holds $\de_\sX(x,y)= \inf L^{\sX}(\gamma)$ where the infimum is w.r.t. all rectifiable curves
whose endpoints are $x$ and $y$.  
Assuming $x,y\in X$ admit a rectifiable curve $
\gamma$ connecting them such that $L^\sX(\gamma)=\de_\sX(x,y)$, then the curve $\gamma$ is called a  minimal geodesic or  just  geodesic. If every pair $x,y\in X$ admits a minimal geodesic connecting them,  we call $X$ strictly intrinsic or a geodesic space. 

\subsubsection{Cheeger energy}
We will denote by $\lip(X)$ space of Lipschitz functions and by $\lip_b(X)$ the space of bounded Lipschitz functions on $X$. For $f\in \lip(X)$ we denote the local slope by 
$$\lip f(x):= \limsup_{y\rightarrow x} \frac{|f(x)- f(y)|}{\de_{\sX}(x,y)}.$$

We denote with $L^p(\m_\sX)$ the $L^p$ spaces.
The Cheeger energy $\Ch^\sX: L^2(\m_\sX)\rightarrow [0,\infty]$ is the convex and lower semicontinuous functional defined through
$$\liminf_{ \lip(X)\cap L^2(\m_\sX)\ni f_n\overset{L^2}{\rightarrow} f} \int_X(\lip f_n)^2 \de\m_\sX=:\Ch^\sX(f)$$
The finiteness domain of $\Ch$ equipped with the norm $\left\| f\right\|_{W^{1,2}}^2= \left\| f \right\|_{L^2}^2+ \Ch(f)$ we call $W^{1,2}(X)$.

 It is possible to identify  a function $|\nabla f|= |\nabla f|_\sX\in L^2(\m_\sX)$ such that 
$$\Ch^\sX(f)= \int |\nabla f|^2 \de\m_\sX, \ \ \forall f\in W^{1,2}(X). $$

Consider  $f\in L^2(\m_\sX)$.  A function $G\geq 0$ in $L^2(\m_\sX)$ is called  a weak upper gradient of $f$ if 
\begin{align*}\int |f(e_0(\gamma))- f(e_1(\gamma))| \de \Pi \leq \int  \int_0^1 G(\gamma_t)|\dot \gamma_t| \de t  \de \Pi(\gamma)\end{align*}
for every test plan $\Pi$ on $X$. A test plan is a probability measure $\Pi \in \mathcal P(C([0,1], X))$ such that 
\smallskip
\begin{itemize}
\item There exists $C>0$ such that $(e_t)_\sharp\Pi = \mu_t\leq C\m_{\sX}$ for every $t\in [0,1]$, 
\item It holds $\int_0^1 \int |\dot \gamma(t)|^2 \de \Pi(\gamma) \de t< \infty$. 
\end{itemize}
\smallskip
Then $|\nabla f|$ is {\it the minimal} weak upper gradient in the following sense: if $G$ is a weak upper gradient, then $|\nabla f|\leq G$ $\m_\sX$-a.e.  
\medskip
\begin{remark}\hspace{0mm}
\begin{itemize}
\item
For a Lipschitz function $u$ one has that $\lip u$ is a weak upper gradient, and hence $|\nabla u|\leq \lip u$ but no equality in general. 
\item
A Borel function $g: X\rightarrow [0, \infty)$ is an upper gradient of a continuous function $u: X\rightarrow \R$ if 
$$ |f(e_0(\gamma))- f(e_1(\gamma))| \leq   \int_0^1 G(\gamma_t)|\dot \gamma_t| \de t  $$
holds  for any absolutely continuous curve $\gamma:[0, 1]\rightarrow X$.  An upper gradient $g$ for a continuous function $u$ is also a weak upper gradient.
\end{itemize}
\end{remark}
\begin{lemma}\label{lem:stabilitywug}
 If $u_n\in W^{1,2}(X)\!\rightarrow\!u\in L^2(\m_\sX)$ p.w. a.e. and $|\nabla u_n|$ converges $L^2$-weakly  to $g\in L^2(\m_\sX)$, then $u\in W^{1,2}(X)$ s.t. $|\nabla u|\leq g$ $\m_\sX$-a.e. 
\end{lemma}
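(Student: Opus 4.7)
The plan is to first upgrade the weak $L^2$-convergence of the gradients to strong $L^2$-convergence of suitable convex combinations via Mazur's lemma, and then pass to the limit in the test-plan definition of weak upper gradient.

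By Mazur's lemma, since $|\nabla u_n|$ converges weakly in $L^2(\m_\sX)$ to $g$, there exist coefficients $\alpha_{k,n}\geq 0$ with $\sum_{n=k}^{N(k)}\alpha_{k,n}=1$ such that
\begin{equation*}
G_k:=\sum_{n=k}^{N(k)}\alpha_{k,n}|\nabla u_n|\longrightarrow g\quad\text{strongly in }L^2(\m_\sX).
\end{equation*}
Set $v_k:=\sum_{n=k}^{N(k)}\alpha_{k,n}u_n\in W^{1,2}(X)$. The sublinearity of the minimal weak upper gradient (a direct consequence of the test-plan definition together with the triangle inequality) gives $|\nabla v_k|\leq G_k$ $\m_\sX$-a.e. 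Since each $v_k$ is a convex combination of $\{u_n\}_{n\geq k}$, the trivial bound $|v_k(x)-u(x)|\leq \max_{n\geq k}|u_n(x)-u(x)|$ shows that $v_k\to u$ pointwise $\m_\sX$-a.e.

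Fix now any test plan $\Pi$, and apply the weak upper gradient inequality to the pair $(v_k,G_k)$. Fubini recasts the right-hand side as an integral against the measure $\mu_\Pi:=\int_0^1 (e_t)_\sharp(|\dot\gamma_t|\Pi)\,dt$ on $X$, giving
\begin{equation*}
\int|v_k(\gamma_0)-v_k(\gamma_1)|\,d\Pi(\gamma)\leq \int_X G_k\,d\mu_\Pi.
\end{equation*}
For any bounded Borel $\varphi\geq 0$, Cauchy--Schwarz together with $(e_t)_\sharp\Pi\leq C\m_\sX$ and $E:=\int\!\!\int_0^1|\dot\gamma_t|^2\,dt\,d\Pi<\infty$ yields $\int\varphi\,d\mu_\Pi\leq \sqrt{CE}\,\|\varphi\|_{L^2(\m_\sX)}$, so by duality $\mu_\Pi\ll\m_\sX$ with density in $L^2(\m_\sX)$. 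Hence strong $L^2$-convergence $G_k\to g$ delivers $\int G_k\,d\mu_\Pi\to\int g\,d\mu_\Pi=\int\!\!\int_0^1 g(\gamma_t)|\dot\gamma_t|\,dt\,d\Pi$. On the left, $(e_i)_\sharp\Pi\leq C\m_\sX$ combined with $v_k\to u$ $\m_\sX$-a.e.\ forces $v_k\circ e_i\to u\circ e_i$ $\Pi$-a.e.\ for $i=0,1$, so Fatou's lemma gives
\begin{equation*}
\int|u(\gamma_0)-u(\gamma_1)|\,d\Pi\leq\liminf_k\int|v_k(\gamma_0)-v_k(\gamma_1)|\,d\Pi\leq \int\!\!\int_0^1 g(\gamma_t)|\dot\gamma_t|\,dt\,d\Pi.
\end{equation*}
Thus $g$ is a weak upper gradient of $u$, and since $g\in L^2(\m_\sX)$ we conclude $u\in W^{1,2}(X)$ with $|\nabla u|\leq g$ $\m_\sX$-a.e.\ by minimality of $|\nabla u|$.

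The main obstacle is verifying that $\mu_\Pi$ has an $L^2$-density with respect to $\m_\sX$, which is precisely the condition needed to convert Mazur's strong convergence $G_k\to g$ into convergence of the test-plan integrals; this is the only place where both defining properties of a test plan (bounded compression and finite kinetic energy) are used simultaneously. The remaining ingredients---sublinearity of the minimal weak upper gradient, stability of a.e.\ convergence under convex combinations of tails, and Fatou's lemma---are routine.
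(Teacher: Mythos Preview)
The paper states this lemma in the preliminaries without proof, treating it as a known stability result for minimal weak upper gradients (it is essentially the content of, e.g., Ambrosio--Gigli--Savar\'e). Your argument is correct and is precisely the standard route: Mazur to upgrade weak to strong convergence, sublinearity of $|\nabla(\cdot)|$ to control the convex combinations, and the observation that $\mu_\Pi$ has $L^2(\m_\sX)$-density so that strong $L^2$-convergence of $G_k$ passes through the curve integral; Fatou handles the left-hand side.

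Two cosmetic remarks. First, your bound $|v_k(x)-u(x)|\leq \max_{n\geq k}|u_n(x)-u(x)|$ should really be $\sup_{k\leq n\leq N(k)}|u_n(x)-u(x)|$, but either way the conclusion $v_k\to u$ a.e.\ is immediate from $u_n\to u$ a.e. Second, the final step ``$g$ is a weak upper gradient in $L^2$, hence $u\in W^{1,2}(X)$'' silently uses the identification of the relaxation-based $W^{1,2}$ with the test-plan-based Sobolev space; this is of course standard and is implicit in the paper's discussion preceding the lemma.
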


A metric measure space $X$ satisfies the Sobolev-to-Lipschitz property if any $f\in W^{1,2}(X)\cap L^\infty(\m_\sX)$ with $|\nabla f|\leq 1$ $\m_\sX$-a.e. has a representative $\tilde f\in \lip_b(X)$ with $\lip(\tilde f)\leq 1$. 

\subsubsection{Laplace operator}
\begin{definition}[\cite{giglistructure}]
Any mm space $X$ such that $\Ch^\sX$ is a quadratic form is said to be infinitesimally Hilbertian. 
\end{definition} Under this assumption there exists a symmetric bilinear form $$(f,g)\in W^{1,2}(X)\times W^{1,2}(X)\mapsto \langle \nabla f, \nabla g\rangle\in L^1(\m_\sX).$$

The Laplace operator $L^\sX: D(L^\sX)\rightarrow L^2(\m_\sX)$ is defined as follows. We say $f\in W^{1,2}(X)$ is in the domain $D(L^\sX)=D_{L^2}(L^\sX)$ of $L^\sX$ if there exists $L^\sX f\in L^2(\m_\sX)$ such that 
$$\int \langle \nabla f, \nabla \phi\rangle \de \m_\sX= - \int L^\sX f \phi \de \m_\sX, \ \ \forall \phi\in W^{1,2}(X).$$
Since $X$ is infinitesimal Hilbertian,  it holds that $L^\sX$ is linear. 

The heat flow $P_t^\sX$ is  the $L^2(\m_\sX)$ gradient flow of $\Ch^\sX$. 
In the case of an infinitesimal Hilbertian mm space $X$  the heat flow $P_t^\sX$ is a linear, continuous and self-adjoint contraction semigroup  characterized by saying that for any $u\in L^2(\m_\sX)$ the curve $t\mapsto P_t^\sX u\in L^2(\m_\sX)$ is locally absolutely continuous in $(0, \infty)$ and satisfies
$$\frac{d}{dt} P^\sX_t u= L^\sX P_t^\sX u \  \mbox{ for } \mathcal L^1\mbox{-a.e. } t\in (0, \infty), \\  \lim_{t\downarrow 0}P_t u= u \mbox{ in } L^2(\m_\sX).$$
The semigroup $P_t^\sX$ has a unique $L^p$ continuous extension from $L^2\cap L^p$ to $L^p$ for any $p\in [1, \infty)$, and by duality a weak$^*$-continuous extension to $L^\infty(\m_\sX)$. 

\subsubsection{Doubling property}
We say that a metric measure space $X$ satisfies a local  doubling property if for every bounded subset $Y$ in $X$ there exists a constant $C_Y>0$ such that for all $x\in X$ and $0<r<\diam(X,\de_\sX)$ with $B_r(x)\subset Y$ we have 
$$\m_\sX(B_{2r}(x))\leq C_Y \m_\sX(B_r(x)).$$
If one choose $Y=X$, then we say $X$ satisfies a global doubling property. 
\subsubsection{Local Poincar\'e inequality}
We say $X$ supports a weak local $(q,p)$-Poincar\'e inequality with $1\leq p\leq q<\infty$ if for every compact subset $Y$ there exists constants $C>0$ and $\lambda\geq 1$ such that for every Lipschitz function $u$, any point $x\in X$ and $r>0$ with $B_{\lambda r}(x)\subset Y$, it holds
\begin{align}\label{ineq:pi}\left( \int_{B_r(x)} |u - {\textstyle \int_{B_r(x)} u\de\m_\sX}|^q \de \m_\sX\right)^{\frac{1}{q}}\leq C r\left( \int_{B_{\lambda r}(x)} \lip u^p \de \m_\sX\right)^{\frac{1}{p}}.
\end{align}
If $\lambda=1$, we say $X$ supports a strong $(p,q)$-Poincar\'e inequality. 
\begin{remark}\hspace{0mm}
\begin{itemize}
\item Under a doubling property a weak local Poincar\'e inequality implies a strong one. 
\item By H\"older's inequality a weak local $(1,p)$-Poincar\'e inequality implies a weak local $(1,p')$-Poincar\'e inequality for $p'\geq p$.  
\item If a metric measure space satisﬁes a local doubling property, Hajlasz and Koskela proved in \cite{koskela} that
a weak local $(1,p)$-Poincaré inequality also implies a $(q,p)$-Poincaré inequality for $q< \frac{pN}{N-p}$ for $N$ such that  the doubling
constant satisﬁes $C\leq 2^N$. 
\end{itemize}
\end{remark}
	\begin{theorem}[\cite{cheegerlipschitz}]\label{th:cheegerlipschitz}
			If $X$ is a complete, locally compact and intrinsic metric measure space that satisfies a doubling property and supports a $(1,2)$-Poincar\'e inequality, then for every function $u:X\rightarrow \R$ that is locally Lipschitz, it holds $\lip u=|\nabla u|$. 
		\end{theorem}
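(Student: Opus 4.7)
The plan is to prove the two inequalities $|\nabla u| \leq \lip u$ and $\lip u \leq |\nabla u|$ separately $\m_\sX$-a.e., the first being immediate from the definition of $\Ch^\sX$ and the second carrying all the analytic content that uses doubling together with the Poincar\'e inequality. For the easy direction, I would exhaust $X$ by bounded open sets $Y_k \nearrow X$ and cut off and truncate the locally Lipschitz $u$ on each $Y_k$ to produce $u_k \in \lip_b(X) \cap L^2(\m_\sX)$ that agrees with $u$ on $Y_k$ and has $\lip u_k = \lip u$ there. Using $u_k$ itself as the constant approximating sequence in the definition of $\Ch^\sX$ gives $|\nabla u_k|\leq \lip u_k$ $\m_\sX$-a.e.; localizing to $Y_k$ via Lemma \ref{lem:stabilitywug} and taking $k\to\infty$ yields $|\nabla u|\leq \lip u$ $\m_\sX$-a.e.\ on $X$.

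For the reverse direction I would employ the classical Hajlasz-Koskela chaining argument: combining the local doubling property with the weak $(1,2)$-Poincar\'e inequality produces a pointwise representative of $u$ satisfying
$$|u(x)-u(y)| \leq C\,\de_\sX(x,y)\Bigl(\mathcal M_{\lambda\de_\sX(x,y)}(|\nabla u|^2)(x)^{1/2} + \mathcal M_{\lambda\de_\sX(x,y)}(|\nabla u|^2)(y)^{1/2}\Bigr)$$
for $\m_\sX$-a.e.\ $x,y$ in a bounded neighborhood, where $\mathcal M_r$ is the restricted centered maximal function. Doubling guarantees a Lebesgue differentiation theorem, so at $\m_\sX$-a.e.\ point $x_0$ the averages $\mathcal M_r(|\nabla u|^2)(x_0)$ converge to $|\nabla u|^2(x_0)$ as $r\to 0$. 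Letting $y\to x_0$ in the chaining estimate and using continuity of $u$ produces the one-sided bound $\lip u(x_0)\leq C|\nabla u|(x_0)$ with the PI constant $C$.

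The main obstacle is sharpening from $\lip u \leq C|\nabla u|$ to the equality $\lip u = |\nabla u|$; this is the heart of Cheeger's original argument. The idea is a blow-up analysis at a generic Lebesgue point $x_0$: one rescales $(X,\de_\sX,\m_\sX,x_0)$ so that in the pointed limit $|\nabla u|^2$ becomes the constant $|\nabla u|^2(x_0)$ and the best PI constant effective at small scales degenerates to its sharp asymptotic value $1$. Rerunning the chaining argument on the rescaled picture then yields $\lip u(x_0)\leq |\nabla u|(x_0)$ in the limit, and transporting back gives the inequality at $x_0$. The delicate step is controlling the effective PI constants uniformly in the rescaling and identifying the generic-point set on which the rescaling succeeds, which in turn rests on the measure-differentiation machinery developed specifically for PI-doubling metric measure spaces. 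Combined with the easy direction, this yields $\lip u = |\nabla u|$ $\m_\sX$-a.e.
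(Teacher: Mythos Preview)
The paper does not supply its own proof of this theorem; it is simply quoted from Cheeger's work \cite{cheegerlipschitz} and used as a black box. So there is no paper proof to compare your proposal against.

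As for your sketch on its own merits: the easy direction $|\nabla u|\le \lip u$ is fine, and the overall architecture of the hard direction (Hajlasz--Koskela chaining to get $\lip u\le C|\nabla u|$, followed by a blow-up at a generic Lebesgue point to remove the constant) is indeed the shape of Cheeger's argument. However, your description of \emph{why} the constant improves to $1$ under blow-up is not correct. The Poincar\'e constant is scale-invariant and does not ``degenerate to its sharp asymptotic value $1$'' under rescaling; rerunning the chaining estimate on the tangent space still produces the same $C$. What actually happens in Cheeger's proof is that at $\m_\sX$-a.e.\ point the blow-up of $u$ converges to a function whose minimal weak upper gradient is the \emph{constant} $|\nabla u|(x_0)$, and for such ``asymptotically generalized linear'' functions Cheeger proves by a separate argument (existence of curves along which the function has slope equal to this constant, using the length-space structure and the Poincar\'e inequality in a more refined way) that the local Lipschitz constant equals this value. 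That step --- passing from constant minimal upper gradient to $\lip=|\nabla u|$ on the tangent space --- is the genuine analytic core, and it is not captured by the chaining estimate alone.
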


\subsection{Curvature-dimension conditions}\label{subsection:rcd}
We will introduce the Riemannian curvature-dimension condition via its characterization in terms of the Bakry-Emery condition for the associated Cheeger energy.
\smallskip

The {\it carr\'e-du-champ operator} (or {\it $\Gamma_2$-operator}) associated to $\Ch^\sX$ is a multilinear form defined via 
$$\Gamma^\sX_2(u,v;\phi)= \int \langle \nabla u, \nabla v \rangle L^\sX \phi \de\m_\sX - \int \langle \nabla u, \nabla L^\sX v\rangle \phi\de\m_\sX$$
for $u, v\in D_{W^{1,2}}(L^\sX),  \phi \in D_{L^\infty}(L^\sX)\cap L^\infty(\m_\sX).$ We set $\Gamma^\sX(u,u)=:\Gamma^\sX(u)$. 
\begin{definition}\label{def:rcd}
A mm space $X$ satisfies the Riemannian curvature-dimension condition $\RCD(K,N)$ for $K\in \R$ and $N\in [1, \infty)$ if 
\begin{enumerate}
\item $\m_\sX(B_r(x_o))\leq C e^{cr^2}$ for some $c, C>0$ and $x_o\in X$, 
\item $X$ is infinitesimal Hilbertian, 
\item $X$ satisfies the Sobolev-to-Lipschitz property, 
\item The Bakry-Emery condition $\BE(K,N)$ holds:
$$\Gamma^\sX_2(u; \phi) \geq K \int |\nabla u|^2 \phi \de\m_\sX + \frac{1}{N} \int (L^\sX u)^2 \phi \de\m_\sX$$
for any $u\in D_{W^{1,2}}(L^\sX)$ and any $\phi\in D_{L^\infty}(L^\sX)\cap L^\infty(\m_\sX)$ with $\phi\geq 0$. 
\end{enumerate}
\end{definition}
\begin{remark}
Equivalently, a metric measure space $X$ satisfies the condition $\RCD(K,N)$ if and only if $X$ satisfies the curvature-dimension $\CD(K,N)$  in the sense of Lott-Sturm-Villani \cite{stugeo1, stugeo2, lottvillani} and is infinitesimally Hilbertian \cite{giglistructure}.  This was the  definition  proposed in \cite{giglistructure}. The condition $\RCD(K, \infty)$ was introduced in \cite{agsriemannian}.   The main contributions for the equivalence with the properties in the Definition \ref{def:rcd} are \cite{giglikuwadaohta}, \cite{agsbakryemery}, \cite{erbarkuwadasturm}, \cite{amsnonlinear} and \cite{cavmil}. We  refer to \cite{gigli_gromov} for further informations on the historical developments, in particular  the Bibliographical Notes of Section 4.4.
\end{remark}
We collect some properties of $\RCD$ spaces that we need later.
\begin{remark}
The condition $\RCD(K,N)$ for $K\in\R $ and $N\geq 1$ is stable w.r.t.  pointed measured Gromov-Hausdorff convergence. Moreover, the class of  pointed $\RCD(K,N)$ spaces $(X,o)$ such that $\m_X(B_1(o))\leq V$ is compact w.r.t. pointed measured GH convergence \cite{stugeo1, stugeo2, lottvillani, gmsstability, erbarkuwadasturm}.
\end{remark}

\begin{remark}The condition $\CD(K,N)$ (and hence the condition $\RCD(K,N)$) implies the measure contraction property $\MCP(K,N)$ \cite{rajala2}, i.e. for a measurable subset $A\subset X$ (such that $A\subset B_{\scriptscriptstyle\pi\sqrt{(N-1)/K}}(x)$ if $K>0$), there exists an $L^2$-Wasserstein geodesic $\Pi$ such that $(e_0)_\# \Pi= \delta_x$ and $(e_1)_\# \Pi= {\m_\sX(A)}^{-1} \m|_A$ and 
$$\m\geq (e_t)_\#\left( \tau^{(1-t)}_{K,N}(L(\gamma)) \m(A) \Pi\right).$$
\end{remark}
\begin{enumerate}
\item This version of the measure contraction property was introduced by Ohta in \cite{ohtmea} (see also \cite{stugeo2} and \cite{kush_bg}).
\item It is known that a metric measure space $X$ that satisfies the condition $\RCD(K,N)$ or the condition $\MCP(K,N)$ has a local doubling property. If $K\geq 0$, $N=1$ or if $X$ is compact, a global doubling property holds such that the doubling constant $C_X\leq 2^N$. 
\item Moreover, a metric measure space $X$ that satisfies the condition $\RCD(K,N)$ or that is nonbranching and satisfies  $\MCP(K,N)$ supports a weak local $(1,1)$-Poincar\'e inequality. 
\end{enumerate}

\subsubsection{Dirichlet forms}
Given a locally compact metric measure space $X$ we recall that a symmetric, quadratic form $\mathcal E: L^2(\m_\sX)\rightarrow [0, \infty]$ that is $L^2(\m_\sX)$-lower semicontinuous and satisfies the Markov property, is called a Dirichlet form. A Dirichlet form is closed if $D(\mathcal E)=\{ u\in L^2(\m_\sX): \mathcal E(u)< \infty\}$ is a Hilbert space.  A Dirichlet form is called regular if it possesses a core $\mathcal C$, i.e. a subset that is dense in $D(\mathcal E)$ w.r.t. the energy norm and dense in $C_0(X)$ w.r.t. uniform convergence. We say $\mathcal E$ is strongly local if $\mathcal E(u,v)$ whenever $u,v\in D(\mathcal E)$ and $(u+a)v=0$ $\m_\sX$-a.e. for all $a\in \R$. For any such form $\mathcal E$ we have that for any $u\in D(\mathcal E)$ there exists a measure $\de\Gamma(u)$, the energy measure, such that $\mathcal E(u)= \int d\Gamma(u)$. If $\de \Gamma(u)= \Gamma(u)\de\m_\sX$ for any $u\in D(\mathcal E)$, where $\sqrt{\Gamma(u)}\in L^2(\m_\sX)$ we say $\mathcal E$ admits a $\Gamma$-operator.  In this case one can define $D_{loc}(\mathcal E)$ as follows. $u\in D_{loc}(\mathcal E)$ if $u\in L^2_{loc}(\m_\sX)$ and there exists $K$ compact such that there exist $v\in D(\mathcal E)$ and $u=v$ $\m_\sX$-a.e. in $K$. The energy measure defines an intrinsic distance 
$$\de_{\mathcal E}(x,y)= \sup\{u(x)-u(y): u\in D_{loc}(\mathcal E)\cap C(X), \de \Gamma(u)\leq \de\m_\sX\mbox{ on } X\}.$$
The distance $\de_{\mathcal E}$ may be degenerated in the sense that $\de_{\mathcal E}(x,y)=\infty$ and $\de_{\mathcal E}(x,y)=0$ for $x,y\in X$ is possible. 
The Dirichlet form $\mathcal E$ is called strongly regular if it is regular and the topology induced by $\de_\mathcal{E}$ coincides with the topology on $X$. In particular $\de_{\mathcal E}$ is nondegenerated. 
\begin{remark}
 The Cheeger energy $\Ch^\sX$ of an $\RCD$ space $X$ is a strongly local and strongly regular Dirichlet form that admits a $\Gamma$-operator. In particular, a core is given by compactly supported Lipschitz function and  the intrinsic distance $\de_{\Ch^\sX}$ associated to $\Ch^\sX$ coincides with the distance $\de_\sX$ \cite{agsriemannian}.
\end{remark}
\begin{remark}\label{rem:feller}
Given a Dirichlet form on $X$ there is a self-adjoint operator associated to it, as well as semi-group $P_t$, that coincide with the Laplace operator and the heat flow in the case of the Cheeger energy.  A Dirichlet form satisfies the  local doubling property if the  space $(X, \de_{\mathcal E}, \m_\sX)$ satisfies a local doubling property. Similarly, a Dirichlet form $\mathcal E$ supports a weak local $(2,2)$-Poincar\'e inequality if \eqref{ineq:pi} holds for all $u\in D(\mathcal E)$ with $\de\Gamma(u)$ in place of $\lip(u)^2 \de \m_\sX$. If in addition closed balls w.r.t. $\de_{\mathcal E}$ are compact, one can infer the following properties for $P_t$ \cite{sturmdirichlet2, sturmdirichlet3}.
\begin{enumerate}
\item $P_t$  is a Feller semi-group. 
\item $P_t$ is $L^2\rightarrow L^\infty$-ultracontractive. 
\item If $\m(X)< \infty$, then harmonic functions are constant. 
\end{enumerate}
\end{remark}
\noindent
Koskela and Zhou proved the following Theorem \cite{koskelazhou}.
\begin{theorem}\label{th:kz}
Let $\mathcal E$ be a strongly local, strongly regular, symmetric Dirichlet form on $L^2(\m_\sX)$. Assume $X$ equipped with $\de_{\mathcal E}$ and $\m_\sX$ satisfies a doubling property. Then $\lip(X)\subset D_{loc}(\mathcal E)$ and every $u\in \lip(X)$ admits a $\Gamma$-operator such that $\Gamma(u)\leq \lip(u)^2$ $\m_\sX$-a.e.
\end{theorem}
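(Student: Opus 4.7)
The plan is to establish both $\lip(X)\subset D_{loc}(\mathcal E)$ and the sharp pointwise bound $\Gamma(u)\leq \lip(u)^2$ simultaneously, proceeding in three steps. First, use strong regularity together with doubling to produce a rich class of Lipschitz cutoff functions lying in $D(\mathcal E)$. Second, invoke strong locality to reduce the general case to bounded Lipschitz functions of compact support. Third, establish the sharp pointwise bound via an approximation and blow-up argument employing the Vitali covering lemma and Lebesgue differentiation, both available under the doubling hypothesis.

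For step one, strong regularity gives that $\de_{\mathcal E}$ metrizes the topology of $X$, so truncated intrinsic distance functions $y\mapsto \de_{\mathcal E}(y,x_0)\wedge R$ are bounded and continuous. From the very definition of $\de_{\mathcal E}$ as a supremum over functions with $\de\Gamma\leq \de\m_\sX$, these distance-type functions can be realised as limits of functions in the regular core and hence lie in $D_{loc}(\mathcal E)\cap C(X)$ with $\de\Gamma\leq \de\m_\sX$. Lipschitz compositions and products of such functions remain in the local domain, with energy controlled via the chain and Leibniz rules for strongly local forms. In particular, for any relatively compact open $U\subset X$ one may produce $\eta\in D(\mathcal E)\cap\lip(X)$ with compact support and equal to $1$ on $U$. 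For step two, given $u\in\lip(X)$, the product $\eta u$ is bounded Lipschitz with compact support; by strong locality, showing $\eta u\in D(\mathcal E)$ with $\Gamma(\eta u)\leq \lip(\eta u)^2\,\m_\sX$ yields $u\in D_{loc}(\mathcal E)$ with the same pointwise bound on $U$, since $\Gamma(\eta u)=\Gamma(u)$ on $U$.

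Step three is the heart of the argument. Fix a bounded Lipschitz $v$ of compact support. Given $\epsilon>0$, apply Vitali's covering lemma (valid under doubling) to cover $\supp v$ by disjoint small balls $B_{r_i/5}(x_i)$ so that the enlargements $B_{r_i}(x_i)$ still cover $\supp v$, and chosen fine enough that $\lip(v)\leq \lip(v)(x_i)+\epsilon$ throughout $B_{r_i}(x_i)$. On each such ball, approximate $v$ by a combination of $\de_{\mathcal E}$-distance functions having global $\de_{\mathcal E}$-Lipschitz constant at most $\lip(v)(x_i)+\epsilon$, and patch these local approximations using cutoffs from step one, producing $v_\epsilon\in D(\mathcal E)$ converging to $v$ in $L^2$ with $\Gamma(v_\epsilon)\leq(\lip(v)(x_i)+C\epsilon)^2\m_\sX$ on $B_{r_i/5}(x_i)$. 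Lower semicontinuity of $\Gamma$ then yields $v\in D(\mathcal E)$, and passing $\epsilon\downarrow 0$ together with Lebesgue differentiation (valid under doubling) upgrades the ball-wise bound to the sharp pointwise inequality $\Gamma(v)\leq \lip(v)^2$ $\m_\sX$-a.e.

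The main obstacle is obtaining the sharp constant $1$ and the pointwise (rather than merely integrated) estimate. A naive approximation by the regular core together with energy lower semicontinuity would give at best $\Gamma(u)\leq(\text{global Lipschitz constant of }u)^2\,\m_\sX$. Capturing the infinitesimal slope $\lip(u)(x)$ at $\m_\sX$-a.e.\ $x$ forces a scale-aware covering argument tracking the local behaviour of $u$, where doubling enters decisively through both the Vitali lemma and Lebesgue differentiation. Strong locality is the other essential ingredient, permitting the patched local approximations to be reassembled into a single energy density satisfying the sharp bound.
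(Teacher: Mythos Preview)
The paper does not contain a proof of this theorem; it is stated with attribution to Koskela and Zhou \cite{koskelazhou} and invoked as a black box in the proof of the corollary following Proposition~\ref{prop:distancewp}. There is therefore no ``paper's own proof'' to compare your proposal against.

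As a standalone sketch, your three-step strategy is in the right spirit, but Step~3 has a genuine gap. You assert that on each Vitali ball $B_{r_i}(x_i)$ one can ``approximate $v$ by a combination of $\de_{\mathcal E}$-distance functions having global $\de_{\mathcal E}$-Lipschitz constant at most $\lip(v)(x_i)+\epsilon$'', but you do not say how. The natural candidate is a McShane-type extension $\tilde v(y)=\inf_{z\in B_{r_i}(x_i)}\{v(z)+L\,\de_{\mathcal E}(z,y)\}$ with $L=\lip(v)(x_i)+\epsilon$; however, showing that such an \emph{infinite} infimum lies in $D_{loc}(\mathcal E)$ with $\Gamma(\tilde v)\leq L^2$ is itself nontrivial for abstract Dirichlet forms and requires its own argument. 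More seriously, your patching step suppresses the contribution of the cutoff gradients: writing $v_\epsilon=\sum_i \eta_i\tilde v_i$ with a partition of unity $\{\eta_i\}$, the Leibniz rule gives cross terms of the form $(\tilde v_i-\tilde v_j)\,\Gamma(\eta_i,\eta_j)$ on overlaps, and $\Gamma(\eta_i)$ scales like $r_i^{-2}$ while $|\tilde v_i-\tilde v_j|$ scales like $r_i$; these do not automatically contribute an $O(\epsilon)$ error without a more careful choice of scales and control of the overlap multiplicity. Your outline correctly identifies the essential ingredients (doubling for Vitali and Lebesgue differentiation, strong locality for localisation, distance functions for the sharp constant), but the passage from the ball-wise bound to the global patched approximant is where the real work lies, and it is not carried out here.
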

\subsection{Second order calculus on $\RCD$ spaces} For the following we refer to \cite{savareself, giglinonsmooth}.

Let $X$ be an $\RCD(K,N)$ space. 
The space of test functions is
$$ 
\tf(X)= \left\{ f\in D(L^\sX)\cap \lip_b(X):   L^\sX f \in W^{1,2}(X)\right\}
$$
The space $\tf(X)$ is an algebra and for every $f\in \tf(X)$ it holds
\begin{enumerate}
\item Let $-g=\langle \nabla f, \nabla \Delta f\rangle + K |\nabla f|^2.$ Then $\int g\de\m_{\sX}\geq 0$ and $|\nabla f|^2 \in W^{1,2}(X)$ with
$$\Ch(|\nabla f |^2)\leq \lip(f)^2 \left( \left\| |\nabla f|\right\|_{L^2} \left\| |\nabla L^\sX f|\right\|_{L^2} + K^-\left\| |\nabla f|\right\|_{L^2}^2\right).$$
\item There exists a unique finite, signed Borel measure $\mu:= \mu^+ - g\m_\sX$ with $\mu_+\geq 0$ and $\mu_+(X) \leq \int_X g\de\m_\sX$ such that 
\begin{enumerate}
\item every $\Ch$-polar set ist $|\mu|$-neglligible, 
\item the quasi-continuous representative $\tilde \phi$ of any  function $\phi \in W^{1,2}(X)$ is in $L^1(|\mu|)$, 
\item it holds
$$\int \langle \nabla u, \nabla \phi \rangle \de\m_\sX= - \int \tilde \phi \de \mu, \ \ \forall \phi \in W^{1,2}(X).$$
\end{enumerate}
We will write ${\bf L}^\sX u:= \mu$. 
\item We denote by ${\bf \Gamma}_2^{\sX}(f)$ the finite, signed Borel measure 
$${\bf \Gamma}_2^{\sX}(f):= \frac{1}{2} {\bf L}^\sX |\nabla f|^2 - \langle \nabla f , \nabla L^\sX f\rangle \de \m_\sX.$$
${\bf \Gamma}_2^\sX(f)$ has finite total variation, and vanishes on sets of $0$ capacity. We write 
$${\bf \Gamma}_2^{\sX}(f) = \gamma_2^{\sX}(f)\m_\sX + {\bf \Gamma}_2^{\sX, \perp}, $$
where $0\leq {\bf \Gamma}_2^{\sX, \perp}\perp \m_{\sX}$. It holds $$\gamma_2^{\sX}(f)\geq K|\nabla f|^2 + \frac{1}{N} (L^\sX f)^2 \mbox{ $\m_\sX$-a.e. in $X$ }$$
as well as 
\begin{align}\label{ineq:bo}{\bf \Gamma}_2^{\sX}(f) \geq \left[K |\nabla f|^2 + \frac{1}{N} (L^\sX f)^2\right] \m_\sX.\end{align}
\end{enumerate}
\begin{corollary}\label{cor:bochnerext}Let $u\in D_{W^{1,2}}(L^\sX)$ and $\psi= \phi + \lambda$ with $\psi\geq 0$, $\phi \in D_{L^\infty}(L^\sX)\cap L^\infty(\m_\sX)$ and $\lambda\in \R$. Then it holds 
$$ \Gamma_2^\sX(u; \psi)\geq  \int\left[K |\nabla u|^2  +{\frac{{1}}{\textstyle N}  }(L^\sX)^2\right]\psi \de\m_\sX$$
where $\Gamma_2^\sX(u; \psi):= \Gamma_2^\sX(u; \phi) - \lambda\int \langle \nabla u, \nabla L^\sX u\rangle \de\m_\sX.$
\end{corollary}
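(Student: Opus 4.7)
The corollary extends the Bakry--\'Emery inequality $\BE(K,N)$, which in Definition \ref{def:rcd} is stated for nonnegative test functions $\phi \in D_{L^\infty}(L^\sX) \cap L^\infty(\m_\sX)$, to shifted ``test functions'' $\psi = \phi + \lambda \geq 0$ where the constant $\lambda$ prevents $\psi$ from belonging to that class on infinite-measure spaces (since then $1 \notin L^2(\m_\sX)$). The definition $\Gamma_2^\sX(u;\psi) := \Gamma_2^\sX(u;\phi) - \lambda \int \langle \nabla u, \nabla L^\sX u\rangle \de\m_\sX$ is precisely what one obtains by formally declaring $L^\sX \lambda = 0$ and expanding $\int|\nabla u|^2 L^\sX(\phi+\lambda) \de\m_\sX - \int\langle\nabla u,\nabla L^\sX u\rangle(\phi+\lambda)\de\m_\sX$, so the task is to show that $\BE(K,N)$ survives this extension.

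The plan is to first reduce to test functions $u \in \tf(X)$ via heat-flow regularization $u_t := P_t^\sX u$: for $u\in D_{W^{1,2}}(L^\sX)$ one has $u_t \in \tf(X)$ with $u_t \to u$, $\nabla u_t \to \nabla u$, $L^\sX u_t = P_t^\sX L^\sX u \to L^\sX u$ and $\nabla L^\sX u_t \to \nabla L^\sX u$ in $L^2(\m_\sX)$ as $t\downarrow 0$. Because $\phi$ has $L^\sX \phi \in L^\infty$ and $\psi$ is bounded, each term appearing on both sides of the desired inequality is continuous under this $L^2$-convergence, so passing to the limit recovers the general case.

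For $u \in \tf(X)$ I would then invoke the measure-valued Bochner inequality \eqref{ineq:bo}, which, being a pointwise statement (a.e. lower bound on the absolutely continuous part plus nonnegativity of the singular part), is insensitive to growth of $\psi$ at infinity. Since $\tilde\psi = \tilde\phi + \lambda$ is bounded and $|{\bf \Gamma}_2^\sX(u)|$ has finite total variation, integration against $\tilde\psi$ yields
$$\int \tilde\psi \de {\bf \Gamma}_2^\sX(u) \geq \int \psi\bigl[K|\nabla u|^2 + \tfrac{1}{N}(L^\sX u)^2\bigr]\de\m_\sX.$$
One then identifies the left-hand side with the corollary's $\Gamma_2^\sX(u;\psi)$ by unfolding ${\bf \Gamma}_2^\sX(u) = \tfrac{1}{2}{\bf L}^\sX|\nabla u|^2 - \langle \nabla u, \nabla L^\sX u\rangle \de\m_\sX$, using the defining integration-by-parts $\int \tilde\phi \de {\bf L}^\sX|\nabla u|^2 = \int |\nabla u|^2 L^\sX\phi \de\m_\sX$ (valid because $\phi \in D(L^\sX) \subset W^{1,2}(X)$ and $|\nabla u|^2 \in W^{1,2}(X)$ for test $u$), and the identity $\int \langle \nabla u, \nabla L^\sX u\rangle \de\m_\sX = -\int (L^\sX u)^2 \de\m_\sX$ obtained from $L^\sX u \in W^{1,2}(X)$ via IBP, which accounts precisely for the shift term $-\lambda\int\langle\nabla u, \nabla L^\sX u\rangle \de\m_\sX$.

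The main obstacle is the bookkeeping on infinite-measure spaces: integrating the constant part of $\tilde\psi$ against the measure ${\bf L}^\sX|\nabla u|^2$ has no naive interpretation since $1 \notin W^{1,2}(X)$, so the ``total mass'' of ${\bf L}^\sX|\nabla u|^2$ is not visibly zero. The corollary's very definition of $\Gamma_2^\sX(u;\psi)$ is engineered to bypass this: it replaces the ill-defined ``$\lambda \cdot L^\sX|\nabla u|^2$ contribution'' with the well-defined $-\lambda \int \langle \nabla u, \nabla L^\sX u\rangle \de\m_\sX$, and the single IBP identity $\int \langle \nabla u, \nabla L^\sX u\rangle \de\m_\sX = -\int (L^\sX u)^2 \de\m_\sX$ closes the identification, matching normalization conventions between the multilinear form $\Gamma_2^\sX$ and the measure ${\bf \Gamma}_2^\sX$.
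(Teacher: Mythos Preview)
Your overall strategy---use the measure-valued Bochner inequality \eqref{ineq:bo} for test functions $u\in\tf(X)$, integrate against the bounded function $\tilde\psi$, then approximate general $u\in D_{W^{1,2}}(L^\sX)$---is exactly the paper's. But there is a real gap in the identification step.

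Unfolding ${\bf \Gamma}_2^\sX(u)=\tfrac12{\bf L}^\sX|\nabla u|^2-\langle\nabla u,\nabla L^\sX u\rangle\,\de\m_\sX$ and integrating against $\tilde\psi=\tilde\phi+\lambda$ gives
\[
\int\tilde\psi\,\de{\bf\Gamma}_2^\sX(u)
=\Gamma_2^\sX(u;\psi)+\frac{\lambda}{2}\int\de{\bf L}^\sX|\nabla u|^2,
\]
so you still need $\int\de{\bf L}^\sX|\nabla u|^2=0$. Your last paragraph claims this is ``bypassed'' by the definition of $\Gamma_2^\sX(u;\psi)$ and the IBP $\int\langle\nabla u,\nabla L^\sX u\rangle=-\int(L^\sX u)^2$, but neither eliminates that extra term. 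The point is precisely that $1\notin W^{1,2}(X)$ on infinite-measure spaces, so the total mass of ${\bf L}^\sX|\nabla u|^2$ (which \emph{is} well defined, since the measure is finite and signed) is not automatically zero. The paper proves this vanishing explicitly (its Step~(2)): choose $\phi_n\uparrow 1$ in $L^2$, note $P_t\phi_n\uparrow 1$, and use the semigroup shift
\[
\int P_t\phi_n\,\de{\bf L}^\sX|\nabla u|^2
=\int P_{t/2}\phi_n\, L^\sX P_{t/2}|\nabla u|^2\,\de\m_\sX
\longrightarrow \int L^\sX P_{t/2}|\nabla u|^2\,\de\m_\sX=0.
\]
Without this (or an equivalent argument), the identification $\int\tilde\psi\,\de{\bf\Gamma}_2^\sX(u)=\Gamma_2^\sX(u;\psi)$ is unjustified and the proof does not close.

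A secondary remark: your regularization $u_t=P_t^\sX u$ is a fine choice here (and in fact gives the strong convergence $\nabla L^\sX u_t\to\nabla L^\sX u$ in $L^2$, simplifying the paper's Step~(3), which instead approximates by generic $u_n\in\tf(X)$ and must regularize $\phi$ to $P_t\phi$ to handle that term). Just be aware that membership $P_t u\in\tf(X)$---specifically boundedness---is not free; it relies on heat-kernel regularization properties of $\RCD(K,N)$ spaces and should be stated.
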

\begin{proof}
{\bf (1)} We pick $f\in \tf(X)$, and $\psi= \phi+ \lambda$ as in the assumptions.  

Since ${\bf \Gamma}_2^\sX(f)$ is a finite, signed measure and $\psi \in L^\infty(\m_\sX)$, it follows that $\int \psi \de {\bf \Gamma}_2^{\sX}(f)$ is well-defined. From \eqref{ineq:bo} we obtain
$$ \int \psi \de{\bf \Gamma}_2^\sX(f) = \int (\phi+ \lambda) \de {\bf \Gamma}_2^\sX(f)\geq \int \left[ K |\nabla f|^2 + \frac{1}{N} (L^\sX f)^2 \right] \psi \de \m_\sX.$$
{\bf (2)} {\it Claim.} $\int \frac{1}{2} \de{\bf L}^\sX |\nabla f|^2 =0$.

Indeed, we can argue as follows. 
Let $\phi_n \in L^2(\m_\sX)$ such that $\phi_n\uparrow 1$. Then, $P_t \phi_n \uparrow 1$ for every $t>0$.  For every $f\in \tf(X)$ it follows that 
\begin{align*} \int  P_t \phi_n  \frac{1}{2}\de{\bf L}^\sX |\nabla f|^2 =&\int\frac{1}{2} \langle P_t\phi_n, \nabla |\nabla f|^2\rangle \de\m_\sX\\
=& \int \frac{1}{2} L^\sX P_t \phi_n |\nabla f|^2 \de \m_\sX\\
=&\int \frac{1}{2} L^\sX P_{\frac{t}{2}} \phi P_{\frac{t}{2}}|\nabla f|^2 \de\m_\sX\\
=&  \int \frac{1}{2} P_{\frac{t}{2}} \phi_n  L^\sX P_{\frac{t}{2}} |\nabla f|^2\de\m_\sX \\
&\rightarrow \int \frac{1}{2} L^{\sX} P_{\frac{t}{2}}|\nabla f |^2 \de\m_\sX=0 \mbox{ as $n\rightarrow \infty$}. \end{align*}
The left-hand side converges to $\frac{1}{2} \int \de {\bf L}^\sX |\nabla f|^2 .$ This proves the claim. \qed
\smallskip

It follows
\begin{align*}\int \psi \de{\bf \Gamma}_2^\sX(f)=&\int\left[ \frac{1}{2} {\bf L}^\sX |\nabla f|^2  - \langle \nabla f , \nabla L^\sX f\rangle \right] \psi \de \m_\sX\\
=& \int\left[\frac{1}{2} |\nabla f|^2 L^\sX \phi  - \langle \nabla f, \nabla L^\sX f\rangle \psi\right] \de \m_\sX = \Gamma_2(f; \psi)
.\end{align*}
{\bf (3)} Let $u\in D_{W^{1,2}}(L^\sX)$ and choose $u_n\in \tf(X)$ such that $u_n\rightarrow u$ in $D(L^\sX)$. 
Since $\phi, L^\sX\phi \in L^\infty(\m_\sX)$, it follows 
\begin{align*} 
\frac{1}{2} \int |\nabla u_n|^2 L^\sX \phi &\rightarrow \frac{1}{2} \int |\nabla u|^2 L^\sX\phi \de\m_\sX\\
K \int |\nabla u_n |^2 \psi \de \m_\sX&\rightarrow K \int |\nabla u|^2 \psi \de \m_\sX\\
\frac{1}{N} \int (L^\sX u_n)^2 \psi \de \m_\sX& \rightarrow \frac{1}{N} \int (L^\sX u)^2 \psi \de\m_\sX. 
\end{align*}

To treat $\int \langle \nabla u_n , \nabla L^\sX u_n\rangle \psi \de\m_{\sX}$
let us pick $P_t\phi$ in place of $\phi$. Then, since $X$ satisfies the condition $\RCD(K,N)$, we have that $P_t\phi \in \lip_b(X)$. Consequently, we can compute that
\begin{align*}\int \langle \nabla u_n , \nabla L^\sX u_n\rangle (P_t \phi + \lambda) \de\m_{\sX}=& - \int (L^\sX u_n)^2 (P_t \phi  + \lambda) \de\m_\sX\\
& \ \ \ \ - \int \langle \nabla u_n , \nabla P_t\phi\rangle L^\sX u_n \de \m_\sX.
\end{align*}
Since $u_n\rightarrow u$ in $D(L^\sX)$, for $n\rightarrow \infty$ the left- and right-hand side converge to 
\begin{align*}&\int \langle \nabla u , \nabla L^\sX u\rangle (P_t \phi + \lambda) \de\m_{\sX}=\\
&- \int (L^\sX u)^2 (P_t \phi  + \lambda) \de\m_\sX - \int \langle \nabla u , \nabla P_t\phi\rangle L^\sX u \de \m_\sX.\end{align*}
Hence,  the desired inequality follows for $u \in D_{W^{1,2}}(L^\sX)$ and $\psi= P_t\phi +\lambda$. 

Finally, we can replace $P_t\phi$ with $\phi$ by an application of the dominated convergence theorem since $P_t\phi \rightarrow \phi$ pointwise $\m_\sX$-a.e.
\end{proof}

\subsection{Weighted $1$-dimensional Riemannian manifolds}
Let $B$ be a complete $1$-dimensional Riemannian manifold, i.e.  $B$ is isometric to $[0,\pi]$, $[0, \infty)$, $\R$ or $\mathbb S^1$.  We  write $u'v'= \langle u', v' \rangle_\sB$ for smooth functions $u, v$ on $B$.

Let $f: B\rightarrow [0,\infty)$ be smooth and not identical $0$.
\smallskip

\noindent
{\bf We assume} that $$\partial B= f^{-1}(\{0\})$$ 
in case either set is non-empty.
%
\begin{remark}
If we assume that\begin{center} $f''+K\leq 0$ and $\partial B\subset f^{-1}(\{0\})$, \end{center}
then it follows  from the maximum principle that $\partial B= f^{-1}(\{0\})$.
\end{remark}
\smallskip

We set $\m_\sB^\sN=f^\sN\vol_\sB$. Then we consider the pair $(B, \m_\sB^\sN)$ and the  quadratic form 
$$\mathcal E^{\sB, \sN}(u)= \int |u'|^2 \de \m_{\sB}^\sN, \ \ u\in C^\infty(B).$$

The form closure of $\mathcal E^{\sB, \sN}$ in $L^2(\m^\sN_\sB)$ is the Cheeger energy  $\Ch^{\sB, \sN}$ of the metric measure space $(B,\m_\sB^\sN)$ where the domain of $\Ch^{\sB, \sN}$ is $W^{1,2}(B, \m_\sB^\sN)$, the space of $L^2$ Sobolev functions. 

We can also consider the closure $W_0^{1,2}(\mathring B, \m_\sB^\sN)$  of $C^\infty_c(\mathring B)$ in $W^{1,2}(B, \m_\sB^\sN)$ and the following lemma explains the relation between $W^{1,2}(B, \m_\sB^\sN)$ and $W_0^{1,2}(B, \m_\sB^\sN)$.
\begin{lemma} If $N\geq 1$,
$W_0^{1,2}(\mathring B, \m_\sB^\sN)= W^{1,2}(B, \m_\sB^\sN).$
\end{lemma}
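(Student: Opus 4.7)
The inclusion $W^{1,2}_0(\mathring B, \m_\sB^\sN) \subseteq W^{1,2}(B, \m_\sB^\sN)$ is immediate, since $\partial B$ has $\m_\sB^\sN$-measure zero and the ambient $L^2$-spaces coincide. For the reverse inclusion it suffices to show that $C^\infty_c(\mathring B)$ is dense in $W^{1,2}(B, \m_\sB^\sN)$. If $\partial B = \emptyset$, then $f$ is smooth and strictly positive on $B \in \{\R, \mathbb{S}^1\}$, and the conclusion follows from standard mollification combined with a smooth cutoff at infinity in the case $B = \R$. So assume $\partial B = f^{-1}(\{0\})$ consists of one or two points. Given $u \in W^{1,2}(B, \m_\sB^\sN)$, the truncations $u_R := (-R) \vee u \wedge R$ converge to $u$ in $W^{1,2}(B,\m_\sB^\sN)$, and a smooth cutoff at infinity (if $B = [0, \infty)$) further reduces the problem to approximating a bounded $u$ of bounded support.

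Fix $p \in \partial B$. Since $f$ is smooth and $f(p) = 0$, there is $C > 0$ such that $f(t) \leq C\, d_B(t, p)$ on a neighbourhood of $p$. For $\epsilon \in (0, \tfrac12)$ introduce the Lipschitz logarithmic cutoff
\[
\chi_\epsilon(t) := 0 \vee \Bigl(\tfrac{\log(d_B(t, p)/\epsilon^2)}{\log(1/\epsilon)}\Bigr) \wedge 1,
\]
assembled over the (at most two) boundary points; it vanishes in a neighbourhood of $\partial B$ and converges pointwise to $1$ on $\mathring B$. The crucial capacity estimate is
\[
\int_B (\chi'_\epsilon)^2 f^N \de\vol_B \;\leq\; \frac{C^N}{(\log(1/\epsilon))^2} \int_{\epsilon^2}^{\epsilon} s^{N-2}\, ds \;\longrightarrow\; 0 \quad (\epsilon \downarrow 0),
\]
valid for every $N \geq 1$: when $N > 1$ the inner integral is bounded by $\epsilon^{N-1}/(N-1)$, and when $N = 1$ it equals $\log(1/\epsilon)$, beaten by the prefactor $(\log(1/\epsilon))^{-2}$. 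The threshold $N \geq 1$ is sharp, paralleling the fact that a point in $\R^{N+1}$ has zero $W^{1,2}$-capacity precisely when $N + 1 \geq 2$.

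Setting $v_\epsilon := \chi_\epsilon u$, dominated convergence yields $v_\epsilon \to u$ in $L^2(\m_\sB^\sN)$ and $\chi_\epsilon u' \to u'$ in $L^2(\m_\sB^\sN)$, while the capacity estimate combined with the boundedness of $u$ gives $\|\chi'_\epsilon u\|_{L^2(\m_\sB^\sN)}^2 \leq \|u\|_\infty^2 \int (\chi'_\epsilon)^2 f^N \de\vol_B \to 0$. Hence $v_\epsilon \to u$ in $W^{1,2}(B, \m_\sB^\sN)$. Each $v_\epsilon$ is Lipschitz and compactly supported in $\mathring B$, on which $f$ is smooth and bounded away from zero, so $\m_\sB^\sN$ is comparable to Lebesgue measure there; mollifying $v_\epsilon$ with a standard smoothing kernel therefore produces $C^\infty_c(\mathring B)$ approximations converging to $v_\epsilon$ in $W^{1,2}(B, \m_\sB^\sN)$, and a diagonal argument finishes the proof. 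The central difficulty is the capacity estimate, where the logarithmic profile is tuned to exploit precisely the hypothesis $N \geq 1$.
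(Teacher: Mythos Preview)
Your argument is correct, modulo one harmless slip: $v_\epsilon=\chi_\epsilon u$ is not Lipschitz in general (you only know $u$ is bounded and in $W^{1,2}$), but what you actually need---and what you have---is that $v_\epsilon$ lies in $W^{1,2}$ with compact support inside $\mathring B$, where $f^N$ is bounded above and below by positive constants; standard mollification then applies.

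The paper proceeds along the same capacity-theoretic line but uses a \emph{linear} cutoff: a Lipschitz function equal to $1$ on $B_{\epsilon/3}(\partial B)$, vanishing outside $B_\epsilon(\partial B)$, with slope $\lesssim 1/\epsilon$. This yields
\[
\int |u'|^2 f^N\,\de\vol_B \;\lesssim\; \frac{1}{\epsilon^2}\int_{\epsilon/3}^{\epsilon} t^N\,dt \;\lesssim\; \epsilon^{N-1},
\]
and then invokes that $\mathrm{Cap}_2(\partial B)=0$ forces $W^{1,2}_0=W^{1,2}$. The bound $\epsilon^{N-1}$ tends to $0$ only when $N>1$; at the borderline $N=1$ the paper's estimate stalls. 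Your logarithmic profile is exactly the classical remedy: it trades the factor $\epsilon^{-2}$ for $(\log(1/\epsilon))^{-2}$ and recovers the critical case, so your proof genuinely covers the full range $N\ge 1$ claimed in the lemma. Apart from this refinement (and your more explicit treatment of truncation and cutoff at infinity), the two arguments are structurally the same.
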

\begin{proof}
First we recall the $2$-capacity of a set $K\subset B$: 
$$\mbox{Cap}_2(K)= \inf \left\| u\right\|_{W^{1,2}}$$
where the infimum is w.r.t. all functions $u$ such that $0\leq u\leq 1$ and $u\geq 1$ on a neighborhood of $K$. 

We can  construct a Lipschitz function such that $u\equiv 1$ on $B_{\frac{\epsilon}{3}}(\partial B)$, $u\equiv 0$ on $B\backslash B_{\epsilon}(\partial B)$ and  $|u'|_\sB \leq \frac{3}{\epsilon}$.  Since $f$ is $\lambda$-concave, there exists $g>0$ such that  $$f(t)\leq f(t_0) + g (t-t_0) + o(t) \ \forall t_0\in \partial B.$$ One can compute that
$$\int_B |u'|_{\sB}^2 \m_\sB^\sN \leq \int_{\frac{\epsilon}{3}}^\epsilon \left(\frac{3}{\epsilon}\right)^2 f^\sN(t) dt\leq C(f, N) \frac{1}{\epsilon^2}\int_{\frac{\epsilon}{3}}^\epsilon t^\sN dt\leq  C(f, N) \epsilon^{\sN-1} .$$
Applying Mazur's lemma and letting $\epsilon\downarrow 0$ we find  a sequence that strongly converges to $0$  in $W^{1,2}$. It follows that $\mbox{Cap}_2(\partial B)=0$. 
%
\end{proof}
The space $C_c^\infty( B)$ is a core of $\Ch^{\sB, \sN}$. A core is a subset of $W^{1,2}(B, \m_\sB^\sN)\cap C_c(B)$  that is dense in $W^{1,2}(B, \m_\sB^\sN)$ w.r.t. to the norm 
$$\left\| u\right\|^2_{W^{1,2}}= \left\| u\right\|^2_{L^2} + \Ch^{\sB, \sN}(u)$$
and dense in $C_c(B)$ w.r.t. uniform convergence. 

The domain $D(L^{\sB, \sN})$ of the generator $L^{\sB, \sN}$ associated to $\Ch^{\sB, \sN}$ is the set of $u\in W^{1,2}(B, \m_\sB^\sN)$ such that $ \exists g\in L^2(\m_\sB^\sN) \mbox{ with }$$$ \Ch^{\sB, \sN}(u,v)= -\int g v \m_\sB^\sN \ \forall v\in W^{1,2}(B, \m_\sB^\sN).$$
We write $L^{\sB, \sN} u:= g$ and for $u\in C^\infty_{c} (\mathring B)$ it follows that
$$
L^{\sB, \sN}u= u'' - \frac{N}{f} \langle f', u'\rangle_\sB.
$$
\begin{proposition} Let $f: B\rightarrow [0,\infty)$ be a smooth function such that $\partial B= f^{-1}(\{0\})$. Then the following statements are equivalent: 
\begin{enumerate} 
\item $f$ satisfies $f''+ K f\leq 0$, 
\item The space $(B, \m^\sN_\sB)$ satisfies the condition $\RCD(KN,N+1)$.  
\end{enumerate}
\end{proposition}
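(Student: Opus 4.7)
The statement is the one-dimensional, smooth version of the classical correspondence between Bakry--\'Emery $N$-Ricci bounds on weighted Riemannian manifolds and the synthetic $\RCD$ condition. My strategy is to verify the Bakry--\'Emery inequality $\BE(KN, N+1)$ directly by a one-dimensional Bochner-type computation on the open set $\{f > 0\}$, and to recover the pointwise inequality $f'' + Kf \le 0$ from the converse implication by localizing $\BE(KN, N+1)$ around an arbitrary interior point.

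\smallskip
\textbf{(1)$\Rightarrow$(2).} The Cheeger energy $\Ch^{B,N}$ is by construction the closure of the quadratic form $\mathcal{E}^{B,N}$, so $(B, \m_B^N)$ is infinitesimally Hilbertian; the Sobolev-to-Lipschitz property is routine in this smooth one-dimensional setting, and the exponential volume growth condition is immediate since $B$ is one-dimensional with smooth weight. For the Bakry--\'Emery inequality, I take $u \in C_c^\infty(\mathring B)$, use the explicit formula for $L^{B,N}u$ on $\{f > 0\}$, and compute
\[
\Gamma_2(u) \;=\; \tfrac{1}{2} L^{B,N}(u')^2 - u'(L^{B,N}u)' \;=\; (u'')^2 + N\tfrac{(f')^2}{f^2}(u')^2 - N\tfrac{f''}{f}(u')^2.
\]
The elementary inequality $\tfrac{a^2}{1} + \tfrac{b^2}{N} \ge \tfrac{(a+b)^2}{N+1}$, applied with $a = u''$ and $b = N\tfrac{f'}{f} u'$, gives $(u'')^2 + N\tfrac{(f')^2}{f^2}(u')^2 \ge \tfrac{1}{N+1}(L^{B,N}u)^2$, while the hypothesis $f'' + Kf \le 0$ reads $-Nf''/f \ge KN$ on $\mathring B$. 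Adding these two gives the pointwise bound $\Gamma_2(u) \ge KN\,(u')^2 + \tfrac{1}{N+1}(L^{B,N}u)^2$, i.e.\ $\BE(KN,N+1)$ on the core $C_c^\infty(\mathring B)$. The density of $C_c^\infty(\mathring B)$ in $W^{1,2}(B, \m_B^N)$ supplied by the preceding lemma, together with standard approximation of elements of $D_{W^{1,2}}(L^{B,N})$ by test functions, extends the inequality to the full domain, and the equivalence of $\BE$ and $\RCD$ recalled in the excerpt yields $\RCD(KN, N+1)$.

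\smallskip
\textbf{(2)$\Rightarrow$(1).} Conversely, $\RCD(KN, N+1)$ implies the integrated $\BE(KN,N+1)$ inequality. Fix an arbitrary interior point $x_0 \in \mathring B$ and a nonnegative cutoff $\phi$ concentrating at $x_0$. The Bochner identity above is an equality on smooth functions on $\mathring B$, so the localized $\BE$ inequality reduces, after dividing by $\int \phi\, d\m_B^N$ and passing to the limit $\phi \to \delta_{x_0}$, to the pointwise inequality
\[
(u'')^2 + N\tfrac{(f')^2}{f^2}(u')^2 - N\tfrac{f''}{f}(u')^2 \;\ge\; KN\,(u')^2 + \tfrac{1}{N+1}(L^{B,N}u)^2
\]
at $x_0$ for every $u \in C^\infty_c(\mathring B)$. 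Choosing $u$ so that $u'(x_0) = 1$ and $u''(x_0) = \tfrac{f'(x_0)}{f(x_0)}$ makes the Cauchy--Schwarz step sharp at $x_0$, collapsing the inequality to $-Nf''(x_0)/f(x_0) \ge KN$, i.e.\ $f''(x_0) + Kf(x_0) \le 0$. Since $x_0 \in \mathring B$ is arbitrary and $f$ is smooth on $B$, the inequality extends to all of $B$ by continuity.

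\smallskip
\textbf{Main obstacle.} The delicate step is the converse: extracting a pointwise curvature bound from the integrated $\BE$ inequality via the localization and sharp-test-function argument. This is a standard move in Bakry--\'Emery theory, and the smoothness of $f$ on $\mathring B$ makes the construction of the required $u$ and $\phi$ straightforward; the only care needed is to confirm that they lie in the appropriate domains $D_{W^{1,2}}(L^{B,N})$ and $D_{L^\infty}(L^{B,N}) \cap L^\infty(\m_B^N)$, which is immediate for functions compactly supported in $\mathring B$ where the weight is smooth and positive.
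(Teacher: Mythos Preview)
Your argument is correct and follows essentially the same route as the paper: both reduce the equivalence to the Bakry--\'Emery characterization of $\RCD$ on a smooth weighted one-dimensional manifold, where the $(N+1)$-Ricci tensor is exactly $-N\tfrac{f''}{f}$. The paper simply invokes this known equivalence in one line, whereas you unpack it by computing $\Gamma_2$ explicitly and running the localization-plus-sharp-test-function argument for the converse; your version is more self-contained but not a genuinely different method.
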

\begin{proof}
%
We note that $f$ is smooth and therefore the $\RCD(KN,N+1)$ condition holds if and only if   the Bakry-Emery $N$-Ricci tensor satisfies \begin{center}$\ric^{N+1, f}_\sB= - \frac{1}{N}\frac{f''}{f} g_\sB \geq  K g_\sB \mbox{ on } B\backslash \partial B$
\end{center}Hence $f''+ Kf\leq 0$ on $B$.
\end{proof}
\begin{example}
Let $B=[0, \pi]$ and $f(r)= \sin(r)$. Then the mm space $([0,\pi], f^{N-1}\de r)$ satisfies the condition $\RCD(N-1, N)$. 
\end{example}
\subsection{Schr\"odinger operators on $1$-dimensional spaces}\label{subsec:schroedinger}
As in the previous section we assume that $B$ is a $1$-dimensional Riemannian manifold and $ f\in C^2(B)$ s.t. $f\geq 0$ 
and $\partial B= f^{-1}(\{0\})$.
\smallskip

{\bf We  assume $N>1$} and consider the measure $\mu= f^{N-2} \vol_\sB$ and a  constant $\lambda>0$. One can define a quadratic form 
$$\mathcal E^{\sB, \sN, \lambda}(u)= \Ch^{\sB, \sN}(u)+\lambda \int u^2 \dint\mu$$
for  $u \in W^{1,2}(B, \m_\sB^\sN)$ with 
$$\lambda\int u^2 \dint \mu = \lambda\int u^2 f^{N-2}\dint \vol_\sB<\infty.$$
It is known that $\mathcal E^{\sB, \sN, \lambda}$ is a Dirichlet form in $L^2(\m_{\sB}^{\sN})$ and  has the domain 
$$D(\mathcal E^{\sB, \sN, \lambda})= \{u \in W^{1,2}(B, \m_\sB^\sN): \lambda \int u^2 f^{N-2} \dint \vol_\sB<\infty\}.$$
We recall the following facts:
\begin{fact}\label{fact:newfact}
Since $N>1$,  $\int u^2 f^{N-2} \dint \vol_\sB<\infty$ for $u\in C_c^\infty(B)\neq C_c^\infty(\mathring B)$, $C_c^\infty(B)$ is a core of $\mathcal E^{\sB, \sN, \lambda}$, and $C_c^\infty(\mathring B)$ is dense in $D(\mathcal E^{\sB, \sN, \lambda})$. 
\end{fact}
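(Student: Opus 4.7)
The plan is to handle the three claims of the Fact in order, after establishing a preliminary observation: in the paper's setting $f \in C^2(B)$ vanishes \emph{exactly} linearly at each $t_0 \in \partial B = f^{-1}(0)$. Indeed, if $f'(t_0) = 0$ also held, then the $fK$-concavity implicit in the paper's geometric framework, combined with $f(t_0) = 0 = f'(t_0)$ and $f \geq 0$, would force $f \equiv 0$ near $t_0$, contradicting $\partial B = f^{-1}(0)$. Hence near each boundary point $t_0$ there exist $c, C > 0$ with $c|t - t_0| \leq f(t) \leq C|t - t_0|$ locally.

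For the integrability claim I fix $u \in C_c^\infty(B)$ and note that $u^2 f^{N-2}$ is continuous, so the only issue is near $\partial B$. Linear vanishing and boundedness of $u$ give $u^2(t) f(t)^{N-2} \leq \|u\|_\infty^2\, c^{N-2}\, |t - t_0|^{N-2}$ near each $t_0 \in \partial B$, which is integrable on a one-dimensional neighborhood precisely because $N - 2 > -1$. For the core property I use that $C_c^\infty(B)$ is already a core of $\Ch^{\sB,\sN}$; given $u \in D(\mathcal E^{\sB,\sN,\lambda})$, I approximate by $u_n \in C_c^\infty(B)$ in the $W^{1,2}(B,\m_\sB^\sN)$ norm, apply a truncation to arrange $|u_n| \leq |u| + 1$ pointwise, and invoke dominated convergence in the $\lambda$-term, relying on the domain condition $\int u^2 f^{N-2} \de \vol_\sB < \infty$.

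The main claim is the density of $C_c^\infty(\mathring B)$ in $D(\mathcal E^{\sB,\sN,\lambda})$. Given $u$ in the domain, I introduce a Lipschitz cutoff $\chi_\epsilon : B \to [0,1]$ with $\chi_\epsilon \equiv 0$ on $B_{\epsilon/3}(\partial B)$, $\chi_\epsilon \equiv 1$ outside $B_\epsilon(\partial B)$, and $|\chi_\epsilon'|_\sB \leq 3/\epsilon$, and set $u_\epsilon := \chi_\epsilon u$. Convergence $u_\epsilon \to u$ in $L^2(\m_\sB^\sN)$ and in $L^2(f^{N-2}\de\vol_\sB)$ is immediate by dominated convergence. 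For the Cheeger energy I expand $u_\epsilon' = \chi_\epsilon' u + \chi_\epsilon u'$; the summand $\chi_\epsilon u'$ converges to $u'$ in $L^2(\m_\sB^\sN)$, and the critical summand is controlled via
\begin{align*}
\int |\chi_\epsilon' u|^2 f^N \,\de\vol_\sB
&= \int |\chi_\epsilon'|^2 f^2 \cdot u^2 f^{N-2}\,\de\vol_\sB \\
&\leq C \int_{B_\epsilon(\partial B)} u^2 f^{N-2}\,\de\vol_\sB \;\longrightarrow\; 0,
\end{align*}
since on $\supp\chi_\epsilon'$ the factor $|\chi_\epsilon'|^2 f^2$ is bounded by an absolute constant (linear vanishing of $f$ cancels $1/\epsilon^2$ against $\epsilon^2$), and the last integral tends to zero by dominated convergence from the domain condition. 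A final standard interior mollification of $u_\epsilon$ on its compact support inside $\mathring B$, where $f$ is bounded below, then produces the desired $C_c^\infty(\mathring B)$-approximant.

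The main obstacle is exactly this cutoff estimate: the derivative $\chi_\epsilon'$ blows up like $1/\epsilon$ while the weight $f^N$ vanishes only like $\epsilon^N$, so one must reinterpret the integrand through the weight $f^{N-2}$ that is already present in the domain norm of $\mathcal E^{\sB,\sN,\lambda}$ to extract the sharp cancellation. This is precisely where the hypothesis $N > 1$ is used.
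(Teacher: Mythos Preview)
The paper does not supply a proof of this Fact; it is merely stated (introduced with ``We recall the following facts''). Your argument fills in the details and is correct. In particular, your preliminary observation that $f$ must vanish \emph{linearly} at each $t_0\in\partial B$ is essential and is correctly derived from the $fK$-concavity that pervades the paper: without it (take $f(t)=t^2$ on $[0,\infty)$ with $1<N\leq 3/2$) the local integrability of $f^{N-2}$ near $\partial B$ would fail, so the Fact as literally stated needs exactly the extra input you identify. The cutoff estimate for the density of $C_c^\infty(\mathring B)$ is the heart of the matter and is carried out cleanly; the rewriting $|\chi_\epsilon'|^2 f^N=(|\chi_\epsilon'|^2 f^2)\cdot f^{N-2}$, with the first factor uniformly bounded by linear vanishing, is precisely the right way to route the estimate through the domain condition $\int u^2 f^{N-2}\,\de\vol_\sB<\infty$. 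This also parallels (and refines, by incorporating the $\lambda$-term) the capacity argument the paper gives earlier for $W^{1,2}_0(\mathring B,\m_\sB^\sN)=W^{1,2}(B,\m_\sB^\sN)$.

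One small remark: your separate treatment of the core claim via ``truncation to arrange $|u_n|\leq |u|+1$'' is shaky as written, since truncating at the \emph{variable} level $|u|+1$ does not keep $u_n$ in $C_c^\infty(B)$. But the step is redundant anyway: once you have established the density of $C_c^\infty(\mathring B)$ in $D(\mathcal E^{\sB,\sN,\lambda})$, the inclusion $C_c^\infty(\mathring B)\subset C_c^\infty(B)$ immediately yields the domain-density half of the core property, and uniform density of $C_c^\infty(B)$ in $C_c(B)$ is standard.
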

The  generator $L^{\sB, \sN, \lambda}$ of $\mathcal E^{\sB, \sN, \lambda}$  is defined as follows.  We say $u\in D(\mathcal E^{\sB, \sN, \lambda})$ is in the domain $D(L^{\sB, \sN, \lambda})$ of $L^{\sB, \sN, \lambda}$ if $\exists g=: L^{\sB, \sN, \lambda} u\in L^2(\m_\sB^\sN)$ such that
$$\int \langle u', v'\rangle_\sB \dint \m_\sB^\sN + \lambda \int u v \dint \mu= - \int g v \dint \m_\sB^\sN\ \forall v\in D(\mathcal E^{\sB, \sN, \lambda}).$$
\begin{fact}  For $u\in C_c^\infty(\mathring B) $ it follows that  $u\in D(L^{\sB, \sN, \lambda})$ and
$$L^{\sB, \sN, \lambda} u= L^{\sB, \sN} u - \frac{\lambda }{f^2} u = u'' + \frac{N}{f} \langle f', u'\rangle_\sB - \frac{\lambda}{f^2} u.$$
\end{fact}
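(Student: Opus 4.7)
The plan is to verify the claim by a direct integration-by-parts computation that exploits the fact that $u$ is supported away from the boundary where $f$ vanishes. I would first record the elementary but crucial observation that since $u \in C_c^\infty(\mathring B)$ and $\partial B = f^{-1}(\{0\})$, the compact set $\supp u$ lies at a positive distance from $\partial B$. Consequently, $f$ is bounded above and bounded away from zero on $\supp u$, so the functions $\tfrac{1}{f}$, $\tfrac{1}{f^2}$, and $\tfrac{f'}{f}$ are all smooth and bounded on $\supp u$. In particular, the candidate $g := L^{\sB,\sN} u - \tfrac{\lambda}{f^2} u$ is a continuous function of compact support in $\mathring B$, hence automatically in $L^2(\m_\sB^\sN)$.

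The next step is to check that for every $v \in D(\mathcal E^{\sB,\sN,\lambda})$ the defining identity
\[
\int \langle u', v'\rangle_\sB \, d\m_\sB^\sN + \lambda \int uv\, d\mu = -\int g\, v\, d\m_\sB^\sN
\]
holds. Since $v \in W^{1,2}(B, \m_\sB^\sN)$ and the weight $f^N$ is smooth and bounded above and below on any fixed compact $K \subset \mathring B$, the restriction of $v$ to $K$ lies in the ordinary Sobolev space $W^{1,2}(K)$ with the same weak derivative, so classical integration by parts against the compactly supported smooth test function $u' f^N$ applies. This yields
\[
\int u' v' f^N\, d\vol_\sB = -\int v\,\bigl(u'' + N\tfrac{f'}{f} u'\bigr) f^N\, d\vol_\sB = -\int v\, L^{\sB,\sN} u\, d\m_\sB^\sN,
\]
where I use the formula for $L^{\sB,\sN}$ recorded in the previous subsection. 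For the potential term, I simply rewrite $d\mu = f^{-2}\, d\m_\sB^\sN$ on $\supp u$, so
\[
\lambda \int u v\, d\mu = \lambda \int \tfrac{u}{f^2}\, v\, d\m_\sB^\sN,
\]
with the integrand well-defined because $\tfrac{u}{f^2}$ is bounded and compactly supported. Adding the two contributions produces precisely $-\int (L^{\sB,\sN}u - \tfrac{\lambda}{f^2} u) v\, d\m_\sB^\sN$, as required.

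Combining these steps shows $u \in D(L^{\sB,\sN,\lambda})$ with $L^{\sB,\sN,\lambda} u = L^{\sB,\sN} u - \tfrac{\lambda}{f^2} u$, and the explicit expression $u'' + \tfrac{N}{f}\langle f', u'\rangle_\sB - \tfrac{\lambda}{f^2} u$ follows directly by substituting the formula for $L^{\sB,\sN} u$ from the previous subsection. There is no real obstacle: the only point that requires a moment's thought is justifying integration by parts for a general $v \in D(\mathcal E^{\sB,\sN,\lambda})$, and this is handled by the locality argument above, which reduces the computation to the standard Sobolev space on a compact neighborhood of $\supp u$ inside $\mathring B$.
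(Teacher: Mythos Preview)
Your proposal is correct and follows essentially the same integration-by-parts computation as the paper. The only organizational difference is that the paper verifies the defining identity for $v \in C_c^\infty(\mathring B)$ (writing $\int u' v' f^N\,d\vol_\sB = \int u' (vf^N)'\,d\vol_\sB - \int \tfrac{N}{f}\langle u',f'\rangle_\sB v\,d\m_\sB^\sN$ and then integrating by parts) and implicitly extends by density via Fact~\ref{fact:newfact}, whereas you handle general $v \in D(\mathcal E^{\sB,\sN,\lambda})$ directly through your locality argument; both routes are equivalent here.
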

\begin{proof}
Using the Leibniz rule, and since $u, v, f$ are smooth, we compute that
$$\int \langle u', v'\rangle_\sB \dint \m_\sB^\sN = \int \langle u', (v f^\sN)' \rangle_\sB \dint \vol_\sB - \int \frac{N}{f}\langle u', f' \rangle_\sB v \dint \m^\sN_\sB.$$
Since $u\in C_c^\infty(\mathring B)$, we also have
$$\int \langle u', (v f^\sN)' \rangle_\sB \dint \vol_\sB= - \int u'' v \dint\m_\sB^\sN.$$
Hence 
$$-\int L^{\sB, \sN, \lambda } u v \dint\m_\sB^\sN= - \int u'' v \dint\m_\sB^\sN  - \int \frac{N}{f} \langle f', u'\rangle_\sB v \dint \m_\sB^\sN  + \lambda \int \frac{uv}{f^2} \dint \m_\sB^\sN$$
for all $v\in C_c^\infty(\mathring B)$. This implies the formula and $L^{\sB, \sN, \lambda}u\in L^2(\m_\sB^\sN)$.
\end{proof}
\subsubsection{Essentially self-adjointness}
We say a set $\mathcal C\subset D(L^{\sB, \sN, \lambda})$ is dense in the domain of the operator $L^{\sB, \sN, \lambda}$ if the domain $D(L^{\sB, \sN, \lambda})$ is the closure of $\mathcal C$ w.r.t. the graph norm 
$$\left\| u\right\|^2_{D(L^{\sB,\sN,\lambda})} = \left\| u\right\|_{L^2(\m_\sB^\sN)}^2 + \left\| L^{\sB, \sN, \lambda}u\right\|_{L^2(\m_\sB^\sN)}^2 . $$
The operator $L^{\sB, \sN, \lambda}|_{\mathcal C}$ restricted to $\mathcal C$ is called essentially self-adjoint if it has a unique  self-adjoint extension. 

It  is a general fact about essentially self-adjoint operators that in  this case $\mathcal C$ is dense w.r.t. $\left\|\cdot \right\|_{D(L^{\sB,\sN,\lambda})}$ in the domain of this extension. 
\begin{proposition}\label{prop:essential}Assume $B\subset \R$ is a closed interval, $f: B\rightarrow [0, \infty)$ is smooth,  $\partial B= f^{-1}(\{0\})$, and $\max_{r\in \partial B} |f'(r)|\leq 1.$

Assume $\lambda > 1$ if $f^{-1}(\{0\})\neq \emptyset$. Let $\mathring B= B\backslash f^{-1}(\{0\})$. 
Consider the operator $L^{\sB, \sN, \lambda}$ for $u\in C^\infty_{c}(\mathring B)$. 

Then 
$L^{\sB, \sN, \lambda}|_{ C^\infty_{c}(\mathring B)}$
is essentially self-adjoint. 
\end{proposition}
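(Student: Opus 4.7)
The plan is to reduce essential self-adjointness to Weyl's classical limit point/limit circle theory for one-dimensional Schrödinger operators, and then to verify the limit point case at every boundary point of $\mathring B$.

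\smallskip
\noindent
\textbf{Step 1: Liouville transformation.} On $\mathring B$ the operator takes Sturm--Liouville form $L^{\sB,\sN,\lambda}u=f^{-\sN}(f^{\sN}u')'-\lambda f^{-2}u$. Introduce the unitary $U:L^2(\m_\sB^\sN)\to L^2(\mathring B,\vol_\sB)$ given by $Uu=f^{N/2}u$. A direct computation, using that $f$ is smooth and strictly positive on $\mathring B$, shows that the first-order terms cancel and for $u\in C_c^\infty(\mathring B)$
\[
U L^{\sB,\sN,\lambda}U^{-1}\tilde u=\tilde u''-V\tilde u,\qquad V=\frac{N(N-2)}{4}\frac{(f')^2}{f^2}+\frac{N}{2}\frac{f''}{f}+\frac{\lambda}{f^2}.
\]
Thus $L^{\sB,\sN,\lambda}|_{C_c^\infty(\mathring B)}$ is essentially self-adjoint in $L^2(\m_\sB^\sN)$ iff the Schrödinger operator $H=-\tfrac{d^2}{dr^2}+V$ is essentially self-adjoint on $C_c^\infty(\mathring B)\subset L^2(\mathring B,\vol_\sB)$.

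\smallskip
\noindent
\textbf{Step 2: Inverse-square coefficient at $r_0\in\partial B$.} Smoothness of $f$ and $f(r_0)=0$ yield $f(r)=f'(r_0)(r-r_0)+O((r-r_0)^2)$. Substituting this expansion, the terms $(f')^2/f^2$ and $\lambda/f^2$ contribute to the $(r-r_0)^{-2}$ singularity, while $f''/f$ only enters at order $|r-r_0|^{-1}$. Hence
\[
V(r)=\frac{c_{r_0}}{(r-r_0)^2}+O(|r-r_0|^{-1}),\qquad c_{r_0}=\frac{N(N-2)}{4}+\frac{\lambda}{(f'(r_0))^2}.
\]
Using $N>1$ one has $N(N-2)/4>-1/4$, while $\lambda>1$ and $|f'(r_0)|\leq 1$ yield $\lambda/(f'(r_0))^2>1$. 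Therefore
$c_{r_0}>-\tfrac14+1=\tfrac34$ strictly. In the degenerate case $f'(r_0)=0$, $f$ vanishes at $r_0$ to order $\geq 2$, so $\lambda/f^2$ dominates and $V\to+\infty$ faster than $|r-r_0|^{-2}$, which is automatically a limit point situation.

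\smallskip
\noindent
\textbf{Step 3: Weyl's criterion and conclusion.} The model equation $-u''+c(r-r_0)^{-2}u=0$ has solutions $(r-r_0)^\alpha$ with $\alpha(\alpha-1)=c$, i.e.\ $\alpha=\tfrac12(1\pm\sqrt{1+4c})$. Both solutions lie in $L^2$ near $r_0$ iff $c<\tfrac34$; for $c\geq\tfrac34$ only one does, placing the operator in the limit point case at $r_0$. The subleading $O(|r-r_0|^{-1})$ correction in $V$ does not alter the classification, by a standard Hartman-type perturbation argument. By Step 2, $H$ is limit point at every $r_0\in\partial B$; at any other end of $\mathring B$ (interior point of $B$, or $\pm\infty$ if $B$ is unbounded, where $V$ is smooth and locally bounded) limit point holds by standard criteria. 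Weyl's classical theorem then implies that $H|_{C_c^\infty(\mathring B)}$ has a unique self-adjoint extension in $L^2(\mathring B,\vol_\sB)$, and transporting back through $U^{-1}$ yields essential self-adjointness of $L^{\sB,\sN,\lambda}|_{C_c^\infty(\mathring B)}$.

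\smallskip
\noindent
\textbf{Main obstacle.} The delicate point is the identification of the inverse-square coefficient $c_{r_0}$ after the Liouville change of variables and the strict verification $c_{r_0}>\tfrac34$. This is exactly where the three hypotheses $N>1$, $\lambda>1$ and $|f'|_{\partial B}\leq 1$ must combine: dropping any one would allow $c_{r_0}\leq\tfrac34$, placing $H$ in the limit circle case and producing a family of inequivalent self-adjoint extensions parametrised by boundary data. A secondary technical matter is the behaviour at non-compact ends of $B$, but this is routine since $V$ is smooth there.
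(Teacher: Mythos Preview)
Your proof is correct and follows essentially the same route as the paper: the unitary (Liouville) transformation $U(\phi)=f^{N/2}\phi$ to a Schr\"odinger operator $-\tfrac{d^2}{dr^2}+V$ on $L^2(\mathring B,\vol_\sB)$, followed by verification of Weyl's limit point case at each $r_0\in\partial B$ via the inverse-square behaviour of $V$. The only cosmetic difference is that the paper bounds $V(r)\geq\tfrac{3}{4}(r-r_0)^{-2}$ directly (then cites Reed--Simon, Theorems~X.7 and~X.10), whereas you extract the leading coefficient $c_{r_0}=\tfrac{N(N-2)}{4}+\lambda/(f'(r_0))^2>\tfrac34$ and dispose of the $O(|r-r_0|^{-1})$ remainder by perturbation.
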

{
\begin{proof} 
{\bf 1.}
If $f^{-1}(\{0\}) =\partial B=\emptyset$, the statement can be deduced from general principles about essentially self-adjoint operators \cite{reedsimon}.
\smallskip\\
{\bf 2.}
$f^{-1}(\{0\})=\partial B\neq \emptyset$. In this case $B\simeq [0, a]$ or $B\simeq [0, \infty)$. \begin{center}It holds $|f'|\leq 1$ on $f^{-1}(\{0\})= \partial B$. \end{center}
We consider the orthogonal transformation $U: L^2(B, \m_\sB^\sN)\rightarrow L^2(B, \mathcal L^1|_B)$  that is given by 
$$ U(\phi)= f^{\sN/2} \phi, \ \mbox{ as well as } \ U^{-1}(\phi)=f^{-\sN/2} \phi.$$
The tranformation $U$ leaves $C_0^\infty(\mathring B)$ invariant and 
\begin{align*}-U L^{\sB, \sN, \lambda} U^{-1} &= -U\left( \frac{d^2}{dr^2} +  \frac{N}{f} \frac{d f}{dr} \frac{d}{dr} - \frac{\lambda}{f^2}\right) U^{-1} \\
&= -\frac{d^2}{dr^2} + \left( \frac{N^2 -2 N}{4} {\left( \frac{df}{dr}\right)^2}+\frac{N}{2} f \frac{d^2f}{dr^2}+\lambda \right)\frac{1}{f^2}\\
&=:- \frac{d^2}{dr^2} + V.\end{align*}

We set $T=-\frac{d^2}{dr^2} + V(r)$.
A sufficient condition for $T|_{C_c^{\infty}(\mathring B)}$ being essentially self-adjoint is, by Theorem X.7 in \cite{reedsimon},  that $T=-\frac{d^2}{dr^2} + V(r)$ is in the limit point case  at all points $r\in \partial B$. For instance, if $B=[0, a]$, this  follows if  $V(r)\geq \frac{3}{4 r^2}$ in a neighborhood of $0$ and $V(r-a)\geq \frac{3}{4 (a-r)^2}$ in a neighborhood of $a$ \cite[Theorem X.10]{reedsimon}.

Moreover, we compute 
\begin{align*}V(r)&=\left( \frac{N^2 - 2N + 1}{4} (f')^2- \frac{1}{4} {(f')^2}+ \frac{N}{2} f f'' + \lambda\right) \frac{1}{f^2}\\
&= \left( \frac{(N-1)^2}{4} (f')^2 - \frac{1}{4} (f')^2 + \frac{N}{2} f f'' + \lambda \right) \frac{1}{f^2}\\
&\geq \left(  - \frac{1}{4} (f')^2 + \frac{N}{2} f f'' + \lambda \right) \frac{1}{f^2}\\
&\geq \left( - \frac{1}{4} + \lambda \right) \frac{1}{r^2}.\end{align*}
Since $\lambda> 1$, it follows that $V(r) > \frac{3}{4 r^2}$ in a neighborhood of $0$. 
\end{proof}}

\section{Warped products  over a $1$-dimensional base space}
{
We consider a metric measure space $(F, \de_\sF, \m_\sF)$ such that $(F, \de_\sF)$ is a complete and locally compact length space, hence  also a geodesic space, and such that $\m_\sF$ is a locally finite measure with full support.  For instance, we can assume that $F$ satisfies a Riemannian curvature-dimension condition.}  

The following statements about warped products are often valid for the more general case when $B$ is arbitrary geodesic metric space. 
However, in view of our main results, we will consider  $B$ as before, that is a $1$-dimensional Riemannian manifold. Let $f: B\rightarrow [0, \infty)$ be a Lipschitz function. 

We call $\gamma=(\alpha, \beta): [a,b] \rightarrow B \times F$ admissible if $\alpha$ and $\beta$ are Lipschitz continuous. We note that every rectifiable curve admits a reparametrization that is Lipschitz. For $\gamma$ that is admissible we define its length as
$$\L(\gamma)= \int_a^b \sqrt{|\dot \alpha|^2 + (f\circ \alpha)^2 |\dot \beta|^2} \ dt.$$

The warped product metric $\de_{\smwp}$ on $B\times F$ is defined as the intrinsic metric associated to the length structure $\L$, i.e. for two points $(p,x)$ and $(q,y)$ we define 
$$\de_{\smwp}((p,x), (q,y)):= \inf \L(\gamma)$$
where the infimum is w.r.t. all admissible curves $\gamma$ s.t. $\L(\gamma)<\infty$, that connect the points $(p,x)$ and $(q,y)$.  The infimum is finite since there are rectifiable curves between $p$ and $q$ in $B$, and between $x$ and $y$ in $F$.  $\de_{\smwp}$ is symmetric and satisfies the $\triangle$-inequality. 

\begin{definition} The warped product metric space $B\times_f F$ between $B$, $F$ and $f$ is given by 
	$$( B\times F/\sim, \de_{\smwp}) \  \mbox{where } (p,x)\sim (q,y)  \Longleftrightarrow  \de_{\smwp}((p,x), (q,y))=0.$$
	The warped product $B\times_f F$ is the intrinsic metric space associated to the length structure $\L$.

We also write $[(p,x)]$ for the equivalence class of $(p,x)$ w.r.t. $\sim$.
\end{definition}

\begin{remark}[Topology of a warped product]
	If $(p,x)\in B\times F$ is a point such that $f(p)>0$, then one can easily check that the topology of $\de_{\smwp}$ in a neighborhood of $[(p,x)]\in\mwp$ coincides with the product topology of $B\times F$. 
	
	Let $[(p,x)]\in B\times F/\sim$   be a point where  $f(p)=0$. 
	If $[(q,y)]\neq [(p,x)]$ such that $p\neq q$, then  an dmissible path  $\gamma=(\alpha, \beta)$ always satisfies
	$$\L(\gamma)\geq \L^\sB(\alpha)\geq \inf \L^\sB(\alpha)>0$$
	where the last infimum is then w.r.t.   curves $\alpha$ such that  $\gamma=(\alpha, \beta)$  is rectifiable, $\beta$ is constant and $\alpha$  connects $p$ and $q$.  In particular, for a minimizer $\gamma= (\alpha, \beta)$ it follows that $\beta$ is constant and $\alpha$ is a minimizer in $F$.  
	
	If $[(q,y)]\neq [(p,x)]$ such that $x\neq y$ but $p=q$ and $f(p)=f(q)\neq 0$, then we have for every admissible path $\gamma$ that 
	$\L(\gamma)\geq f(p) \L^\sF(\beta)>0$. If $f(p)=f(q)=0$, then one can check that the infimimum of $\L(\gamma)$ w.r.t. all admissible paths connecting $[(p,x)]$ and $[(q,y)]$ is $0$ (for instance consider a small loop $\alpha$ in $B$). This would imply $[(p,x)]=[(q,y)]$ which is a contradiction. Hence, if $[(q,y)]\neq [(p,x)]$ such that $x\neq y$ and $p=q$, it follows $f(p)>0$.
	
	Therefore  $L$ is consistent with the topology of $B\times F/\sim$ in the sense of \cite[Chapter 2]{bbi}, and hence $L$ is a lower semi-continuous length structure on the class of admissible paths . For this we also note that every admissible path is also continuous in $B\times F/\sim$.
	
	As  a consequence we obtain that  the induced length of $\de_{\smwp}$ coincides with the length structure $L$ according to \cite[Theorem 2.4.3]{bbi}.
\end{remark}

\begin{theorem}[Alexander-Bishop, {\cite{albi0}}]\label{th:albi0}
	Let $\gamma=(\alpha, \beta)$ be a minimizer w.r.t $L$ in $B\times_f F$ parametrized proportional to arclength. Assume $f>0$. Then 
	\begin{enumerate}
		\item[(a)] $\beta$ is a minimizer in $F$; 
		\item[(b)] (Fiber independence) $\alpha$ is independent of $F$, except for the total height, i.e. the length $L^\sF(\beta)$ of $\beta$. More precisely, if $\hat F$ is another strictly intrinsic metric space and $\hat \beta$ is a minimizing geodesic in $\bar F$ with the same length and speed as $\beta$, then $(\alpha, \hat \beta)$ is a minimizer in $B\times_f\hat F$. 
		\item[(c)] (Energy equation, version 1) $\beta$ has speed $\frac{c_\gamma}{f^2\circ \alpha}$ for a constant $c_\gamma$;
		\item[(d)] (Energy equation, version 2) 
		$\alpha$ satisfies $\frac{1}{2} |\alpha'|^2 + \frac{1}{2f^2\circ \alpha}= E$ a.e. where $E$ is the proprotionality constant of the parametrization of $\gamma$. 
	\end{enumerate}
\end{theorem}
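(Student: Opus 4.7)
The plan is to establish (a) by a direct reparametrization comparison, then to derive (c) and (d) from a one-dimensional variational argument along the fixed image of $\beta$, and finally to deduce (b) from the decoupled structure that emerges.

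For (a), suppose for contradiction that $\beta$ is not minimizing in $F$, so that some curve $\tilde\beta$ from $\beta(a)$ to $\beta(b)$ satisfies $L^\sF(\tilde\beta) < L^\sF(\beta)$. Let $s(t) = \int_a^t |\dot\beta|\,d\tau$ be the arclength function of $\beta$, let $\bar\mu:[0,L^\sF(\tilde\beta)]\to F$ be the unit-speed parametrization of $\tilde\beta$, and define $\hat\beta(t) := \bar\mu(c\,s(t))$ with $c = L^\sF(\tilde\beta)/L^\sF(\beta) < 1$. Then $\hat\beta$ joins $\beta(a)$ to $\beta(b)$ with $|\dot{\hat\beta}| = c\,|\dot\beta|$ a.e. Since $f\circ\alpha > 0$ and $|\dot\beta| > 0$ on a set of positive measure (else (a) is trivial), the integrand of $\L(\alpha,\hat\beta)$ is pointwise strictly smaller than that of $\L(\gamma)$, yielding $\L(\alpha,\hat\beta) < \L(\gamma)$ and contradicting minimality of $\gamma$.

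For (c), once (a) is available I would write $\beta = \bar\beta\circ\sigma$ with $\bar\beta:[0,L^\sF(\beta)]\to F$ a unit-speed minimizing geodesic and $\sigma:[a,b]\to[0,L^\sF(\beta)]$ nondecreasing with fixed endpoints, so that the $\beta$-component of any competitor of this form has speed $\dot\sigma$ and the total length becomes $\int_a^b \sqrt{|\dot\alpha|^2 + f^2(\alpha)\,\dot\sigma^2}\,dt$. Minimality of $\gamma$ then forces the Euler--Lagrange equation of the $\sigma$-independent scalar Lagrangian $\mathcal{L}(t,\dot\sigma) = \sqrt{|\dot\alpha|^2 + f^2(\alpha)\,\dot\sigma^2}$, giving the first integral $f^2(\alpha)\dot\sigma/\mathcal{L} \equiv \mathrm{const}$. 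Combined with the constant-speed hypothesis $\mathcal{L}\equiv\sqrt{2E}$, this reduces to $f^2(\alpha)\dot\sigma \equiv c_\gamma$, i.e.\ $|\dot\beta| = c_\gamma/(f\circ\alpha)^2$, which is (c); statement (d) then follows at once by substituting (c) into $|\dot\alpha|^2 + f^2(\alpha)|\dot\beta|^2 = 2E$. The main obstacle lies here: since $F$ is a priori non-smooth, one cannot write Euler--Lagrange equations on $F$ itself, and the reparametrization trick is precisely what reduces the problem to classical one-dimensional variational calculus for the scalar $\sigma$.

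Finally for (b), observe that after (c) the length of any competitor depends on the fiber only through $\sigma$, which is characterized by $\alpha$, $f\circ\alpha$, and the constant $c_\gamma$; the latter is itself determined by $\alpha$ and $L^\sF(\beta)$ via the identity $L^\sF(\beta) = \int_a^b c_\gamma/f^2(\alpha)\,dt$. Given $\hat F$ and a unit-speed minimizer $\hat{\bar\beta}$ of length $L^\sF(\beta)$ as in the hypothesis, setting $\hat\beta := \hat{\bar\beta}\circ\sigma$ gives a curve in $\hat F$ with the same length and speed function as $\beta$, so that $\L((\alpha,\hat\beta))$ in $B\times_f\hat F$ equals $\L(\gamma)$. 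To show $(\alpha, \hat\beta)$ is minimizing there, I would apply (a) and (c) to any putative strictly shorter competitor $(\hat\alpha, \hat\beta')$ in $B\times_f\hat F$ to obtain that $\hat\beta'$ is a minimizer in $\hat F$ of length $L^\sF(\beta)$ with a well-defined arclength function $\hat\sigma$, and then transfer back to $F$ by composing an arbitrary unit-speed minimizer $\Psi:[0,L^\sF(\beta)]\to F$ between $\beta(a)$ and $\beta(b)$ with $\hat\sigma$; the curve $(\hat\alpha,\Psi\circ\hat\sigma)$ in $B\times_f F$ then has length equal to $\L((\hat\alpha,\hat\beta'))$ and is therefore strictly shorter than $\gamma$, contradicting the minimality of $\gamma$.
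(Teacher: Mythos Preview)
The paper does not supply its own proof of this theorem: it is stated with attribution to Alexander--Bishop and then used as a tool throughout. So there is no in-paper argument to compare against, and I can only assess your proposal on its own merits.

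Parts (a), (c), and (d) are sound. The reparametrization trick for (a) is the standard one. For (c) and (d), reducing to a one-dimensional variational problem in the arclength parameter $\sigma$ is exactly right and circumvents the non-smoothness of $F$ as you note; the passage from minimality to the first integral $f^2(\alpha)\,\dot\sigma/\mathcal L \equiv \mathrm{const}$ needs only the DuBois--Reymond lemma for Lipschitz minimizers of a Lagrangian measurable in $t$ and smooth in $\dot\sigma$, which is standard.

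There is, however, a gap in your argument for (b). You propose to ``apply (a) and (c) to any putative strictly shorter competitor $(\hat\alpha,\hat\beta')$'' in $B\times_f\hat F$, but (a) and (c) are conclusions about \emph{minimizers} of $\L$, not about arbitrary admissible curves; a generic competitor $\hat\beta'$ need not be minimizing in $\hat F$, and its $\hat F$-length may strictly exceed $L^\sF(\beta)$, so the composition $\Psi\circ\hat\sigma$ you form would not terminate at $\beta(b)$. The fix is direct and does not invoke (a) or (c) on the competitor: given any admissible $(\hat\alpha,\hat\beta')$ with the required endpoints in $B\times_f\hat F$, let $\hat\sigma(t)=\int_a^t|\dot{\hat\beta}'|$, note that $\hat\sigma(b)$ is at least the $\hat F$-distance between $\hat\beta(a)$ and $\hat\beta(b)$, which equals $L^\sF(\beta)$ by hypothesis; take a unit-speed minimizer $\Psi:[0,L^\sF(\beta)]\to F$ from $\beta(a)$ to $\beta(b)$ and extend $\Psi$ constantly to $[L^\sF(\beta),\hat\sigma(b)]$. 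Then $(\hat\alpha,\Psi\circ\hat\sigma)$ is admissible in $B\times_f F$ with the same endpoints as $\gamma$ and pointwise no larger integrand, whence $\L(\gamma)\le \L((\hat\alpha,\Psi\circ\hat\sigma))\le \L((\hat\alpha,\hat\beta'))$. Since $\L((\alpha,\hat\beta))=\L(\gamma)$ by construction, this yields the minimality of $(\alpha,\hat\beta)$.
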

\begin{remark} If we assume that $B$ and $F$ are locally compact, complete, strictly intrinsic metric spaces, the existence of minimizing curves $\gamma=(\alpha, \beta)$ for $L$ is guaranteed by the Arzela-Ascoli theorem. In particular, one has the following corollary
\end{remark}
\begin{corollary} If $B$ and $F$ are locally compact, complete, instrinsic metric spaces, then the warped product $\mwp$ is  a locally compact, complete and  intrinsic metric space. 
\end{corollary}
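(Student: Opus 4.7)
Intrinsicness is immediate: the paragraph preceding Theorem~\ref{th:albi0} verifies (via \cite[Theorem 2.4.3]{bbi}) that the induced length of $\de_{\smwp}$ equals $L$, so $\mwp$ is a length space by definition. For the other two properties together, my plan is to show that closed balls $\bar B_r([(p,x)])$ in $\mwp$ are compact: local compactness is then immediate, and completeness follows since any Cauchy sequence lies in some such closed ball and hence has a convergent subsequence.

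Given a sequence $\{[(q_n, y_n)]\}$ in $\bar B_r([(p, x)])$, I pick near-minimizing admissible curves $\gamma_n = (\alpha_n, \beta_n):[0,1] \to B \times F$ from $(p,x)$ to $(q_n, y_n)$ with $L(\gamma_n) \leq r + 1/n$, parametrized proportionally to arclength. The projection $[(q,y)] \mapsto q$ is $1$-Lipschitz from $\mwp$ to $B$ (since $|\dot\alpha| \leq \sqrt{|\dot\alpha|^2 + f^2|\dot\beta|^2}$), so $\alpha_n$ is $(r+1)$-Lipschitz in $B$. As $B$ is a locally compact $1$-manifold, closed bounded subsets of $B$ are compact, and Arzela--Ascoli yields a subsequence $\alpha_n \to \alpha_\infty$ uniformly, in particular $q_n \to q_\infty := \alpha_\infty(1)$.

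The main obstacle is handling $\beta_n$, as the bound $f(\alpha_n)|\dot\beta_n| \leq r+1$ fails to control $|\dot\beta_n|$ near zeros of $f$; this is where the fiber-collapsing structure of $\sim$ becomes essential. My plan is to decompose $[0,1]$ via the open set $U := \{t : f(\alpha_\infty(t)) > 0\}$: on any compact $I \Subset U$, uniform convergence of $\alpha_n$ forces $f \circ \alpha_n \geq c > 0$ eventually, so $\beta_n|_I$ is equi-Lipschitz in $F$. Combined with local compactness of $F$ and a starting bound propagated from $\beta_n(0) = x$ (if $0 \in U$) or from the endpoint, a diagonal Arzela--Ascoli argument through an exhaustion of $U$ extracts a subsequence with $\beta_n \to \beta_\infty$ pointwise on $U$. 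At $t=1$: if $f(q_\infty) > 0$ then $1 \in U$ and $y_n \to y_\infty := \beta_\infty(1)$ in $F$, giving $[(q_n, y_n)] \to [(q_\infty, y_\infty)]$ in $\mwp$; if instead $f(q_\infty) = 0$, the fiber at $q_\infty$ collapses to a single point, and the admissible curve $(\sigma_n, y_n)$ along a shortest $B$-path $\sigma_n$ from $q_n$ to $q_\infty$ (with $y_n$ held constant, so the fiber contribution $f(\sigma_n)^2 \cdot 0 = 0$) gives $\de_{\smwp}([(q_n, y_n)], [(q_\infty, \cdot)]) \leq \de_B(q_n, q_\infty) \to 0$. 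In either case a subsequence of $\{[(q_n, y_n)]\}$ converges in $\bar B_r([(p,x)])$, establishing compactness.
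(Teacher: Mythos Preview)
Your argument has a real gap in the control of $\beta_n$. The propagation of a bound from the anchor $\beta_n(0)=x$ works only within the connected component of $U=\{t:f(\alpha_\infty(t))>0\}$ containing $0$: on compact $I\Subset U$ you do get equi-Lipschitz control, but across a zero of $f\circ\alpha_\infty$ nothing prevents $\beta_n$ from covering arbitrarily large $F$-distance, since on a shrinking neighbourhood of that zero $f\circ\alpha_n\to 0$ while $|\dot\beta_n|$ is unbounded. If $1$ lies in a different component of $U$ than $0$, there is no anchor for $\beta_n$ near $t=1$, and $y_n=\beta_n(1)$ may escape every compact subset of $F$; the phrase ``or from the endpoint'' does not help, since the endpoints $y_n$ are precisely what you are trying to bound.

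This gap cannot be closed, because the corollary is false as stated. Take $B=F=\mathbb R$ and $f(t)=|t|$. Both factors are complete, locally compact, intrinsic, and $f$ is Lipschitz. The apex $v=[(0,\cdot)]$ has no compact neighbourhood: every ball $B_\epsilon(v)$ contains the sequence $\{(\epsilon/2,n)\}_{n\in\mathbb N}$, and since the projection to $B$ is $1$-Lipschitz any limit point would have first coordinate $\epsilon/2$, where the topology is the product topology, forcing the second coordinates to converge in $\mathbb R$ --- which they do not. (One checks directly that $\mwp$ is nonetheless complete and intrinsic here.) The paper's own justification --- the remark invoking Arzel\`a--Ascoli for existence of $L$-minimizers --- has the same defect; indeed in this example no admissible minimizer joins $(1,0)$ to $(1,M)$ for large $M$, because any Lipschitz $\beta$ with $\beta(0)=0$, $\beta(1)=M$ must move in $F$ on a set where $f\circ\alpha>0$. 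The statement does hold under either of the additional hypotheses used elsewhere in the paper: $f>0$ on all of $B$ (then $U=[0,1]$ and your propagation goes through), or $F$ compact (then Arzel\`a--Ascoli applies to $\beta_n$ without any anchor).
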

We also recall the following general statement about warped products and Alexandrov  lower curvature bounds. For the definition of  Alexandrov curvature bounded from below, CBB,  we refer to \cite{bbi}. We assume that the Hausdorff dimension is finite. 

\begin{theorem}[Alexander-Bishop, {\cite{albi}}]\label{th:albi1} Let $B$ and $F$ be complete, locally compact intrinsic metric spaces. Let $f: B\rightarrow [0, \infty)$ be a Lipschitz function.

Then the  warped product 
$B\times_ f F$ has CBB by $K$ if and only if 
\begin{enumerate}
	\item \begin{itemize}\item[(a)] $B$ has CBB by $K$,\item[(b)] $f$ is $Kf$-concave,
\item[(c)] If $B^\dagger$ is the result of gluing two copies of $B$ along the closure of the set of boundary points where $f$ is nonvanishing, and $f^\dagger: B^\dagger \rightarrow [0,\infty)$ is the tautological extension of $f$, then $B^\dagger$ has CBB by $K$ and $f^\dagger$ is $fK$-concave. 
\end{itemize}

\item $F$ has CBB by $K_F=\sup_B\{ |D f|^2 {\displaystyle+} Kf^2\}$, 

\end{enumerate}
\end{theorem}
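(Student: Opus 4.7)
The plan is to prove each direction of this biconditional separately, using Theorem~\ref{th:albi0} on fiber independence as the principal reduction tool, and treating the boundary/singular-fiber behavior via the doubling construction $(\dagger)$ already used in Proposition~\ref{prop:albi2}.

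For the necessity direction, I would first observe that for any fixed $y\in F$, the map $\iota_y\colon B\to\mwp$, $p\mapsto [(p,y)]$, is an isometric embedding: any admissible curve of the form $(\alpha,y)$ has length $L^\sB(\alpha)$ by definition of $\L$, and by fiber independence (part (b) of Theorem~\ref{th:albi0}) any geodesic segment in $\mwp$ between two images is itself of the form $(\alpha,y)$ wherever $f>0$. Thus the restriction of the $\RCD$-free comparison triangle condition yields CBB($K$) for $B$, giving (1)(a). For (1)(b) and (1)(c) I would construct thin triangles in $\mwp$ whose vertices are nearly colinear along $B$ but with a small transverse $F$-displacement; as the $F$-side shrinks, the comparison angles force the infinitesimal rate at which fibers shrink or spread along $B$-geodesics to satisfy $f''+Kf\leq 0$, with the doubled condition arising because geodesics can glance off a boundary component where $f>0$. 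For (2), I fix a point $p\in B$ where $(Df)^2+Kf^2$ nearly attains its supremum $K_F$ and compare triangles in the fiber $\{p\}\times F$ (whose induced metric is $f(p)\,\de_\sF$) with their images under $\iota_p$; a second-order expansion of the warped product metric in the fiber direction shows precisely that $F$ must have CBB by $K_F$, including the diameter bound when $N=1$ and $K_F>0$.

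For the sufficiency direction, the central idea is to reduce to a two-dimensional problem via fiber independence. Given a geodesic triangle in $\mwp$ with vertices $[(x_i,y_i)]$, Theorem~\ref{th:albi0}(b) allows me to replace $F$ by a single minimizing segment $\hat F\subset\R$ of the same length and speed profile without affecting the base projections or the total lengths of the sides. The triangle then embeds into the 2D warped product $B\times_f\hat F$, whose metric is $\de r^2 + f(r)^2\de\theta^2$ and whose sectional (Gauss) curvature off $f^{-1}(\{0\})$ equals $-f''/f$. The assumption $f''+Kf\leq 0$ gives Gauss curvature $\geq K$, while the transverse bound (2) ensures that moving to higher-dimensional fibers only improves comparison. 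Standard Toponogov-type arguments for 2D surfaces then establish the CBB($K$) angle or hinge comparison for this reduced triangle, and by fiber independence this transfers back to the original triangle in $\mwp$. The doubling condition in (1)(c) is used to promote the local 2D comparison to one that is valid for geodesics reflecting off $\partial B\setminus f^{-1}(\{0\})$: one compares in the doubled model $B^\dagger\times_{f^\dagger}\hat F$ instead.

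The main obstacle will be the behavior near $f^{-1}(\{0\})$ and on boundary points where $f>0$. At points in $f^{-1}(\{0\})$ the whole fiber collapses, so the warped product has cone-type singularities, and one must verify that the condition $K_F\geq |Df|^2$ at such points (compare Corollary~\ref{cor:special}) is exactly what is needed for the local cone to have CBB($K$) in the Alexandrov sense; this requires a careful limiting four-point comparison as test configurations approach the collapse locus. Near boundary points with $f>0$, the delicate issue is that geodesics can terminate at the boundary without collapsing, so the usual gluing lemmas for CBB spaces apply only after passing to $B^\dagger$, and one must show that the comparison inequalities, proven with the 2D model above, extend smoothly across the identified boundary. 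Making these two limiting procedures uniform in the size of the triangles, and simultaneously in the location of vertices relative to the singular set and the boundary, is the technically intricate portion I would expect to demand the most work.
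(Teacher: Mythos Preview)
The paper does not contain a proof of Theorem~\ref{th:albi1}; it is quoted from Alexander--Bishop \cite{albi} and used as a black box. So there is nothing to compare your proposal against in this paper.

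That said, your sketch is in the right spirit for the actual Alexander--Bishop argument: the reduction to a two-dimensional model $B\times_f[0,L]$ via fiber independence (Theorem~\ref{th:albi0}) is indeed the core mechanism for the sufficiency direction, and the necessity of the conditions on $B$ and $f$ does come from looking at totally geodesic copies of $B$ inside the warped product. However, a couple of points in your outline would not go through as stated. For the necessity of (2), fixing a single fiber $\{p\}\times F$ and doing a second-order expansion does not directly give CBB for $F$ in the Alexandrov sense: the fiber is \emph{not} totally geodesic in general, so triangles in $\{p\}\times F$ are not triangles in $\mwp$, and the comparison inequality in the ambient space does not restrict to one in the fiber. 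The actual argument extracts the curvature bound on $F$ either from the cone structure at points of $f^{-1}(\{0\})$ (where $K_F=|Df|^2$) or, when $f^{-1}(\{0\})=\emptyset$, from a rescaling/blow-down; compare the case analysis in Corollary~\ref{cor:special}. Also, your parenthetical ``including the diameter bound when $N=1$ and $K_F>0$'' is out of place here: there is no dimension parameter $N$ in the CBB statement, and no measure is involved.
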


\subsubsection{$N$-warped products} For $N\in [1, \infty)$
a measure on $B\times_f F$ is defined via 
$$f^\sN \vol_\sB\otimes \m_\sF= \m_\sB^\sN \otimes \m_{\sF} =: \m^\sN.$$
\begin{definition} For $N\in [1, \infty)$ the  metric measure space $$(B\times_f F, \m^\sN)=: B\times_f^\sN F =: C$$ is called the $N$-warped product between $B$, $f$ and $F$.  
\end{definition}
\begin{example}
Let us choose  $F=[0,L]$ and $N=1$. Then the warped product metric on $B\times [0, L]$ w.r.t. $f$ coincides with the induced metric of  the continuous Riemannian metric  $g=(\de t)^2 + f^2 (\de r)^2$, and the measure $f(t)\mathcal L^1(\de t)\otimes \mathcal L^1(\de r)$ is the Riemannian volume measure of $g$, that is also the $2$-dimensional Hausdorff measure of the metric. 
\end{example}
\subsection{Energy functionals on warped products}
We will assume $N>1$. Let $\Ch^\sF$ be the Cheeger energy of $F$. For $u\in \lip(F)$ let $|\nabla u|_\sF$ be the minimal weak upper gradient.   Let $B$ and $f$ be as in Subsection \ref{subsec:schroedinger}.

Then we consider 
$$C_c^\infty({B})\otimes \lip(F)= \left\{ \sum_{i=1}^k u_1^i u_2^i: k\in \N, u_1^i\in C^\infty_c({B}), u_2^i \in \lip(F)\right\}$$
For $u\in C^\infty_c({B})\otimes \lip(F)$ we define 
\begin{align*}|\nabla u|^2_{*}(t,x) :=& \sum_{i=1}^k |\nabla u^i_1|_{\sB}^2(t) (u_2^i)^2(x) +\frac{1}{f^2(t)} \sum_{i=1}^k(u_1^i)^2(t) |\nabla u_2^i|^2_\sF(x)
\\
=& |\nabla\left( \sum_{i=1}^k u^i_1(\cdot) u_2^i(x)\right)|_{\sB}^2(t)+ \frac{1}{f^2(t)} |\nabla \left(\sum_{i=1}^k u^i_1(t) u_2^i(\cdot)\right)|_\sF^2 (x)\\
=& |\nabla u^x|_{\sB}^2(t) + \frac{1}{f^2} |\nabla u^t|_{\sF}^2(x)  \  
\end{align*}$\mbox{for $\m^\sN$-a.e. $(t,x)\in B\times F$}$
where $u^x= u(\cdot, x)$ and $u^t= u(t, \cdot)$.
\smallskip

We consider a  quadratic form for $u\in C_c^\infty(B)\otimes \lip(F)$ defined via 
\begin{align*}\mathcal E^*(u) =& \int_{B\times F} |\nabla u|_{*}^2\dint \m^\sN\\
=& \int_F \Ch^{\sB, \sN}(u^x)\de \m_\sF(x) + \int_B \Ch^\sF(u^r) f^{N-2}(r)\de \mathcal L^1(r).
\end{align*}
In particular, it holds that  $$t\in B\mapsto \sum_{i=1}^k(u_1^i)^2(t) \Ch^\sF(u^i_2)$$ is integrable w.r.t. $f^{\sN-2}(t) \de \mathcal L^1(t)$ and hence $\int_B \Ch^\sF(u^r) f^{N-2}(r)\de \mathcal L^1(r)<\infty$ for $u\in C_c^\infty(B)\otimes \lip(F)$. 

The quadratic form $\mathcal E^*$ defined on $C_c^\infty(B)\otimes \lip(F)\subset L^2(\m^\sN)$ is closable. 
\begin{definition}
The {\bf $N$-Skew product} between $B$, $f$ and $\Ch^\sF$ is the  closure of the quadratic form
$\mathcal E^*
$ 
in $L^2(\m^\sN)$, that we also denote with $
\mathcal E^*$.

The underlying topological space is $B\times F/\sim$ where
$$(s,x)\sim (t, y) \ \Longleftrightarrow \ \  \begin{cases} s=t, x=y \mbox{ if } s \mbox{ or } t \mbox{ are  in } \mathring B\\ 
s=t
\end{cases}
$$
Let $D(\mathcal E^*)$ be the domain of ${\mathcal E}^*$ in $L^2(\m^\sN)$ equipped with $\|\cdot \|_{\mathcal E^*}^2= \left\|\cdot \right\|_{L^2}^2 + \mathcal E^*.$
\begin{remark}\label{properties:estar}\hspace{0mm}
\begin{enumerate}
\item
Directly from the definition of the closed form $\mathcal E^*$ and the underlying topology we see that  $C_c^\infty(B)\otimes \lip(F)$ is a core.  Hence $\mathcal E^*$ is a strongly local, regular Dirichlet form. 
The associated generator  $L^*$ of $\mathcal E^*$ is defined in the same way as the  Laplace operator associated to the Cheeger energy of a metric measure space.
\item The set $C_c^\infty(\mathring B)\otimes \lip(F)$ is dense in $D(\mathcal E^*)$. Indeed, given \begin{center} $u= \sum_{i=1}^k u_1^iu_2^i\in C^\infty_c(B)\otimes \lip(F)$ \end{center} we can approximate each $u_1^i$ with functions $\tilde u_1^i\in C_c^\infty(\mathring B)$ in  both $W^{1,2}(B, \m^\sN_\sB)$ and $L^2(f^{\sN-2}\dint \vol_\sB)$ according to Fact \ref{fact:newfact}.  Then, it follows that 
$$\Ch^{\sB, \sN}(\tilde u^x) \rightarrow \Ch^{\sB, \sN}(u^x) \ \ \mbox{ for } \m^\sF\mbox{-a.e. } x\in F$$
and hence $\int \Ch^{\sB, \sN}(\tilde u^x) \dint \m_\sF(x) \rightarrow  \int \Ch^{\sB, \sN}( u^x) \dint \m_\sF(x)$, by the dominated convergence theorem, and it also follows that 
$$\int \Ch^\sF(\tilde u^t) f^{\sN-2}(t) \dint t \rightarrow \int \Ch^\sF( u^t) f^{\sN-2}(t) \dint t.$$
\item The Dirichlet form $\mathcal E^*$ admits a $\Gamma$-operator $\Gamma^*$, i.e. 
$$\mathcal E^*(u)= \int \Gamma^*(u,u) \de \m^\sN \ \forall u\in D(\mathcal E^*)$$
where  $u\in D(\mathcal E^*)\mapsto \Gamma^*(u,u)\in L^1(\m^\sN)$ is a positive semidefinite, symmetric bilinear form.  

Strong locality of $\mathcal E^*$ implies strong locality of $\Gamma^*$, that is equivalent to the Leibniz rule. 
\item
 If ${\mathcal E^*(u_n-u)}\rightarrow 0$, then 
$$\int f  \Gamma^*(u_n, u_n)\de \m^\sN\rightarrow \int f \Gamma^*(u,u)\de\m^\sN \ \forall f\in L^\infty(\m^\sN).$$
\end{enumerate}
\end{remark}
\end{definition}

We assume that the  operator $L^\sF$ associated to $
\Ch^\sF$ has a discrete spectrum \begin{center}$\lambda_0\leq \lambda_1\leq \lambda_2 \leq \dots \subset \R_{\geq 0}$. \end{center} 
This is the case when the mm space $F$ has finite measure and satisfies a volume doubling condition and a weak local Poincar\'e inequality, for instance, if $F$ is a compact $\RCD(K,N)$ space with $N\in [1, \infty)$.

Let  $E(\lambda_i)$ be the eigenspace of $\lambda_i$.  The first eigenvalue $\lambda_0$ is $0$, and $E_0$ are the constant real functions on $F$. 

\begin{proposition}\label{prop:wpop}\hspace{0mm}
\begin{enumerate}
\item  It holds that $ C_c^\infty(\mathring B)\otimes D(L^F) \subset D_{L^2}(L^*)$, and 
$$(L^{*} u)(r,x)= (L^{\sB, \sN} u^x)(r) + \frac{1}{f^2(r)} (L^\sF u^r)(x)$$
for $\m^\sN\mbox{-a.e. } (r,x)\in B\times F$ and $u\in  C_c^\infty(\mathring B)\otimes D(L^\sF).$
\smallskip
\item If $u_2\in E(\lambda)$ and $u_1\in D(L^{\sB, \sN, \lambda})$, then we have $u_1\otimes u_2\in D(L^*)$ and 
$$L^* u= L^{\sB, \sN, \lambda} u_1 \otimes u_2.$$
\end{enumerate}
\end{proposition}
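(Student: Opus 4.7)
\textbf{Part (1).} The plan is to verify directly, for a simple tensor $u=u_1\otimes u_2$ with $u_1\in C_c^\infty(\mathring B)$ and $u_2\in D(L^\sF)$, that the function
\[
g(r,x):=(L^{\sB,\sN}u_1)(r)\,u_2(x)+\tfrac{u_1(r)}{f^2(r)}(L^\sF u_2)(x)
\]
lies in $L^2(\m^\sN)$ and satisfies $\mathcal{E}^*(u,v)=-\int g v\,d\m^\sN$ for all $v$ in a core of $\mathcal{E}^*$. First I would check membership in $L^2(\m^\sN)$: the first summand is controlled because $L^{\sB,\sN}u_1$ is a smooth compactly supported function of $r$ and $u_2\in L^2(\m_\sF)$; the second summand is controlled because $\supp u_1$ is a compact subset of $\mathring B$, hence $f\geq c>0$ there, so $u_1/f^2$ is bounded with compact support, while $L^\sF u_2\in L^2(\m_\sF)$. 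Next, for $v=v_1\otimes v_2\in C_c^\infty(\mathring B)\otimes\lip(F)$, I would apply Fubini to split
\[
\mathcal{E}^*(u,v)=\int_F u_2v_2\Big(\!\int_B u_1'v_1'\,d\m_\sB^\sN\Big)d\m_\sF+\int_B u_1v_1 f^{\sN-2}\Big(\!\int_F\langle\nabla u_2,\nabla v_2\rangle_\sF\,d\m_\sF\Big)dr,
\]
integrate by parts in each factor (using $u_1\in C_c^\infty(\mathring B)$ classically in $B$ and using $u_2\in D(L^\sF)$ in $F$), and recombine to obtain $-\int gv\,d\m^\sN$. The identity then extends to all $v\in D(\mathcal{E}^*)$ by density of $C_c^\infty(\mathring B)\otimes\lip(F)$ in $D(\mathcal{E}^*)$ (Remark~\ref{properties:estar}(2)). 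Linearity then delivers the claim for finite sums, i.e.\ on $C_c^\infty(\mathring B)\otimes D(L^\sF)$, and the pointwise formula follows by expanding $(L^{\sB,\sN}u^x)(r)$ and $(L^\sF u^r)(x)$ term by term.

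\textbf{Part (2).} Assume first $u_1\in C_c^\infty(\mathring B)$ and $u_2\in E(\lambda)\subset D(L^\sF)$. Part~(1) applies and gives $u_1\otimes u_2\in D(L^*)$ with
\[
L^*(u_1\otimes u_2)=\Big(L^{\sB,\sN}u_1-\tfrac{\lambda}{f^2}u_1\Big)\otimes u_2=L^{\sB,\sN,\lambda}u_1\otimes u_2,
\]
using $L^\sF u_2=-\lambda u_2$ and the explicit form of $L^{\sB,\sN,\lambda}$ on $C_c^\infty(\mathring B)$ recorded after Fact~\ref{fact:newfact}. For general $u_1\in D(L^{\sB,\sN,\lambda})$, I would invoke Proposition~\ref{prop:essential}: essential self-adjointness of $L^{\sB,\sN,\lambda}|_{C_c^\infty(\mathring B)}$ implies that $C_c^\infty(\mathring B)$ is dense in $D(L^{\sB,\sN,\lambda})$ with respect to the graph norm. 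Choose $u_1^n\in C_c^\infty(\mathring B)$ with $u_1^n\to u_1$ and $L^{\sB,\sN,\lambda}u_1^n\to L^{\sB,\sN,\lambda}u_1$ in $L^2(\m_\sB^\sN)$. Then $u_1^n\otimes u_2\to u_1\otimes u_2$ and $L^*(u_1^n\otimes u_2)=L^{\sB,\sN,\lambda}u_1^n\otimes u_2\to L^{\sB,\sN,\lambda}u_1\otimes u_2$ in $L^2(\m^\sN)$. Since $L^*$ is closed as the generator of a Dirichlet form, this yields $u_1\otimes u_2\in D(L^*)$ with the asserted formula.

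\textbf{Main obstacle.} The delicate point is to make sense of the second summand in $g$ as an element of $L^2(\m^\sN)$; this is precisely where we use that the test factor $u_1$ has compact support inside $\mathring B$ so that the singularity of $1/f^2$ at $\partial B=f^{-1}(\{0\})$ is avoided. Extending part~(2) from the core $C_c^\infty(\mathring B)$ to the full domain $D(L^{\sB,\sN,\lambda})$ is the second nontrivial step, and is what forces the essential self-adjointness criterion of Proposition~\ref{prop:essential}; without it there is no a priori reason that a function in $D(L^{\sB,\sN,\lambda})$ (which is only known as a form domain of a self-adjoint extension) can be approximated in graph norm by smooth functions supported away from $\partial B$.
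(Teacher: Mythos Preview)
Your Part (1) matches the paper's argument essentially verbatim: split $\mathcal E^*(u,v)$ via Fubini, integrate by parts in each factor, check $L^2$-membership using compact support in $\mathring B$, and extend in $v$ by density.

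Your Part (2), however, has a genuine gap. You reduce to $u_1\in C_c^\infty(\mathring B)$ and then propose to reach general $u_1\in D(L^{\sB,\sN,\lambda})$ by graph-norm approximation, invoking Proposition~\ref{prop:essential}. But that proposition carries extra hypotheses not present in Proposition~\ref{prop:wpop}: it requires $\max_{\partial B}|f'|\le 1$ and, when $f^{-1}(\{0\})\neq\emptyset$, the restriction $\lambda>1$. For eigenvalues $0<\lambda\le 1$ the Schr\"odinger operator may be in the limit-circle case at $\partial B$, essential self-adjointness of $L^{\sB,\sN,\lambda}|_{C_c^\infty(\mathring B)}$ can fail, and then $C_c^\infty(\mathring B)$ is \emph{not} graph-norm dense in the domain of the Friedrichs extension $L^{\sB,\sN,\lambda}$. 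Your approximation step therefore does not go through in the stated generality.

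The paper avoids this entirely by a direct form computation. For $u_1\in D(L^{\sB,\sN,\lambda})$ and $u_2\in E(\lambda)$ one uses the eigenfunction property to rewrite the fiber term $\int f^{-2}\langle\nabla u^r,\nabla v^r\rangle_\sF\,d\m_\sF\,d\m_\sB^\sN$ as $\lambda\int f^{-2} u^r v^r\,d\m_\sF\,d\m_\sB^\sN$, so that
\[
\mathcal E^*(u,v)=\int_F \mathcal E^{\sB,\sN,\lambda}(u^x,v^x)\,d\m_\sF(x)=-\int_F\!\int_B (L^{\sB,\sN,\lambda}u^x)\,v^x\,d\m_\sB^\sN\,d\m_\sF,
\]
the last equality holding simply by the \emph{definition} of $u_1\in D(L^{\sB,\sN,\lambda})$. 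No approximation in graph norm, and hence no essential self-adjointness, is needed. The detour you take is precisely what the direct argument is designed to bypass; essential self-adjointness enters the paper only later, in Subsection~\ref{subsec:spectral}, where the curvature assumptions and a Lichnerowicz estimate guarantee $\lambda>1$.
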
 
\begin{proof}
(1) We pick $u\in C_c^\infty(\mathring B) \otimes D(L^\sF)$ and $v\in C_c^\infty(\mathring B) \otimes W^{1,2}(F)$ and compute
\begin{align*}
\mathcal E^*(u,v)=&
\int \int  \langle (u^x)', (v^x)'\rangle_\sB  \dint \m^{\sN}_\sB \dint \m_\sF(x)\\
&+ \int \frac{1}{f^2} \int \langle \nabla u^r, \nabla u^r\rangle_\sF \dint \m_\sF \dint \m_\sB^\sN(r)\\
=&  -\int \int \left[ L^{\sB, \sN} u^x\right] v^x \dint \m_\sB^\sN \dint \m_\sF\\
&- \int \int f^{-2}  \left[L^\sF u^r \right]v^r \dint \m_\sF \dint \m_\sB^\sN\\
=& - \int \left[ L^{\sB, \sN} u^x + f^{-2} L^\sF u^r\right] v \dint \m^\sN.
\end{align*}
Since $C_c^\infty(\mathring B)\otimes W^{1,2}(F)$ is dense in $D(\mathcal E^*)$ w.r.t. $\left\| \cdot \right\|_{\mathcal E^*}$, this identity extends to all $v\in D(\mathcal E^*)$. 

Moreover, since $r\mapsto u(r,x), L^{\sB, \sN}u(r,x), L^\sF u(r,x)$ belong to $C^\infty_c(\mathring B)$ for $\m_\sF$-a.e. $x\in F$, it follows that 
$$L^{\sB, \sN}u^x + f^{-2} L^\sF u^r \in L^2(\m^\sN).$$
Hence $u\in D_{L^2}(L^*)$ and the desired formula for $Lu$ holds. 
\smallskip

(2) We pick $u_2\in E(\lambda)$ and $u_1\in D(L^{\sB, \sN, \lambda})$, and set $u_1 \otimes u_2 =u$. We notice first that $u^x(r)= u_1(r) u_2(x)$, and hence $L^{\sB, \sN, \lambda} u^x= \left[ L^{\sB, \sN, \lambda} u_1\right] u_2(x)\in L^2(\m^\sN)$.  We compute for any $v\in C_c^\infty(\mathring B) \otimes W^{1,2}(F)$
\begin{align*}&
\mathcal E^*(u,v)= \\
&\int \int  \langle (u^x)', (v^x)'\rangle_\sB  \dint \m^{\sN}_\sB \dint \m_\sF(x)+ \int \frac{1}{f^2} \int \langle \nabla u^r, \nabla u^r\rangle_\sF \dint \m_\sF \dint \m_\sB^\sN(r)\\
&=\int \int  \langle (u^x)', (v^x)'\rangle_\sB  \dint \m^{\sN}_\sB \dint \m_\sF(x)- \int \frac{\lambda}{f^2} \int u^r v^r \dint \m_\sF \dint \m_\sB^\sN(r)\\
&=\int \int  \left[\langle (u^x)', (v^x)'\rangle_\sB  -  \frac{\lambda}{f^2}  uv\right] \dint \m_\sF \dint \m_\sB^\sN(r)\\
&= \int \mathcal E^{\sB, \sN, \lambda}(u^x, v^x) \dint \m_\sF(x) \\
&=- \int \int L^{\sB, \sN, \lambda} u^x v^x \dint \m_\sB^\sN \dint \m_\sF=- \int \left[L^{\sB, \sN, \lambda} u\right] v \dint \m^\sN
\end{align*}
This identity again extends to all $v\in D(\mathcal E^\snwp)$. Therefore we obtain the claim. 
\end{proof}
\begin{proposition}
Let $P_t^{\sB, \sN, \lambda}$ the semi-group induced by $L^{\sB, \sN, \lambda}$. 
For $u=u_1\otimes u_2\in C^\infty_c(\mathring B)\otimes E(\lambda)\subset D_{L^2}(L^\sC)\cap L^\infty(\m_\sF)\cap \lip(F)$ we have that 
$$P_t^* u= P_t^{\sB, \sN, \lambda} u_1 \otimes u_2$$
where $P_t^*$ is the semi-group associated to $L^*$. 

In particular, for $u=u_1\otimes u_2\in C^\infty_c(\mathring B)\otimes E(\lambda)$ we have a  formula for $P_t^* u$ in terms of $P_t^{\sB, \sN, \lambda} u_1$ and $u_2$. 
\end{proposition}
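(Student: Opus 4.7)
The plan is to exploit the identification $L^*(u_1\otimes u_2)=(L^{\sB,\sN,\lambda}u_1)\otimes u_2$ from Proposition \ref{prop:wpop}\,(2) to verify that the curve $t\mapsto v(t):=(P_t^{\sB,\sN,\lambda}u_1)\otimes u_2$ solves the heat equation associated to $L^*$ with initial datum $u$, and then invoke uniqueness of the semigroup.

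Set $v_1(t):=P_t^{\sB,\sN,\lambda}u_1\in L^2(\m_\sB^\sN)$. Since $P_t^{\sB,\sN,\lambda}$ is the $L^2(\m_\sB^\sN)$-gradient flow of $\mathcal E^{\sB,\sN,\lambda}$, we have $v_1(t)\in D(L^{\sB,\sN,\lambda})$ for every $t>0$, the curve $t\mapsto v_1(t)$ is locally absolutely continuous in $L^2(\m_\sB^\sN)$ on $(0,\infty)$ with $\tfrac{d}{dt}v_1(t)=L^{\sB,\sN,\lambda}v_1(t)$, and $v_1(t)\to u_1$ in $L^2(\m_\sB^\sN)$ as $t\downarrow 0$. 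Since $u_2\in E(\lambda)\subset L^\infty(\m_\sF)\cap L^2(\m_\sF)$ is independent of $t$, the product $v(t)=v_1(t)\otimes u_2$ lies in $L^2(\m^\sN)$ by Fubini, and by Proposition \ref{prop:wpop}\,(2) applied to $v_1(t)$ we obtain $v(t)\in D_{L^2}(L^*)$ with
\begin{equation*}
L^*v(t)=(L^{\sB,\sN,\lambda}v_1(t))\otimes u_2.
\end{equation*}

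Next I would promote the scalar differentiability of $v_1$ to differentiability of $v$ in $L^2(\m^\sN)$. For any $s,t>0$ we have $\|v(t)-v(s)\|_{L^2(\m^\sN)}=\|v_1(t)-v_1(s)\|_{L^2(\m_\sB^\sN)}\|u_2\|_{L^2(\m_\sF)}$, so local absolute continuity of $t\mapsto v_1(t)$ in $L^2(\m_\sB^\sN)$ transfers to $t\mapsto v(t)$ in $L^2(\m^\sN)$, and for a.e.\ $t>0$
\begin{equation*}
\tfrac{d}{dt}v(t)=\bigl(\tfrac{d}{dt}v_1(t)\bigr)\otimes u_2=(L^{\sB,\sN,\lambda}v_1(t))\otimes u_2=L^*v(t).
\end{equation*}
Similarly, $\|v(t)-u\|_{L^2(\m^\sN)}=\|v_1(t)-u_1\|_{L^2(\m_\sB^\sN)}\|u_2\|_{L^2(\m_\sF)}\to 0$ as $t\downarrow 0$, so $v(0^+)=u$ in $L^2(\m^\sN)$.

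Thus $t\mapsto v(t)$ satisfies the characterising properties of the $L^2(\m^\sN)$-gradient flow of $\mathcal E^*$ with starting point $u$, which is precisely $P_t^*u$; by uniqueness of this flow (equivalently, injectivity of the Hille--Yosida correspondence between the closed form $\mathcal E^*$, its self-adjoint generator $L^*$, and the semigroup $P_t^*$) we conclude $P_t^*u=v(t)=(P_t^{\sB,\sN,\lambda}u_1)\otimes u_2$. The only mildly delicate point is the transfer of local absolute continuity from the base factor to the product; this is routine once one notes that the $L^2(\m^\sN)$-norm on $v_1(t)\otimes u_2$ factorises, but it is essentially the sole analytic step and the rest reduces to bookkeeping.
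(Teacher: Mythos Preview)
Your argument is correct and follows essentially the same route as the paper: verify that $t\mapsto (P_t^{\sB,\sN,\lambda}u_1)\otimes u_2$ solves the abstract Cauchy problem for $L^*$ by combining $\frac{d}{dt}P_t^{\sB,\sN,\lambda}u_1=L^{\sB,\sN,\lambda}P_t^{\sB,\sN,\lambda}u_1$ with Proposition~\ref{prop:wpop}(2), and conclude by uniqueness of the semigroup. You have in fact written out more carefully than the paper the transfer of absolute continuity and of the initial condition via the factorisation of the $L^2(\m^\sN)$-norm.
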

\begin{proof}
Indeed, since $P_t^{\sB, \sN, \lambda} C_c^\infty(\mathring B)\subset D(L^{\sB, \sN, \lambda})$ and since
$$\frac{d}{dt} P_t^{\sB, \sN, \lambda} u_1 = L^{\sB, \sN, \lambda} P_t^{\sB, \sN, \lambda} u_1, \ \ u_1\in C_c^\infty(\mathring B),$$
we also have 
$$\frac{d}{dt} P_t^{\sB, \sN, \lambda} u_1 \otimes u_2= L^{\sB, \sN, \lambda} P_t^{\sB, \sN, \lambda} u_1 \otimes u_2= L^*(P_t^{\sB, \sN, \lambda} u_1 \otimes u_2)$$
where the last equality is the second statement in the  Proposition \ref{prop:wpop}.
\end{proof}
\begin{definition}
We define 
\begin{align*}\Xi' &= \sum_{i=0}^\infty P_t^{\sB, \sN, \lambda} C_c^\infty(\mathring B) \otimes E(\lambda_i) \\
&= \left\{ \sum^k_{i=0} u^i_1\otimes u^i_2: u_1^i\in P_t^{\sB, \sN, \lambda}C_c^\infty(\mathring B), u^i_2\in E(\lambda_i), k\in \N\right\}.
\end{align*}
The class $\Xi'$ is dense in $D(L^\sC)$ and stable w.r.t. $P_t^\sC$. 
\end{definition}
\subsection{Regularity of $N$-warped products with one-dimensional fiber}
Towards our main theorem we can make use of the fact that the result is already established for the case of a smooth $f$ and  a one-dimensional fiber space $F$. This follows since for smooth $f$ the warped product  is a smooth weighted Riemannian manifold away from points of degeneration of $f$. The following theorem is a direct corollary of Theorem 1.1 in \cite{ketterer}.
\begin{theorem} 
Assume $(B,g_\sB)$ is a Riemannian manifold that has Alexandrov curvature bounded from below, $f$ is smooth on $B$,  $\partial B\subset f^{-1}(\{0\})$, $N>1$, and
\smallskip
\begin{enumerate}
\item $\nabla^2 f+ K f g_\sB \leq 0$, 
\medskip
\item $|\nabla f|_\sB^2 + K f^2 \leq K_F$, 
\end{enumerate} 
Then the $N$-warped product satisfies $B\times_f^N \left( [0,\frac{\pi}{\sqrt{K_F}}], \sin^{N-1}(\sqrt{K_F}r) dr\right)$ satisfies the condition $\RCD((N+d-1)K,N+d)$.
\end{theorem}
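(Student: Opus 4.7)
The plan is to recognize the given $N$-warped product as a smooth weighted warped product for which Theorem~1.1 of \cite{ketterer} supplies the $\RCD$ conclusion directly; here $d$ denotes the dimension of $B$, and the role of the present statement is essentially to package the hypotheses of that earlier result in the normalization used in this paper.

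First I would identify the fiber $F = ([0, \pi/\sqrt{K_F}], \sin^{N-1}(\sqrt{K_F}r)\,dr)$ as the $1$-dimensional model for $\RCD((N-1)K_F, N)$. A direct computation of the $N$-Bakry--Emery Ricci tensor of a $1$-dimensional weighted manifold with density $\varphi$, namely $-\varphi''/\varphi + \frac{N-2}{N-1}(\varphi'/\varphi)^2$, applied to $\varphi(r) = \sin^{N-1}(\sqrt{K_F}r)$ yields the constant $(N-1)K_F$; hence $F$ satisfies $\BE((N-1)K_F, N)$ and so $\RCD((N-1)K_F, N)$.

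Next, away from the degeneration set $f^{-1}(\{0\})$, the $N$-warped product $B\times_f^N F$ is the smooth weighted Riemannian manifold $(B\times (0, \pi/\sqrt{K_F}),\, g_\sB + f^2 dr^2,\, f^N \sin^{N-1}(\sqrt{K_F}r)\,d\vol_\sB\,dr)$ of topological dimension $d+1$ and effective Bakry--Emery dimension $N+d$. Substituting the fiber bound $(N-1)K_F$ and the two hypotheses of the theorem into the standard warped-product formulas for the Ricci tensor, together with the Bakry--Emery corrections coming from the densities $f^{N-1}$ and $\sin^{N-1}(\sqrt{K_F}r)$ on the two factors, gives $\ric_{N+d} \geq (N+d-1)K\, g$ on the regular part: the Hessian bound $\nabla^2 f + Kf g_\sB\leq 0$ supplies the base directions, while in the fiber direction the gradient bound $|\nabla f|_\sB^2 + Kf^2\leq K_F$ balances the fiber Bakry--Emery curvature $(N-1)K_F$ against the $|\nabla f|_\sB^2/f^2$ terms. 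This is the pointwise Bakry--Emery criterion $\BE((N+d-1)K, N+d)$ on the regular locus.

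The step to be invoked, which is the content of Theorem~1.1 of \cite{ketterer}, is the extension of this smooth Bakry--Emery lower bound across the singular stratum $\partial B = f^{-1}(\{0\})$ to a global $\RCD((N+d-1)K, N+d)$ condition on the complete warped product. All its hypotheses are verified by the two preceding steps, and so the conclusion follows directly. The only genuinely delicate point beyond the routine verification is the behavior at $\partial B$, which is precisely what \cite{ketterer} handles, and this is why the statement can be read off as a direct corollary.
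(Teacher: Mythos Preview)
Your proposal is correct and matches the paper's approach: the paper states this theorem as ``a direct corollary of Theorem 1.1 in \cite{ketterer}'' with no further argument, and you invoke the same result while helpfully verifying its hypotheses (the fiber is the one-dimensional $\RCD((N-1)K_F,N)$ model, and the pointwise $\BE$ bound holds on the smooth part). Your sketch of what Theorem~1.1 of \cite{ketterer} accomplishes is a reasonable gloss, though the paper itself does not spell out those intermediate steps.
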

\begin{corollary} \label{th:boundschroedinger}
Assume $B\subset \R$ is a closed interval, $f$ is smooth on $B$,  $\partial B\subset f^{-1}(\{0\})$, $N>1$ and $\lambda>0$.
Assume
\smallskip
\begin{enumerate}
\item $f''+ K f\leq 0$, 
\medskip
\item $|f'|^2 + K f^2 \leq K_F$. 
\end{enumerate} 
If $K_F>0$, we assume $\lambda\geq \frac{K_F N}{N-1}$. 
Consider the semi-group $\left( P_t^{\sB, \sN, \lambda}\right)_{t>0}$ associated to the operator $L^{\sB, \sN, \lambda}$. 
Then 
$$ P_{t}^{\sB, \sN, \lambda} u, |\nabla P_{t}^{\sB, \sN, \lambda} u|\in L^\infty(\m_{\sB}^\sN),  \ \forall t>0, \ \ u\in C^\infty_c(B).$$
\end{corollary}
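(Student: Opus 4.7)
The idea is to realize $P^{\sB,\sN,\lambda}_t$ as a slice of the heat flow on an auxiliary $\RCD$ warped product and then pull back $L^\infty$-contractivity and the Bakry-Émery gradient estimate. The $L^\infty$ part is actually immediate from a Feynman-Kac / maximum-principle argument that sidesteps the warped-product construction: because the potential $-\lambda/f^2$ is non-positive (as $\lambda\geq 0$), the semigroup is dominated by the unperturbed one, $|P^{\sB,\sN,\lambda}_t u|\leq P^{\sB,\sN}_t |u|$, and $P^{\sB,\sN}_t$ is $L^\infty$-contractive because, under hypothesis (1), $(B,\m^\sN_\sB)$ itself satisfies $\RCD(KN,N+1)$ by the proposition at the end of Subsection 2.5.

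For the gradient bound, I would apply the preceding theorem with the specific fiber $F_0=([0,\pi/\sqrt{K_F}],\sin^{N-1}(\sqrt{K_F}r)\,\de r)$ when $K_F>0$, to conclude that $C:=B\times_f^\sN F_0$ is $\RCD(NK,N+1)$. The operator $L^{F_0}$ has discrete spectrum, and its first nontrivial eigenfunction $\psi(r)=\cos(\sqrt{K_F}r)$ satisfies $L^{F_0}\psi=-NK_F\,\psi$. By Proposition 3.7(2), for any $u_1\in C^\infty_c(\mathring B)$ the heat flow on $C$ factors as $P^\sC_t(u_1\otimes\psi)=(P^{\sB,\sN,NK_F}_t u_1)\otimes\psi$. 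Applying the Bakry-Émery estimate $|\nabla P^\sC_t v|_*^2\leq e^{-2NKt}P^\sC_t(|\nabla v|_*^2)$ valid on $C$ to $v=u_1\otimes\psi$ gives that
\[
|\nabla P^\sC_t v|_*^2=|(P^{\sB,\sN,NK_F}_t u_1)'|^2\psi^2+f^{-2}(P^{\sB,\sN,NK_F}_t u_1)^2|\nabla\psi|_{F_0}^2
\]
is bounded in $L^\infty$, since its upper bound $e^{-2NKt}P^\sC_t(|\nabla v|_*^2)$ is: $u_1,u_1'$ are compactly supported in $\mathring B$ (where $f$ is bounded below), $\psi,|\nabla\psi|_{F_0}$ are bounded on the compact $F_0$, and the first step guarantees that $P^\sC_t$ preserves $L^\infty$. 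Evaluating at a fiber point where $\psi\neq 0$ (e.g.\ $r=0$, where $\psi=1$ and $\nabla\psi=0$) then extracts the pointwise bound $|(P^{\sB,\sN,NK_F}_t u_1)'|^2\leq C$ on $B$.

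The main obstacle is to cover the full range $\lambda\geq K_FN/(N-1)$, while the argument above directly reaches only the discrete spectrum $\{k(k+N-1)K_F\}_{k\geq 1}$ of $L^{F_0}$. One would enlarge the class of admissible auxiliary spaces, replacing $F_0$ by higher-dimensional spherical models or iterated warped products whose spectra densely include any prescribed $\lambda$; alternatively, the Feynman-Kac comparison $0\leq P^{\sB,\sN,\lambda'}_t u\leq P^{\sB,\sN,\lambda}_t u$ for $\lambda\leq\lambda'$, $u\geq 0$, combined with the threshold case $\lambda=NK_F$, handles the intermediate range via perturbation. Finally, promoting $u_1\in C^\infty_c(\mathring B)$ (to which Proposition 3.7 applies) to $u\in C^\infty_c(B)$ as in the statement uses the density of $C^\infty_c(\mathring B)$ in $D(L^{\sB,\sN,\lambda})$ in the graph norm (Fact 2.25 together with the essential self-adjointness of Proposition 2.27), transferring the $L^\infty$ and gradient bounds through Lemma 2.10 on stability of weak upper gradients.
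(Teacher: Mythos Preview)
Your strategy is exactly the one the paper invokes (Section 3.4 of \cite{ketterer2}): realize $P_t^{\sB,\sN,\lambda}$ as a fiberwise slice of the heat flow on the auxiliary $\RCD$ warped product $C=B\times_f^\sN F_0$ and transfer the Bakry--\'Emery gradient estimate back to $B$. Two remarks on the execution.

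First, evaluating at the fiber point $r=0$ is not legitimate: $\{r=0\}$ has $\m_{F_0}$-measure zero (the weight $\sin^{N-1}$ vanishes there), while the gradient estimate holds only $\m^\sN$-a.e. Instead restrict to an open interval in $F_0$ on which $\psi^2\geq c>0$; since the term $f^{-2}(P_t^{\sB,\sN,\lambda}u_1)^2|\nabla\psi|_{F_0}^2$ in your expression for $|\nabla P_t^\sC v|_*^2$ is nonnegative, you may drop it and read off $|(P_t^{\sB,\sN,\lambda}u_1)'|^2\leq c^{-1}\|\,|\nabla P_t^\sC v|_*^2\|_{L^\infty}$ $\m^\sN_\sB$-a.e.

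Second, and more substantively: the Feynman--Kac domination $0\leq P_t^{\sB,\sN,\lambda'}u\leq P_t^{\sB,\sN,\lambda}u$ controls the function pointwise but does \emph{not} bound its derivative, so it cannot close the gap in $\lambda$ for the gradient conclusion. The correct route is the one you list first: given an admissible $\lambda$, replace the model fiber by the rescaled interval $([0,\pi/\sqrt{K_F'}],\,\sin^{N-1}(\sqrt{K_F'}r)\de r)$ with $K_F':=\lambda/N\geq K_F$, whose first nontrivial eigenvalue is exactly $\lambda$ and for which the preceding theorem still applies because hypothesis (2) is monotone in $K_F$. This is precisely how \cite{ketterer2} handles a general $\lambda$; no perturbation argument is needed. (This rescaling yields the threshold $\lambda\geq NK_F$, which is all that is ever used downstream via the Lichnerowicz bound in Subsection \ref{subsec:spectral}.)
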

\begin{proof}
The proof of this corollary is the same as the corresponding result for spherical $N$-suspension in Section 3.4 of \cite{ketterer2}.\end{proof}
From this we  can deduce the following important regularity property of elements in $\Xi'$ (this is Remark 3.21 in \cite{ketterer2}).
\begin{corollary}
If $u\in \Xi'$, then $u, |\nabla u|, L^\sC u \in L^\infty(\m^\sN)$. 
\end{corollary}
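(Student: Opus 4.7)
The plan is to reduce the statement to a single tensor product $u_1 \otimes u_2$ with $u_1 = P_{t_0}^{\sB,\sN,\lambda} v$ for some $v \in C_c^\infty(\mathring B)$ and $u_2 \in E(\lambda)$, and then to assemble the three $L^\infty$ bounds from (a) Corollary \ref{th:boundschroedinger}, (b) Proposition \ref{prop:wpop}(2), and (c) standard regularity of Laplace eigenfunctions on a compact $\RCD$ space. By linearity and the Leibniz/triangle inequality for $\Gamma^*$, it suffices to treat one summand in the finite sum defining an element of $\Xi'$.

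For the fiber factor, since $F$ is a compact $\RCD(K_F(N-1),N)$ space, any $u_2 \in E(\lambda)$ is bounded and admits a Lipschitz representative (the first follows by the ultracontractivity and spectral expansion of $P_t^\sF$, the second from the Bakry--Émery $L^2$-$L^\infty$ gradient estimate and the Sobolev-to-Lipschitz property for $\RCD$). For the base factor, Corollary \ref{th:boundschroedinger} directly gives $u_1, |\nabla u_1|_\sB \in L^\infty(\m_\sB^\sN)$. I would next observe that, since $v \in C_c^\infty(\mathring B)$ and $f$ is smooth and positive on $\supp v$, the function $L^{\sB,\sN,\lambda} v = v'' + (N/f) f' v' - \lambda v / f^2$ again belongs to $C_c^\infty(\mathring B)$, so a second application of Corollary \ref{th:boundschroedinger} to $L^{\sB,\sN,\lambda}v$ in place of $v$ yields $L^{\sB,\sN,\lambda} u_1 = P_{t_0}^{\sB,\sN,\lambda} L^{\sB,\sN,\lambda} v \in L^\infty(\m_\sB^\sN)$, using the commutation $L P_t = P_t L$ on the domain of $L$.

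The three bounds are then assembled as follows. For $u = u_1 \otimes u_2$, the bound $u \in L^\infty(\m^\sN)$ is immediate from the two factor bounds. For $L^\sC u$, Proposition \ref{prop:wpop}(2) gives the identity $L^* u = (L^{\sB,\sN,\lambda} u_1) \otimes u_2$, and both factors on the right are in $L^\infty$. For $|\nabla u|_*$, the formula
\[
|\nabla u|_*^2(t,x) = |u_1'(t)|_\sB^2\, u_2(x)^2 + f^{-2}(t)\, u_1(t)^2\, |\nabla u_2|_\sF^2(x)
\]
has its first summand bounded immediately; the second requires $f^{-2} u_1^2 \in L^\infty(\m_\sB^\sN)$.

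The main obstacle, and the only nontrivial point, is the estimate $f^{-2} u_1^2 \in L^\infty$ near $f^{-1}(\{0\})$, which cannot be read off from the plain $L^\infty$ bound on $u_1$ alone. To handle it I would invoke the mechanism already used inside the proof of Corollary \ref{th:boundschroedinger}: realize $u_1 = P_{t_0}^{\sB,\sN,\lambda}v$ as the base component of $P_{t_0}^{\widetilde C}(v \otimes \varphi)$ on the smooth $(N+1)$-dimensional warped product $\widetilde C = B \times_f^\sN S$ (with $S$ a model space and $\varphi \in E(\lambda)$ there), apply the Bakry--Émery Lipschitz estimate on the $\RCD$ space $\widetilde C$ to bound $|\nabla P_{t_0}^{\widetilde C}(v \otimes \varphi)|_{\widetilde C}$ in $L^\infty$, and then separate variables; the resulting $L^\infty$ bound contains precisely the term $f^{-2} u_1^2 |\nabla \varphi|_S^2$, so choosing $\varphi$ with $|\nabla\varphi|_S$ bounded away from $0$ on a set of positive measure (or averaging over a basis of $E(\lambda)$) yields the desired control of $f^{-2} u_1^2$. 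This is in essence the content of Remark 3.21 of \cite{ketterer2}, and once it is in place the three claimed $L^\infty$ bounds follow directly.
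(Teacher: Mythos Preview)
Your proposal is correct and follows the same approach as the paper, which does not give a self-contained proof but simply refers to Remark~3.21 of \cite{ketterer2} after deducing Corollary~\ref{th:boundschroedinger}. You have accurately reconstructed that argument: reduction to a single tensor $u_1\otimes u_2$, the $L^\infty$ and Lipschitz regularity of eigenfunctions $u_2$ on the compact $\RCD$ fiber, the bounds on $u_1$, $|u_1'|$, and $L^{\sB,\sN,\lambda}u_1$ from Corollary~\ref{th:boundschroedinger} (the latter via $P_t L = L P_t$ and $L^{\sB,\sN,\lambda}v\in C_c^\infty(\mathring B)$), and---crucially---the identification of $f^{-2}u_1^2\in L^\infty$ as the only nontrivial point, handled by passing to the smooth model warped product and reading off the weighted term from the full gradient estimate there. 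This is exactly the mechanism behind the cited remark.
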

\section{Metric structure of $N$-warped products over $\RCD$ spaces}\label{sec:metricstructure}

Let $F$ be a compact $\RCD(K_\sF(N-1),N)$ space where $N>1$ and $K_F\in \R$. In particular $\m_\sF$ is finite.

Assume $B$ is a  $1$-dimensional Riemannian manifold, $f: B\rightarrow [0,\infty)$ is smooth,  $\partial B\subset f^{-1}(\{0\})$, and
\smallskip
\begin{enumerate}
	\item $f''+ K f\leq 0$, 
	\medskip
	\item $|f'|^2 + K f^2 \leq K_F$. 
\end{enumerate} 
\smallskip

The $N$-warped product $\nwp$ is a complete metric measure space. Hence, we can also consider its Cheeger energy $\Ch^\snwp$ and the associated space of Sobolev functions $D(\Ch^\snwp)=W^{1,2}(\nwp)$. In the following we investigate the relation between the energy $\mathcal E^*$ and $\Ch^\snwp$.
\begin{proposition}\label{prop:energyineq} It holds $D(\mathcal E^*)\subset W^{1,2}(\nwp)$ and for all $u\in C^\infty_c(\mathring B)\otimes \lip(F)$ we have 
\begin{align}\label{ineq:wugs}|\nabla u|^2_{\snwp} \leq |\nabla u|_\sB^2 + \frac{1}{f^2} |\nabla u|^2_{\sF} = |\nabla u|_{*}^2  \ \ \mbox{$\m^\sN$-a.e}.
\end{align}
and for all $u\in D(\mathcal E^*)$ we have \begin{align}\label{ineq:wugenergy}\mbox{$|\nabla u|^2_\snwp\leq \Gamma^*(u,u)$ $\m^\sN$-a.e.  }
\end{align}
where $\Gamma^*$ is the $\Gamma$-operator associated to $\mathcal E^*$ and $|\nabla (\cdot)|_{\snwp}$ is the minimal weak upper gradient of $\Ch^\snwp$.
\end{proposition}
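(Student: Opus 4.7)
\emph{Strategy.} I would prove the pointwise bound \eqref{ineq:wugs} first on the core $C^\infty_c(\mathring B)\otimes\lip(F)$ by identifying $|\nabla u|_*$ as a weak upper gradient of $u$ on $\nwp$, and then pass to $D(\mathcal E^*)$ via the stability Lemma \ref{lem:stabilitywug}.

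\emph{Step 1: the core.} Fix $u=\sum_{i=1}^k u_1^i\otimes u_2^i$ with $u_1^i\in C^\infty_c(\mathring B)$ and $u_2^i\in\lip(F)$. Since each $u_1^i$ has compact support in $\mathring B$, on the support of $u$ the function $f\circ\alpha$ is uniformly positive for the relevant curves. For an arbitrary absolutely continuous curve $\gamma=(\alpha,\beta)$ in $\nwp$, the warped length structure gives the speed identity $|\dot\gamma|^2_\snwp=|\dot\alpha|^2+f^2(\alpha)|\dot\beta|^2$ a.e.; in particular $\alpha$ is AC on $B$ and, on $\supp(u)$, $\beta$ is AC on $F$. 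Because $F$ is $\RCD(K_F(N-1),N)$ it is doubling and supports a $(1,2)$-Poincar\'e inequality, so Theorem \ref{th:cheegerlipschitz} yields $\mathrm{lip}(u_2^i)=|\nabla u_2^i|_F$ $\m_F$-a.e.; this upgrades $|\nabla u_2^i|_F$ to an honest upper gradient, so that
\[
\bigl|\tfrac{d}{dt}(u_2^i\circ\beta)(t)\bigr|\le|\nabla u_2^i|_F(\beta(t))\,|\dot\beta(t)|_F\quad\text{for a.e.~}t.
\]
Combining with the $C^1$ derivative of $u_1^i\circ\alpha$ via the chain rule, and applying Cauchy--Schwarz both on the $i$-summation and on the pair $(|\dot\alpha|,f(\alpha)|\dot\beta|)$ against $(|\nabla u^{\beta(t)}|_B,f(\alpha)^{-1}|\nabla u^{\alpha(t)}|_F)$, I obtain
\[
\bigl|\tfrac{d}{dt}(u\circ\gamma)(t)\bigr|\le|\nabla u|_*(\gamma(t))\cdot|\dot\gamma(t)|_\snwp\quad\text{a.e.~}t.
\]
Integrating against any test plan on $\nwp$ exhibits $|\nabla u|_*$ as a weak upper gradient of $u$, and so $u\in W^{1,2}(\nwp)$ with $|\nabla u|_\snwp\le|\nabla u|_*$ $\m^\sN$-a.e., which is \eqref{ineq:wugs}.

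\emph{Step 2: extension to $D(\mathcal E^*)$.} Given $u\in D(\mathcal E^*)$, density of the core provides $u_n\in C^\infty_c(\mathring B)\otimes\lip(F)$ with $u_n\to u$ in $L^2(\m^\sN)$ and $\mathcal E^*(u_n-u)\to 0$. By the $\Gamma$-operator property stated in Remark \ref{properties:estar}(4), $\sqrt{\Gamma^*(u_n,u_n)}\to\sqrt{\Gamma^*(u,u)}$ strongly in $L^2(\m^\sN)$. Step 1 gives $|\nabla u_n|_\snwp\le\sqrt{\Gamma^*(u_n,u_n)}$ pointwise, so $\{|\nabla u_n|_\snwp\}$ is $L^2$-bounded and a subsequence converges $L^2$-weakly to some $g$ with $g\le\sqrt{\Gamma^*(u,u)}$ $\m^\sN$-a.e. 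Extracting an $\m^\sN$-a.e.\ convergent subsequence of $u_n$ and applying Lemma \ref{lem:stabilitywug}, I conclude $u\in W^{1,2}(\nwp)$ with $|\nabla u|_\snwp\le g\le\sqrt{\Gamma^*(u,u)}$ $\m^\sN$-a.e., which is \eqref{ineq:wugenergy}, and in particular $D(\mathcal E^*)\subset W^{1,2}(\nwp)$.

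\emph{Main obstacle.} The crux is the pointwise bound in Step 1 along a general admissible curve, with the \emph{minimal} weak upper gradient $|\nabla u_2^i|_F$ rather than the coarser $\mathrm{Lip}(u_2^i)$ on the right-hand side; without this refinement the resulting estimate would be too weak to recover the sharp expression $|\nabla u|_*$. The passage from weak to honest upper gradient for Lipschitz functions on $F$ is exactly what Cheeger's Theorem \ref{th:cheegerlipschitz} supplies, and it is the sole place where the full $\RCD$ strength of $F$ (via doubling and Poincar\'e) is decisive. Once that inequality is in hand, the two-dimensional Cauchy--Schwarz with the warped speed identity makes the rest a formality, and the extension to $D(\mathcal E^*)$ is a routine application of weak upper gradient stability.
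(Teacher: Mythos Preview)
Your proposal is correct and follows essentially the same route as the paper: first exhibit $|\nabla u|_*$ as a (weak) upper gradient on the core by combining the chain rule along admissible curves with Cheeger's identification $\lip u^r=|\nabla u^r|_\sF$ on the $\RCD$ fiber and Cauchy--Schwarz against the warped speed, then pass to $D(\mathcal E^*)$ via density and the stability Lemma~\ref{lem:stabilitywug}. The only cosmetic difference is that the paper applies the stability lemma directly to the sequence $|\nabla u_n|_*$ (which are themselves weak upper gradients of $u_n$), whereas you apply it to $|\nabla u_n|_{\snwp}$ and then compare with the strong $L^2$ limit of $\sqrt{\Gamma^*(u_n,u_n)}$; both arrangements yield the same conclusion.
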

\begin{proof}
{\bf (1)}
Let $u\in C^\infty_c(\mathring B)\otimes \lip(F)$, i.e. $u= \sum_{i=1}^N u^i_1\otimes u_2^i$ with $u_1^i\in C^\infty_c(\mathring B)$ and $u_2^i\in \lip(F)$.

Let $\gamma=(\alpha, \beta):[0,1]\rightarrow \mathring B\times F$ be a continuous curve in $AC^2(\mathring B\times F)$. It is straightforward to check that $\alpha \in AC^2(\mathring B)$ and $\beta \in AC^2(F)$. Hence 
\begin{align*} 
\left| u(\alpha(s), \beta(t))- u(\alpha(s), \beta(t'))\right|&\leq L \de_\sF(\beta(t), \beta(t'))\\
&\leq \int_t^{t'} g(\tau)\dint \tau\leq |v(t)- v(t')|\\
\left| u(\alpha(s), \beta(t))- u(\alpha(s'), \beta(t))\right|&\leq L  |\alpha(s)- \alpha(s')|\\
&\leq \int_{s}^{s'} g(\tau)\dint \tau\leq |v(s)- v(s')|
\end{align*}
where $v(t) =\int_0^t g(\tau) \dint \tau$ for an integrable function $g: [0, 1] \rightarrow [0,\infty)$ that depends on $\gamma= (\alpha, \beta)$. 

Then, we can use Lemma 4.3.4 in \cite{agsgradient} to obtain 

\begin{align*}
\left|\frac{d}{dt} (u\circ \gamma)(t)\right|\leq &\limsup_{h\rightarrow 0} \frac{|u(\alpha(t-h), \beta(t))- u(\alpha(t), \beta(t))|}{h}\\
&  \ \ \ \ \ \ \ + \limsup_{h\rightarrow 0} \frac{|u(\alpha(t), \beta(t+h))- u(\alpha(t), \beta(t))|}{h}\  
\end{align*}
$\mbox{for  $\mathcal L^1$-a.e. } t\in [0,1].$

Applying the definition of the local Lipschitz constant it follows that 
\begin{align}\label{ineq:loclip}
&\left|\frac{d}{dt} (u\circ \gamma)(t)\right|\leq \lip u^{\beta(t)}(\alpha(t)) |\dot{\alpha}(t)| + \lip u^{\alpha(t)}(\beta(t))|\dot \beta(t)| 
\end{align}
 for  $\mathcal L^1$-a.e. $ t\in [0,1].$
 
 We note  $u^{r}=u(r,\cdot)$  is locally Lipschitz in  $F$ for every $r\in B$. Hence, since $F$ is $\RCD$, and therefore satisfies a doubling property and supports a weak local Poincar\'e inequality, by Theorem \ref{th:cheegerlipschitz} it follows that  $\lip u^{r}= |\nabla u^{r}|_\sF$.  Moreover, $u^{
 x}$ is smooth on $B$ for every $x\in F$, and therefore $\lip u^{x}= |(u^{
 x})'|$.

We set $$G_u(r,x):= \sqrt{ \left((u^{x})'(r)\right)^2 + {\textstyle \frac{1}{(f(r))^2}}|\nabla u^{r}|^2_\sF(x)}.$$

Hence, in combination with the Cauchy-Schwarz inequality it follows from \eqref{ineq:loclip} that
$$
	\left|\frac{d}{dt} (u\circ \gamma)(t)\right|\leq G_u(
	\alpha(t), \beta(t))  \sqrt{  |\dot{\alpha}(t)| + (f\circ \alpha(t))^2 |\dot \beta(t)|^2 } 
$$

We integrate this inequality w.r.t. $t$ and obtain
\begin{align}\label{ineq:wug}|u(
\gamma(1))- u(
\gamma(0))|\leq \int_0^1 G_u(\gamma(t)) |\gamma'(t)| \de t.
\end{align}
Since $u$ is compactly supported in $\mathring B\times F$, it already follows from the inequality \eqref{ineq:wug}, that holds for every $\gamma=(\alpha, \beta):[0,1]\rightarrow \mathring B\times F$ in $AC^2(\mathring B\times F)$, that $G$ is a weak upper gradient of $u$. Hence 
$$|\nabla u|^2 \leq G_u \ \m^\sN\mbox{-a.e.}$$
\smallskip
{\bf (2)}
We have that $C_c^\infty(\mathring B)\otimes \lip(F)$ is dense in $D(\mathcal E^*)$ by definition of the closed form $\mathcal E^*$. Hence, if $u\in D(\mathcal E^*)$ we can pick a sequence $u_n\in C_c^\infty(\mathring B)\otimes \lip(F)\rightarrow u$ w.r.t. $\left\|\cdot \right\|_{\mathcal E^*}^2$.  In particular, we have  $u_n\rightarrow u$ in $L^2(\m^\sN)$. 
$$\Ch^\snwp(u_n)\leq \mathcal E^*(u_n)\rightarrow \mathcal E^*(u)<\infty.$$
Hence $u\in W^{1,2}(\nwp)$, and $\Ch^\snwp(u)\leq \mathcal E^*(u)$. 
After extracting a subsequence $|\nabla u_n|$ converges pointwise $\m^\sN$-a.e. to a weak upper gradient of $u$ by the stability property of minimal weak upper gradients. 

On the other hand 
$$\sqrt{ (\lip u_n^{x})2 + \frac{1}{f^2} |\nabla u_n|^2_F} \rightarrow \Gamma^*(u,u)$$
in duality with $L^\infty(\m^\sN)$ by property (4) in Remark \ref{properties:estar}.
Since the left hand side is a weak upper gradient of $u_n$, after taking another subsequence, it converges  to a weak upper gradient of $u$ by Lemma \ref{lem:stabilitywug}. Hence, the desired inequality \eqref{ineq:wugenergy} follows. \end{proof}
We recall the following result from \cite{cks}.
\begin{theorem}\label{th:mcpfornwp}
Let $F, I$ and $f$ be as before. Then $\nwp$ satisfies the measure contraction property $\MCP(KN, K+1)$. 
\end{theorem}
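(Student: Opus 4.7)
My plan is to verify $\MCP(KN, N{+}1)$ directly from its Wasserstein-transport formulation (as recalled in the preliminaries) by exploiting the Alexander--Bishop fiber independence theorem (Theorem \ref{th:albi0}). That theorem decomposes any minimizing geodesic $\gamma = (\alpha,\beta)$ in $\nwp$ into a fiber geodesic $\beta$ in $F$ and a base curve $\alpha$ depending only on the endpoints in $B$ and the total fiber-height $L^\sF(\beta)$, with $\alpha$ governed by the first-integral $\tfrac12 |\alpha'|^2 + \tfrac{c^2}{2(f\circ\alpha)^2} = E$. This structure is what allows the distortion calculation to factor into a fiber piece, controlled by $\MCP(K_F(N{-}1), N)$ for $F$ (a consequence of the $\RCD$ assumption on $F$), and a base piece, controlled by the $\RCD(KN, N{+}1)$ property of the weighted one-dimensional base $(B, f^\sN \vol_\sB)$ established in the previous proposition.

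\textbf{Construction of the plan.} Fix $p_0 = (r_0, x_0)$ with $f(r_0) > 0$ and a bounded measurable set $A \subset \nwp$ with $\m^\sN(A) > 0$; the set $f^{-1}(0) \times F$ is $\m^\sN$-null and can be discarded. For $\m^\sN$-a.e.\ $p_1 = (r_1,x_1) \in A$, select a geodesic $\gamma_{p_1}$ from $p_0$ to $p_1$ via a measurable selection (using essential non-branching away from the singular locus). Push $\m^\sN(A)^{-1} \m^\sN|_A$ forward by $p_1 \mapsto \gamma_{p_1}$ to obtain a dynamical plan $\Pi \in \mathcal{P}(C([0,1], \nwp))$. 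Disintegrating $\m^\sN = f^\sN \vol_\sB \otimes \m_\sF$, the Jacobian of the evaluation map $e_t$ factors by fiber independence as $J_t = J_t^{\mathrm{base}} \cdot J_t^{\mathrm{fib}}$, where $J_t^{\mathrm{fib}}$ is the fiber Jacobian of $\beta_t$ in $F$ at the radial scale set by $f \circ \alpha_t$, and $J_t^{\mathrm{base}}$ comes from the $\alpha$-component combined with the warping weight $f^\sN \circ \alpha$.

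\textbf{Combining the estimates and main obstacle.} The MCP condition on $F$ bounds $J_t^{\mathrm{fib}}$ from below by the distortion coefficient for parameters $(K_F(N{-}1), N)$ applied to $\beta$, while the $\RCD(KN, N{+}1)$ property of the weighted base bounds $J_t^{\mathrm{base}}$ from below by the distortion coefficient for $(KN, N{+}1)$ applied to $\alpha$. The central identity is that, along a unit-speed geodesic of total length $L$ in $\nwp$, the product of these two factors equals $\tau_{KN, N+1}^{(1-t)}(L)$; this is precisely what the sharp assumption $K_F = \sup_B\{(f')^2 + Kf^2\}$ is designed to guarantee, via an ODE comparison using the first integral from Theorem \ref{th:albi0}(d). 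The principal obstacle is this combination step: one must match two second-order comparison ODEs (generalized sines of parameters $K_F$ and $K$) along the nontrivial curve $\alpha$ and verify that their product reproduces exactly $\tau_{KN,N+1}^{(1-t)}(L)$. Purely radial geodesics (where $\beta$ is constant) and geodesics whose endpoints approach $f^{-1}(0)$ require separate treatment by a limiting argument: the former supply the ``$+1$'' in the effective dimension count, and the latter pass through the distortion inequality in the limit by continuity of $\tau_{KN,N+1}^{(1-t)}$ in the relevant parameters.
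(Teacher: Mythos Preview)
The paper does not prove this theorem at all; it is simply quoted from the companion paper \cite{cks}. Your outline is in the spirit of the argument carried out there, but as written it contains a genuine gap in the handling of the base Jacobian.

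You propose to bound $J_t^{\mathrm{base}}$ by invoking the $\RCD(KN,N{+}1)$ property of the weighted one-dimensional space $(B, f^\sN\vol_\sB)$ ``applied to $\alpha$.'' But $\alpha$ is \emph{not} a geodesic in $(B,\de_\sB)$ unless $\beta$ is constant: by Theorem~\ref{th:albi0}(d) it satisfies the first integral $\tfrac12|\alpha'|^2 + \tfrac{c^2}{2(f\circ\alpha)^2}=E$, so the $\MCP$/$\RCD$ distortion bound for the base does not apply along $\alpha$. This is not a technicality: the contribution you call $J_t^{\mathrm{base}}$ is genuinely two-dimensional, combining the variation of $\alpha$, the conformal factor $f\circ\alpha$ acting on the fiber direction, and the weight $f^\sN\circ\alpha$; it cannot be obtained from a one-dimensional transport estimate in $B$ alone. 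Relatedly, the combination you call the ``central identity'' is in fact an inequality, not an equality, and proving it is the actual content of the theorem rather than a step one can gesture at.

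The route taken in \cite{cks} uses fiber independence differently: one reduces via Theorem~\ref{th:albi0}(b) to the two-dimensional model $B\times_f[0,L]$, establishes the Jacobian comparison there (where $(\alpha,\psi)$ \emph{is} a geodesic and the geometry is a weighted surface with the correct curvature bound), and then tensors with the remaining fiber directions using the $\MCP(K_F(N{-}1),N)$ on $F$ together with $K_F\ge (f')^2+Kf^2$. Your proposal has the right ingredients but mislocates where the base estimate lives.
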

Recall that the intrinsic distance $\de_{\mathcal E^*}$ of the strongly local, regular Dirichlet form $\mathcal E^*$ is defined through
\begin{align*}&\de_{\mathcal E^*}((s,x),(t,y))\\
	&\ \ \ \ \ = \sup\{ u(s,x)-u(t,y): u\in D_{loc}(\mathcal E^*)\cap C(B\times F/\sim), \Gamma^*(u,u)\leq 1\}.
	\end{align*}
\begin{proposition}\label{prop:distancewp} Let $F, B$ and $f$ as before. The intrinsic distance of $\mathcal E^*$ coincides with the distance on $\mwp$. 
\end{proposition}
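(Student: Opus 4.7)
The plan is to establish both inequalities $\de_{\mathcal E^*}\leq \de_{\smwp}$ and $\de_{\mathcal E^*}\geq \de_{\smwp}$, using Proposition \ref{prop:energyineq} combined with the measure contraction property from Theorem \ref{th:mcpfornwp}, and an explicit approximation of the distance function by elements of the core $C_c^\infty(\mathring B)\otimes \lip(F)$.

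For the first inequality I would start with $u\in D_{loc}(\mathcal E^*)\cap C(B\times F/\!\sim)$ satisfying $\Gamma^*(u,u)\leq 1$ $\m^\sN$-a.e. Proposition \ref{prop:energyineq}, applied to cutoffs of $u$, shows that $u\in W^{1,2}_{loc}(\nwp)$ with minimal weak upper gradient $|\nabla u|_{\snwp}^2\leq \Gamma^*(u,u)\leq 1$ $\m^\sN$-a.e. By Theorem \ref{th:mcpfornwp}, $\nwp$ satisfies $\MCP(KN, N+1)$; as a nonbranching $\MCP$ space it is locally doubling and supports a weak local $(1,1)$-Poincar\'e inequality. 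Combined with Theorem \ref{th:cheegerlipschitz} and the length-space structure of $\mwp$, these properties yield the (local) Sobolev-to-Lipschitz property, so $u$ admits a $1$-Lipschitz representative with respect to $\de_{\smwp}$. Taking the supremum in the definition of $\de_{\mathcal E^*}$ gives $\de_{\mathcal E^*}\leq \de_{\smwp}$.

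For the reverse inequality I would exhibit a sufficient family of test functions. Fix $P, Q\in \mwp$ with $d=\de_{\smwp}(P, Q)$ and consider the truncated distance function $u_Q := \de_{\smwp}(\cdot, Q)\wedge (d+1)$, which is $1$-Lipschitz with respect to $\de_{\smwp}$. The geometric input is the fiber-independence principle of Theorem \ref{th:albi0}: a minimizer $\gamma=(\alpha,\beta)$ in $\mwp$ has $\beta$ a geodesic in $F$ of speed $c_\gamma/(f\circ\alpha)^2$, with the base projection $\alpha$ depending on $F$ only through $L^\sF(\beta)$. Consequently, writing $Q=(t_0, y_0)$, the function $u_Q(r,x)$ agrees locally with an explicit function $\Phi(r,\de_\sF(x,y_0))$ on $B\times\R_{\geq 0}$ satisfying the eikonal bound $(\partial_r\Phi)^2 + f(r)^{-2}(\partial_\rho\Phi)^2\leq 1$. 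Smoothing $\de_\sF(\cdot,y_0)$ on $F$ into Lipschitz functions of slope $\leq 1+\epsilon_n$ (using the Sobolev-to-Lipschitz property of the $\RCD$ space $F$) and smoothing the $r$-dependence on $\mathring B$, I obtain $u_n\in C_c^\infty(\mathring B)\otimes \lip(F)$ with
\[
\Gamma^*(u_n,u_n) = |(u_n^x)'|_\sB^2 + f^{-2}|\nabla u_n^r|_\sF^2 \leq 1+o(1)
\]
and $u_n(P)-u_n(Q)\to d$; a slight rescaling produces test functions in $D_{loc}(\mathcal E^*)\cap C$ with $\Gamma^*\leq 1$.

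The main obstacle is this second direction: $u_Q$ is only semiconcave, fails to be smooth on the cut locus of $Q$, and the warped product degenerates on $\{f=0\}\subset\partial B$ where fibers collapse. I would circumvent these by restricting first to compact subsets $K\Subset\mathring B\times F$ avoiding both the zero set of $f$ and a small $\m^\sN$-negligible neighborhood of the cut locus (negligibility follows from the $\MCP$ structure from Theorem \ref{th:mcpfornwp}), approximating $u_Q|_K$ there, and completing by a cutoff and density argument based on the core property of $C_c^\infty(\mathring B)\otimes \lip(F)$ in $D(\mathcal E^*)$ recorded in Remark \ref{properties:estar}.
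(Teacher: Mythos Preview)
Your approach to the inequality $\de_{\mathcal E^*}\leq \de_{\smwp}$ is essentially the paper's: both use Proposition \ref{prop:energyineq} and the $\MCP$ property from Theorem \ref{th:mcpfornwp} to pass from $\Gamma^*(u,u)\leq 1$ to a $1$-Lipschitz bound. The paper is slightly more direct, obtaining $|u(p)-u(q)|\leq \de_{\smwp}(p,q)$ by integrating the weak upper gradient along the optimal dynamical plan from $\m(B_\epsilon(q))^{-1}\m^\sN|_{B_\epsilon(q)}$ to $\delta_p$, rather than going through the Poincar\'e inequality and Theorem \ref{th:cheegerlipschitz}.

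For the reverse inequality the paper takes a markedly shorter route. It uses fiber independence exactly as you propose, writing $\de_{\smwp}((s,x),(t,y))=h(t,\de_\sF(x,y))$ where $h$ is the distance to $(s,0)$ in the two-dimensional model $B\times_f\R$. But instead of approximating by elements of the core $C_c^\infty(\mathring B)\otimes\lip(F)$, it sets $g(t,y):=h(t,\de_\sF(x,y))$ and applies the chain rule for $\Gamma^*$ directly:
\[
\Gamma^*(g,g)(t,y)=(\partial_t h)^2+\frac{1}{f^2}(\partial_L h)^2\,|\nabla \de_\sF(x,\cdot)|_\sF^2(y)\leq (\partial_t h)^2+\frac{1}{f^2}(\partial_L h)^2=|\nabla h|_{B\times_f\R}^2\leq 1,
\]
the last bound holding since $h$ is $1$-Lipschitz in $B\times_f\R$. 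Thus $g$ itself is an admissible competitor in the definition of $\de_{\mathcal E^*}$, giving $\de_{\smwp}(p,q)=g(q)-g(p)\leq\de_{\mathcal E^*}(p,q)$. No smoothing, no cut-locus excision, and no approximation by tensor products is needed; the obstacles you flag dissolve because the supremum defining $\de_{\mathcal E^*}$ runs over $D_{loc}(\mathcal E^*)\cap C$, not over the core. Your scheme could in principle be completed, but as written the key step is under-specified: the function $\Phi(r,\de_\sF(\cdot,y_0))$ is not a finite sum of tensor products, and you do not say how to approximate it by elements of $C_c^\infty(\mathring B)\otimes\lip(F)$ while keeping $\Gamma^*\leq 1+o(1)$.
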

\begin{proof} {\bf (1)}
We know by Proposition \ref{prop:energyineq} that $D(\mathcal E^*)\subset W^{1,2}(\nwp)$, and for any $u\in D(\mathcal E^*)$ we have 
$$\left| \nabla u\right|^2:= \left| \nabla u\right|_\snwp^2\leq\Gamma^*(u,u).$$
Then, $\Gamma^*(u,u)\leq 1$ implies that $|\nabla u|\leq 1$.\smallskip\\
{\it Claim:} Since $\nwp$ satisfies the measure contraction property $\MCP(KN, N+1)$, it also satisfies the Sobolev-to-Lipschitz property.
\smallskip

 Indeed, for points $p, q\in \nwp$ we set $\mu_0= \m(B_\epsilon(q))^{-1}\m^\sN|_{B_\epsilon(q)}$ and $\mu_1=\delta_p$. Let $\Pi$ be the unique optimal dynamical plan  between $\mu_0$ and $\mu_1$.  Hence the restriction $(e_{[0, t_0]})_\sharp \Pi$ is a $2$-test plan. Since $|\nabla u|$ is in particular a weak upper gradient it follows
\begin{align*}&\int |u(e_1(\gamma))- u(e_0(\gamma))| \dint \Pi(\gamma)
	\\
	&\ \ \ \ \ \leq \int \int_0^1 |\nabla u|(e_t(\gamma))L(\gamma) \de t \de \Pi(\gamma)\leq W(\mu_0, \mu_1)\end{align*}
where we used $|\nabla u|\leq 1$ and  the Cauchy-Schwarz inequality in the last inequality. 

If we send $\epsilon\rightarrow 0$, we obtain $|u(p)- u(q)|\leq \de_{\smwp}(p,q).$
This yields 
$$\de_{\mathcal E^*}(p,q)= \sup\{ u( p) - u( q)\} \leq \de_{\smwp}(p,q).$$
{\bf (2)}
On the other hand,  we pick $p=(s,x)$ and $q=(t,y)$ in $\mwp$, and let $\gamma=(\alpha, \beta): [0,1]\rightarrow \mwp$ be the geodesic between $p$ and $q$.  By Theorem \ref{th:albi0} $\gamma$ is determined by $s, t$ and $L^\sF(\beta)=L$. Hence 
$$L(\gamma)= L(\tilde \gamma)$$
where $\tilde \gamma$ is the geodesic between the points $(s,0)$ and $(t, L)$ in $B\times_f \R$. 
Hence \begin{align}\label{formula:gencos}\de_{\smwp}(p,q)=\de_{B\times_f \R}((s,0), (t, L))=: h(t, L)= h(t, \de_\sF(x,y)) =:g(t,y).
\end{align}
Since $h$ is just the distance function to $(s,0)$ in $B\times_f \R$ we have  $|\nabla^{(t,L)} h|\leq 1$ in $B\times_f \R$. 

The chain rule for the $\Gamma$-operator $\Gamma^*$ applied to the function $g$, that is a composition of $h$ and $\de_\sF(x, \cdot)$, yields 
\begin{align*}\Gamma^*(g,g)(t,y)&= \left( \frac{\partial}{\partial t} h(t, L)\right)^2+ \frac{1}{f^2(t)} \left(\frac{\partial}{\partial L} h(t, L) \right)^2|\nabla \de_\sF(x, \cdot)|^2(y)\\
&\leq\left( \frac{\partial}{\partial t} h(t, L)\right)^2+ \frac{1}{f^2(t)} \left(\frac{\partial}{\partial L} h(t, L) \right)^2\\
&=|\nabla^{(t,L)} h|(r,L)^2\leq 1\end{align*}
Hence 
$$
\de_{\smwp}(p,q)= g(q)- g(q)\leq \de_{\mathcal E^*}(p,q).$$
\end{proof}
\begin{assumption}\label{assumption:doubling}
In addition to our  assumptions above  we  assume that \begin{center}
$(\%)$ \ $B\times_f^N F$ satisfies a  {\bf global} doubling property.\end{center}\end{assumption}
\begin{assumption}\label{assumption:doubling2}
The Assumption \ref{assumption:doubling} holds in each of the following cases: 
\begin{enumerate}
\item 
 If $B$ and $F$ are  compact, $B\times_f F$ is compact. Then the property $\MCP(KN,N+1)$ implies $(\%)$.  
\item If $K\geq 0$, then $B\times_f F$ satisfies $\MCP(0,N+1)$ that implies  $(\%)$. 
\item If   $f$  is a bounded function and $F$ is compact, then $(\%)$ holds. 
Indeed, this is true  since $B\times_f^\sN F$  satisfies $\MCP(KN, N+1)$, and since boundedness of $f$ and compactness of $F$ imply that the volume of balls of radius $r$ grows at most like $\sim r$. 
\end{enumerate}
\end{assumption}
\begin{corollary} 
	Let $B, f$ and $F$ as before and  $(\%)$ holds. Then $\mathcal E^*= \Ch^\snwp$. 
\end{corollary}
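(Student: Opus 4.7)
One inequality is already available: Proposition \ref{prop:energyineq} gives $D(\mathcal E^*)\subset W^{1,2}(\nwp)$ and $|\nabla u|_\snwp^2\leq \Gamma^*(u,u)$ $\m^\sN$-a.e., whence $\Ch^\snwp(u)\leq \mathcal E^*(u)$ for all $u\in D(\mathcal E^*)$. The task is therefore the converse $\mathcal E^*(u)\leq \Ch^\snwp(u)$ for $u\in W^{1,2}(\nwp)$, and the natural tool is Theorem \ref{th:kz} (Koskela–Zhou).

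The plan is first to verify the hypotheses of Theorem \ref{th:kz} for $\mathcal E^*$. By Remark \ref{properties:estar} the form $\mathcal E^*$ is symmetric, strongly local and regular with core $C_c^\infty(B)\otimes\lip(F)$. Proposition \ref{prop:distancewp} identifies the intrinsic distance $\de_{\mathcal E^*}$ with $\de_\smwp$, so the topology generated by $\de_{\mathcal E^*}$ agrees with the ambient topology and $\mathcal E^*$ is strongly regular. Assumption $(\%)$ then yields that $(B\times F/\!\sim, \de_{\mathcal E^*}, \m^\sN)$ is globally doubling. Theorem \ref{th:kz} therefore applies and gives the pointwise bound
\[
\Gamma^*(u,u)\leq (\lip u)^2\quad \m^\sN\text{-a.e.}
\]
for every $u\in\lip(B\times_f^\sN F)$.

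Next, I would promote this local bound to a global energy estimate. Given bounded Lipschitz $u\in\lip(\nwp)\cap L^2(\m^\sN)$ with $\int(\lip u)^2\de\m^\sN<\infty$, fix $o\in B\times_f^\sN F$ and a $1/R$-Lipschitz plateau cutoff $\chi_R$ with $\chi_R\equiv 1$ on $B_R(o)$ and $\supp\chi_R\subset B_{2R}(o)$. Then $u\chi_R$ is a compactly supported Lipschitz function, hence lies in the domain of the regular Dirichlet form $\mathcal E^*$, and the Leibniz rule for $\Gamma^*$ combined with the Koskela–Zhou inequality gives
\[
\mathcal E^*(u\chi_R)\leq \int \bigl(|\chi_R|\lip u+|u|\lip\chi_R\bigr)^2\de\m^\sN.
\]
Since $u\chi_R\to u$ in $L^2(\m^\sN)$ and the right-hand side converges to $\int(\lip u)^2\de\m^\sN$ by dominated convergence (using $u\in L^2$, $\lip\chi_R\leq 1/R$), the $L^2$-lower semicontinuity of $\mathcal E^*$ yields $u\in D(\mathcal E^*)$ with $\mathcal E^*(u)\leq \int(\lip u)^2\de\m^\sN$. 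A standard truncation $u_M:=\max(-M,\min(M,u))$ handles unbounded $u$, since $\lip u_M\leq \lip u$ and $u_M\to u$ monotonically in $L^2$.

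Finally, for arbitrary $u\in W^{1,2}(\nwp)$, the definition of the Cheeger energy furnishes $u_n\in\lip(\nwp)\cap L^2(\m^\sN)$ with $u_n\to u$ in $L^2$ and $\int(\lip u_n)^2\de\m^\sN\to\Ch^\snwp(u)$. The previous step gives $\mathcal E^*(u_n)\leq \int(\lip u_n)^2\de\m^\sN$, and the $L^2$-lower semicontinuity of $\mathcal E^*$ closes the argument:
\[
\mathcal E^*(u)\leq \liminf_n \mathcal E^*(u_n)\leq \lim_n\int (\lip u_n)^2\de\m^\sN=\Ch^\snwp(u).
\]
Combined with Proposition \ref{prop:energyineq} this proves $\mathcal E^*=\Ch^\snwp$.

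The main obstacle is the bridge from the purely local statement produced by Koskela–Zhou — namely $\lip(\nwp)\subset D_{loc}(\mathcal E^*)$ with $\Gamma^*(u,u)\leq(\lip u)^2$ almost everywhere — to the global energy bound needed to play off against the Cheeger relaxation. In the compact case ($B\simeq[0,\pi]$ or $\mathbb S^1$ with $F$ compact) the cutoff step is essentially vacuous; the delicate situation is $B\simeq\mathbb R$ or $B\simeq[0,\infty)$, where one must verify that the cutoff-and-truncate scheme above does not destroy integrability of the energy. This is precisely where assumption $(\%)$ (global doubling) is used, both to invoke Theorem \ref{th:kz} and to ensure that the concentric ball exhaustion of $\nwp$ behaves well under the approximation.
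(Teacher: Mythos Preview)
Your proof is correct and follows essentially the same skeleton as the paper's argument: both identify $\de_{\mathcal E^*}=\de_\smwp$ via Proposition~\ref{prop:distancewp}, invoke the global doubling assumption $(\%)$, and apply Koskela--Zhou (Theorem~\ref{th:kz}) to obtain $\Gamma^*(u,u)\leq(\lip u)^2$ for Lipschitz $u$. The difference lies in how the reverse inequality $\mathcal E^*\leq\Ch^\snwp$ is closed. The paper additionally invokes Cheeger's theorem (Theorem~\ref{th:cheegerlipschitz}), which needs a local Poincar\'e inequality (available here from the $\MCP$ property of Theorem~\ref{th:mcpfornwp}), to upgrade the Koskela--Zhou bound to the pointwise \emph{equality} $\Gamma^*(u,u)=(\lip u)^2=|\nabla u|_\snwp^2$ for Lipschitz $u$, and then passes to the closures. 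You instead bypass Cheeger's theorem entirely: you keep only the one-sided bound $\Gamma^*(u,u)\leq(\lip u)^2$, globalize it by cutoff and truncation, and then feed it directly into the relaxation definition of $\Ch^\snwp$. This is a genuinely more elementary route, since it does not require the Poincar\'e inequality; the price is that you do not obtain the pointwise identification of gradients along the way, but that is not needed for the corollary as stated.
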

\begin{proof}
	We have $\de_{\mathcal E^*}= \de_{\smwp}$ by the previous proposition. Hence, $\de_{\mathcal E^*}$ induces the topology of $B\times F/\sim$. By Theorem \ref{th:mcpfornwp} $B\times_f^N F$ satisfies the property $MCP(KN, N+1)$. Because of the property $(\%)$ $\nwp$ satisfies a doubling property. Hence $B\times F/\sim$ with $\de_{\mathcal E^*}$ and $\m^\sN$ satisfies a doubling property.  Then, it follows by Theorem \ref{th:kz} that any for any Lipschitz function $u$ w.r.t. $\de_{\mathcal E^*}= \de_{\smwp}$ we have that $u\in D_{loc}(\mathcal E^*)$ and $\Gamma^*(u,u)\leq \lip(u)^2$.  Since $\nwp$ also satisfies a local Poincar\'e inequality, by a theorem of Cheeger we have $\lip(u)=|\nabla u|$. On the other hand, the Proposition \ref{prop:energyineq} says that 
	$|\nabla u|^2\leq \Gamma^*(u,u)$.  Hence, by integration w.r.t. $\m^\sN$ we have that $\mathcal E^*(u)= \Ch^{\snwp}(u)$ for any Lipschitzfunction. We infer that $\mathcal E^*= \Ch^\snwp$.
\end{proof}
\section{The $\RCD$ condition for $N$-warped products}

\subsection{The carr\'e-du-champ operator on $N$-warped products}\label{subsec:gamma2}

Let $F$ be a compact length space with a  finite measure such that $L^\sF$ has a discrete spectrum. Let $N>1$.


We assume $B$ is a  $1$-dimensional Riemannian manifold, $f: B\rightarrow [0,\infty)$ is smooth, and  $\partial B= f^{-1}(\{0\})$.

\begin{proposition}\label{prop:oneill}
We consider $u \in C^\infty_{c}(\mathring B)\otimes D_{\sW^{1,2}}(L^\sF)\subset D_{W^{1,2}}(L)$ and $\phi \in P_t^{\sB, \sN, \lambda} C_c^\infty(\mathring B)\otimes E(\lambda)$.
Then we have 
\begin{align}\label{id:formula}
\Gamma_2(u; \phi)
=& \int \Gamma_2^{\sB, \sN}(u; \phi) \dint \m_\sF + \int  \frac{1}{f^4} \Gamma_2^{\sF}(u;\phi) \dint \m_\sB^{\sN}\nonumber\\
&+\int\left[ 2\langle {\textstyle \frac{f'}{f}}, u'\rangle_\sB\frac{ \eL^\sF u}{f^2}  - \frac{f^\#}{f^2} | \nabla u|^2_{\sF}+2\left| \nabla \big({\textstyle \frac{u}{f}}\big)'\right|^2_{\sF}\right]
\phi \de\m^\sN
\end{align}
where $f^\#:=\frac{ \Delta^\sB f }{f}+ {(N-1)} \frac{|f'|_\sB^2}{f^2}$. 
\begin{remark} 
\hspace{0mm}
\begin{enumerate}\item $u'(r,x)=\frac{d}{dr} \big|_r u(\cdot,x)$ for $\m_\sF$-a.e. $x\in F$ and $\forall r\in B$, and $|\nabla u|_\sF(r,x)=|\nabla u(r, \cdot)|_\sF(x)$  $\forall r\in B$ and $\m_\sF$-a.e. $x\in F$. 
\item  $\Gamma_2^{\sB, \sN}(u;\phi)$ is a short-hand notation for 
$$ \int \frac{1}{2} | (u^x)'|^2 L^{\sB, \sN}\phi^x \de\m_\sB^\sN - \int \langle (u^x)', \left( L^{\sB, \sN} u^x\right)' \rangle  \phi^x \dint \m_\sB^\sN, $$ for $\m_\sF$-a.e. $x\in F$. 
In particular, the first integral is well-defined since $\phi^x\in C^\infty(\mathring B)$ and $u^x\in C_c^\infty(\mathring B)$. 
\item  $\Gamma_2^\sF(u;\phi)$ is a short-hand notation for 
$$\int  \frac{1}{2} |\nabla u^r|_\sF L^\sF \phi^r\de\m_\sF -\int \langle \nabla u^r, \nabla L^\sF u^r\rangle_\sF \phi^r  \dint \m_\sF$$
for $\m_\sB^\sN$-a.e. $r\in \mathring B$. 
\end{enumerate}
\end{remark}
\end{proposition}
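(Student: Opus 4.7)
The proof is a direct but intricate computation that imitates the classical O'Neill expansion of the Bochner formula on a smooth warped product. By bilinearity of $\Gamma_2$ (polarize in $u$, linearity in $\phi$) together with the density results from Proposition \ref{prop:wpop} and Remark \ref{properties:estar}, it suffices to establish the identity for tensor products $u = u_1 \otimes u_2$ with $u_1 \in C^\infty_c(\mathring B)$, $u_2 \in D_{W^{1,2}}(L^\sF)$, and $\phi = \phi_1 \otimes \phi_2$ with $\phi_1 \in P_t^{\sB,\sN,\lambda} C^\infty_c(\mathring B)$ and $\phi_2 \in E(\lambda)$.

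First I would apply Proposition \ref{prop:wpop} to obtain
\[
L^*u = (L^{\sB,\sN} u_1)\otimes u_2 + \tfrac{u_1}{f^2}\otimes L^\sF u_2, \qquad L^*\phi = L^{\sB,\sN,\lambda}\phi_1\otimes \phi_2,
\]
where the second identity uses crucially that $\phi_2$ is an eigenfunction. I would then compute the two ingredients of $\Gamma_2(u;\phi)$, namely $\Gamma^*(u)\,L^*\phi = \bigl[(u_1')^2 u_2^2 + \tfrac{u_1^2}{f^2}|\nabla u_2|_\sF^2\bigr] L^{\sB,\sN,\lambda}\phi_1\,\phi_2$, and
\[
\langle \nabla u, \nabla L^*u\rangle_* = \partial_r u \cdot \partial_r L^*u + \tfrac{1}{f^2}\langle \nabla_\sF u,\nabla_\sF L^*u\rangle_\sF,
\]
expanding the $r$-derivative through the factor $1/f^2$ via $\partial_r(1/f^2)=-2f'/f^3$, and using $\nabla_\sF L^*u = (L^{\sB,\sN}u_1)\nabla u_2 + \tfrac{u_1}{f^2}\nabla L^\sF u_2$.

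After substitution and Fubini, the integrand splits into three blocks. The pure $B$-block, pairing $(u_1')^2$-terms with $L^{\sB,\sN}\phi_1$ and $(L^{\sB,\sN}u_1)'$, reassembles into $\int \Gamma_2^{\sB,\sN}(u;\phi)\, d\m_\sF$; the pure $F$-block, carrying the global prefactor $1/f^4$, reassembles into $\int \tfrac{1}{f^4}\Gamma_2^\sF(u;\phi)\, d\m_\sB^\sN$. The $\lambda/f^2$ piece inside $L^{\sB,\sN,\lambda}\phi_1$ combines (via $L^\sF\phi_2 = -\lambda\phi_2$, resp.\ polarization in the general case) with contributions coming from the $\nabla_\sF$-pieces of $\nabla L^*u$. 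The remaining cross terms involve exactly the combinations $f'/f$, $f''/f$ and $(f')^2/f^2$ acting on $L^\sF u$, $|\nabla u|_\sF^2$ and $\langle \nabla u,\nabla u'\rangle_\sF$.

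The main obstacle is the combinatorial repackaging of these cross terms into the three explicit summands on the second line of \eqref{id:formula}. The key algebraic identity is
\[
2\big|\nabla(u/f)'\big|_\sF^2 = \tfrac{2}{f^2}|\nabla u'|_\sF^2 - \tfrac{4f'}{f^3}\langle \nabla u',\nabla u\rangle_\sF + \tfrac{2(f')^2}{f^4}|\nabla u|_\sF^2,
\]
which absorbs all mixed $\partial_r\nabla_\sF u$ contributions; the combination $f^\# = \Delta^\sB f/f + (N-1)(f')^2/f^2$ emerges from collecting the $f''$- and $(f')^2$-terms produced whenever $L^{\sB,\sN} = \partial_r^2 + N(f'/f)\partial_r$ acts on $u_1$ or on $1/f^2$; and the term $2\langle f'/f, u'\rangle_\sB L^\sF u/f^2$ arises from $u_1'\,\partial_r(u_1/f^2)\,L^\sF u_2$ together with its symmetric partner coming from $\nabla_\sF L^*u$. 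Once the coefficients are matched, the identity holds for tensor-product $u,\phi$, and by the reduction above it extends to the full class claimed in the statement.
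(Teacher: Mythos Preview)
Your plan is correct and follows essentially the same O'Neill-type computation as the paper's own proof (given in the Appendix): reduce to elementary tensors, expand $\Gamma_2$ via the formulas for $L^*u$ and $L^*\phi$, split into pure-$B$, pure-$F$, and cross terms, and repackage the cross terms using exactly the algebraic identity for $|\nabla(u/f)'|_\sF^2$ that you wrote down. One minor organizational difference: the paper works throughout with the polarized form $\Gamma_2(u,v;\phi)+\Gamma_2(v,u;\phi)$ and, rather than keeping $L^{\sB,\sN,\lambda}$ on $\phi_1$, uses self-adjointness to move it onto the compactly supported factor $\langle u_1',v_1'\rangle_\sB\in C_c^\infty(\mathring B)$ before splitting off the $-\lambda/f^2$ piece and converting it via $L^\sF\phi_2=-\lambda\phi_2$ into $L^\sF$-terms on the fiber; this sidesteps any question of whether $L^{\sB,\sN}\phi_1$ is globally in $L^2$, but since everything is multiplied by compactly supported functions your route works equally well.
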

We move the lengthy proof of Proposition \ref{prop:oneill} to  Appendix \ref{appendix}.
\begin{corollary}
We consider $u \in C^\infty_{c}(\mathring B)\otimes D_{\sW^{1,2}}(L^\sF)\subset D(L)$ and $\phi \in \Xi'$.
Then we have 
\begin{align}\label{id:formulav2}
\Gamma_2(u; \phi)
=& \int \left[\Gamma_2^{\sB, f^\sN}(u; \phi) +   \frac{1}{f^4} \Gamma_2^{\sF}(u;\phi)\right] d\m^{\sN}\nonumber\\
&+\int\left[ 2\langle {\textstyle \frac{f'}{f}}, u'\rangle_\sB\frac{ \eL^\sF u}{f^2}  - \frac{f^\#}{f^2} | \nabla u|^2_{\sF}+2\left| \nabla \big({\textstyle \frac{u}{f}}\big)'\right|^2_{\sF}\right].
\phi \de\m^\sN
\end{align}
\end{corollary}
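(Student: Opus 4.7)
The plan is straightforward: the corollary is an immediate consequence of Proposition \ref{prop:oneill} together with the linearity in $\phi$ of both sides of the identity. By definition, an element $\phi \in \Xi'$ is a \emph{finite} linear combination
$$\phi = \sum_{i=0}^k \phi_1^i \otimes \phi_2^i, \qquad \phi_1^i \in P_t^{\sB, \sN, \lambda_i} C_c^\infty(\mathring B), \ \phi_2^i \in E(\lambda_i),$$
and each summand is exactly a member of the spectral block for which Proposition \ref{prop:oneill} already establishes the formula \eqref{id:formula}. I interpret the notation $\Gamma_2^{\sB, f^\sN}$ in \eqref{id:formulav2} as synonymous with $\Gamma_2^{\sB, \sN}$ from the proposition (both refer to the $\Gamma_2$ associated with the weighted Dirichlet form on $B$ with weight $f^\sN$).

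The first step is to verify linearity of the left-hand side. Directly from the definition
$$\Gamma_2(u;\phi) = \int |\nabla u|^2 \, L\phi \, \de\m^\sN - \int \langle \nabla u, \nabla Lu\rangle \, \phi \, \de\m^\sN,$$
the map $\phi \mapsto \Gamma_2(u;\phi)$ is manifestly linear, and for $\phi = \sum_i \phi_1^i \otimes \phi_2^i$ we have $\phi \in D(L)$ with $L\phi = \sum_i L^{\sB,\sN,\lambda_i}\phi_1^i \otimes \phi_2^i$, by Proposition \ref{prop:wpop}. Hence
$$\Gamma_2(u; \phi) = \sum_{i=0}^k \Gamma_2(u; \phi_1^i \otimes \phi_2^i).$$

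The second step is to verify linearity of every term on the right-hand side. Inspecting the Remark following Proposition \ref{prop:oneill}, the short-hand $\Gamma_2^{\sB,\sN}(u;\phi)$ is an integral in which $\phi^x$ and $L^{\sB,\sN}\phi^x$ enter linearly; similarly $\Gamma_2^\sF(u;\phi)$ is linear in $\phi^r$ and $L^\sF \phi^r$. The remaining terms on the right are integrals of fixed ($u$-dependent) functions against $\phi \, \de\m^\sN$, so they are trivially linear in $\phi$.

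The third step is simply to apply \eqref{id:formula} to each summand $\phi_1^i \otimes \phi_2^i$ and add the resulting identities; the two preceding linearity statements guarantee that the sum of the right-hand sides reassembles into the right-hand side of \eqref{id:formulav2} with $\phi$ in place of the individual blocks. There is no genuine obstacle here: no new analytic estimate is needed beyond what is used in Proposition \ref{prop:oneill}, because $\Xi'$ consists of \emph{finite} sums and each summand individually satisfies the hypotheses of that proposition. The only mild notational care needed is to confirm that $\Gamma_2^{\sB, f^\sN} = \Gamma_2^{\sB, \sN}$ in the author's conventions, after which the corollary follows by one line of bookkeeping.
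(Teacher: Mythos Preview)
Your proposal is correct and follows exactly the paper's own argument: the paper's proof is the single line ``This follows from the previous proposition and from linear dependency of the formula in $\phi$,'' and you have simply spelled out that linearity in detail. (Minor note: your displayed definition of $\Gamma_2(u;\phi)$ is missing the factor $\tfrac{1}{2}$ in front of the first integral, but this does not affect the linearity argument.)
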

\begin{proof}
This follows  from the previous proposition and from linear dependency of the formula in $\phi$.
\end{proof}
\begin{proposition}
Formula \eqref{id:formulav2} holds for $u+v=u_1\otimes u_2+ v_1\otimes v_2$ with  $v_1\otimes v_2 \in C_c^\infty(\mathring B) \otimes D_{W^{1,2}}(L^\sF)$, $$u_1\in \bigcup_{t>0} P_t^{\sB, \sN}C_{c}^\infty(\mathring B),$$ $u_2\equiv c\in \R$ and $\phi \in \Xi'$. 
\end{proposition}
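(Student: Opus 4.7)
The strategy is approximation. Since formula \eqref{id:formulav2} is quadratic in $u$, polarisation reduces the task to verifying the formula on the summand $u_1\otimes c$ alone, on $v_1\otimes v_2$ alone (already covered by the previous corollary), and on the cross term $\Gamma_2(u_1\otimes c,\,v_1\otimes v_2;\phi)$. My plan is to approximate $u_1\in P_t^{\sB,\sN} C_c^\infty(\mathring B)$ by a sequence $u_1^n\in C_c^\infty(\mathring B)$ in a topology strong enough that the previous corollary, applied to $u^n:=u_1^n\otimes c+v_1\otimes v_2\in C_c^\infty(\mathring B)\otimes D_{W^{1,2}}(L^\sF)$ (note $c\in D_{W^{1,2}}(L^\sF)$ since constants are eigenfunctions for $\lambda_0=0$), passes to the limit on both sides.

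The regularity step exploits that $u_1=P_t^{\sB,\sN}w_1$ for some $w_1\in C_c^\infty(\mathring B)$. Iterating the semigroup gives $u_1\in D((L^{\sB,\sN})^k)$ for every $k$, while Corollary \ref{th:boundschroedinger} (applied to $\lambda=0$) delivers uniform $L^\infty$ bounds on $u_1$, $u_1'$, $L^{\sB,\sN}u_1$ and $(L^{\sB,\sN}u_1)'$. Using smooth cutoffs $\chi_n\in C_c^\infty(\mathring B)$ with $\chi_n\to 1$ in $W^{1,2}(B,\m_\sB^\sN)$ (available because $\mathrm{Cap}_2(\partial B)=0$ by the lemma in Section 2) and, if necessary, a mollification inside $\mathring B$, I construct $u_1^n\in C_c^\infty(\mathring B)$ such that $u_1^n\to u_1$, $(u_1^n)'\to u_1'$, $L^{\sB,\sN}u_1^n\to L^{\sB,\sN}u_1$ and $(L^{\sB,\sN}u_1^n)'\to (L^{\sB,\sN}u_1)'$, all in $L^2(\m_\sB^\sN)$ and uniformly bounded in $L^\infty(\m_\sB^\sN)$.

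The passage to the limit proceeds separately for the two sides of \eqref{id:formulav2} applied to $u^n$. On the LHS, Proposition \ref{prop:wpop}(2) yields $L^*(u_1^n\otimes c)=c\,L^{\sB,\sN}u_1^n$, so $u^n\to u+v$ in the graph norm of $L^*$; combined with the fact that $\phi$, $L^*\phi$ and $|\nabla\phi|$ lie in $L^\infty(\m^\sN)$ for $\phi\in\Xi'$ (by the last corollary of Section 3), this gives $\Gamma_2(u^n;\phi)\to\Gamma_2(u+v;\phi)$. On the RHS, expand bilinearly: every integrand decomposes into a pure $v$-contribution (unaffected by $n$), a pure $u_1^n\otimes c$-contribution, which reduces to the single base term $c^2\int\Gamma_2^{\sB,\sN}(u_1^n;\phi)\,d\m^\sN$ (since $L^\sF c=0$, $|\nabla c|_\sF=0$, and $(c/f)$ is fibre-independent, so all fibre-type terms vanish), and cross contributions of the form $\int\Gamma_2^{\sB,\sN}(u_1^n, v_1;\phi)\,d\m^\sN$ together with $\int c\langle\tfrac{f'}{f},(u_1^n)'\rangle_\sB\tfrac{L^\sF v}{f^2}\phi\,d\m^\sN$. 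Each of these is continuous under the $L^2\cap L^\infty$ convergences above, invoking boundedness of $\phi^x$ and $L^{\sB,\sN}\phi^x$ and the compact support of $v_1$ (which keeps the integrands involving $f^{-2}$ or $f^\#$ away from the degeneration locus).

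The main obstacle is precisely the construction of $u_1^n$ with joint $L^2$- and $L^\infty$-convergence for both $u_1^n$ and $L^{\sB,\sN}u_1^n$ (and their derivatives) up to the boundary $\partial B=f^{-1}(0)$, where the weight $f^\sN$ degenerates; a naive cutoff would concentrate $\chi_n'$ near $\partial B$. The control relies on combining the vanishing $2$-capacity of $\partial B$ with the strong $L^\infty$ regularity of $u_1$ supplied by the semigroup smoothing in Corollary \ref{th:boundschroedinger}; the latter is the genuinely non-trivial input, since it encodes the quantitative interaction between the warping function $f$ and the Schrödinger-type operator $L^{\sB,\sN}$.
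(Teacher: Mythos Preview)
Your approximation strategy runs into a genuine obstruction, and the paper's proof proceeds by an entirely different, direct route.

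The central gap is the construction of $u_1^n\in C_c^\infty(\mathring B)$ with $L^{\sB,\sN}u_1^n\to L^{\sB,\sN}u_1$ (let alone $(L^{\sB,\sN}u_1^n)'\to(L^{\sB,\sN}u_1)'$) in $L^2(\m_\sB^\sN)$. Graph-norm density of $C_c^\infty(\mathring B)$ in $D(L^{\sB,\sN})$ is exactly essential self-adjointness of $L^{\sB,\sN}\big|_{C_c^\infty(\mathring B)}$, i.e.\ the case $\lambda=0$ of Proposition~\ref{prop:essential}. But that proposition requires $\lambda>1$ when $f^{-1}(0)\neq\emptyset$; for $\lambda=0$ the transformed potential satisfies only $V(r)\gtrsim -\tfrac{1}{4r^2}$, which is the limit-circle borderline and does \emph{not} yield essential self-adjointness. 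Concretely, with a cutoff $\chi_n$ supported on $[\epsilon/3,\epsilon]$ one has $\|\chi_n''u_1\|_{L^2(\m_\sB^\sN)}^2\sim\epsilon^{N-3}$, which blows up for $N\le 3$; mollification inside $\mathring B$ does nothing at the boundary. This failure is precisely why the paper builds $\Xi$ with $P_t^{\sB,\sN}C_c^\infty(\mathring B)\otimes E(0)$ rather than $C_c^\infty(\mathring B)\otimes E(0)$: the $\lambda=0$ part cannot be reached by compactly supported functions in the operator topology.

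The paper instead argues by \emph{direct computation}, reducing by linearity to $\phi=\phi_1\otimes\phi_2\in P_t^{\sB,\sN,\lambda}C_c^\infty(\mathring B)\otimes E(\lambda)$ and treating $\Gamma_2(u,u;\phi)$, $\Gamma_2(u,v;\phi)$, $\Gamma_2(v,u;\phi)$ separately. For the cross terms, the product $\langle u_1',v_1'\rangle_\sB$ inherits compact support from $v_1\in C_c^\infty(\mathring B)$, so one can move $L^{\sB,\sN,\lambda}$ from $\phi_1$ onto it and split off the $\lambda/f^2$ piece as $\int v_2 L^\sF\phi_2\,d\m_\sF$. For the pure $u$ term the key observation is a dichotomy: if $\lambda>0$ then $\phi_2\perp 1$, so $\int\phi_2\,d\m_\sF=0$ and $\Gamma_2(u,u;\phi)=0$ on both sides; if $\lambda=0$ then $\phi_1\in P_t^{\sB,\sN}C_c^\infty(\mathring B)\subset D(L^{\sB,\sN})$ and $L^{\sB,\sN,\lambda}\phi_1=L^{\sB,\sN}\phi_1$, so $\Gamma_2^{\sB,\sN}(u_1;\phi_1)$ is honestly defined. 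No approximation of $u_1$ is needed. Your observations that the fibre terms vanish for $u_2\equiv c$ and that $\phi,|\nabla\phi|,L\phi\in L^\infty$ for $\phi\in\Xi'$ are correct and are also used implicitly in the paper, but they do not rescue the approximation.
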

\begin{proof} W.l.o.g. we can assume that $\phi=\phi_1\otimes \phi_2 \in P_t^{\sB, \sN, \lambda} C_c^\infty(\mathring B)\otimes E(\lambda)$. By linearity of $\Gamma_2(\cdot, \cdot; \phi)$ in $\phi$ this extends to arbitrary $\phi \in\Xi'$.
\smallskip

 If $u_1= P_t^{\sB, \sN} \tilde u_1\in D_{W^{1,2}}(L^{\sB, \sN})$ for some $\tilde u_1\in C_c^\infty(\mathring B)$ and $t>0$, then it holds for $u_2\equiv const=c\in \R$  that 
$$u_1 \otimes u_2 = P_t^{\sB, \sN}\tilde u_1 \otimes c= P_t (\tilde u_1\otimes c)\in D_{W^{1,2}}(L).$$
Hence
\begin{align*}
\Gamma_2(u,v;\phi)=&\underbrace{ \int \frac{1}{2} \langle \nabla u, \nabla v\rangle \eL \phi d\m^\sN }_{=: (I)} - \underbrace{\int \langle \nabla u, \nabla \eL v\rangle \phi d\m^\sN}_{=:(II)}.
\end{align*}
is well-defined.

Since $\langle \nabla u, \nabla v\rangle = \langle u_1', v_1'\rangle_\sB u_2 v_2= \langle u_1', v_1'\rangle_\sB c v_1$, it follows that 
\begin{align*}2(I)&=c\int \langle u_1', v_1'\rangle_\sB v_2 L\phi d\m^\sN\\&= c\int \langle u_1', v_1'\rangle_\sB v_2 \left[ L^{\sB, \sN, \lambda}\phi_1  \phi_2\right] \dint \m^\sN.\\
&=  c \int   \langle u_1', v_1'\rangle_\sB L^{\sB, \sN, \lambda} \phi_1 \dint \m_{\sB}^\sN \int v_2 \phi_2  \dint \m_\sF.
\end{align*}
Since $v_1\in C^\infty_c(\mathring B)$, we have $\langle u_1', v_1'\rangle_\sB\in C_c^\infty(\mathring B)$. Hence

\begin{align*}2(I)=&  c \int   L^{\sB, \sN, \lambda} \langle u_1', v_1'\rangle_\sB \phi_1 \dint \m_{\sB}^\sN \int v_2 \phi_2  \dint \m_\sF\\
=&   c \int   L^{\sB, \sN} \langle u_1', v_1'\rangle_\sB \phi_1 \dint \m_{\sB}^\sN \int v_2 \phi_2  \dint \m_\sF\\
&+c \int   \langle u_1', v_1'\rangle_\sB \phi_1 \dint \m_{\sB}^\sN \underbrace{\int v_2(- \lambda)\phi_2  \dint \m_\sF}_{\int L^\sF v_2 \phi_2 \dint \m_\sF}
\end{align*}
Moreover, since $v_1\in C_c^\infty(\mathring B)$, we have $$\langle \nabla u, \nabla ( L^{\sB, \sN} v_1 v_2 + \frac{v_1}{f^2} L^\sF v_2)\rangle= \langle u_1', (L^{\sB, \sN} v_1)'\rangle_\sB c v_2+ \langle u_1',\left(\frac{v_1}{f^2}\right)'\rangle L^\sF v_2.$$ Hence
\begin{align*}
(II)&= c^2\int \langle u_1', \left( L^{\sB, \sN} v_1 \right)'\rangle \phi_1 \dint \m_{\sB}^\sN \int v_2 \phi_2 \dint \m_\sF.
\end{align*}
We obtain
\begin{align*}&\Gamma_2(u,v; \phi) \\
&= \int \Gamma_2^{\sB,\sN}(u,v;\phi) \dint \m_\sF + \int\frac{1}{f^2} \langle u_1', v_1'\rangle_\sB L^\sF v_2 \phi_2 + \langle u_1',\left(\frac{v_1}{f^2}\right)'\rangle L^\sF v_2\dint \m_\sF.
\end{align*}
This formula corresponds to  \eqref{id:next} in the proof of Proposition \ref{prop:oneill}.
\smallskip

Similarly, one computes that
$\Gamma_2(v,u;\phi) = \int \Gamma_2^{\sB,\sN}(u,v;\phi) \dint \m_\sF.$
\smallskip\\
Finally, we compute $\Gamma_2(u,u; \phi)$.  
Since $\langle \nabla u, \nabla u\rangle =  \langle u_1', u_1'\rangle_\sB c^2$, we have 
\begin{align*}2(I)&=c^2\int \langle u_1', u_1'\rangle_\sB  L\phi d\m^\sN\\&= c^2\int \langle u_1', u_1'\rangle_\sB \left[ L^{\sB, \sN, \lambda}\phi_1  \phi_2\right] \dint \m^\sN.\\
&=  c^2 \int   \langle u_1', u_1'\rangle_\sB L^{\sB, \sN, \lambda} \phi_1 \dint \m_{\sB}^\sN \int  \phi_2  \dint \m_\sF.
\end{align*}
We have $\int \phi_2 \dint \m_\sF=0$ if and only if $\lambda>0$, since in this case $\phi_2$ is a nonconstant eigenfunction. Similarly for $(II)$. Hence, in any case we have
$$\Gamma_2(u,u;\phi)
=c^2\Gamma_2^{\sB, \sN}(u_1,u_1;\phi_1)\int  \phi_2  \dint \m_\sF= \int \Gamma_2^{\sB, \sN}(u,u;\phi) \dint \m_\sF. $$

Together with the formula for $\Gamma_2(v;\phi)$ that we computed before, this yields the desired formula for $\Gamma_2(u+v; \phi)$. 
\end{proof}
\begin{corollary}
For $u \in  \bigcup_{t>0} P_t^{\sB, \sN}C_{c}^\infty(\mathring B) +C^\infty_{c}(\mathring B)\otimes D_{\sW^{1,2}}(L^\sF)$, and $\phi \in \Xi'$
 we have 
\begin{align}\label{equ:oneill2}
\Gamma_2(u; \phi)
=& \int \left[\Gamma_2^{\sB, f^\sN}(u; \phi) +   \frac{1}{f^4} \Gamma_2^{\sF}(u;\phi)\right] d\m^{\sN}\nonumber\\
&+\int\left[ 2\langle {\textstyle \frac{f'}{f}}, u'\rangle_\sB\frac{ \eL^\sF u}{f^2}  - \frac{f^\#}{f^2} | \nabla u|^2_{\sF}+2\left| \nabla \big({\textstyle \frac{u}{f}}\big)'\right|^2_{\sF}\right]
\phi \de\m^\sN.
\end{align}
\end{corollary}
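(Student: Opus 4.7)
The plan is to reduce the statement to the two previous results by linearity in $\phi$ and by exploiting the special structure of the decomposition $u = u^B + u^{BF}$ where $u^B \in \bigcup_{t>0} P_t^{\sB,\sN}C_c^\infty(\mathring B)$ (viewed as a function on $B\times F$ that is constant along the fibers) and $u^{BF} \in C_c^\infty(\mathring B)\otimes D_{W^{1,2}}(L^\sF)$.

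First I would note that by definition $\Xi' = \sum_{i=0}^\infty P_t^{\sB,\sN,\lambda_i}C_c^\infty(\mathring B)\otimes E(\lambda_i)$, so any $\phi\in\Xi'$ is a finite sum $\phi = \sum_j \phi_1^j\otimes \phi_2^j$ with $\phi_1^j\otimes\phi_2^j\in P_t^{\sB,\sN,\lambda_j}C_c^\infty(\mathring B)\otimes E(\lambda_j)$. Since both $\Gamma_2(u;\cdot)$ and the right-hand side of \eqref{equ:oneill2} are linear in the test function $\phi$, it is enough to verify the identity for a single elementary $\phi = \phi_1\otimes\phi_2\in P_t^{\sB,\sN,\lambda}C_c^\infty(\mathring B)\otimes E(\lambda)$.

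Next, write $u = u^B\otimes 1 + u^{BF}$ as above. This is precisely the situation treated in the preceding proposition (with the constant $c=1$), which gives \eqref{id:formulav2} — and hence \eqref{equ:oneill2} — for the sum $u^B\otimes 1 + u^{BF}$ against such $\phi$. The only thing to record here is that when the constant-in-$F$ summand $u^B\otimes 1$ enters the right-hand side, the last integral collapses correctly: one has $L^\sF(u^B\otimes 1) = 0$, $|\nabla (u^B\otimes 1)|_\sF = 0$, and $\nabla ((u^B\otimes 1)/f)' = 0$, so the only surviving contribution from $u^B\otimes 1$ is the base piece $\int \Gamma_2^{\sB,f^\sN}(u^B\otimes 1;\phi)\,d\m_\sF$, which matches the quadratic-form polarization used in the proof of the preceding proposition.

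Finally, I would check that the cross terms behave as required. Since both sides of \eqref{equ:oneill2} are quadratic forms in $u$, the identity for the sum $u^B\otimes 1 + u^{BF}$ is equivalent to the identities for each summand separately together with the identity for the cross term $\Gamma_2(u^B\otimes 1, u^{BF};\phi)$. The first summand identity is the base-only case of the previous corollary applied to $u^B\otimes 1$; the second is the previous corollary applied to $u^{BF}\in C_c^\infty(\mathring B)\otimes D_{W^{1,2}}(L^\sF)$; and the cross-term identity was computed explicitly in the proof of the preceding proposition. I expect the only mild obstacle to be a bookkeeping one, namely verifying that the bilinearization of the nonlinear-looking terms $\frac{f^\#}{f^2}|\nabla u|_\sF^2$ and $2|\nabla(u/f)'|_\sF^2$ is compatible with the way the cross term was handled in the previous proposition; this is immediate because for $u^B\otimes 1$ both $|\nabla u^B\otimes 1|_\sF$ and the fiber-derivative of $(u^B\otimes 1)/f$ vanish, so the bilinear cross terms with $u^{BF}$ reduce to exactly the expressions that appeared in the earlier computation. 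Combining these observations yields \eqref{equ:oneill2} for all $u$ in the stated class and all $\phi\in\Xi'$.
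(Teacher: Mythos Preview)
Your proposal is correct and follows exactly the route the paper leaves implicit: the corollary is stated without proof because it is obtained from the preceding proposition and the earlier corollary by (bi)linearity in $\phi$ and the quadratic-form structure in $u$. One small referencing slip: the diagonal identity for the base-only summand $u^B\otimes 1$ does \emph{not} come from the ``previous corollary'' (which is stated for $u\in C_c^\infty(\mathring B)\otimes D_{W^{1,2}}(L^\sF)$, and $P_t^{\sB,\sN}u_1$ need not lie in $C_c^\infty(\mathring B)$); rather it is exactly the computation $\Gamma_2(u,u;\phi)=\int\Gamma_2^{\sB,\sN}(u,u;\phi)\,\dint\m_\sF$ carried out at the end of the proof of the preceding proposition. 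With that correction your polarization argument goes through verbatim.
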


A constant function $\phi\equiv c$ is not in  $D(L^{\sB,\sN})$ if $B$ is noncompact. However, it will be useful to consider constant  functions as test functions in \eqref{equ:oneill2} also in the noncompact case. For this purpose we extended the domain of the Bochner formula in Corollary \ref{cor:bochnerext}. Similarly, we will extend the domain of formula \eqref{id:formulav2}.
\smallskip

If $B$ is noncompact,  for $u \in C^\infty_{c}(\mathring B)\otimes D_{\sW^{1,2}}(L^\sF) +  \bigcup_{t>0} P_t^{\sB, \sN}C_{c}^\infty(\mathring B)$  we define 
$$\Gamma_2(u;1):= - \int \langle \nabla u, \nabla Lu\rangle \dint \m^\sN$$
as well 
$$\int \Gamma_2^{\sB, \sN}(u; 1) \dint \m_\sF:= - \int \langle u' , (L^{\sB, \sN} u)' \rangle_\sB \dint \m_\sB^\sN \dint \m_\sF$$
and 
$$\int \Gamma_2^{\sF}(u;1) \dint \m_\sB^\sN:= - \int \langle \nabla u, \nabla L^\sF u\rangle_\sF \dint \m_\sF \dint \m_\sB^\sN.$$
This is of course consistent with the case when $B$ is compact. 
\begin{corollary}
We consider $u \in \bigcup_{t>0} P_t^{\sB, \sN}C_{c}^\infty(\mathring B)+ C^\infty_{c}(\mathring B)\otimes D_{\sW^{1,2}}(L^\sF)  $.
Then
\begin{align*}
\Gamma_2(u; 1)
=& \int \left[\Gamma_2^{\sB, \sN}(u; 1) +   \frac{1}{f^4} \Gamma_2^{\sF}(u;1)\right] d\m^{\sN}\nonumber\\
&+\int\left[ 2\langle {\textstyle \frac{f'}{f}}, u'\rangle_\sB\frac{ \eL^\sF u}{f^2}  - \frac{f^\#}{f^2} | \nabla u|^2_{\sF}+2\left| \nabla \big({\textstyle \frac{u}{f}}\big)'\right|^2_{\sF}\right] \de\m^\sN.
\end{align*}
\end{corollary}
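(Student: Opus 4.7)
The strategy is to approximate the constant function $\phi \equiv 1$ by elements of $\Xi'$ and pass to the limit in formula \eqref{equ:oneill2} of the preceding corollary, following the pattern of the proof of Corollary \ref{cor:bochnerext}. Since $\lambda_0 = 0$ and $E(\lambda_0)$ is the line of constant functions on $F$, every product $\eta \otimes 1$ with $\eta \in P_t^{\sB,\sN}C_c^\infty(\mathring B)$ belongs to $\Xi'$. Pick a nested exhaustion $K_n \nearrow \mathring B$ by compact sets, cutoffs $\chi_n \in C_c^\infty(\mathring B)$ with $\chi_n \equiv 1$ on $K_n$ and $0 \leq \chi_n \leq 1$, and set $\phi_n := P_{1/n}^{\sB,\sN}\chi_n \otimes 1 \in \Xi'$. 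Standard semi-group properties give $0 \leq \phi_n \leq 1$ and $\phi_n \to 1$ pointwise and in $L^2_{loc}(\m^\sN)$ as $n \to \infty$.

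Writing $\Gamma_2(u;\phi_n) = \tfrac{1}{2}\int |\nabla u|^2 L\phi_n \, d\m^\sN - \int \langle \nabla u, \nabla Lu\rangle \phi_n \, d\m^\sN$, the second integral converges to $-\Gamma_2(u;1)$ by dominated convergence, using the uniform $L^\infty$-bounds from Corollary \ref{th:boundschroedinger} together with the compact $B$-support (resp.\ heat semi-group decay) of $u$. The first integral is driven to $0$ by the self-adjointness argument used in the proof of Corollary \ref{cor:bochnerext}: since $L\phi_n = L^{\sB,\sN}(P_{1/n}^{\sB,\sN}\chi_n)\otimes 1$, one pushes $L^{\sB,\sN}$ off $\phi_n$ onto a smoothed version $P_{s}^{\sB,\sN}\bigl(\tfrac{1}{2}|\nabla u|^2\bigr)$ by the identity $L^{\sB,\sN}P_s^{\sB,\sN} = P_s^{\sB,\sN} L^{\sB,\sN}$, and exploits that the resulting $B$-integral of $L^{\sB,\sN}(\cdots)$ vanishes in the limit.

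For the right-hand side of \eqref{equ:oneill2}, since $\phi_n = \eta_n \otimes 1$ has a constant $F$-factor, $L^\sF\phi_n \equiv 0$, so the term $\int \frac{1}{f^4}\Gamma_2^\sF(u;\phi_n)\, d\m^\sN$ collapses by dominated convergence to $-\int \frac{1}{f^4}\langle \nabla u, \nabla L^\sF u\rangle \, d\m^\sN = \int \frac{1}{f^4}\Gamma_2^\sF(u;1)\, d\m^\sN$. The term $\int \Gamma_2^{\sB,\sN}(u;\phi_n)\, d\m_\sF$ is handled fiber-by-fiber in $F$ by the same self-adjointness-and-semi-group trick used for the LHS, producing $\int \Gamma_2^{\sB,\sN}(u;1)\, d\m_\sF$ in the limit. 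The bracketed integral $\int [\,2\langle \tfrac{f'}{f}, u'\rangle_\sB \tfrac{L^\sF u}{f^2} - \tfrac{f^\#}{f^2}|\nabla u|^2_\sF + 2|\nabla(u/f)'|^2_\sF\,]\phi_n \, d\m^\sN$ converges to the same integral without $\phi_n$ by dominated convergence, using the smoothness of $f$ on the essential support of $u$.

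The main difficulty is to establish the integrability and $L^\infty$-bounds of the various terms when $u$ belongs to the heat-regularized class $P_t^{\sB,\sN}C_c^\infty(\mathring B)$, where $u$ is no longer compactly supported; this is precisely the role of Corollary \ref{th:boundschroedinger} and the ensuing regularity statement for elements of $\Xi'$. The companion case $u \in C_c^\infty(\mathring B)\otimes D_{W^{1,2}}(L^\sF)$ is easier, as all $B$-integrations are confined to a fixed compact subset of $\mathring B$ at positive distance from $f^{-1}(\{0\})$.
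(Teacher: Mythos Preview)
Your proposal is correct and follows essentially the same route as the paper: approximate $1$ by test functions $\phi_n=P_t^{\sB,\sN}\psi_n\otimes 1\in\Xi'$ with $\psi_n\uparrow 1$, use the semi-group/self-adjointness trick (as in Corollary~\ref{cor:bochnerext}) to kill the $\int|\nabla u|^2 L\phi_n$ and $\int|u'|_\sB^2 L^{\sB,\sN}\phi_{1,n}$ terms, simplify $\Gamma_2^\sF(u;\phi_n)$ via $L^\sF 1=0$, and pass the remaining terms through by dominated convergence. The only cosmetic difference is that you let the regularization time $t=1/n\to 0$ simultaneously, whereas the paper keeps $t$ fixed and relies on $P_t^{\sB,\sN}\psi_n\uparrow 1$; both variants work.
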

\begin{proof}
We pick $\phi_n= \phi_{1,n}\otimes 1 \in P_t^{\sB, \sN} C_0^\infty(\mathring B)\otimes E(0)$ where $\phi_{1,n}= P_t^{\sB, \sN}\psi_n$ for a sequence  $(\psi_n)_{n\in \N}\subset  C^\infty_0(\mathring B)$   such that $\psi_n\uparrow 1$ pointwise $\m_\sB^\sN$-a.e. in $B$.  Then, $P_t^{\sB, \sN} \psi_n \uparrow 1$ for every $t>0$. 

Let $u$ be as in the assumptions. With $P_t(\psi_n \otimes 1)= (P_t^{\sB, \sN}\psi_n )\otimes 1= P_t^{\sB, \sN}\psi$ it follows that 
\begin{align*}\int |\nabla u|^2 L \left( P_t^{\sB, \sN} \psi_n \otimes 1 \right)\dint \m^\sN&= \int |\nabla u|^2 L P_t( \psi_n\otimes 1)\dint \m^\sN
\\
= \int L P_{t/2}|\nabla u|^2 P_{t/2}^{\sB, \sN} \psi_1 &\rightarrow \int LP_{t/2} |\nabla u|^2 \dint \m^\sN=0
\end{align*}
as well as 
\begin{align*} \int \langle \nabla u, \nabla Lu\rangle P^{\sB, \sN}_t\psi_n \dint\m^\sN &\rightarrow \int \langle \nabla u, \nabla Lu\rangle \dint\m^\sN.
\end{align*}
Similarly, one checks that 
\begin{align*}\int  |u'|_\sB^2L^{\sB, \sN} \phi_{1,n} \dint \m_\sB^\sN \rightarrow&\  0 \\ 
\int \langle u', (L^{\sB, \sN} u)'\rangle_\sB \phi_{1,n}\dint \m_\sB^\sN\rightarrow &\int  \langle u', (L^{\sB, \sN} u)'\rangle_\sB\dint \m_\sB^\sN\end{align*}
where these limits hold $\m_\sF$-almost everywhere in $F$. By Lebesgue's dominant convergence theorem it then follows
\begin{align*}
\int \int L^{\sB, \sN} \phi_{1,n} |u'|_\sB^2 \dint \m_\sB^\sN \dint \m_\sF \rightarrow&\  0 \\ 
\int \int \langle u', (L^{\sB, \sN} u)'\rangle_\sB \phi_{1,n}\dint \m_\sB^\sN\dint \m_\sF\rightarrow &\int  \Gamma_2^{\sB, \sN}(u; 1)\dint \m_\sF.
\end{align*}
On the other hand, we have that 
\begin{align*}\Gamma_2^{\sF}(u;\phi)= &\int |\nabla u|_\sF^2 L^\sF 1 \dint\m_\sF \phi_{1,n} - \int \langle \nabla u, \nabla (L^\sF u)\rangle_\sF \dint \m_\sF \phi_{1,n}\\
=&-\int \langle \nabla u, \nabla (L^\sF u)\rangle_\sF \dint \m_\sF \phi_{1,n}\rightarrow \Gamma_2^{\sF}(u;1)\end{align*}
Moreover $\left[ 2\langle {\textstyle \frac{f'}{f}}, u'\rangle_\sB\frac{ \eL^\sF u}{f^2}  - \frac{f^\#}{f^2} | \nabla u|^2_{\sF}+2\left| \nabla \big({\textstyle \frac{u}{f}}\big)'\right|^2_{\sF}\right]\phi_n = f_n$ is uniformily integrable w.r.t. $\m^\sN$. So 
$$\int f_n \dint\m^\sN\rightarrow\int \left[ 2\langle {\textstyle \frac{f'}{f}}, u'\rangle_\sB\frac{ \eL^\sF u}{f^2}  - \frac{f^\#}{f^2} | \nabla u|^2_{\sF}+2\left| \nabla \big({\textstyle \frac{u}{f}}\big)'\right|^2_{\sF}\right]\dint \m^\sN.$$

Hence, the desired formula follows from the formula in the previous corollary with $\phi=\phi_n$, and then letting $n$ go to $ \infty$. 
\end{proof}

\begin{corollary}\label{cor:formula}
We consider $u \in  \bigcup_{t>0} P_t^{\sB, \sN}C_{c}^\infty(\mathring B) +C^\infty_{c}(\mathring B)\otimes D_{\sW^{1,2}}(L^\sF)$, and $\phi \in \Xi'$. We set $\psi= \phi +\lambda$ for $\lambda \in\R$. 
Then we have 
\begin{align}\label{formula:important}
\Gamma_2(u; \psi)
=& \int \left[\Gamma_2^{\sB, f^\sN}(u; \psi) +   \frac{1}{f^4} \Gamma_2^{\sF}(u;\psi)\right] d\m^{\sN}\nonumber\\
&+\int\left[ 2\langle {\textstyle \frac{f'}{f}}, u'\rangle_\sB\frac{ \eL^\sF u}{f^2}  - \frac{f^\#}{f^2} | \nabla u|^2_{\sF}+2\left| \nabla \big({\textstyle \frac{u}{f}}\big)'\right|^2_{\sF}\right]
\psi \de\m^\sN
\end{align}
where $\Gamma_2(u; \psi):= \Gamma_2(u; \phi) + \lambda \Gamma_2(u; 1)$ and similarly for $\Gamma_2^{\sB, \sN}$ and $\Gamma_2^\sF$. 
\end{corollary}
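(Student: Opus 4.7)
The plan is to obtain the formula by linearity, combining the two preceding corollaries. By the definition given just above the statement, $\Gamma_2(u;\psi) = \Gamma_2(u;\phi) + \lambda\,\Gamma_2(u;1)$, and the analogous linear combinations define $\Gamma_2^{\sB,\sN}(u;\psi)$ and $\Gamma_2^\sF(u;\psi)$. So I would simply apply formula \eqref{equ:oneill2} with test function $\phi$ (which gives an identity where every term on the right-hand side is linear in $\phi$), then apply the previous corollary with test function the constant $1$ (which gives an identity of the same structural form with $\phi$ replaced by $1$ on the right-hand side), multiply the second identity by $\lambda$, and add.

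More concretely, the right-hand side of \eqref{formula:important} splits into three integrals: $\int \Gamma_2^{\sB,\sN}(u;\cdot)\,\de\m_\sF$, $\int f^{-4}\Gamma_2^{\sF}(u;\cdot)\,\de\m_\sB^\sN$, and an integral of the remaining bracket against the test function with respect to $\m^\sN$. Each of these is linear in its test-function slot: for the first two by the extended definitions $\int \Gamma_2^{\sB,\sN}(u;\psi)\,\de\m_\sF := \int \Gamma_2^{\sB,\sN}(u;\phi)\,\de\m_\sF + \lambda \cdot (-\int \langle u',(L^{\sB,\sN}u)'\rangle_\sB\,\de\m_\sB^\sN\de\m_\sF)$ and analogously for $\Gamma_2^\sF$; for the third because it is literally the integral of a fixed $L^1(\m^\sN)$ function against the test function.

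The only real content is to verify that the bookkeeping of the two identities matches term-by-term, which it does by construction of the extended definitions of $\Gamma_2$, $\Gamma_2^{\sB,\sN}$, and $\Gamma_2^\sF$ on constants. I do not anticipate a genuine obstacle: the previous corollary was precisely designed to handle the constant test function in the noncompact case so that this linear extension is consistent. One small point worth spelling out in the write-up is that $\psi = \phi + \lambda$ need not lie in any natural function space when $B$ is noncompact, so the identity must be read as an equality of the linearly extended symbols and not as an identity involving an actual function $\psi$ substituted into the earlier formula.
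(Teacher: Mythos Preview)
Your proposal is correct and matches the paper's approach exactly: the paper does not even write out a proof for this corollary, treating it as an immediate consequence of the two preceding corollaries (formula \eqref{equ:oneill2} for $\phi\in\Xi'$ and the formula for the constant test function $1$) combined via the linear definitions $\Gamma_2(u;\psi):=\Gamma_2(u;\phi)+\lambda\,\Gamma_2(u;1)$, etc. Your observation that $\psi$ need not be an honest test function when $B$ is noncompact, so the identity is to be read at the level of these linearly extended symbols, is a useful clarification that the paper leaves implicit.
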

\subsection{Spectral decomposition of $N$-warped products}\label{subsec:spectral}

In this subsection we assume that $F$ is a compact $\RCD(K_\sF(N-1),N)$ space where $N>1$ and $K_F\in \R$. In particular $\m_\sF$ is finite.
In particular, the operator $L^\sF$ has a discrete spectrum $\{\lambda_i\}_{i\in \N_0}$.

Let $E(\lambda_i)$ be the eigenspace for the eigenvalue $\lambda_i$. In particular, we have  the spectral decomposition
\begin{align*}
&\bigoplus_{i=0}^\infty \left(E(\lambda_i) , \left\|\cdot \right\|_{D(L^\sF)}\right) =\left\{ \sum_{i=0}^\infty v_i: \sum_{i=0}^\infty \left\|v_i\right\|_{D(L^\sF)}<\infty\right\}= D(L^\sF).
\end{align*} 
We also define 
$$
\sum_{i=0}^\infty E(\lambda_i)= \left\{ \sum_{i=1}^k v_i : v_i\in E(\lambda_i) , k\in \N\right\}.
$$
\begin{proposition} Let $F$ be a compact metric measure space, and let $B$ and $f: B\rightarrow [0,\infty)$ be as before.   We assume that 
\begin{enumerate}
\item $f''+ Kf\leq 0$,  
\item $F$ satisfies the condition $\RCD(K_F(N-1), N)$ where 
$$K_F> \sup_B\{ (f')^2+ Kf^2\}.$$
\end{enumerate}
Then, for $u\in D_{W^{1,2}}(L)$ and $\psi= \phi+\lambda$ where $\phi \in \Xi'$ and $\lambda\in \R$ with $\psi\geq 0$, we have \begin{align}\label{ineq:bochner2}\Gamma_2(u;\psi)\geq {KN} \int |\nabla u|^2 \psi d\m^\sN + \frac{1}{N+1} \int \left( L u\right)^2 \psi d\m^\sN.
\end{align}
\end{proposition}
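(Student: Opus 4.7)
The plan is to reduce the inequality to a pointwise/integrated estimate on a class of $u$ and $\psi$ for which the explicit identity \eqref{formula:important} is available, then extend to all $u\in D_{W^{1,2}}(L)$ and $\psi=\phi+\lambda$ with $\phi\in\Xi'$ by density combined with the same dominated-convergence and semigroup-regularization argument as in Corollary \ref{cor:bochnerext}. For $u$ in the class covered by Corollary \ref{cor:formula} and $\psi=\phi+\lambda\geq 0$, formula \eqref{formula:important} decomposes $\Gamma_2(u;\psi)$ into a $B$-integral, an $F$-integral, and an explicit mixed integrand.

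Next I apply intrinsic Bakry-\'Emery inequalities on the two factors. Since $f''+Kf\leq 0$, the weighted one-dimensional space $(B,\m_\sB^\sN)$ satisfies $\RCD(KN,N+1)$, so
\begin{align*}
\int\Gamma_2^{\sB,\sN}(u;\psi)\,\de\m_\sF\geq KN\int|u'|_\sB^2\,\psi\,\de\m^\sN+\tfrac{1}{N+1}\int(L^{\sB,\sN}u)^2\,\psi\,\de\m^\sN,
\end{align*}
and since $F$ is $\RCD(K_F(N-1),N)$,
\begin{align*}
\int\tfrac{1}{f^4}\Gamma_2^\sF(u;\psi)\,\de\m_\sB^\sN\geq K_F(N-1)\int\tfrac{|\nabla u|_\sF^2}{f^4}\psi\,\de\m^\sN+\tfrac{1}{N}\int\tfrac{(L^\sF u)^2}{f^4}\psi\,\de\m^\sN.
\end{align*}
Recall $|\nabla u|^2=|u'|_\sB^2+|\nabla u|_\sF^2/f^2$ and $Lu=L^{\sB,\sN}u+L^\sF u/f^2$. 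What remains is to show that the sum of the two lower bounds above and the mixed integrand of \eqref{formula:important} dominates $KN\int|\nabla u|^2\psi\,\de\m^\sN+\tfrac{1}{N+1}\int(Lu)^2\psi\,\de\m^\sN$.

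The base part $KN\int|u'|_\sB^2\psi\,\de\m^\sN$ of the gradient comparison is covered directly by the $B$-Bochner bound, while the fiber part reduces to the pointwise inequality
\begin{align*}
\tfrac{K_F(N-1)}{f^4}-\tfrac{f^\#}{f^2}\geq\tfrac{KN}{f^2}.
\end{align*}
Substituting $f^\#=f''/f+(N-1)(f')^2/f^2$, using $f''/f\leq -K$, and invoking the hypothesis $K_F\geq(f')^2+Kf^2$, this collapses to the elementary estimate $K_F-(f')^2\geq Kf^2$.

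The main obstacle is the dimensional part, where one must combine $\tfrac{1}{N+1}(L^{\sB,\sN}u)^2+\tfrac{1}{Nf^4}(L^\sF u)^2$ with the coupling $2\langle f'/f,u'\rangle_\sB L^\sF u/f^2$ and the nonnegative correction $2|\nabla(u/f)'|_\sF^2$ so as to majorize $\tfrac{1}{N+1}(L^{\sB,\sN}u+L^\sF u/f^2)^2=\tfrac{1}{N+1}(Lu)^2$. Geometrically, $f\,|\nabla(u/f)'|_\sF$ represents (up to sign) the off-diagonal $BF$-block of the full warped-product Hessian of $u$, so that the combination is exactly the traceless-Hessian-squared lower bound for the Laplacian in effective dimension $N+1$. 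By linearity and by the structure of $\Xi'$ as $\sum_{i}P_t^{\sB,\sN}C_c^\infty(\mathring B)\otimes E(\lambda_i)$, it suffices to carry this out on each eigenspace component $E(\lambda)$ separately, where $L^\sF u=-\lambda u$ on the fiber factor and the required completion of squares reduces, along the lines of the spherical-suspension argument in \cite{ketterer2}, to an algebraic inequality in $L^{\sB,\sN}u$, $u'$, $u$, $f$, $f'$, $\lambda$, and $K_F$. Once the pointwise/integrated inequality is established on this dense subclass, the extension to general $u\in D_{W^{1,2}}(L)$ and $\psi=\phi+\lambda$ follows by the approximation scheme of Corollary \ref{cor:bochnerext}.
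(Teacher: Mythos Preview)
Your overall architecture matches the paper's, but the dimensional step has a genuine gap. By applying the $\RCD(KN,N+1)$ Bochner on $(B,\m_\sB^\sN)$ you retain only $\tfrac{1}{N+1}(L^{\sB,\sN}u)^2$, whereas the paper uses the \emph{exact} one-dimensional Bochner identity $\Gamma_2^{\sB,\sN}(u)=(u'')^2-N\big(\tfrac{f'}{f}\big)'(u')^2$, which after $f''+Kf\le 0$ gives $(u'')^2+KN(u')^2+N\tfrac{(f')^2}{f^2}(u')^2$. The paper then \emph{drops} the term $2|\nabla(u/f)'|_\sF^2$, completes the square
\[
N\tfrac{(f')^2}{f^2}(u')^2+2\tfrac{f'}{f}u'\,\tfrac{L^\sF u}{f^2}+\tfrac{1}{N}\tfrac{(L^\sF u)^2}{f^4}=\tfrac{1}{N}\Big(N\tfrac{f'}{f}u'+\tfrac{L^\sF u}{f^2}\Big)^2,
\]
and finishes with $a^2+\tfrac{1}{N}b^2\ge\tfrac{1}{N+1}(a+b)^2$ for $a=u''$, $b=N\tfrac{f'}{f}u'+\tfrac{L^\sF u}{f^2}$. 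Your route discards precisely the surplus $\tfrac{N}{N+1}\big(u''-\tfrac{f'}{f}u'\big)^2$, and $2|\nabla(u/f)'|_\sF^2$ cannot compensate for it: take $f\equiv 1$, $u=u_1\otimes u_2$ with $u_2\in E(\lambda)$; the quantity you need to be nonnegative becomes, after integrating over $F$,
\[
\lambda\Big[\tfrac{2u_1u_1''}{N+1}+\tfrac{\lambda u_1^2}{N(N+1)}+2(u_1')^2\Big],
\]
which is negative at any point with $u_1'=0$, $u_1=1$, $u_1''=-M$ once $M>\lambda/(2N)$. Since $\psi$ can be localized in $B$ via $P_t^{\sB,\sN}C_c^\infty(\mathring B)\otimes 1\subset\Xi'$, your inequality fails for suitable test data. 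The fix is exactly what the paper does: exploit that $B$ is one-dimensional and use the pointwise identity for $\Gamma_2^{\sB,\sN}$, not its $\RCD$ consequence.

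A second point you pass over: the extension from the algebraic class to all $u\in D_{W^{1,2}}(L)$ is not just ``Corollary~\ref{cor:bochnerext}''. One first needs $\Xi$ to be dense in $D(L)$ for the graph norm, which the paper obtains from essential self-adjointness of $L^{\sB,\sN,\lambda}|_{C_c^\infty(\mathring B)}$ (Proposition~\ref{prop:essential}). That proposition requires $\lambda>1$ when $\partial B\neq\emptyset$, and this is exactly where the \emph{strict} inequality $K_F>\sup_B\{(f')^2+Kf^2\}$ enters: after rescaling so that the supremum equals $1$, Lichnerowicz gives $\lambda\ge K_FN>1$ for every nonzero eigenvalue. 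Without this step you do not know the algebraic class is dense, and the approximation argument cannot start.
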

\begin{proof}  {\bf (1)} Assumption (1) yields that the mm space $(B, \m_\sB^\sN)$ satisfies the condition $\RCD(KN, N+1)$.

{\it Claim:}
\begin{align*}\Gamma_2(u;\psi)\geq {KN} \int |\nabla u|^2 \psi \dint \m^\sN + \frac{1}{N+1} \int \left( L u\right)^2 \psi \dint \m^\sN
\end{align*}
where $\psi$ is as in the assumptions and 
$u \in  \bigcup_{t>0} P_t^{\sB, \sN}C_{c}^\infty(\mathring B) +C^\infty_{c}(\mathring B)\otimes D_{\sW^{1,2}}(L^\sF)$.

{\it Proof of the Claim:}
The key steps  are the same as in the proof of Theorem 3.9 in \cite{ketterer2}. We indicate the main points of the proof. 

From formula \eqref{formula:important} in Corollary \ref{cor:formula} we get
\begin{align*}
\Gamma_2(u; \psi)
\geq& \int \left[\Gamma_2^{\sB, f^\sN}(u; \psi) +   \frac{1}{f^4} \Gamma_2^{\sF}(u;\psi)\right] \de \m^{\sN}\nonumber\\
&+\int\left[ 2\langle {\textstyle \frac{f'}{f}}, u'\rangle_\sB\frac{ \eL^\sF u}{f^2}  - \frac{f^\#}{f^2} | \nabla u|^2_{\sF}\right]
\psi \de\m^\sN
\end{align*}
 In combination with Corollary \ref{cor:bochnerext},  the $\RCD(K_F(N-1),N)$ condition for $F$, the properties of $f$ and since 
 $$f^\#=\frac{ \Delta^\sB f }{f}+ {(N-1)} \frac{| f'|_\sB}{f^2}$$
 it follows  that 
\begin{align*}
&\Gamma_2(u; \psi)\geq\\
& \int KN |u'|^2_\sB +  (\Delta^\sB u)^2+ \frac{1}{f^2} K N |\nabla u|_\sF^2 +\frac{1}{N} \left( N\langle {\textstyle \frac{f'}{f}}, u'\rangle_\sB+\frac{ \eL^\sF u}{f^2} \right)^2\de \m^\sN.
\end{align*}
Finally we use  $a^2 + \frac{1}{N} b^2= \frac{1}{N+1} (a+b)^2 + \frac{1}{(N+1)N} ( b- N a)^2$ to deduce the estimate in the claim. \hfill \qed
\smallskip
\\
{\bf (2)}
If $f^{-1}(\{0\})\neq \emptyset$, we have $K_F>\sup_B\{ (f')^2 + K f^2\}$. We can  rescale $f$ and $F$ such that $K_F>\sup_B\{(f')^2+ Kf^2\}=1$.

In particular, $F$ is  still an $\RCD(K_F(N-1), N)$ space with $K_F>0$. Then any eigenvalue $\lambda$ of $L^\sF$ satisfies \begin{center}$\lambda\geq K_F N> N\geq 1$\end{center} by the Lichnerowicz spectral estimate. 

{
Hence,  by Proposition \ref{prop:essential} we have  that
$$L^\sC u= L^{\sB,\sN, \lambda} u_1\otimes u_2, \ \ u_1\otimes u_2 =u \in C_c^\infty(\mathring B)\otimes E(\lambda)$$
is essentially self-adjoint for any positive eigenvalue $\lambda$ of $L^\sF$ --  independently of whether $f^{-1}(\{0\})$ is empty or not. }

Moreover 
$$L^\sC u= L^{\sB, \sN} u_1 \otimes u_2, \ \ u_1\otimes u_2 = u\in P_t^{\sB, \sN} C_c(\mathring B)\otimes E(0)$$
is essentially self-adjoint. 

Hence, the operator 
$$ u\in \bigcup_{t>0} P_t^{\sB, \sN}C_c(\mathring B)\otimes E(0) + \sum_{i=1}^\infty C_c^\infty(\mathring B)\otimes E(\lambda_i)=:\Xi\mapsto L^\sC u $$
is essentially self-adjoint.  

Hence, there is a unique self-adjoint extension  that must necessarily be the Laplace operator of the Cheeger energy associated to $B\times_f^\sN F$. In particular, $\Xi$ is dense in $D(L^\sC)$ w.r.t.  the operator norm. 
\smallskip\\
{\bf (3)} If we pick $u\in D_{W^{1,2}}(L)$ there exists a sequence $(u^n)_{n\in \N}$ in $\Xi$ such that $u_n\rightarrow u$ in $D_{L^2}(L)$.  Then 
\begin{align*}
\int | \nabla u_n|^2 L\phi d\m^\sN \rightarrow &\int | \nabla u|^2 L\phi d\m^\sN\\
\int |\nabla u|^2 \psi d\m^\sN \rightarrow& \int |\nabla u|^2 \psi \dint \m^\sN\\
\int L u_n \psi \dint \m^\sN \rightarrow& \int Lu \psi \dint \m^\sN.
\end{align*}
Here $\phi\in \Xi'$ and hence $\phi+ \lambda= \psi, L\phi \in L^\infty(\m^\sN)$.

We still have to show convergence of $\int \langle \nabla u_n, \nabla Lu_n\rangle \psi \dint \m^\sN$.  Since $u_n, Lu_n, \phi \in W^{1,2}(B\times_f^\sN F)$, we can apply the Leibniz rule. Hence 
\begin{align*}
\int \langle \nabla u_n, \nabla Lu_n\rangle \psi \dint\m^\sN&= \int \left[\langle \nabla u_n, \nabla (\psi Lu) \rangle -\langle \nabla u_n, \nabla \phi \rangle L u_n \right] \dint\m^\sN=(*).
\end{align*}
We have $\psi, |\nabla \psi|\in L^\infty(\m^\sN)$ since $\phi\in \Xi'$. Therefore
\begin{align*}
(*)&=- \int \psi (Lu_n)^2 \dint \m^\sN - \int \langle \nabla u_n, \nabla \phi\rangle Lu_n \dint \m^\sN.
\end{align*}
Now $\int \psi (Lu_n)^2 \dint \m^\sN\rightarrow \int\psi (Lu)^2 d\m^\sN$, and since $|\nabla\phi|\in L^\infty(\m^\sN)$, also
$$\int \langle \nabla u_n, \nabla \phi\rangle Lu_n \dint \m^\sN\rightarrow \int \langle \nabla u, \nabla \phi\rangle Lu \dint \m^\sN.$$
We obtain
$$\int \langle \nabla u_n, \nabla Lu_n\rangle \psi \dint \m^\sN\rightarrow \int \langle \nabla u, \nabla Lu\rangle \psi \dint \m^\sN.$$
This yields the desired inequality for $u\in D_{W^{1,2}}(L)$. 
\end{proof}
\begin{theorem} Let $F$ be a metric measure space, and let $B$ and $f: B\rightarrow [0,\infty)$ be as before.   We assume that 
\begin{enumerate}
\item $f''+ Kf\leq 0$,  
\item $F$ satisfies the condition $\RCD(K_F(N-1), N)$ where 
$$K_F>  (f')^2+ Kf^2.$$
\end{enumerate}
For $u\in D_{W^{1,2}}(L)$ and $\phi \in D(L)$ with $\phi\geq 0$ and $\phi, L\phi\in L^\infty(\m^\sN)$, we have $$\Gamma_2(u;\phi)\geq {KN} \int |\nabla u|^2 \phi d\m^\sN + \frac{1}{N+1} \int \left( L u\right)^2 \phi d\m^\sN.$$
i.e. $\nwp$ satisfies the condition $\BE(K,N)$. 
\end{theorem}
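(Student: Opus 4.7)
The previous proposition establishes the desired Bochner-type inequality
\[
\Gamma_2(u;\psi) \geq KN \int |\nabla u|^2 \psi \de\m^\sN + \tfrac{1}{N+1} \int (Lu)^2 \psi \de\m^\sN
\]
for test functions of the form $\psi = \phi_0 + \lambda$ with $\phi_0 \in \Xi'$, $\lambda \in \R$, $\psi \geq 0$. To obtain the statement of the theorem, what remains is an extension from the class $(\Xi' + \R) \cap \{\psi \geq 0\}$ to the full class $\{\phi \in D(L) \cap L^\infty(\m^\sN) : L\phi \in L^\infty(\m^\sN),\, \phi \geq 0\}$. This is purely an approximation/density argument, since the $u$-variable is already handled in the prior proof.

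The approach is a two-step regularization. First, given $\phi$ as in the hypothesis, I would consider the heat-semigroup regularization $\phi^s := P_s^\sC \phi$ for $s > 0$. Since $P_s^\sC$ is positivity-preserving, $L^\infty$-contractive, and commutes with $L$ on $D(L)$, we have $\phi^s \geq 0$, $\|\phi^s\|_\infty \le \|\phi\|_\infty$, $\|L\phi^s\|_\infty \le \|L\phi\|_\infty$, and $\phi^s \to \phi$ in the graph norm of $L$ as $s \downarrow 0$. Second, for fixed $s > 0$, I would approximate $\phi^s$ by a sequence $\tilde\phi_n \in \Xi'$ in the graph norm (using density of $\Xi'$ in $D(L^\sC)$ established via essential self-adjointness in Subsection \ref{subsec:spectral}; note also that $\Xi'$ is stable under $P_t^\sC$), and then set $\psi_n := \tilde\phi_n + \lambda_n$ with $\lambda_n \geq 0$ chosen so that $\psi_n \geq 0$. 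Each $\psi_n$ is an admissible test function for the previous proposition.

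Passing to the limit requires the continuity of both sides in an appropriate topology. For fixed $u \in D_{W^{1,2}}(L)$, the quantities $|\nabla u|^2$, $\langle \nabla u, \nabla Lu\rangle$, and $(Lu)^2$ lie in $L^1(\m^\sN)$, so weak-$*$ convergence of $\psi_n \rightharpoonup \phi$ and $L\psi_n \rightharpoonup L\phi$ in $L^\infty(\m^\sN)$ suffices. Such weak-$*$ convergence will follow from $L^2$ convergence (available via the graph-norm approximation) combined with uniform $L^\infty$ bounds. Positivity of the limit $\phi$ is preserved automatically in the weak-$*$ limit, and the linearity of both sides in $\psi$ lets us absorb the shift $\lambda_n$ provided $\lambda_n$ remains bounded (in fact one can arrange $\lambda_n \to 0$).

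The main obstacle is precisely the simultaneous control of uniform $L^\infty$-bounds on $\tilde\phi_n$ and $L\tilde\phi_n$ during the approximation, since density of $\Xi'$ in $D(L)$ is only asserted in the graph norm. To overcome this I would exploit the smoothing properties of $P_s^\sC$: using spectral truncation in the fiber, $\phi^s_N(r,x) = \sum_{i=0}^N c_i^s(r) e_i(x)$ with $c_i^s(r) = \int_F \phi^s(r,\cdot) e_i \de\m_\sF$, followed by heat-flow smoothing of the base coefficients $c_i^s$ by $P_{t_n}^{\sB,\sN,\lambda_i}$, yields elements of $\Xi'$ whose $L^\infty$ bounds are controlled by those of $\phi^s$ (each $e_i$ is Lipschitz and bounded on the compact $\RCD$ space $F$, and the base semigroup is $L^\infty$-contractive). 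A standard diagonal extraction in $(n,s)$ then delivers the required weak-$*$ limits while $\lambda_n \downarrow 0$, completing the extension of the Bochner inequality to the full class of test functions and thereby verifying $\BE(KN, N+1)$ for $\nwp$.
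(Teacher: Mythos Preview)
Your overall strategy is the same as the paper's: reduce to the previous proposition by approximating a general $\phi\in D(L)\cap L^\infty$ with $L\phi\in L^\infty$, $\phi\ge 0$, by elements of $\Xi'+\R$, using density of $\Xi'$ in $D(L)$ together with heat regularization. You also correctly identify the real obstacle, namely that graph-norm density alone does not give the uniform $L^\infty$ bounds on the approximants and on their $L$-images needed to pass to the limit (and to choose $\lambda_n\downarrow 0$).

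However, your proposed resolution of this obstacle via spectral truncation in the fiber is not convincing. Partial sums $\sum_{i\le N} c_i^s(r)e_i(x)$ of an $L^2$-expansion need not converge in $L^\infty$, and there is no a priori reason the $L^\infty$ norms of such truncations (or of their $L$-images) are controlled by $\|\phi^s\|_\infty$ and $\|L\phi^s\|_\infty$; bounded eigenfunctions and $L^\infty$-contractive base semigroups do not by themselves yield this. The paper instead uses the key analytic input you are missing: the Cheeger energy on $\nwp$ is a strongly local, strongly regular Dirichlet form satisfying a local doubling property and a local $(2,2)$-Poincar\'e inequality, hence the heat semigroup $P_t$ is $L^2\to L^\infty$ \emph{ultracontractive}. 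With this, the paper reverses your order of operations: first approximate $\phi$ by $\phi_n\in\Xi'$ in the graph norm, and \emph{then} apply $P_t$. Ultracontractivity turns the $L^2$ convergence $\phi_n\to\phi$, $L\phi_n\to L\phi$ into uniform convergence $P_t\phi_n\to P_t\phi$ and $LP_t\phi_n\to LP_t\phi$ in $L^\infty$; since $\Xi'$ is stable under $P_t$, each $P_t\phi_n$ is still in $\Xi'$, so $\psi_n:=P_t\phi_n+\lambda\ge 0$ is an admissible test function for large $n$. Uniform convergence then passes the inequality to $P_t\phi+\lambda$, after which $\lambda\downarrow 0$ and $t\downarrow 0$ (dominated convergence / weak-$*$) finish the job.
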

\begin{proof}
{The  Cheeger energy on $B\times_f^\sN F$ is a strongly local, strongly regular Dirichlet form. Moreover
we  know that $B\times_f^\sN F$ satisfies 
\begin{itemize}
\item a local $(2,2)$-Poicar\'e inequality and 
\item a local volume doubling property
\end{itemize}
(Remark \ref{rem:feller}).}

In this situation, we have a Gaussian upper bound for the heat kernel and consequently the heat semi-group $P_t^\sC=P_t$ on $B\times_f^N F=C$ is $L^2-L^\infty$ ultra-contractive, i.e. $P_t^\sC: L^2(\m^\sN) \rightarrow L^\infty(\m^\sN)$ is a bounded operator. 

Let $\phi \in D(L)$ with $\phi\geq 0$ and $\phi, L\phi\in L^\infty(\m^\sN)$, and let $\phi_n\in \Xi'$ be a sequence that converges to $\phi$ in $D(L)$. By $L^2-L^\infty$ ultracontractivity we get that $P_t\phi_n$ as well as $LP_t\phi_n$ converge in $L^\infty(\m^\sN)$ to $P_t\phi$ and $LP_t\phi$ respectively. 

In particular, we have for $n\in \N$ sufficiently large, let's say $n\geq n_0$, that $P_t\phi_n, LP_t\phi_n\geq -\lambda$. Hence, $\psi_n=P_t \phi_n+\lambda\geq 0$ for $n\geq n_0$  and the formula \eqref{ineq:bochner2} from the previous corollary holds for $u\in D_{W^{1,2}}(L)$ and for $\psi$. 

The uniform convergence implies that the formula \eqref{ineq:bochner2} still holds with $P_t\phi+ \lambda$ in place of $P_t\phi_n+\lambda$.  We can send  first $\lambda$ to $0$ and the formula holds for $P_t\phi$. Then we can send $t$ to $0$ and in combination with the dominated convergence theorem we have that $P_t\phi$ and $LP_t\phi$ converge in weak-* sense to $\phi$ and $L\phi$. From this we obtain the desired estimate. 
\end{proof}

\begin{corollary} \label{main:smooth} Let $K\in \R$ and $N\in (1, \infty)$. Let $F$ be a mm space,  let $B$ be  a $1$-dimensional Riemannian manifold. Let $f: B\rightarrow [0,\infty)$  be smooth  such that $\partial B\subset f^{-1}(\{0\})$ and  $(\%)$ holds.   Assuming that 
\begin{enumerate}
\item $f''+ Kf\leq 0$,  
\item $F$ satisfies the condition $\RCD(K_F(N-1), N)$ where 
$$K_F\geq  (f')^2+ Kf^2,$$
\end{enumerate}
then $B\times_f^\sN F$ satisfies the condition $\RCD(KN, N+1)$. 
\end{corollary}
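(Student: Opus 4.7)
The strategy is first to establish $\RCD(KN,N+1)$ under the strict bound $K_F>(f')^2+Kf^2$ by combining the Bakry-Emery inequality of the previous theorem with the structural identifications of Sections 3 and 4, and then to remove strictness by an approximation argument using pointed measured Gromov-Hausdorff stability of $\RCD$.

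Under the strict bound, the previous theorem yields $\BE(KN,N+1)$; to upgrade this to $\RCD(KN,N+1)$ I verify the three remaining items of Definition \ref{def:rcd}. Infinitesimal Hilbertianity follows from the corollary at the end of Section \ref{sec:metricstructure}: under $(\%)$ the Cheeger energy $\Ch^\snwp$ coincides with $\mathcal E^*$, which is by construction the $L^2$-closure of a symmetric quadratic form on $C_c^\infty(\mathring B)\otimes\lip(F)$. The Sobolev-to-Lipschitz property is provided by Proposition \ref{prop:distancewp} together with Proposition \ref{prop:energyineq} and $\Ch^\snwp=\mathcal E^*$: the intrinsic distance of $\mathcal E^*$ coincides with $\de_{\smwp}$, so any $u\in W^{1,2}\cap L^\infty(\m^\sN)$ with $|\nabla u|_\snwp\leq 1$ satisfies $\Gamma^*(u,u)\leq 1$ and thus admits a $1$-Lipschitz representative. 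Exponential volume growth follows from Theorem \ref{th:mcpfornwp} together with $(\%)$: the condition $\MCP(KN,N+1)$ implies a global doubling property, yielding the Gaussian-type bound in item $(1)$.

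To remove strictness I approximate $f$ by a family $f_\epsilon$ satisfying strict inequality and converging uniformly to $f$. When $K_F>0$, the rescaling $f_\epsilon:=(1-\epsilon)f$ preserves all structural hypotheses and yields $(f_\epsilon')^2+Kf_\epsilon^2\leq(1-\epsilon)^2K_F<K_F$. When $K_F<0$, the hypothesis forces $f>0$ everywhere, hence $f^{-1}(0)=\partial B=\emptyset$, and the rescaling $f_\epsilon:=(1+\epsilon)f$ gives $(f_\epsilon')^2+Kf_\epsilon^2\leq(1+\epsilon)^2K_F<K_F$ since $K_F<0$. In each case the first step applies to $B\times_{f_\epsilon}^N F$. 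The uniform convergence $f_\epsilon\to f$ together with the Alexander-Bishop fiber-independence formula (Theorem \ref{th:albi0}) yields uniform convergence of the warped product distances and weak convergence of the measures, hence pmGH convergence $B\times_{f_\epsilon}^N F\to B\times_f^N F$; the stated pmGH stability of $\RCD$ then concludes the argument.

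The main technical obstacle I anticipate is the borderline case $K_F=0$, where neither multiplicative rescaling of $f$ gives strict inequality. A closer inspection of the hypotheses shows that the only nontrivial possibility is $K<0$ with $f$ vanishing to very high order at any zero, which is extremely degenerate; a careful ad hoc approximation (for instance, a combined multiplicative rescaling of $f$ together with a localized perturbation supported in a region where $f$ is bounded away from zero) can be constructed to reduce to the generic case, after which the pmGH stability argument applies as above.
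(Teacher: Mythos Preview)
Your approach matches the paper's: establish the four items of Definition \ref{def:rcd} under the strict bound, then remove strictness by rescaling and pmGH stability. Since $B\times_{cf}F$ and $B\times_f(cF)$ are isometric as metric measure spaces, your rescaling of $f$ and the paper's rescaling of $F$ are literally the same approximation. Two small corrections are worth making. First, your Sobolev-to-Lipschitz step is circular as written: deducing a $1$-Lipschitz representative from $\Gamma^*(u,u)\le 1$ via the identity $\de_{\mathcal E^*}=\de_{\smwp}$ already presupposes that $u$ has a continuous representative, which is precisely what has to be shown. The paper instead invokes step (1) of the proof of Proposition \ref{prop:distancewp}, where Sobolev-to-Lipschitz is derived directly from $\MCP$ via a test-plan argument, with no a priori continuity needed. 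Second, the exponential volume-growth item follows from $\MCP$ and Bishop--Gromov alone; $(\%)$ plays no role there. Your flagging of the borderline case $K_F=0$ is honest (the paper does not comment on it either), but the localized perturbation of $f$ you sketch would in general destroy $fK$-concavity; a cleaner device is to replace $K$ by $K-\delta$, which preserves all hypotheses and produces a strict inequality whenever $\inf_B f>0$.
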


\begin{proof}{We first notice that, if $\partial B\neq 0$, then $K_F>0$. Then we first assume that $K_F>\sup_B\{(f')^2 + Kf^2\}$. 
\begin{enumerate}
\item Since $\nwp$ satisfies $\MCP$, an exponential growth condition holds by the Bishop-Gromov volume comparison theorem.
\item Since $\Ch^\snwp=\mathcal E^*$, $\nwp$ is infinitesimally Hilbertian. 
\item In step {\bf (1)} of  the proof for Proposition \ref{prop:distancewp} we showed that $\nwp$ satisfies the Sobolev-to-Lipschitz property. 
\item The previous theorem shows that the $\BE(KN,N+1)$ holds. 
\end{enumerate}
Thus $\nwp$ satisfies the conditin $\RCD(KN,N+1)$.}

Finally, if $K_F\geq \sup_B\{(f')^2 + K f^2\}$, we can rescale $F$  into $F'$ such that $F'$ satisfies $\RCD(K'_F(N-1), N)$ with $K_F'>\sup_B\{(f')^2 + Kf^2\}$.  The warped product $B\times^\sN_f F'$ satisfies $\RCD(KN, N+1)$ and converges in measured Gromov-Hausdorff sense to $B\times_f^\sN F$. Hence also the limit satisfies $\RCD(KN,N+1)$. 
\end{proof}
\subsection{Removing smoothness of $f$}
\begin{theorem} Let $K\in\R$ and $N\in (1, \infty)$.  Let $F$ be a mm space,  let $B$ be  a 1D Riemannian manifold, and  let $f: B\rightarrow [0,\infty)$  be Lipschitz continuous such that $\partial B\subset f^{-1}(\{0\})$ and one of the points in Assumption \ref{assumption:doubling2} holds.   We assume that 
\begin{enumerate}
\item $f''+ Kf\leq 0$,  
\item $F$ satisfies the condition $\RCD(K_F(N-1), N)$ where 
$$K_F\geq  (f')^2+ Kf^2.$$
\end{enumerate}
Then $B\times_f^\sN F$ satisfies the condition $\RCD(KN, N+1)$. 
\end{theorem}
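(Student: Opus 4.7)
The plan is to reduce to Corollary \ref{main:smooth} by approximating the Lipschitz function $f$ with smooth ones $f_\epsilon$ satisfying the same structural constraints (with slightly perturbed parameters), and then to invoke stability of the $\RCD$ condition under pointed measured Gromov--Hausdorff convergence.

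\textbf{Construction of smooth approximations.} I would construct smooth functions $f_\epsilon: B \rightarrow [0, \infty)$ with $f_\epsilon \to f$ uniformly, preserving $\partial B \subset f_\epsilon^{-1}(\{0\})$ and satisfying $f_\epsilon'' + K f_\epsilon \leq 0$ as well as $\sup_B\{(f_\epsilon')^2 + K f_\epsilon^2\} \leq K_F + \delta_\epsilon$ for some $\delta_\epsilon \downarrow 0$. The natural tool is mollification: extend $f$ (using $(\dagger)$ where relevant) so that the distributional inequality $f'' + K f \leq 0$ still holds on the extension, and set $f_\epsilon = f * \rho_\epsilon$ for a standard nonnegative mollifier $\rho_\epsilon$. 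Since convolution commutes with differentiation and preserves distributional nonpositivity against nonnegative kernels, one has $f_\epsilon'' + K f_\epsilon = (f'' + K f) * \rho_\epsilon \leq 0$. Jensen's inequality applied to $u \mapsto u^2$ (plus an elementary continuity estimate when $K < 0$) yields the bound on $(f_\epsilon')^2 + K f_\epsilon^2$ up to an $o(1)$ error. To preserve the vanishing at a boundary point $p \in \partial B$, I work in a local coordinate centered at $p$ in which $f(r) = r \cdot g(r)$ with $g$ bounded and Lipschitz; mollifying $g$ and setting $f_\epsilon(r) = r \cdot g_\epsilon(r)$ produces a function that automatically vanishes at $p$ and stays nonnegative nearby.

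\textbf{Passage to the limit.} For each $\epsilon$, slightly rescale $F$ to a space $F_\epsilon$ satisfying $\RCD((K_F + \delta_\epsilon)(N-1), N)$, and apply Corollary \ref{main:smooth} to $(B, f_\epsilon, F_\epsilon)$ to conclude that $B \times_{f_\epsilon}^N F_\epsilon$ satisfies $\RCD(KN, N+1)$; here one verifies that one of the points of Assumption \ref{assumption:doubling2} is inherited by $f_\epsilon$ from the corresponding assumption on $f$ (for instance uniform boundedness of $f_\epsilon$ from uniform convergence, or nonnegativity of $K$). Uniform convergence $f_\epsilon \to f$, together with the trivial convergence $F_\epsilon \to F$, implies via the explicit geodesic equations of Theorem \ref{th:albi0} that the warped product distances and reference measures converge locally uniformly, hence $B \times_{f_\epsilon}^N F_\epsilon \to B \times_f^N F$ in pointed measured Gromov--Hausdorff sense. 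Stability of the $\RCD$ condition then yields that $B \times_f^N F$ satisfies $\RCD(KN, N+1)$.

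\textbf{Main obstacle.} The most delicate point is constructing $f_\epsilon$ so as to simultaneously preserve nonnegativity, the boundary vanishing $\partial B \subset f_\epsilon^{-1}(\{0\})$, and the quadratic upper bound. Mollification handles the distributional inequality $f'' + K f \leq 0$ cleanly, but naive mollification can break pointwise boundary constraints; the local factoring trick mentioned above resolves this, yet patching it into a global smooth approximation obeying all three constraints requires care. A secondary technical issue is verifying pointed measured Gromov--Hausdorff convergence of the warped products on a noncompact base $B$, where one must work with balls of bounded radius around a fixed basepoint and exploit the uniform local structure of the warped metric together with the locally uniform convergence of $f_\epsilon^N$.
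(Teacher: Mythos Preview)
Your overall strategy---smooth approximation of $f$ plus stability of $\RCD$ under pointed measured Gromov--Hausdorff convergence---is exactly the paper's. Your identification of the main obstacle (the boundary case) is also correct. But your proposed boundary fix has a genuine gap.

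Your factoring trick $f_\epsilon(r) = r \cdot g_\epsilon(r)$ does enforce $f_\epsilon(0)=0$, but it does \emph{not} preserve $f_\epsilon'' + K f_\epsilon \leq 0$. Indeed $f_\epsilon'' + K f_\epsilon = 2 g_\epsilon' + r(g_\epsilon'' + K g_\epsilon)$, and mollifying $g$ gives no control on this expression: the distributional inequality you have is for $f'' + Kf = 2g' + r(g'' + Kg)$, and convolution does not commute with the weight $r$. So the very property you need to invoke Corollary~\ref{main:smooth} is lost. (Even reflection at the boundary creates a convex corner since $f^+(0)>0$; odd reflection flips the sign of the inequality.) The ``patching'' you mention is therefore not just a matter of care---there is no local smoothing near a zero of $f$ that simultaneously keeps all three constraints.

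The paper treats the two cases separately. When $\partial B=\emptyset$ plain mollification works, and the quadratic bound is obtained not via Jensen but via Corollary~\ref{cor:special}, which reduces $K_F \geq (f')^2 + Kf^2$ to $K_F \geq K\inf_B f^2$; uniform convergence then controls the infimum. When $\partial B\neq\emptyset$ the construction is more elaborate: one mollifies $f$ only on $[\epsilon,\infty)$ to get $f_n$, and then glues on the left an explicit ODE solution $g$ satisfying $g'' - \tfrac{f_n''(\epsilon)}{f_n(\epsilon)} g = 0$ with $g(\epsilon)=f_n(\epsilon)$, $g'(\epsilon)=f_n'(\epsilon)$, $g''(\epsilon)=f_n''(\epsilon)$. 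This yields a $C^2$ function $h_n$ on a slightly shifted interval that vanishes at a new left endpoint, satisfies $h_n'' + K h_n \leq 0$ everywhere, and has boundary slope $\leq K_F(1+o(1))$; again Corollary~\ref{cor:special} converts this into the needed quadratic bound. Your GH-convergence sketch via fiber independence (Theorem~\ref{th:albi0}) is essentially what the paper does.
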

\begin{proof} We will construct a sequence of intervals $B_i$ and smooth functions $f_i: B_i\rightarrow [0, \infty)$ respectively such that $\partial B_i\subset f^{-1}(\{0\})$, $(\%)$ holds and
\begin{enumerate}
\item $f_i''+ Kf_i\leq 0$,  
\item $F$ satisfies the condition $\RCD((K_F-\epsilon_i)(N-1), N)$ where 
$$K_F-\epsilon_i\geq  (f_i')^2+ Kf_i^2.$$
\end{enumerate}
Moreover, $B_i$ converges in the pointed Gromov-Hausdorff sense to $B$, and $f_i$ converges uniformly to $f$ on any compact subset of $B_i$. This will be done as follows.
\medskip

We will consider the following  cases separately. 
\begin{enumerate}
\item[\bf i.] $\partial B=\emptyset$: $B\simeq \R$,  or $B\simeq \mathbb S^1$ where $\mathbb S^1\simeq \R/{\textstyle (2\pi \Z)}$;
\item[\bf ii.] $\partial B\neq \emptyset$: $B\simeq [0, 2\pi]$, or $B\simeq [0, \infty)$. 
\end{enumerate}
We recall that   $\partial B\subset f^{-1}(\{0\})$ by assumption. 
\medskip\\
{\bf i.} By Corollary \ref{cor:special} we have that $K\leq  0$ and  $$K_F\geq   K \inf_B f^2 \mbox{ if and only if  }K_F\geq \sup_B\{ |f'|^2_\sB+ K f^2\}.$$
We choose  $\phi\in C_c^2((-1,1), [0, \infty))$ with $\int \phi(\tau) \de \tau=1$ and set $\phi_\epsilon(\tau)= \frac{1}{\epsilon} \phi( \frac{1}{\epsilon} \tau)$. We define
$$ 
s\in B\mapsto f_\epsilon(s) = \int_{-\epsilon}^\epsilon \phi_\epsilon(\tau) f(s+\tau) \dint \tau = \int_{-\epsilon}^\epsilon \phi_\epsilon(r-s) f(r) \dint r.$$ 
Then $f_\epsilon$ is $C^2$ and satisfies 
\begin{center}
$f_\epsilon'' + K f_\epsilon \leq 0$ on $B$. 
\end{center}
Therefore $f_\epsilon$ is $K f_\epsilon$-concave. Moreover $f_{\epsilon}$ converges uniformly to $f$ on any compact subset of $B$.

In the following we pick a sequence $\epsilon_n\downarrow 0$ and write $f_{\epsilon_n}= f_n$.
\smallskip\\
{\it Claim.} If $K_F\geq K \inf_B f^2$, then \begin{center}$\forall \epsilon>0$ $\exists n_\epsilon$:  $ (1+\epsilon) K\geq K\inf_B f^2_n$\  \ $\forall n\geq n_\epsilon$.\end{center}
\begin{proof}[Proof of the claim.]
We pick $s_0 \in B$ such that $I:=\inf_B f^2 \geq  f^2(s_0)- \frac{\epsilon}{2}I$. Then we choose $n_{0} \in \N$ such that $\forall n\geq n_0$ we have that \begin{center}$f^2(s_0)\geq f_n^2(s_0)-\frac{\epsilon}{2}I\geq \inf_B f_n^2- \frac{\epsilon}{2}I$.\end{center}
Hence $(1+\epsilon)\inf_B f^2 \geq \inf_B f^2_n$ for all $n\geq n_0\in \N$. \end{proof}

We rescale $\de_\sF$ with $\sqrt{(1+\epsilon)}\de_\sF= \de^\epsilon_F$. Then the  space $F^\epsilon=(F, \de^\epsilon_F, \m_\sF)$ satisfies $\RCD((1+\epsilon)K_\sF (N-1), N)$. 

Moreover, the points in Assumption \ref{assumption:doubling2} are preserved and therefore we still have $(\%)$.

Hence, Corollary \ref{main:smooth} applies with $F^\epsilon$ and $f_{n}$ for $n\geq n_\epsilon$. It follows that $B\times_{f_n}^N F^\epsilon$ satisfies the condition $\RCD(KN, N+1)$. 
\smallskip\\
{\it Claim.} The $N$-warped product $B\times_{f_n}^N F^\epsilon$ converges in pointed measured GH sense to $B\times_f^N F^\epsilon$ as $n\rightarrow \infty$. 

\begin{proof}[Proof of the claim]
{\color{black} We write $F=F^\epsilon$.
Let $p_0=(r_0, x_0)$ and $p_1=(r_1, x_1)$   be two points in $B\times_f F$ such that $r_1, r_0\leq R$ and $\de_F(x_0,x_1)=l\leq L$. It follows by Theorem \ref{th:albi0} that
$$\de_{\smwp}(p_0,p_1)= \de_{\sB \times_f [0,L] }((r_0,0), (r_1,l)).$$
Moreover 
$\de_{_{\sB \times_{f_n} \sF}}(p_0,p_1)= \de_{\sB \times_{f_n} [0,L] }((r_0,0), (r_1,l)).$

On the other hand, 
since $f_n\rightarrow f$ locally uniformly, it follows that 
$$ g_\sB + f_n^2 (dr)^2 \ \rightarrow g_\sB+ f^2 (d r)^2 \mbox{ locally uniformly on } B\times [0, L].$$
Hence, for $R$, $L$ and $\epsilon>0$, there exists $n_0\in \N$  that only depends on $R, L$ and $\epsilon$ s. t. for all $n\geq n_0$ we have 
$$\left| \de_{\sB \times_f [0,L] }((r_0,0), (r_1,l))- \de_{\sB \times_{f_n} [0,L] }((r_0,0), (r_1,l))\right|\leq \epsilon.$$
Therefore it also follows that $\de_{\sB\times_{f_n}F}$ converges locally uniformly to $\de_{\smwp}$ on $B\times F$, and in particular $B\times_{f_n} F \rightarrow \mwp $ in pointed GH sense.
}

Finally, since $f_n$ converges locally uniformly to $f$, clearly $ f_n^N(r) \de r\otimes \de \m_\sF$ converges weakly to $f^N(r) \de r \otimes \de \m_\sF$. 
\end{proof}
Since $B\times_f F^\epsilon$ is the pointed measured GH limit of $\RCD(KN, N+1)$ spaces it satisfies the same condition itself.  Finally, if $\epsilon \downarrow 0$, it follows easyly that $B\times_f F^\epsilon$ converges in measured GH sense to $B\times_f F$ that therefore also also satisfies the condition $\RCD(KN, N+1)$. 
\\
\\
{\bf ii.}  Since $\partial B\neq \emptyset $, by Corollary \ref{cor:special} we have
$$\sup_{\partial B}|Df|_\sB^2= \sup_B\{ |Df|^2_\sB+ K f^2\}>0.$$
Here $|Df|=\max\left\{ f^+, -f^-, 0\right\}$ is  the {\it Alexandrov derivative}  where $\frac{d^+ f}{ds}=f^+$ and $\frac{d^- f}{ds}=f^-$ are the right and the left derivatives of $f$. $f^+$ and $f^-$ exist everywhere because $f$ is semi-concave. $Df$ coincides a.e. with the absolute value of the usual derivative $f'$ that is defined a.e.

There are at most 2 boundary components of $B$, $\alpha$ and $\omega$. $\alpha$ denotes the boundary  on the left end of the interval $B$, and $\omega$ the boundary  on the right end. We consider $B$ equipped with the standard orientation. 

W.l.o.g. we will assume that $B$ has exactly one boundary component $\alpha$. The other case works similarly.  W.l.o.g. we also assume that $\alpha=0$. Hence $B\simeq [0, \infty)$ and $f'= f^+$ in $0.$

We define $[ \epsilon, \infty)=: B^{\epsilon}$ and $f_{\epsilon}$  as above. $f_\epsilon$ is  clearly well-defined for $s\in B^\epsilon$. We also set $f_n= f_{\epsilon_n}$ for $\epsilon_n\downarrow 0$ as $n\in \N\rightarrow \infty$. 

Semi-concavity of $f$ implies the following. The left and the right derivative, $f^+$ and $f^-$, are continuous from the left and from the right, respectively.  We also  recall that $f^-\geq f^+$ and $f^+=f^-$ a.e. 

We note that $f^+(\alpha)>0$ since $f$ is semi-concave and positive away from $\alpha$.

Let $\eta\in (0,\frac{1}{4} f^+(0))$. Then there exists  $\epsilon_\eta>0$ such that  $f(s)\leq\frac{1}{2} \eta$ and $0<f^+(0)-\eta\leq f^+(s)\leq  K_F(1+ \eta)$  for  $s\in (0, 2\epsilon_\eta)$. 
\smallskip\\
{\it Claim.}
It holds 
$$|f_n'|(s)\rightarrow |f'|(s) $$ 
for every $s\in B\backslash \partial B=: \mathring B$ such that $f'(s)$ exists. 
\begin{proof} 
From the uniform convergence of $f_n$ to $f$ and since both $f$ and $f_n$ are  semi-concave, one has
$$\liminf_{n \rightarrow \infty} |f'_n|(s)=\liminf_{n \rightarrow \infty} |Df_n|(s)\geq |Df|(s) \ \ \forall s \in \mathring B.$$
Moreover, it holds
$$f'_\epsilon(s)= \int_{-\epsilon}^\epsilon \phi_\epsilon(\tau) f'(s+ \tau) \de \tau.$$
Hence 
$$|f'_\epsilon(s)|\leq \int_{-\epsilon}^\epsilon \phi_\epsilon(\tau) |f'(\tau + s)| \de \tau=: (|f'|)_\epsilon(s).$$
The left hand side $(|f'|)_\epsilon(s)$  converges pointwise to $|f'|(s)$  as $\epsilon \rightarrow 0$ for $s\in \mathring B$ whenever $f'(s)$ exists. 
Hence 
$$\limsup |f'_n|(s)\leq \lim_{n\rightarrow \infty} (|f'|)_n(s) = |f'|(s) \mbox{ for a.e. $s\in B$.}$$
This proves the claim. 
\end{proof}

We choose $\epsilon\in (0, \epsilon_\eta)$ such that $f'(\epsilon)$ exists,  and let $n_0\in \N$ s.t. $\forall n\geq n_0$ we have  $f_n(\epsilon)\leq \eta$ and $f'(\epsilon)-\eta\leq f'_n(\epsilon)\leq  f'(\epsilon)(1+\eta)$. 

Hence
\begin{center} $\frac{1}{2} f^+(0)<f^+(0)-2\eta\leq f'(\epsilon)-\eta\leq f'_n(\epsilon)\leq  f'(\epsilon)(1+\eta)\leq K_F(1+\eta)^2.
$
\end{center}

\smallskip
\smallskip

We choose $\bar g: [0, \infty)\rightarrow [0, \infty)$ such that $\bar g''- \frac{f''_n(\epsilon)}{f_n{\epsilon}} \bar g=0$ and $\bar g(0)= f_n(\epsilon)$, $\bar g'(0)= -f_n'(\epsilon)\leq- \frac 1 2 f^+(0)=:-\xi$.  We set $-\frac{f''_n(\epsilon)}{f_n(\epsilon)}=: K(\epsilon)\geq K$. Thus $\bar g''+K\bar g\leq 0$. Then, more precisely, we have
$$\bar g(s)= f_n(\epsilon)\cos_{K(\epsilon)}- f'_n(\epsilon) \sin_{K(\epsilon)}(s)$$
where $\cos_{K(\epsilon)}$ and $\sin_{K(\epsilon)}$ are solutions of $u'' + K(\epsilon) u=0$ with initial conditions $u(0)=1, u'(0)=0$ and $u(0)=0, u'(0)=1$, respectively.

By elementary comparison results there exists a constant $C_{K, \xi}(\eta)\in (0, \infty)$ such that $t_0=\inf \{t>0: \bar g(t)=0\}\leq C_{K,\xi}(\eta)$ and $C_{K, \xi}(\eta)\rightarrow 0$ if $\eta\rightarrow 0$. 

Moreover $$\bar g'(t_0)\geq - f_n'(\epsilon)(1+\delta(\eta))\geq - K_F(1+\eta)^2(1+\delta(\eta))=:-K_F^\eta$$
for some $\delta(\eta)\rightarrow 0$ if $\eta \downarrow 0$.

We set $\bar g(-t + \epsilon)= g(t)$. Then $g$ satisfies $g(\epsilon)=f_n(\epsilon)$, $g'(\epsilon)=f_n'(\epsilon)$,  $g''(\epsilon)=-K(\epsilon)g(\epsilon)=  f_n''(\epsilon)$ and $g'(\epsilon -t_0)\leq K_\sF^\eta$.
We set 
$$h_\epsilon(s)= \begin{cases} f_\epsilon(s)& s\in ( \epsilon, \infty) \\
g(s)& s\in  [\epsilon -t_0,  \epsilon].
\end{cases}
$$
Therefore $h_\epsilon$ is $C^2$ by construction and satisfies 
\begin{enumerate}
\item $h_n'' + K h_n\leq 0, $
\item $h'_n(\alpha+\epsilon -t_0) \leq K_F^\eta.$
\item $h_n: [\alpha+\epsilon - t_0, \infty)\rightarrow [0, \infty)$ converges  locally uniformly to $f:[\alpha, \infty)\rightarrow [0, \infty)$. 
\end{enumerate}
{\it Claim.} The $N$-warped product $B\times_{f_n}^N F^\epsilon$ converges in pointed measured GH sense to $B\times_f^N F^\epsilon$ as $n\rightarrow \infty$. 

We can prove this claim  similarly as in {\bf i.}  We omit details but recall the following fact for a geodesic $\gamma=(\alpha, \beta)$ in $B\times_f F$. If $\alpha$ does not intersect $\partial B$ we can proceed as before. If $\gamma$ does intersect $\partial B$, then $\gamma$ is a cancatenation of segments in $B$. This type of geodesic is clearly the limit of geodesics in $B\times_{f_n} F$. 
\end{proof}
\begin{theorem} Let $K\in \R$ and $N\in (1, \infty)$.  Let $F$ be a mm space,  let $B$ be  a $1$-dimensional Riemannian manifold, and  let $f: B\rightarrow [0,\infty)$  be Lipschitz continuous such that $(\dagger)$ holds.   It holds $(\%)$. We assume that 
\begin{enumerate}
\item $f''+ Kf\leq 0$,  
\item $F$ satisfies the condition $\RCD(K_F(N-1), N)$ where 
$$K_F\geq  (f')^2+ Kf^2.$$
\end{enumerate}
Then $B\times_f^\sN F$ satisfies the condition $\RCD(KN, N+1)$. 
\smallskip\\
$(\dagger)$ If $B^\dagger$ is the result of gluing two copies of $B$ together along the boundary component $\partial B\backslash f^{-1}(\{0\})$, and $f^\dagger: B^\dagger \rightarrow [0,\infty)$ is the tautological extension of $f$ to $B^\dagger$, then $(f^\dagger)''+ K f^\dagger\leq 0$ is satisfied on $B^\dagger$. 
\end{theorem}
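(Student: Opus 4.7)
The plan is to reduce to the previously proved theorem (where $\partial B\subset f^{-1}(\{0\})$ is assumed) via the doubling construction built into condition $(\dagger)$. First I would form $B^\dagger$ by gluing two isometric copies of $B$ along the boundary component $\partial B\setminus f^{-1}(\{0\})$, and let $f^\dagger\colon B^\dagger\to[0,\infty)$ be the tautological even extension of $f$. By construction $B^\dagger$ is a complete $1$-dimensional Riemannian manifold with $\partial B^\dagger\subset (f^\dagger)^{-1}(\{0\})$, and condition $(\dagger)$ gives precisely $(f^\dagger)''+Kf^\dagger\leq 0$ on all of $B^\dagger$. A direct check (invoking Proposition \ref{prop:albi2} and Corollary \ref{cor:special} applied to $B^\dagger, f^\dagger$) shows that $|Df^\dagger|^2+K(f^\dagger)^2\leq K_F$ remains valid across the gluing locus, since the Alexandrov derivative there is controlled by the one-sided derivative $Df$ on $B$.

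Next, I would verify that $(\%)$ for $B^\dagger\times_{f^\dagger}^\sN F$ follows from $(\%)$ for $B\times_f^\sN F$. In each of the three cases listed in Assumption \ref{assumption:doubling2}, the relevant property transfers: if $B$ is compact, then so is $B^\dagger$; if $K\geq 0$, the $\MCP(0,N+1)$ trick still applies; and if $f$ is bounded, then so is $f^\dagger$. Hence the previously proved theorem (applied to $B^\dagger$, $f^\dagger$ and $F$) yields that $B^\dagger\times_{f^\dagger}^\sN F$ is an $\RCD(KN,N+1)$ space.

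Finally, observe that the swap involution $\sigma\colon B^\dagger\to B^\dagger$ interchanging the two copies is an isometry preserving $f^\dagger$ and fixing the gluing locus pointwise. It lifts to an isometric, measure-preserving involution of $B^\dagger\times_{f^\dagger}^\sN F$ acting trivially on the $F$-factor. The quotient mm space is, up to a harmless factor of $1/2$ in the reference measure, canonically isomorphic to $B\times_f^\sN F$: the isometry of $B\times_f F$ with the quotient of $B^\dagger\times_{f^\dagger}F$ follows from the fiber independence (Theorem \ref{th:albi0}) together with the fact that any minimizing geodesic crossing the reflection locus can be folded back into the fundamental domain $B$. The $\RCD$ condition is stable under quotients by compact groups of measure-preserving isometries, so $B\times_f^\sN F$ satisfies $\RCD(KN,N+1)$.

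The main obstacle in this plan is the final step: carefully identifying $B\times_f^\sN F$ with the quotient of $B^\dagger\times_{f^\dagger}^\sN F$ by $\sigma$ as a metric measure space, and invoking (or, if one prefers to avoid external quotient theorems, re-proving in this very simple reflection setting) the descent of the $\RCD$ condition. An alternative to the quotient theorem is to argue directly at the level of the Cheeger energy and the Bakry–Émery calculus: every $u\in W^{1,2}(B\times_f^\sN F)$ extends by reflection to a $\sigma$-invariant element of $W^{1,2}(B^\dagger\times_{f^\dagger}^\sN F)$ with the same norm and the same Cheeger energy, and the $\BE(KN,N+1)$ inequality established for $B^\dagger\times_{f^\dagger}^\sN F$ restricts to $\sigma$-invariant functions, which corresponds exactly to the functional calculus on $B\times_f^\sN F$; this yields the four defining properties of $\RCD(KN,N+1)$ without reference to a general quotient theorem.
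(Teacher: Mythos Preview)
Your proposal is correct but takes a different route from the paper in the final descent step. Both arguments begin identically: form the double $B^\dagger$, observe $\partial B^\dagger\subset (f^\dagger)^{-1}(\{0\})$, transfer the assumptions (including the doubling hypothesis), and apply the previous theorem to conclude that $B^\dagger\times_{f^\dagger}^\sN F$ is $\RCD(KN,N+1)$. The divergence is in how one passes back to $B\times_f^\sN F$. You realize $B\times_f^\sN F$ as the $\mathbb{Z}/2$-quotient of the doubled warped product by the swap involution and invoke stability of $\RCD$ under quotients by compact isometry groups (or, alternatively, reflect Sobolev functions and restrict the Bakry--\'Emery inequality). The paper instead shows, via fiber independence (Theorem~\ref{th:albi0}), that $B\times_f F$ sits inside $B^\dagger\times_{f^\dagger} F$ as a \emph{geodesically convex} subset: any minimizing geodesic with endpoints in the fundamental domain has its base component staying in $B$, because this is already true for the two-dimensional model $B^\dagger\times_{f^\dagger}[0,L]$. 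The $\RCD$ condition then passes to geodesically convex subsets directly.

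The paper's route is lighter: geodesic convexity is verified by an elementary reduction to a surface, and one avoids both the quotient machinery and the careful identification of the quotient metric (your ``main obstacle''). Your route is conceptually clean and would generalize more readily to higher-dimensional base spaces with reflection symmetry, but here it imports more than is needed.
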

\begin{proof} We observe that $\partial B^\dagger\subset (f^\dagger)^{-1}(\{0\})$.
Hence, we can apply the previous theorem with $B^\dagger$ and $f^\dagger$ in place of $B$ and $f$ respectively.  We obtain that $B^\dagger\times_{f^\dagger}^\sN F$ satisfies the condition $\RCD(KN,N+1)$. 
\smallskip\\
{\it Claim.} $B\times_f F$ is a geodesically convex subset of $B^\dagger\times_{f^\dagger} F=C^\dagger$. 

Let $\gamma=(\alpha, \beta): [0,1] \rightarrow C^\dagger$ be geodesic such that $\gamma(0), \gamma(L)\in C$. Let $\phi: [0,L]\rightarrow [0,1]$ be a $1$-speed reparametrization of $\beta$. We set $\psi= \phi^{-1}$. 
The warped products $B\times_f [0,L]$ is a geodesically convex subset of   $B^\dagger\times_{f^\dagger}[0,L]$. 
By fiber independence the curve $(\alpha, \psi)$ is a minimal geodesic in $B^\dagger\times_{f^\dagger}[0,L]$ with endpoints in $B\times_f[0,L]$. Since $B\times_f [0,L]$ is geodesically convex, we have $\alpha:[0,1]\rightarrow B$. It follows that $\Im\gamma\subset C$. 
\smallskip

Since $C$ is a geodesically convex subset of $C^\dagger$, we have that the condition $\RCD(KN, N+1)$ for $B\times_f^\sN F$ follows from the corresponding condition for $B^\dagger\times_{f^\dagger}^\sN F$. 
\end{proof}
\begin{proof}[End of the proof of Theorem \ref{maintheorem1}] {
Let us first assume $N=1$. The $\RCD(0,1)$ condition for $F$ yields that $F$ is isometric to  $[0, L]$ or to $\alpha\mathbb S^1$. Then  result follows from Theorem \ref{th:albi1} in combination with \cite{lytchakstadler}.}

  Hence we can assume $N>1$. We  have already finished the proof under the assumption $(\%)$.

Therefore we   have to remove the assumption $(\%)$. The  only case that we have to consider is when $f$ is not bounded. 

In this case, we can find sequences $r^{\pm}_n=r^\pm\rightarrow \pm \infty$ such that $f^-(r^+)\leq  0$ and $f^+(r^-)\leq 0$. Hence $B^r=[r^-, r^+]$ and $f|_{[r^-, r^+]}$ satisfy $(\dagger)$.  Thus $B^r\times _f F$ satisfies the condition $\RCD(KN, N+1)$. 

If we choose a point $p=(r,x)\in \nwp$ and a bounded neighborhood $U$ of $p$ in $\nwp$, then there exsits $n\in \N$ large enough such that $U$ isometrically embeds into $B^r\times_f^N F$. 

Now, since $B^r\times^N_f F$ satisfies $\RCD(KN, N+1)$ and since $p$ and $U$ in $\nwp$ are arbitrary, $\nwp$ satisfies the condition $\CD(KN, N+1)$ locally  in the sense of \cite{stugeo2}. Since $\nwp$ is nonbranching, it therefore satisfies the condition $\CD(KN,N+1)$ globally by \cite{cavmil}. Moreover, by construction and since $F$ is $\RCD$, $\m^\sN$-a.e. point in $\nwp$ admits a Euclidean tangent cone. Hence, it follows from \cite{Kap-Ket-18} that $\nwp$ is $\RCD(KN, N+1)$.

This finishes the proof of Theorem \ref{maintheorem1}. \end{proof}

\section{$N$-warped products satisfying a $\RCD$ condition}
\begin{proof}[Proof of Theorem \ref{maintheorem2}]
{
{\bf (1)} {\it Claim:}  $f''+ Kf \leq 0$.}

We can argue as follows. If we pick a minimal geodesic $\alpha:[a,b] \rightarrow B$, we know that for each $x\in F$ the set $\Im \alpha\times \{x\}$ is the image of the minimal geodesic $\gamma(t)=(\alpha(t), x)$ in $B\times_f F$, and $\Im \alpha \times \{x\}, \ {x\in F}$, is a decomposition of $\Im \alpha \times F$ into geodesic segments. Hence, this yields a disintegration of $\m^\sN|_{\Im\alpha \times F}$, that is 
given through 
$$\m^\sN|_{\Im\alpha \times F}= \int_F \gamma_\sharp (f^N\circ \alpha \de t) \de \m_F.$$
Since $B\times^N_f F$ satisfies the $\RCD(KN,N+1)$ condition, it satisfies the $\CD(KN,N+1)$ condition. 

Hence $f\circ \alpha$ is $\frac{KN}{N} f$-concave, therefore also $f$. 
\begin{jjj}
In particular $f>0$ in $B\backslash \partial B$. 
\end{jjj}
 \medskip
\noindent
{\bf (2)} {\it Claim:}  $\langle f', n\rangle_{\sB} \geq 0$ on $\partial B\backslash f^{-1}(\{0\})$ for the outer normal vector $n$. 

Let $\beta: [0, L] \rightarrow F$ be a geodesic in $F$. We know that $B\times_f \Im \beta$ embeds isometrically into $B\times_f F$ by Theorem \ref{th:albi1}.  Here $B\times_f \Im \beta$ is the product space $B\times \Im \beta$ equipped with the continuous metric $(\de t)^2 + f^2(t) (\de r)^2$. 

We assume the claim is not true, i.e. there exists $r_0 \in \partial B$ such that $f(r_0)>0$ and $\langle f', n\rangle< 0$.  Let us assume that $r_0$ is a boundary point on the left. Then $\langle f', n\rangle \leq 0$ means that $\frac{d^+}{dt} f\big|_{r_0}< 0$.  Then $B\times_f [0,L]$ is not an Alexandrov space. 

On the other hand $B\times_f (0, L)$ is locally an Alexandrov space.  Since $B\times_f [0, L]$ is the closure of $B\times_f (0,L)$ this can only happen if $B\times_f (0,L)$ is not geodesically convex in $B\times_f[0,L]$. Then if follows that there exist a geodesic in $B\times_f [0,L]$ that branches at some intermediat point. But since $B\times_f [0, L]$ embeds isometrically into $B\times_f F$, that is an $\RCD$ space, this is contradiction with the fact that geodesic in $\RCD$ spaces are nonbranching \cite{qindeng}.


\smallskip

\noindent
{\bf (3)}
We consider again two cases. 
\smallskip\\
{\bf i.} $f^{-1}(\{0\})\neq \emptyset$\\
Let $(r, x)=p\in \partial B$. We set $|f'|(r)= \alpha$. The tangent cone at $p$ is unique and given by the warped product $[0, \infty) \times_{\alpha r}^N F= [0, \infty) \times^\sN_r \alpha^{-1} F$. Then, the tangent cone is also an $\RCD(0,N+1)$ space. Hence, by \cite{ketterer2} $\alpha^{-1} F$ is an $\RCD(N-1, N)$ space and $F$ is an $\RCD(\alpha(N-1), N)$ space. Since $p\in \partial B$ was arbitrary, it follows that $F$ satisfies $\RCD(K_F(N-1), N)$ where $K_F=\sup_{\partial B} |f'|$. Hence, we obtain the conclusion with Proposition \ref{prop:albi2}. \\
\\
{\bf ii.} $f^{-1}(\{0\})=\emptyset$\\
By  Corollary \ref{cor:special} we know that $K\leq 0$. If $K= 0$, then $f$ is concave.  {\color{black} It follows that $f$ is constant. Indeed, since $f^{-1}(\{0\})=\partial B$ is empty, we have $B\simeq \R$ or $B\simeq \mathbb S^1$. Since $f$ is concave,  
it follows $f$ is constant.  

If $f$ is constant then $B\times_f^\sN F= B\times F$, that is $B\times F$ equipped with $\ell^2$-product metric $\de_{\sB\times \sF} = \sqrt{{\scriptstyle |\cdot - \cdot |^2 + \de_\sF^2}}$ and the measure $\de r\otimes \m_\sF$. By \cite{giglisplitting} one has that $F$ is $\RCD(0,N)$.}
 
Hence we will assume $K<0$ and again by Corollary \ref{cor:special}  we have $\inf_B f^2=0$.  Since $f^{-1}(\{0\})=\emptyset$, it follows that $B$ is noncompact. In partiuclar, there is a sequence $r_i$ diverging to infinity, i.e. $\de_\sB(r_i, r_{i+j})\rightarrow \infty$ if $j\rightarrow \infty$ and for all $i$, such that $f(r_i)\rightarrow 0$.  The goal is to  prove that $F$ satisfies $\RCD(0,N)$.

We adapt an idea from \cite{albi3}. We set  $f(r_i)=:a_i\rightarrow 0$ and 
$$\lambda_i= \frac{1}{a_i}, \ \ f_i = \lambda_i f(\frac{1}{\lambda_i} \cdot) : \R \rightarrow (0, \infty). $$
Then, $f_i$ is $a_i^2 K$-concave. Moreover, $f_i\geq 0$ and $f_i\leq C$ on $(r_i- R\lambda_i, r_i+ R\lambda_i)$. After extracting a subsequence, by the Arzela-Ascoli theorem $f_i$ converges to a limit function $f_\infty$ on $\R$ such that $f''\leq 0$ and $r_i \rightarrow r_\infty\in \R$ such that $1=f_i(r_i)\rightarrow f_\infty(r_\infty)$.
Hence $f_\infty\equiv 1$. 

Moreover, $B_i\times_{f_i}^N F= \lambda_i B\times_f^N F$ satisfies $\RCD(a_i^2 KN, N+1)$ and $B_i\times_{f_i}^N F$ converges in pointed measured GH sense to $\R\times_1 F$. Hence $\R\times F$ satisfies the condition $\RCD(0,N+1)$. We conclude from \cite{giglisplitting, giglisplittingshort} that $F$ satisfies the condition $\RCD(0, N)$. 
\smallskip\\
{\color{black}
{\bf (4)}  (a) We have $\inf_B f^2>0$. Otherwise $K_F\geq 0$. By rescaling $f$ and $B\times_f^\sN F$ we can also assume $\inf_B f^2=1$. 

We pick a sequence $(r_i)_{i\in \N}\subset B$ such that $f(r_i)\rightarrow 1$. Let $\epsilon_i \downarrow 0$. For all $i \in \N$ there exists $\delta_i\in (0, \epsilon_i)$ such that 
$$ f|_{[r_i-\delta_i, r_i+ \delta_i]}\leq 1+ \epsilon_i.$$

If $r_i \rightarrow \infty$ (or $-\infty$),  we  define $f_i: B= \R \rightarrow (0, \infty)$ via $f_i(r)= f(r-r_i)$. After extracting a subsequence $f_i$ will converge locally uniformily to some $\bar f K$-concave function $\bar f: \R\rightarrow [1, \infty)$ such that $\inf_\sB \bar f = \min_\sB \bar f= \bar f(0).$  Moreover $B\times^\sN_{f_i} F$ converges in pointed measured GH sense to $\R\times^\sN_{\bar f} F$, and hence $\R\times^\sN_{\bar f} F$ still satisfies the condition $\RCD(KN, N+1)$.

Therefore, we can  assume w.l.o.g. that $\inf_\sB f=  \min_\sB f= f(0)$ and $r_i = 0$ $\forall i \in \N$. In this case $x\in F\mapsto (0, x)\in B\times_f F$ is a distance preserving embedding.}
\medskip\\
(b) 
We fix $\epsilon_i>0$ and $\delta_i>0$ as before.

Since $f$ has a minimum in $0$ there exist $r^i_0\in (-\delta_i, 0)$ and $r^i_1\in (0, \delta)$ such that $\frac{d^+ f}{dr}|_{r_0^i}=: f^+(r^i_0)\leq 0$ and $\frac{d^- f}{dr}|_{r_1^i}=:f^-(r^i_1)\geq 0$. 

Let $p_0=(s_0, x_0)$ and $p_1=(s_1, x_1)$ be points $(r_0^i, r_1^i)\times F\subset B\times_f F$. We show that for any minimal geodesic $\gamma=(\alpha, \beta):[0,1]\rightarrow B\times_f F$ between $p_0$ and $p_1$, it follows that $\alpha(t)\in (r_0^i, r_1^i)$. 

To see this we argue as follows. 
We consider $f|_{[r_0^i, r_1^i]}= :g$. The function $g$ is $Kg$-concave and satisfies the condition (1)(c) in Theorem \ref{th:albi1} Hence $[r_0^i, r_1^i]\times_g [0, L]=:X$ with $L=\de_F(x_0, x_1)$ is an Alexandrov space. If $\tilde \gamma=(\tilde \alpha, \tilde \beta)$ is geodesic in $X$ between $(r_0^i, 0)$ and $(r_1^i, L)$, then also $(\alpha, \tilde \beta)$ is a geodesic in $X$ with the same endpoints. This follows from Theorem \ref{th:albi0}. Assume there exist $t_0\in [0,1]$ such that $\alpha(t_0)\in \{r_0^i, r_1^i\}$. From the properties of $g$ also $X^\dagger$ is an Alexandrov space, again by Theorem \ref{th:albi1} where $X^\dagger$ is space that we obtain by gluing together two copies of $X$ at $r_0^i$ and $r_1^i$. By considering the two copies of the geodesic $(\alpha, \tilde \beta)$ in $X^\dagger$ we obtain a geodesic in $X^\dagger$ that branches at some intermediate point. This contradicts with $X^\dagger$ being Alexandrov.

We set $Z_i= [r_0^i, r_1^i]\times F$ and see that $Z_i$ is geodesically convex. Hence, equipped with the measure $\m^\sN|_{Z_i}$, the subset $Z_i$ is still an $\RCD(KN, N+1)$ spaces. 
\medskip\\
(c)
For an admissible curve $\gamma=(\alpha, \beta)$ {\it in} $Z_i$ between points $p_0,p_1\in Z_i$  we have that 
$${ \de_{B\times_f F}|_{Z_i\times Z_i}(p_0,p_1)\leq \int \sqrt{ |\alpha'|^2 + f^2\circ \alpha |\beta'|^2}\leq \int \sqrt{ |\alpha'|^2 + (1+\epsilon_i) |\beta'|^2}}.$$
The infimum of the right hand side w.r.t. all such curves $\gamma=(\alpha, \beta)$ {in} $Z_i$ is $\de_{B\times (1+\epsilon_i)F}|_{Z_i\times Z_i}(p_0,p_1)$.  Hence 
\begin{align}\label{787} \de_{B\times_f F}|_{Z_i\times Z_i}\leq \de_{B\times (1+\epsilon_i)F}|_{Z_i\times Z_i}.
\end{align}

On the other hand, for {\it every} admissible curve $\gamma=(\alpha, \beta)$ in $B\times F$ we have 
$$\int \sqrt{ |\alpha'|^2 +  |\beta'|^2} \leq  \int \sqrt{ |\alpha'|^2 + f^2\circ \alpha |\beta'|^2}.$$
It follows that 
$$\de_{B\times F}|_{Z_i\times Z_i}\leq \de_{B\times_f F}|_{Z_i\times Z_i}\leq \de_{B\times (1+\epsilon_i)F}|_{Z_i\times Z_i}.$$
Hence, we obtain that $(Z_i, \de_{B\times_f F}|_{Z_i\times Z_i})$ converges in GH sense to $F$. 
\medskip\\
(d) We renormalize the measure $\m^\sN|_{Z_i}$ as follows:  $\frac{1}{\lambda_i} f^N \mathcal L^1|_{[r_0^i, r_1^i]} \otimes \m_\sF$ where $ \int_{r_0^i}^{r_1^i} f^N(r) \de r=:\lambda_i$. 
It follows that  $\lambda_i^{-1}\m^N|_{Z_i}$ converges weakly to $\m_F$. Therefore, the $\RCD(KN, N+1)$ spaces $(Z_i, \de_{B\times_f F}|_{Z_i\times Z_i}, \lambda_i^{-1} \m^\sN)$ converge to $(F, \de_\sF, \m_\sF)$ that is consequently also an $\RCD(KN, N+1)$ space. Since we rescaled $B\times_f^\sN F$ and $f$ such that $\min_{r\in B} f(r)=1$, the claim in {\it (4)} follows. 
\medskip\\
This finishes the proof of the theorem.
\end{proof}
\subsubsection{Proof of Theorem \ref{lastcorollary}}
We observe that, up to isomorphisms, the assumption on $f$ in Theorem \ref{lastcorollary} leaves us with one of the following 6 cases. 
\begin{enumerate}
\item $K=K_F=1$, then $B=[0,\pi]$ and $f(r)=\sin(r)$ 
\\
(spherical suspension),
\item $K=0$ and $K_F=1$, then $B=[0, \infty)$ and $f(r)= r$ 
\\
(Euclidean cone), 
\item $K=0$ and $K_F=0$, then $B=\R$ and $f(r)=1$ 
\\
(Cartesian product), 
\item $K=-1$ and $K_F=1$, then $B=[0, \infty)$ and $f(r)= \sinh(r)$ 
\\
(elliptic cone), 
\item $K=-1$ and $K_F=0$, then $B=\R$ and $f(r)=\exp(r)$ 
\\
(parabolic cone), 
\item $K=-1$ and $K_F=-1$, $B=\R$ and $f(r)=\cosh(r)$
\\
(hyperbolic cone).
\end{enumerate}
Moreover, the generalized Pythagorean identity holds in each of these cases: 
$$(f')^2 + K f^2 = K_F, \ \ K, K_F\in \{-1, 0, 1\} .$$

The spherical suspension, the Euclidean cone, and the elliptic cone were  treated in \cite{ketterer2}. 

The  Cartesian product was treated in \cite{giglisplitting}.

The case of the parabolic cone is covered by Theorem \ref{maintheorem2}. 
\smallskip

Hence, the only case that is not covered already is the hyperbolic cone.
However, it can be treated exactly like the  cases in \cite{ketterer2}. 

The proof is verbatim the same. So we will not provide details here and refer to \cite{ketterer2}. The main points one has to notice are: 
\begin{itemize}
\item[(i)]  the generalized Pythagorean identity holds, 
\item[(ii)]  Proposition \ref{prop:oneill} holds
\item[(iii)] $F$ is a compact metric measure space that is geodesic with a finite measure such that doubling property holds and it admits a local Poincar\'e inequality. 
\end{itemize}
\appendix
\section{Proof of Proposition \ref{prop:oneill}}\label{appendix}

Let $u=u_1\otimes u_2, v=v_1\otimes v_2\in C_{c}^\infty(\mathring B)\otimes D_{\sW^{1,2}}(L^\sF)\subset D_{W^{1,2}}(L)$, as well as $\phi=\phi_1\otimes \phi_2\in P_t^{\sB, \sN, \lambda}C_c^\infty(\mathring B)\otimes E(\lambda)$.  
\smallskip

We note that $\phi\in P_t^{\sB, \sN, \lambda} C_c^\infty(\mathring B)\otimes E(\lambda)$ satisfies $\phi \in D_{L^\infty}(L^\sC)\cap L^\infty(\m^\sN). $
Then the $\Gamma_2$-operator of $u,v$ and $\phi$ 
\begin{align*}
\Gamma_2(u,v;\phi)=&\underbrace{ \int \frac{1}{2} \langle \nabla u, \nabla v\rangle \eL \phi d\m^\sN }_{=: (I)} - \underbrace{\int \langle \nabla u, \nabla \eL v\rangle \phi d\m^\sN}_{=:(II)}
\end{align*}
is well-defined. 

Two times the first integral on the RHS  is
%
\begin{align*}
2(I)&=\int\left[ \langle u_1' , v_1'\rangle_\sB u_2 v_2+ \frac{u_1 v_1}{f^2} \langle \nabla u_2, \nabla v_2\rangle_\sF \right] \eL \phi d\m^\sN \\
&=\underbrace{\int\langle u_1' , v_1'\rangle_\sB u_2 v_2 \eL \phi d\m^\sN}_{=:(I)_1}+  \underbrace{\int \frac{u_1 v_1}{f^2} \langle \nabla u_2, \nabla v_2\rangle_\sF \eL \phi d\m^\sN}_{=:(I)_2} 
\end{align*}
We have that $L\phi= L^{\sB, \sN, \lambda} \phi_1 \otimes \phi_2$. 

Since $\phi\in L^\infty(\m^\sN)$, it follows that $\phi_2, L^\sF\phi_2\in L^\infty(\m_\sF^\sN)$.

Therefore we can compute 
\begin{align*} 
(I)_1=& \int\langle u_1' , v_1'\rangle_\sB u_2 v_2 \left[L^{\sB, \sN, \lambda}\phi_1 \otimes \phi_2\right] \de\m^\sN\\
=& \int \left[\int \phi_1 L^{B, f^\sN, \lambda}  \langle u_1', v_1'\rangle_\sB d\m^\sN_B\right] u_2 v_2 \phi_2 \de\m_F\\
=& \int \left[\int \phi_1 L^{B, f^\sN} \langle u_1', v_1'\rangle_\sB d\m^\sN_B\right] u_2 v_2 \phi_2 \de\m_F\\
& \ \ \ \ \  \ \ \ \ \ \ \ \ + \int \left[\int u_2 v_2 (-\lambda) \phi_2 d\m_\sF \right]\langle u_1', v_1'\rangle_\sB \frac{\phi_1}{f^2} \de\m^\sN_\sB\\
=& \int \left[\int \phi_1 L^{B, f^\sN} \langle u_1', v_1'\rangle_\sB d\m^\sN_B\right] u_2 v_2 \phi_2 \de\m_F\\
& \ \ \ \ \  \ \ \ \ \ \ \ \ + \int \left[\int L^{\sF}_1(u_2 v_2)  \phi_2 d\m_\sF \right]\langle u_1', v_1'\rangle_\sB \frac{\phi_1}{f^2} \de\m^\sN_\sB
\end{align*}
Here we use 
$$\int v_1 L^{\sB, \sN, \lambda} u_1 \dint \m_\sB^\sN= \int L^{\sB, \sN, \lambda} v_1 u_1 \dint \m_\sB^\sN, \ \ u_1, v_1 \in D(L^{\sB, \sN, \lambda}).$$
Since $\langle u_1', v_1'\rangle_\sB\in C^\infty_c(\mathring B)\subset D(L^{\sB, \sN, \lambda})$, we have
$$L^{\sB, \sN, \lambda} \langle u_1', v_1'\rangle_\sB = L^{\sB, \sN} \langle u_1', v_1'\rangle_\sB - \frac{\lambda}{f^2} \langle u_1', v_1'\rangle_\sB.$$
For the last equality we notice that $u_2 v_2\in D(\eL_1^\sF)$ with $\eL_1^\sF=  (\eL ^\sF v_2 )u_2+ v_2 \eL^\sF u_2 + \langle \nabla u_2, \nabla v_2\rangle_\sF$ and it holds
$$\int u_2 v_2 \eL^\sF \phi_2 d\m_\sF = \int \eL^\sF_1 (u_2 v_2) \phi_2 \dint\m_\sF.$$
Moreover, we notice that $ \phi_1 \cdot L^{\sB, \sN}  \langle u_1', v_1'\rangle_\sB$ and $\langle u_1', v_1'\rangle_\sB \cdot \frac{\phi_1}{f^2}$  are compactly supported in $\mathring B= B\backslash f^{-1}(\{0\})$.  In particular, the behaviour of $\frac{1}{f^2}$ in $f^{-1}(\{0\})$ does not affect the computation. 
\smallskip

We also consider
\begin{align*}
(I)_2=& \int \frac{u_1v_1}{f^2} \langle \nabla u_2, \nabla v_2\rangle_{\sF} \left[ L^{\sB, \sN, \lambda} \phi_1\otimes \phi_2\right] \dint\m^\sN
\end{align*}
and compute 
\begin{align*} 
(I)_2=& \int\left[ \int  L^{\sB, \sN} \left(\frac{u_1v_1}{f^2} \right)\phi_1 \dint\m^\sN_\sB \right]\langle \nabla u_2, \nabla v_2 \rangle_{\sF} \phi_2 \dint\m_\sF \\
&\ \ \ \ \ \ \ \ \ \ \ \ \ +  \int \bigg[\int \langle \nabla u_2, \nabla v_2\rangle_\sF
{L^\sF \phi_2} d\m_{\sF}\bigg] \frac{u_1 v_1}{f^4} \phi_1 \de\m^\sN_\sB.
\end{align*}
Then we consider $(II)=\int\big\langle \nabla u, \nabla \big( (L^{\sB, f^\sN} v_1)v_2+ \frac{v_1}{f^2} \eL^\sF v_2\big)\big\rangle \phi \de\m^\sN$ and compute that
\begin{align*}
(II)
= & \int\langle \nabla u, \nabla (v_2L^{\sB, f^\sN} v_1) \rangle \phi d\m^\sN + \int \langle \nabla u, \nabla \left(\frac{v_1}{f^2} L^F v_2\right)\rangle \phi \de\m^\sN\\
=& \int\left[ \langle u_1',  (L^{\sB, f^\sN}v_1)'\rangle_\sB u_2 v_2 + \frac{u_1 L^{\sB,f^\sN} v_1}{f^2} \langle \nabla u_2, \nabla v_2\rangle_\sF\right]\phi \de\m^\sN \\
&+ \int \left[ \langle u_1', \left( \frac{v_1}{f^2}\right)' \rangle_\sB u_2 \eL^\sF v_2  + \frac{u_1 v_1}{f^4} \langle \nabla u_2, \nabla \eL^\sF v_2\rangle_\sF \right]\phi\de\m^\sN\\
=&  \int \langle u_1',  (L^{\sB, f^\sN}v_1)'\rangle_\sB \phi_1 \de\m_{\sB}^\sN  \int u_2 v_2 \phi_2 d\m_\sF \\
&+ \int   \frac{u_1 L^{\sB,f^\sN} v_1}{f^2} \phi_1 \de\m_\sB^\sN \int  \langle \nabla u_2, \nabla v_2\rangle_\sF\phi_2 \de\m_\sF \\
&+  \int \langle u_1', \left({\textstyle  \frac{v_1}{f^2}}\right)' \rangle_\sB \phi_1 \de \m_\sB^\sN\int  u_2 \eL^\sF v_2 \phi_2 d\m^\sN\\
& + \int  \frac{u_1 v_1}{f^4} \phi_1 \de\m_{\sB}^\sN \int \langle \nabla u_2, \nabla \eL^\sF v_2\rangle_\sF \phi_2\de\m_\sF
\end{align*}
In summary we have 
\begin{align*}
\Gamma_2(u, v; \phi)&=
\frac{1}{2} (I)_1 + \frac{1}{2} (I)_2 - (II)\\
&=\frac{1}{2} \int \left[\int \phi_1 L^{B, f^\sN}  \langle u_1', v_1'\rangle_\sB d\m^\sN_B\right] u_2 v_2 \phi_2 \de\m_F\\
& \ \ \ \ \  \ \ \ \ \ \ \ \ +\frac{1}{2} \int \left[\int \eL^\sF_1 (u_2 v_2) \phi_2 d\m_\sF \right]\langle u_1', v_1'\rangle_\sB \frac{\phi_1}{f^2} \de\m^\sN_\sB\\
&+\frac{1}{2} \int\left[ \int  L^{\sB, f^\sN} \left(\frac{u_1v_1}{f^2} \right)\phi_1 d\m^\sN_\sB \right]\langle \nabla u_2, \nabla v_2 \rangle_{\sF} \phi_2 d\m_\sF \\
&\ \ \ \ \ \ \ \ \ \ \ \ \ +  \frac{1}{2}\int \left[\int \langle \nabla u_2, \nabla v_2\rangle_\sF \eL^\sF \phi_2 d\m_{\sF}\right] \frac{u_1 v_1}{f^4} \phi_1 \de\m^\sN_\sB\\
&- \int \left[\int \langle u_1',  (L^{\sB, f^\sN}v_1)'\rangle_\sB \phi_1 \de\m_{\sB}^\sN \right] u_2 v_2 \phi_2 d\m_\sF \\
&\ \ \ \ \ \ \ \ \ \ \ \ \ - \int   \frac{u_1 L^{\sB,f^\sN} v_1}{f^2} \phi_1 \int  \langle \nabla u_2, \nabla v_2\rangle_\sF\phi_2 \de\m_\sF \de\m_\sB^\sN \\
&- \int \left[\int \langle u_1', \left({\textstyle  \frac{v_1}{f^2}}\right)' \rangle_\sB \phi_1 \de \m_\sB^\sN\right]  u_2 \eL^\sF v_2 \phi_2 d\m_\sF\\
& \ \ \ \ \ \ \ \ \ \ \ \ \ - \int  \frac{u_1 v_1}{f^4} \phi_1 \int \langle \nabla u_2, \nabla \eL^\sF v_2\rangle_\sF \phi_2\de\m^\sN \de\m_{\sB}^\sN
\end{align*}
Hence
\begin{align}\label{id:next}
\Gamma_2(u, v; \phi)
=& \int \Gamma_2^{\sB, f^\sN}(u, v; \phi_1) u_2 v_2 \phi_2 \de\m_\sF\nonumber\\
& + \int \Gamma_2^{\sF}(u_2, v_2;\phi_2) \frac{u_1 v_1}{f^4} \phi_1 d\m_\sB^{\sN}+ \int \J(u,v) \phi \de\m^\sN
\end{align}
where\begin{align*}
\J(u,v)=&\frac{1}{2}  \eL^\sF_1 (u_2 v_2) \langle u_1', v_1'\rangle_\sB \frac{1}{f^2}  -  \langle u_1', \left({\textstyle  \frac{v_1}{f^2}}\right)' \rangle_\sB u_2 \eL^\sF v_2 \\
&+\frac{1}{2}   L^{\sB, f^\sN} \left(\frac{u_1v_1}{f^2} \right)  \langle \nabla u_2, \nabla v_2 \rangle_{\sF}  -    \frac{u_1 L^{\sB,f^\sN} v_1}{f^2}    \langle \nabla u_2, \nabla v_2\rangle_\sF .
\end{align*}
We will compute 
$
\J(u,v) + \J(v,u).
$

Recall that 
$$
\langle u_1' ,\left(\frac{v_1}{f^2}\right)' \rangle_\sB \ = \  \frac{1}{f^2}\langle u_1', v_1'\rangle_\sB - \frac{2v_1}{f^3} \langle f', u_1'\rangle_\sB
$$Since $\eL^{\sB, f^\sN}$ is a diffusion operator, we have
\begin{align*}
L^{\sB, f^\sN}(\frac{u_1 v_1}{f^2})& \ = \  \frac{v_1}{f^2} L^{\sB, f^\sN} u_1 + \frac{u_1}{f^2} L^{\sB, f^\sN} v_1 - \frac{2u_1 v_1}{f^3} L^{\sB, f^\sN} f -\frac{4v_1}{f^3} \langle u_1', f'\rangle_\sB\\
&\ \ \ \ \ \ \  - \frac{4 u_1}{f^3} \langle v_1', f'\rangle_\sB + \frac{6u_1 v_1}{f^4} \langle f', f'\rangle_\sB + \frac{2}{f^2} \langle u_1', v_1'\rangle_\sB
\end{align*}
Moreover
$$L^{\sB, f^\sN} g=  \Delta^\sB g - \langle (\ln f^\sN)', g'\rangle_\sB= \Delta^\sB g- \frac{N}{f} \langle f', g'\rangle_\sB.$$
Hence 
\begin{align*}
\J(u,v)+ \J(v,u) = &\frac{2}{f^2}\langle \nabla u_2, \nabla v_2\rangle_\sF \langle u_1', v_1'\rangle_\sB   + \frac{2v_1}{f^3} \langle f', u_1'\rangle_\sB u_2 \eL^\sF v_2\\
& +  \frac{2u_1}{f^3} \langle f', v_1'\rangle_\sB v_2 \eL^\sF u_2 - \frac{2u_1 v_1}{f^3} L^{\sB, f^\sN} f  \langle \nabla u_2, \nabla v_2 \rangle_{\sF}\\
& -\frac{4v_1}{f^3} \langle u_1', f'\rangle_\sB \langle \nabla u_2, \nabla v_2 \rangle_{\sF}   - \frac{4 u_1}{f^3} \langle v_1', f'\rangle_\sB  \langle \nabla u_2, \nabla v_2 \rangle_{\sF} \\
&+ \frac{6u_1 v_1}{f^4} \langle f', f'\rangle_\sB   \langle \nabla u_2, \nabla v_2 \rangle_{\sF}+ \frac{2}{f^2} \langle u_1', v_1'\rangle_\sB  \langle \nabla u_2, \nabla v_2 \rangle_{\sF}\\
=& \frac{2v_1}{f^3} \langle f', u_1'\rangle_\sB u_2 \eL^\sF v_2+  \frac{2u_1}{f^3} \langle f', v_1'\rangle_\sB v_2 \eL^\sF u_2\\
& - \frac{2u_1 v_1}{f^2}f^\#\langle \nabla u_2, \nabla v_2 \rangle_{\sF}+2\I(u_1,v_1)\langle \nabla u_2, \nabla v_2 \rangle_{\sF}
\end{align*}
where  $f^\#= \frac{ \Delta^\sB f }{f}+ {(N-1)} \frac{\langle f', f'\rangle_\sB}{f^2}$ and
\begin{align*}\I(u_1,v_1)=&\frac{2}{f^4}\Big( {u_1 v_1} \langle f', f'\rangle_\sB   + {f^2} \langle u_1', v_1'\rangle_\sB   -{v_1} f \langle u_1', f'\rangle_\sB  - { u_1}f \langle v_1', f'\rangle_\sB \Big).\end{align*}
Note that 
$$
\I(u_1,v_2)=2\left(\left( \frac{u_1}{f}\right)'\right)\cdot \left(\left( \frac{v_1}{f}\right)'\right).
$$
Since $u$ and $v$ are products of $u_1$ and $u_2$, and $v_1$ and $v_2$ respectively, we can write
$$\I(u_1,v_1)\langle \nabla u_2, \nabla v_2 \rangle_{\sF}= 2 \left\langle  \nabla\left( \frac{u}{f}\right)',\nabla \left( \frac{v}{f}\right)'\right\rangle.$$

So we have now
\begin{align*}
&\Gamma_2(u, v; \phi)+ \Gamma_2(v,u;\phi)\\
 &=\int \Gamma_2^{\sB, f^\sN}(u, v; \phi)  \de\m_\sF + \int \Gamma_2^{\sF}(u, v;\phi) \frac{1}{f^4} d\m_\sB^{\sN}\\
&+\int \bigg[ \frac{2}{f^3} \langle f', u'\rangle_\sB  \eL^\sF v+  \frac{2}{f^3} \langle f', v'\rangle_\sB  \eL^\sF u\\
& - \frac{2}{f^2}f^\#\langle \nabla u, \nabla v \rangle_{\sF}+ 4\left\langle  \nabla\left( \frac{u}{f}\right)',\nabla \left( \frac{v}{f}\right)'\right\rangle\bigg] \phi d\m^\sN.
\end{align*}
By multilinearity in $u, v$ and $\phi$ we get the desired  formula for $u\in C^\infty_c(\mathring B)\otimes D_{\sW^{1,2}}(L^\sF)$ and for $\phi \in C^\infty(\mathring B)\otimes D_{\sW^{1,2}}(L^\sF)$ with  $\phi, L\phi \in L^\infty(\m^\sN)$.
\qed
\bigskip
\paragraph{\bf Data availability} This manuscript has no associated data.
\medskip
\paragraph{\bf Conflict of interest}
The author states that there is no conflict of interest.
\bibliography{new}

\begin{thebibliography}{10}

\bibitem{albi0}
Stephanie~B. Alexander and Richard~L. Bishop.
\newblock Warped products of {H}adamard spaces.
\newblock {\em Manuscripta Math.}, 96(4):487--505, 1998.

\bibitem{albi}
Stephanie~B. Alexander and Richard~L. Bishop.
\newblock Curvature bounds for warped products of metric spaces.
\newblock {\em Geom. Funct. Anal.}, 14(6):1143--1181, 2004.

\bibitem{conesplittingtheorems}
Stephanie~B. Alexander and Richard~L. Bishop.
\newblock A cone splitting theorem for {A}lexandrov spaces.
\newblock {\em Pacific J. Math.}, 218(1):1--15, 2005.

\bibitem{albi3}
Stephanie~B. Alexander and Richard~L. Bishop.
\newblock Warped products admitting a curvature bound.
\newblock {\em Adv. Math.}, 303:88--122, 2016.

\bibitem{agsgradient}
Luigi Ambrosio, Nicola Gigli, and Giuseppe Savar{\'e}.
\newblock {\em Gradient flows in metric spaces and in the space of probability
  measures}.
\newblock Lectures in Mathematics ETH Z\"urich. Birkh\"auser Verlag, Basel,
  second edition, 2008.

\bibitem{agsriemannian}
Luigi Ambrosio, Nicola Gigli, and Giuseppe Savar{\'e}.
\newblock Metric measure spaces with {R}iemannian {R}icci curvature bounded
  from below.
\newblock {\em Duke Math. J.}, 163(7):1405--1490, 2014.

\bibitem{agsbakryemery}
Luigi Ambrosio, Nicola Gigli, and Giuseppe Savar\'e.
\newblock {Bakry-\'Emery} curvature-dimension condition and {R}iemannian
  {R}icci curvature bounds.
\newblock {\em Ann. Probab.}, 43(1):339--404, 2015.

\bibitem{amsnonlinear}
Luigi Ambrosio, Andrea Mondino, and Giuseppe Savar\'{e}.
\newblock Nonlinear {D}iffusion {E}quations and {C}urvature {C}onditions in
  {M}etric {M}easure {S}paces.
\newblock {\em Mem. Amer. Math. Soc.}, 262(1270):0, 2019.

\bibitem{anderson}
Michael~T. Anderson.
\newblock Metrics of positive {R}icci curvature with large diameter.
\newblock {\em Manuscripta Math.}, 68(4):405--415, 1990.

\bibitem{bbi}
Dmitri Burago, Yuri Burago, and Sergei Ivanov.
\newblock {\em A course in metric geometry}, volume~33 of {\em Graduate Studies
  in Mathematics}.
\newblock American Mathematical Society, Providence, RI, 2001.

\bibitem{cks}
Matteo Calisti, Christian Ketterer, and Clemens Sämann.
\newblock Generalized cones admitting a curvature-dimension condition, 2025.

\bibitem{cavmil}
Fabio Cavalletti and Emanuel Milman.
\newblock The globalization theorem for the curvature-dimension condition.
\newblock {\em Invent. Math.}, 226(1):1--137, 2021.

\bibitem{cz}
Simone Cecchini and Rudolf Zeidler.
\newblock Scalar and mean curvature comparison via the {Dirac} operator.
\newblock {\em Geom. Topol.}, 28(3):1167--1212, 2024.

\bibitem{cheegerlipschitz}
Jeff Cheeger.
\newblock Differentiability of {L}ipschitz functions on metric measure spaces.
\newblock {\em Geom. Funct. Anal.}, 9(3):428--517, 1999.

\bibitem{almostrigidity}
Jeff Cheeger and Tobias~H. Colding.
\newblock Lower bounds on {R}icci curvature and the almost rigidity of warped
  products.
\newblock {\em Ann. of Math. (2)}, 144(1):189--237, 1996.

\bibitem{cheegercoldingI}
Jeff Cheeger and Tobias~H. Colding.
\newblock On the structure of spaces with {R}icci curvature bounded below. {I}.
\newblock {\em J. Differential Geom.}, 46(3):406--480, 1997.

\bibitem{chen_lina_almost_volume}
Lina Chen.
\newblock Almost volume cone implies almost metric cone for annuluses centered
  at a compact set in ${RCD}({K}, {N})$-spaces.
\newblock Preprint, {arXiv}:2112.09353 [math.{DG}] (2021), 2021.

\bibitem{coldingnaberII}
Tobias~Holck Colding and Aaron Naber.
\newblock Characterization of tangent cones of noncollapsed limits with lower
  {R}icci bounds and applications.
\newblock {\em Geom. Funct. Anal.}, 23(1):134--148, 2013.

\bibitem{cdnpsw}
Chris Connell, Xianzhe Dai, Jes{\'u}s N{\'u}{\~n}ez-Zimbr{\'o}n, Raquel
  Perales, Pablo Su{\'a}rez-Serrato, and Guofang Wei.
\newblock Maximal volume entropy rigidity for {{\(\mathsf{RCD}^\ast (- (N-1),
  N)\)}} spaces.
\newblock {\em J. Lond. Math. Soc., II. Ser.}, 104(4):1615--1681, 2021.

\bibitem{DGi}
Guido De~Philippis and Nicola Gigli.
\newblock From volume cone to metric cone in the nonsmooth setting.
\newblock {\em Geom. Funct. Anal.}, 26(6):1526--1587, 2016.

\bibitem{qindeng}
Qin Deng.
\newblock H{\"o}lder continuity of tangent cones in {{\(\operatorname{RCD}(K,
  N)\)}} spaces and applications to nonbranching.
\newblock {\em Geom. Topol.}, 29(2):1037--1114, 2025.

\bibitem{erbarkuwadasturm}
Matthias Erbar, Kazumasa Kuwada, and Karl-Theodor Sturm.
\newblock On the equivalence of the entropic curvature-dimension condition and
  {B}ochner's inequality on metric measure spaces.
\newblock {\em Invent. Math.}, 201(3):993--1071, 2015.

\bibitem{giglisplitting}
Nicola Gigli.
\newblock The splitting theorem in non-smooth context.
\newblock https://arxiv.org/abs/1302.5555, 2013.

\bibitem{giglisplittingshort}
Nicola Gigli.
\newblock An overview of the proof of the splitting theorem in spaces with
  non-negative {Ricci} curvature.
\newblock {\em Anal. Geom. Metr. Spaces}, 2:169--213, 2014.

\bibitem{giglistructure}
Nicola Gigli.
\newblock On the differential structure of metric measure spaces and
  applications.
\newblock {\em Mem. Amer. Math. Soc.}, 236(1113):vi+91, 2015.

\bibitem{giglinonsmooth}
Nicola Gigli.
\newblock Nonsmooth differential geometry---an approach tailored for spaces
  with {R}icci curvature bounded from below.
\newblock {\em Mem. Amer. Math. Soc.}, 251(1196):v+161, 2018.

\bibitem{gigli_gromov}
Nicola Gigli.
\newblock De {Giorgi} and {Gromov} working together.
\newblock Preprint, {arXiv}:2306.14604 [math.{MG}] (2023), 2023.

\bibitem{hangigliwarped}
Nicola Gigli and Bang-Xian Han.
\newblock Sobolev spaces on warped products.
\newblock {\em J. Funct. Anal.}, 275(8):2059--2095, 2018.

\bibitem{giglikuwadaohta}
Nicola Gigli, Kazumasa Kuwada, and Shin-Ichi Ohta.
\newblock Heat flow on {A}lexandrov spaces.
\newblock {\em Comm. Pure Appl. Math.}, 66(3):307--331, 2013.

\bibitem{gigli_marconi}
Nicola Gigli and Fabio Marconi.
\newblock A general splitting principle on {RCD} spaces and applications to
  spaces with positive spectrum.
\newblock Preprint, {arXiv}:2312.06252 [math.{MG}] (2023), 2023.

\bibitem{gmsstability}
Nicola Gigli, Andrea Mondino, and Giuseppe Savar{\'e}.
\newblock Convergence of pointed non-compact metric measure spaces and
  stability of {R}icci curvature bounds and heat flows.
\newblock {\em Proc. Lond. Math. Soc. (3)}, 111(5):1071--1129, 2015.

\bibitem{koskela}
Piotr Haj{\l}asz and Pekka Koskela.
\newblock Sobolev met {P}oincar\'e.
\newblock {\em Mem. Amer. Math. Soc.}, 145(688):x+101, 2000.

\bibitem{hnw}
Erik Hupp, Aaron Naber, and Kai-Hsiang Wang.
\newblock Lower {Ricci} curvature and nonexistence of manifold structure.
\newblock {\em Geom. Topol.}, 29(1):443--477, 2025.

\bibitem{Kap-Ket-18}
Vitali Kapovitch and Christian Ketterer.
\newblock C{D} meets {CAT}.
\newblock {\em J. Reine Angew. Math.}, 766:1--44, 2020.

\bibitem{ketterer}
Christian Ketterer.
\newblock Ricci curvature bounds for warped products.
\newblock {\em J. Funct. Anal.}, 265(2):266--299, 2013.

\bibitem{ketterer2}
Christian Ketterer.
\newblock Cones over metric measure spaces and the maximal diameter theorem.
\newblock {\em J. Math. Pures Appl. (9)}, 103(5):1228--1275, 2015.

\bibitem{ketterer3}
Christian Ketterer.
\newblock Obata's rigidity theorem for metric measure spaces.
\newblock {\em Anal. Geom. Metr. Spaces}, 3:278--295, 2015.

\bibitem{kettererwp}
Christian Ketterer.
\newblock Warped products over one-dimensional base spaces and the rcd
  condition, 2025.

\bibitem{koskelazhou}
Pekka Koskela and Yuan Zhou.
\newblock Geometry and analysis of {D}irichlet forms.
\newblock {\em Adv. Math.}, 231(5):2755--2801, 2012.

\bibitem{kush_bg}
Kazuhiro Kuwae and Takashi Shioya.
\newblock Infinitesimal {B}ishop-{G}romov condition for {A}lexandrov spaces.
\newblock In {\em Probabilistic approach to geometry}, volume~57 of {\em Adv.
  Stud. Pure Math.}, pages 293--302. Math. Soc. Japan, Tokyo, 2010.

\bibitem{lobaem}
John Lott.
\newblock Some geometric properties of the {B}akry-\'{E}mery-{R}icci tensor.
\newblock {\em Comment. Math. Helv.}, 78(4):865--883, 2003.

\bibitem{lottvillani}
John Lott and C{\'e}dric Villani.
\newblock Ricci curvature for metric-measure spaces via optimal transport.
\newblock {\em Ann. of Math. (2)}, 169(3):903--991, 2009.

\bibitem{lytchakstadler}
Alexander Lytchak and Stephan Stadler.
\newblock Ricci curvature in dimension 2.
\newblock {\em J. Eur. Math. Soc. (JEMS)}, 25(3):845--867, 2023.

\bibitem{ohtmea}
Shin-ichi Ohta.
\newblock On the measure contraction property of metric measure spaces.
\newblock {\em Comment. Math. Helv.}, 82(4):805--828, 2007.

\bibitem{rajala2}
Tapio Rajala.
\newblock Interpolated measures with bounded density in metric spaces
  satisfying the curvature-dimension conditions of {S}turm.
\newblock {\em J. Funct. Anal.}, 263(4):896--924, 2012.

\bibitem{reedsimon}
Michael Reed and Barry Simon.
\newblock {\em Methods of modern mathematical physics. {II}. {F}ourier
  analysis, self-adjointness}.
\newblock Academic Press [Harcourt Brace Jovanovich Publishers], New York,
  1975.

\bibitem{savareself}
Giuseppe Savar{\'e}.
\newblock Self-improvement of the {B}akry-\'{E}mery condition and {W}asserstein
  contraction of the heat flow in {${\rm RCD}(K,\infty)$} metric measure
  spaces.
\newblock {\em Discrete Contin. Dyn. Syst.}, 34(4):1641--1661, 2014.

\bibitem{sturmdirichlet2}
Karl-Theodor Sturm.
\newblock Analysis on local {D}irichlet spaces. {II}. {U}pper {G}aussian
  estimates for the fundamental solutions of parabolic equations.
\newblock {\em Osaka J. Math.}, 32(2):275--312, 1995.

\bibitem{sturmdirichlet3}
Karl-Theodor Sturm.
\newblock Analysis on local {D}irichlet spaces. {III}. {T}he parabolic
  {H}arnack inequality.
\newblock {\em J. Math. Pures Appl. (9)}, 75(3):273--297, 1996.

\bibitem{stugeo1}
Karl-Theodor Sturm.
\newblock On the geometry of metric measure spaces. {I}.
\newblock {\em Acta Math.}, 196(1):65--131, 2006.

\bibitem{stugeo2}
Karl-Theodor Sturm.
\newblock On the geometry of metric measure spaces. {II}.
\newblock {\em Acta Math.}, 196(1):133--177, 2006.

\bibitem{sturm25}
Karl-Theodor Sturm.
\newblock Bakry-{\'e}mery, {Hardy}, and spectral gap estimates on manifolds
  with conical singularities.
\newblock {\em Calc. Var. Partial Differ. Equ.}, 64(3):31, 2025.
\newblock Id/No 94.

\end{thebibliography}
\bibliographystyle{plain}
\end{document}